\documentclass[11pt,reqno]{amsart}
\usepackage[margin=25.4mm]{geometry}
\usepackage{amsmath,amssymb,amsthm,mathtools}
\usepackage{bm,microtype,booktabs,array,enumitem,xcolor}
\definecolor{crossrefcolor}{RGB}{0,105,135}
\usepackage[colorlinks=true,linkcolor=crossrefcolor,citecolor=black,urlcolor=crossrefcolor]{hyperref}
\newcommand{\refnum}[1]{\textcolor{crossrefcolor}{#1}}
\usepackage[nameinlink,capitalize,noabbrev]{cleveref}
\allowdisplaybreaks
\numberwithin{equation}{section}

\newtheorem{theorem}{Theorem}[section]
\newtheorem{proposition}[theorem]{Proposition}
\newtheorem{lemma}[theorem]{Lemma}
\newtheorem{corollary}[theorem]{Corollary}

\theoremstyle{definition}
\theoremstyle{remark}\newtheorem{remark}[theorem]{Remark}

\newcommand{\R}{\mathbb R}

\newcommand{\Per}{\operatorname{Per}}

\newcommand{\weakstar}{\mathrel{\stackrel{*}{\rightharpoonup}}}

\newcommand{\Id}{\mathrm{Id}}

\title[Isolated singularities of the capillary equation]
{Isolated singularities of the capillary equation\\
with negative gravity}
\author{Bin Deng \and Jiahuan Li \and Yilu Liu \and Xi-Nan Ma}
\date{September 2026}

\begin{document}
\begin{abstract}
We classify isolated singularities of classical solutions to the capillary
equation with negative gravity,
\[
 \operatorname{div}\frac{Du}{\sqrt{1+|Du|^2}}=-u
 \qquad\text{in }B_R\setminus\{0\}\subset\mathbb R^n,
 \qquad n\ge2,
\]
without a priori assumptions on symmetry, sign, one-sided boundedness,
or blow-up rate.
Every such solution is either bounded near the puncture or tends uniformly to
$+\infty$ or $-\infty$.  In the bounded case, the solution extends across
the puncture as a $W^{1,1}_{\mathrm{loc}}$ distributional solution; this
extension has a unique continuous representative when $2\le n\le7$.
In the unbounded case, for some $\varepsilon\in\{-1,1\}$,
\[
 u(x)=\varepsilon\left(\frac{n-1}{|x|}
       -\frac{n+3}{2(n-1)^2}|x|^3\right)+O(|x|^5),
\]
uniformly in the angular variable.  In every fixed smaller ball, all
sufficiently high level sets of $\varepsilon u$ are connected, smooth,
strictly convex hypersurfaces enclosing the puncture.  The corresponding
pressure-rescaled graphs converge smoothly with multiplicity one to the
round cylinder $\partial B_{n-1}\times\mathbb R$.
Nonradial examples form infinite-dimensional families that agree
with the radial pole to every algebraic order, so the complete asymptotic
expansion does not determine the singular solution germ.
The proof of the classification combines
critical tail estimates and logarithmic $BV$ compactness with level-set
rigidity and the translation identities of the capillary equation.
The critical two-dimensional case requires an additional finite-height
analysis to exclude a translation defect.
\end{abstract}
\maketitle

\section{Introduction}
\label{sec:intro}

\subsection{Capillary surfaces and the gravity sign}

A graphical liquid interface in three-dimensional space is described
by a height function of two horizontal variables.  At
equilibrium, the Young--Laplace law balances surface tension against the
pressure difference across the interface; hydrostatic pressure makes this
difference affine in the height.  The resulting graphical equation is
\begin{equation}\label{eq:general-capillary-equation}
 \operatorname{div}\mathcal A(Du)=\kappa u+\lambda,
 \qquad \mathcal A(p):=\frac{p}{\sqrt{1+|p|^2}},
\end{equation}
where $\kappa$ is the signed capillary constant and $\lambda$ accounts for
the reference pressure.  The physical theory originates in the work of
Young, Laplace, and Gauss \cite{Young1805,Laplace1806,Gauss1830}; for its
mathematical development, see Finn \cite{Finn1986,Finn1999}.
When $\kappa\ne0$, a vertical translation removes $\lambda$, and scaling
both horizontal and vertical lengths by the capillary length reduces
$\kappa$ to $+1$ or $-1$.

In the convention of \eqref{eq:general-capillary-equation}, positive gravity describes, for
example, the upper surface of a sessile drop, whereas negative gravity
is associated with the lower interface of a pendent drop.  Both arise
under ordinary gravity \cite{Finn1986,Nickolov2002}.  The symmetry
$u\mapsto-u$ preserves each normalized equation; it does not interchange
the two signs.  We study the negative-gravity equation
\begin{equation}\label{eq:pde}
 u\in C^2(B_R\setminus\{0\}),\qquad
 \operatorname{div}\frac{Du}{\sqrt{1+|Du|^2}}=-u,
 \qquad B_R\subset\mathbb R^n,\quad n\ge2.
\end{equation}
Here $n$ is the dimension of the base, so $n=2$ describes a liquid
interface in $\mathbb R^3$.  We ask whether every unbounded isolated end
has the asymptotic shape of the classical singular pendent drop, without
assuming axial symmetry.

We prove that every solution is either bounded near the puncture or
tends uniformly to $+\infty$ or $-\infty$.  In the physical case $n=2$,
the bounded branch has a continuous weak extension, while every
unbounded branch has the classical two-term pole expansion.
After changing the sign if necessary, its sufficiently high level curves
are single, smooth, strictly convex curves that become circular after
rescaling.  No sign, one-sided bound, blow-up rate,
convexity, or level-set topology is assumed.  The asymptotic expansion does not determine the solution germ:
nonradial examples agree with the radial pole to every algebraic order;
see Remark~\ref{rem:nonradial-germs}.

Our interior problem imposes no supporting surface, contact angle, or
stability condition.  Boundary-value problems prescribe the height
(Dirichlet data), its normal derivative (Neumann data), or the normalized
flux $\mathcal A(Du)\cdot\nu$ (contact-angle data), where $\nu$ is the
outer unit normal to the base domain.  Serrin \cite{Serrin1969} related
Dirichlet solvability to domain geometry and prescribed mean curvature.  Spruck
\cite{Spruck1975}, Simon--Spruck \cite{SimonSpruck1976}, and Gerhardt
\cite{Gerhardt1976} developed existence and boundary regularity theory
for prescribed contact angle.  Giusti \cite{Giusti1976} treated
Dirichlet, capillarity, and mixed boundary conditions through a
variational formulation in $BV$; see also Finn \cite{Finn1986}.

Within this framework, Ma \cite{Ma2000} obtained size estimates for
zero-gravity capillary surfaces over planar convex domains with constant
contact angle, and Ma--Xu
\cite{MaXu2016} proved boundary gradient estimates for prescribed mean
curvature equations with Neumann data.  Related developments include
mean curvature flow with prescribed contact angle
\cite{GaoMaWangWeng2021} and gradient estimates for higher-order curvature
equations with contact-angle conditions \cite{DengMa2023}.  Loss of uniform ellipticity at large gradients makes boundary gradient
estimates central to this theory.  At our interior puncture, neither
height nor gradient is assumed bounded.

\subsection{Removability and the singular pendent drop}

Classical results on isolated singularities differ according to the
gravity sign.  In the minimal surface case, $\kappa=\lambda=0$, Bers
\cite{Bers1951} and Finn \cite{Finn1953} proved that isolated
singularities of planar graphs are removable.  The contrast with the
poles and multipoles of harmonic functions shows that the nonlinear
geometric equation can exclude singular behavior admitted by its
linearization.  Serrin developed removability and local singularity
theory for wider classes of elliptic equations
\cite{Serrin1964,Serrin1965,Serrin1965Removable}.

For \emph{positive gravity}, $\kappa=1$, Finn \cite{Finn1961} proved
that every isolated interior singularity is removable, in every
dimension $n\ge2$, without a symmetry, boundedness, or growth assumption
at the puncture.  This result belongs to a broader removability theory
for elliptic equations with bounded flux and nondecreasing dependence
on the height; see Finn's account in
\cite[pp.~156--157]{Finn1976}.  The capillary flux satisfies
$|\mathcal A(Du)|<1$, and the height term $\kappa u$ is increasing
when $\kappa>0$.  The negative-gravity equation loses this monotonicity.
Later, Finn--Lu \cite{FinnLu1998} obtained interior gradient bounds
and a Harnack inequality for planar graphs with nonconstant,
nondecreasing prescribed mean curvature $H(u)$, without assuming
positivity of the solution.  Their results include the positive-gravity
case $H(u)=u/2$.

For \emph{negative gravity}, $\kappa=-1$, isolated poles exist.
Concus and Finn \cite{ConcusFinnGravity} proposed a singular radial limit
of regular axisymmetric solutions.  They then proved its existence and
derived its asymptotic expansion \cite{ConcusFinn1975I}, and established
uniqueness within the radial class under an additional asymptotic
condition \cite{ConcusFinn1975II}.  In dimension two, the first terms of
the positive branch are $r^{-1}-\tfrac52r^3$ in our normalization
\cite[p.~128]{Nickolov2002}.  Their work on pendent drops
\cite{ConcusFinn1979} also related this singular profile to limits of
finite-length drops.  The pole thus arises within the classical
equilibrium theory of pendent drops as an end of infinite length.

Bidaut--V\'eron proved global continuation and strengthened the radial
uniqueness criteria \cite{BidautVeron1986}, and later studied more
general height-dependent prescribed mean curvature
\cite{BidautVeron1996}.  Nickolov \cite{Nickolov2002} completed the
radial uniqueness proof by showing that every radial nonremovable
singularity satisfies the required criterion.  Riera and Risler
\cite{RieraRisler2002} developed a dynamical-system description of
axisymmetric capillary surfaces, and Risler \cite{Risler2015} used it
to give a shorter uniqueness proof.  These results determine the
radial pole; they do not classify arbitrary solutions of the PDE.

Using constant-mean-curvature comparison surfaces, Finn
\cite[Theorems~\refnum{1 and~2}]{Finn1976} obtained a boundedness
criterion from the values on a single surrounding sphere, without
assuming symmetry.  For $n=2$, it states that
\[
 \sup_{|x|=r_0}|u(x)|\le \frac1{r_0}-\frac{\pi}{2}r_0
 \quad\text{for some }0<r_0<R
\]
implies boundedness near the origin.  In every dimension, a consequence
is that $|u(x)|=o(|x|^{-1})$ implies boundedness; see
\cite[p.~159, Remark~\refnum{iii}]{Finn1976}.  These comparison results
restrict nonradial growth but do not determine the asymptotic profile.

Finn also asked whether bounded isolated singularities of the exact
negative-gravity equation are classically removable
\cite[pp.~157--158 and Sections~\refnum{6--7}]{Finn1976}.  His Section~\refnum{6} example
has bounded height, bounded mean curvature, and finite area, but is
discontinuous.  The H\"older-continuous example in Section~\refnum{7} has unbounded
gradient and satisfies
$\operatorname{div}\mathcal A(Dz)=-z+f(x,y)$ with H\"older-continuous
forcing.  Neither is a counterexample for the unforced equation
\eqref{eq:pde}.  Our bounded-branch conclusion concerns continuous weak
extension in the stated dimensions; classical smooth removability is
a separate question.

The remaining classification problem is already present in dimension
two.  Without axial symmetry, a punctured solution could have different
limits in different directions, both positive and negative tails, or
several high-level components.  Global symmetry theorems for bounded
drops, such as Wente's \cite{Wente1980Symmetry}, use supporting-plane and
boundary hypotheses that are absent here.  We exclude these alternatives directly from \eqref{eq:pde} and prove
that every unbounded punctured solution has the classical two-term
expansion and a single round asymptotic end.
Remark~\ref{rem:nonradial-germs} shows that even the complete algebraic
expansion does not determine the solution germ.

\subsection{Main theorem}

Here $B_\rho=B_\rho(0)\subset\mathbb R^n$, $\partial^*$ denotes reduced
boundary, and $\partial_U$ denotes topological boundary relative to $U$.

\begin{theorem}\label{thm:main}
Let $n\ge2$ and let $u$ satisfy \eqref{eq:pde}.  Exactly one of the
following alternatives occurs.
\begin{enumerate}[label=\textnormal{(\roman*)},leftmargin=3.2em]
\item\label{item:removable-main}
The function $u$ is bounded in a punctured neighborhood of the origin and
has a unique $W^{1,1}_{\mathrm{loc}}(B_R)$ weak extension
$\widetilde u$ across the puncture, unique up to equality almost everywhere,
which satisfies \eqref{eq:pde} distributionally on $B_R$.  If
$2\le n\le7$, then $u$ admits a unique continuous extension to $B_R$;
this continuous extension represents the preceding weak
$W^{1,1}_{\mathrm{loc}}(B_R)$ solution class.

\item\label{item:positive-main}
One has $u(x)\to+\infty$ uniformly as $x\to0$.  For every fixed
$0<r<R$ there exists $T_r<\infty$ such that, for every $t\ge T_r$, there
is a smooth strictly convex body $K_{t,r}\Subset B_r$, with
$0\in\operatorname{int}K_{t,r}$, for which
\begin{equation}\label{eq:localized-positive-level-main}
 E_{t,r}:=\{x\in B_r\setminus\{0\}:u(x)>t\}
 =\operatorname{int}K_{t,r}\setminus\{0\}.
\end{equation}
Moreover,
\begin{equation}\label{eq:localized-positive-boundary-main}
 \partial_{B_r\setminus\{0\}}E_{t,r}
 =\partial^*E_{t,r}=\partial K_{t,r},
\end{equation}
and
\begin{equation}\label{eq:positive-expansion-main}
 u(x)=\frac{n-1}{|x|}
 -\frac{n+3}{2(n-1)^2}|x|^3+O(|x|^5).
\end{equation}
For every $L>0$, the pressure-rescaled graph pieces
\[
 \left\{\bigl(tx,t(u(x)-t)\bigr):
  x\in B_r\setminus\{0\},\ |u(x)-t|<L/t\right\}
\]
converge smoothly with multiplicity one on every smaller vertical slab to
the corresponding portion of $\partial B_{n-1}\times\mathbb R$.

\item\label{item:negative-main}
One has $u(x)\to-\infty$ uniformly as $x\to0$.  For every fixed
$0<r<R$ there exists $T_r<\infty$ such that, for every $t\ge T_r$, there
is a smooth strictly convex body $K^-_{t,r}\Subset B_r$, with
$0\in\operatorname{int}K^-_{t,r}$, for which
\begin{equation}\label{eq:localized-negative-level-main}
 F_{t,r}:=\{x\in B_r\setminus\{0\}:u(x)<-t\}
 =\operatorname{int}K^-_{t,r}\setminus\{0\}.
\end{equation}
Moreover,
\begin{equation}\label{eq:localized-negative-boundary-main}
 \partial_{B_r\setminus\{0\}}F_{t,r}
 =\partial^*F_{t,r}=\partial K^-_{t,r},
\end{equation}
and
\begin{equation}\label{eq:negative-expansion-main}
 u(x)=-\frac{n-1}{|x|}
 +\frac{n+3}{2(n-1)^2}|x|^3+O(|x|^5).
\end{equation}
The corresponding negative pressure graphs have the sign-reversed smooth
cylindrical description.
\end{enumerate}
The remainders in \eqref{eq:positive-expansion-main} and
\eqref{eq:negative-expansion-main} are uniform in the angular variable.
\end{theorem}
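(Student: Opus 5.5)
The plan has four stages. First comes a trichotomy at the level of growth, using Finn's comparison criterion \cite{Finn1976}. Then the bounded branch is treated by variational removability, and the unbounded branch by a blow-up analysis of level sets. The last stage is an ODE-type expansion obtained by perturbing the radial pole. Since $u\mapsto-u$ preserves \eqref{eq:pde}, alternative \ref{item:negative-main} follows from \ref{item:positive-main}, and I only treat $\varepsilon=+1$.

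\emph{Step 1: critical tail estimate and the trichotomy.} The capillary flux satisfies $|\mathcal A(Du)|<1$. Integrating \eqref{eq:pde} over annuli $B_\rho\setminus B_\sigma$ therefore gives $\bigl|\int_{B_\rho\setminus B_\sigma}u\bigr|\le C\rho^{n-1}$, so $u\in L^1(B_\rho)$ and the distributional identity holds on $B_R$ up to a possible point defect $c\,\delta_0$. Combined with Finn's comparison with constant-mean-curvature spheres, this yields a two-sided bound $|u(x)|\le (n-1)/|x|+C$ near $0$ once some sphere control is available. It also yields the dichotomy: either $|u|=o(|x|^{-1})$ along some sequence of spheres, which forces boundedness, or $|x|\,|u|$ stays bounded below. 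In the second case I rescale $u_t(y)=t^{-1}\cdot t\,u(y/t)$, i.e.\ I consider the pressure rescalings $v_t(y)=u(y/t)/t$, whose graphs have mean curvature of order one. Logarithmic $BV$ compactness of the subgraphs, using the area bound from the flux estimate on dyadic annuli, gives a limit along subsequences. This limit is a generalized solution of $\operatorname{div}\mathcal A=\mathrm{const}$ in the Miranda sense, whose infinite sets must be cylinders $\partial B_{n-1}(a)\times\R$. Level-set rigidity, i.e.\ strict convexity from the translation identities $\partial_i$ of the equation applied to $\nu$-components, then shows that the cylinder is centered and unique, and that $u$ has a single sign near $0$. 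This gives uniform divergence to $\pm\infty$.

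\emph{Step 2: bounded branch.} If $u$ is bounded, the flux estimate shows $c=0$ in the distributional defect. Bounded height and bounded mean curvature give local finite area, so $u\in W^{1,1}_{\mathrm{loc}}(B_R)$ and \eqref{eq:pde} holds weakly across $0$. The extension is unique a.e.\ because a point has measure zero. For $2\le n\le7$, the subgraph is an almost-minimizer (a $\Lambda$-minimizer) of perimeter. De Giorgi regularity then applies, since singular sets are empty below dimension $8$, and makes the boundary a $C^{1,\alpha}$ hypersurface near $(0,\cdot)$. Vertical pieces are excluded by boundedness of $u$, so the graph closes up and $u$ extends continuously.

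\emph{Step 3: level sets and expansion.} With smooth multiplicity-one convergence of the rescaled graphs to $\partial B_{n-1}\times\R$, Allard or curvature estimates give smooth closeness of $\{u=t\}$ to the sphere $\partial B_{(n-1)/t}$ in $C^2$ after scaling. The level sets are therefore smooth, strictly convex, connected, and enclose $0$, which gives \eqref{eq:localized-positive-level-main} and \eqref{eq:localized-positive-boundary-main}. Writing $u=u_{\mathrm{rad}}+w$, where $u_{\mathrm{rad}}$ is the radial pole of Concus--Finn with the known expansion, the function $w$ solves a linearized equation. That equation is degenerate because the gradient is large, and in level-set coordinates it becomes a uniformly parabolic-like equation along the height variable. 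Decay estimates for the angular modes give $w=O(|x|^5)$, since the first two coefficients are forced by the radial ODE.

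\emph{Main obstacle.} I expect the hardest point to be the $n=2$ case of Step 1. There the log-$BV$ compactness may lose mass through a translation defect, meaning the blow-up limit could drift horizontally at a rate not detected by the area bound. To handle this, I would first run a finite-height analysis. The plan is to integrate the translation identity $\operatorname{div}(\partial_i u\,\mathcal A(Du)-\ldots)$ over slabs $\{t<u<t+L/t\}$ and show that the resulting horizontal-momentum flux vanishes, which pins down the center.
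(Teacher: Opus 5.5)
\paragraph{Step 1 does not establish the trichotomy.} This is where the real difficulty lies, since no sign or growth is assumed.

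Integrating the equation over annuli bounds only $\bigl|\int_{B_\rho\setminus B_\sigma}u\,dx\bigr|$, not $\int|u|$. So once $u$ may change sign, it gives neither $u\in L^1$ nor any area bound. The paper reaches the Morrey bound $\int_{B_r}|u|\le Cr^{n-1}$ only after the critical tail law $t^n|\{\varepsilon u>t\}|\to C_\varepsilon$. That law comes from the Noether stress $\mathsf S$: the finite-band area estimate, the level-set flux and dilation identities, and the moment hierarchy, all run on one-signed superlevel sets.

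Finn's comparison theorem is only a boundedness criterion (smallness of $u$ on one sphere). It gives no bound $|u|\le(n-1)/|x|+C$. Failure of its hypothesis only bounds $\sup_{|x|=r}|x||u|$ from below, so neither your pointwise dichotomy nor any sign follows.

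Nor is the blow-down automatically one centered cylinder. The limiting levels are only distributional CMC sets, hence (Delgadino--Maggi) finite unions of equal balls of radius $k/s$. You therefore need:
\begin{itemize}
\item the quantization $C_\varepsilon=m_\varepsilon\omega_nk^n$;
\item the matching bound $m_\varepsilon\le2$;
\item exclusion of two tangent balls, which would add $2\mathcal H^{n-1}\llcorner e^\perp$ to $\operatorname{div}Z_\infty$;
\item centering through the conserved translation flux $\mathcal F(t)$ and the moving-ball formula.
\end{itemize}

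\paragraph{The planar case.} For $n=2$ your proposed remedy is exactly the step that fails. If the cluster set is $[\ell,+\infty]$ or $[-\infty,+\infty]$, every level above $\ell$ accumulates at the puncture. Then every slab $\{t<u<t+L/t\}$ meets the puncture, and the stress cutoff costs $O(r^{n-2})=O(1)$. The half-dipole $H_e$ and dipole $D_e$ then remain admissible blow-downs.

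The missing idea is to complete the subgraph boundary as a current and varifold across the axis, and to show that the axis-supported part of its first variation has a bounded density along the axis. Using the dipole-type blow-downs, one proves that blow-ups at axis points are multiplicity-one vertical planes, so this density vanishes. Its horizontal component equals $D_s\mathcal F$, which yields zero flux and contradicts the off-center path $c(s)=e/s$.

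\paragraph{Step 2 contains two genuine errors.}
\begin{itemize}
\item The subgraph boundary is an $n$-dimensional hypersurface in $\R^{n+1}$. For $n=7$ the ambient dimension is $8$, where $\Lambda$-minimizers may have isolated singular points. An extra argument is needed there; the paper notes that tangent cones of a vertical subgraph satisfy $\nu_{\mathcal C}\cdot e_8\ge0$, and a Jacobi-field maximum principle then forces a half-space.
\item More seriously, boundedness does not exclude vertical pieces. It only shows that the completed boundary meets the axis in the finite segment $\{0\}\times[u_-,u_+]$ given by the cluster interval. Finn's discontinuous example shows that bounded height and bounded mean curvature cannot rule this out. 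The paper uses the exact equation to make the completed hypersurface real analytic, then applies the identity theorem at the endpoint $(0,u_+)$.
\end{itemize}

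\paragraph{Step 3 leaves two points unproved.} Multiplicity one and the absence of extra components must be established. The paper does this via strict $BV$ convergence of the pressure subgraphs, the exact mass limit, Allard's theorem, and a covering-degree count.

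Angular decay alone also cannot give the expansion. The limiting linearization $\eta_{ZZ}+k^{-2}(\Delta+k)\eta=0$ is elliptic, not parabolic. Its degree-one modes are neutral ($\mu_1=0$) and are controlled only through the translation flux identity. The mean modes need a separate nonlinear argument (the Hamiltonian barrier) to obtain the $O(Z^{-3})$ error that produces the $O(|x|^5)$ remainder.
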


For $n=2$, the theorem gives a finite limit and a continuous weak
extension, or the expansion
\[
 u(x)=\varepsilon\left(\frac1{|x|}-\frac52|x|^3\right)
       +O(|x|^5),\qquad \varepsilon\in\{-1,1\}.
\]
In the latter case, every sufficiently high level of $\varepsilon u$
in a fixed smaller disk is one strictly convex closed curve enclosing
the puncture.  The pressure-rescaled surface converges smoothly with
multiplicity one to $\mathbb S^1\times\mathbb R$.

\begin{remark}\label{rem:bounded-regularity}
The $W^{1,1}_{\mathrm{loc}}$ distributional extension exists in every
dimension $n\ge2$; Proposition~\ref{prop:bounded-continuous-lowdim}
provides its continuous representative when $2\le n\le7$.
For $n\ge8$ no continuity assertion is made here.  Classical smooth removability is not asserted.
\end{remark}

\begin{remark}\label{rem:local-levels}
All level sets are localized in a fixed $B_r\Subset B_R$, and the
threshold $T_r$ may depend on this ball.  The conclusion concerns sufficiently high levels within $B_r$.
\end{remark}

\begin{remark}\label{rem:planar-endpoint-intro}
The physical dimension $n=2$ is also critical for the proof.
The obstruction is a possible horizontal translation defect at the
puncture; its exclusion is described below.
\end{remark}

\begin{remark}[Nonradial singular germs]\label{rem:nonradial-germs}
Theorem~\ref{thm:main} gives an asymptotic classification.  Even the
complete algebraic expansion of the positive radial pole does not
determine the singular solution germ.

More precisely, fix $n\ge2$ and
$0<c<(n-1)\sqrt{n+1}/2$.  There exist $R>0$, $a_*>0$, and classical
solutions $u_a$, $|a|<a_*$, on $B_R\setminus\{0\}$ satisfying
\[
 \operatorname{div}\frac{Du_a}{\sqrt{1+|Du_a|^2}}=-u_a,
 \qquad u_a(x)\longrightarrow+\infty\quad\text{as }x\to0,
\]
uniformly in direction.  If $U_n(|x|)$ denotes the classical positive
radial pole, then $u_0(x)=U_n(|x|)$ and
\[
 u_a(x)-U_n(|x|)=O_a\!\left(e^{-c/|x|^2}\right)
 \qquad\text{as }x\to0.
\]
For $a\ne0$, the solution $u_a$ is nonradial in every punctured
neighborhood of the origin, and distinct parameter values give distinct
solution germs.  In particular,
\[
 u_a(x)-U_n(|x|)=O_a(|x|^N)\qquad\text{for every }N>0,
\]
so all members of the family have the same expansion to every algebraic
order.

The solutions may be chosen centrally symmetric, with all sufficiently
high level sets smooth and strictly convex.  More generally, small even,
mean-zero angular data give an infinite-dimensional family of such
nonradial singular germs.  Thus the universal two-term expansion and
round asymptotic shape in Theorem~\ref{thm:main} coexist with
nonuniqueness of the singular solution germ.
\end{remark}

\subsection{Geometric singularities and asymptotic models}

Simon \cite{SimonAsymptotics1983} developed an asymptotic theory for
isolated singularities, and
Hardt--Simon \cite{HardtSimon1985} studied minimizing isolated
singularities and the minimizing foliations associated with their
cone models.  Edelen--Spolaor \cite{EdelenSpolaor2023} analyzed the
local structure near minimizing quadratic cones.  Constructions of
nonconical minimal hypersurfaces \cite{CaffarelliHardtSimon1984} and
of hypersurfaces with multiple isolated singularities \cite{Smale1989}
show the range of possible geometries.  For suitable smooth ambient
metrics, Simon \cite{SimonSingularSets2023} constructed strictly stable
minimal hypersurfaces with prescribed closed singular sets.
These results concern singularities at finite ambient
points.  In the present problem, the bounded branch concerns regularity
of a completed subgraph boundary, whereas a pole is an end at infinite
height.  Cylindrical limits also arise in the mean curvature flow work
of Colding--Minicozzi \cite{ColdingMinicozzi2015,ColdingMinicozzi2018};
here the equation is static, and the level sets need not evolve by
mean curvature.

The theory of integral currents \cite{FedererFleming1960} and the
relation between generalized graphs and sets of locally finite perimeter
\cite{Miranda1964} allow us to treat subgraph boundaries without
controlling the height gradient.  Geometric measure theory also enters capillary boundary
regularity directly; see Simon \cite{SimonCapillary1980}.  We combine
finite-perimeter compactness, the weak Alexandrov theorem of
Delgadino--Maggi \cite{DelgadinoMaggi2019}, and Allard regularity.
The key task is to derive compactness before the sign or growth rate
is known and to retain the exact translation identities in the limit.
Those identities select a single end centered at the puncture.

\subsection{The critical two-dimensional problem}

At large gradients the equation loses uniform ellipticity.  The
rescaling suggested by the pole is
\[
 w_t(y)=t^{-1}u(y/t),
 \qquad
 \operatorname{div}\frac{Dw_t}{\sqrt{t^{-4}+|Dw_t|^2}}=-w_t.
\]
Its formal limit is $\operatorname{div}(Dw/|Dw|)=-w$ where $Dw\ne0$.
This limit alone does not select a centered radial pole.  In dimension
two, the competing dipole and half-dipole profiles are
\begin{equation}\label{eq:dipoles}
 D_e(x):=\frac{2e\cdot x}{|x|^2},
 \qquad
 H_e(x):=\frac{2(e\cdot x)_+}{|x|^2},
 \qquad e\in\mathbb S^1.
\end{equation}
The full dipole $D_e$ satisfies the formal equation away from the
puncture, and $H_e=(D_e)_+$.  Their positive superlevel sets are disks
tangent to the puncture.  They are candidates for the rescaled limit,
not solutions of the original capillary equation.

To describe the competing behaviors, define the extended-real cluster
set
\[
 \mathcal C_0(u):=
 \left\{a\in[-\infty,+\infty]:
   \text{there are }x_j\to0\text{ with }u(x_j)\to a\right\}.
\]
This set is an interval because punctured balls are connected.
A half-line with a finite endpoint leads, up to sign, to the
half-dipole candidate; the full extended line leads to the dipole.
To exclude them we use horizontal translation balance for the exact
equation.  On a fixed finite-height band, a puncture cutoff at radius
$\rho$ produces an error of order $O(\rho^{n-2})$.  The error vanishes
for $n\ge3$, while in dimension two it may leave a nonzero force.
This is the critical obstruction.

To describe the residual force on the vertical axis, write
\[
 \Sigma:=\{(x,u(x)):0<|x|<R\},
 \qquad
 E:=\{(x,z):0<|x|<R,\ z<u(x)\}.
\]
The value of $\mathbf1_E$ on the vertical axis is immaterial, since
the axis has zero ambient Lebesgue measure.  In the exceptional planar
cases, we complete the subgraph boundary as an integral current and
track the axis-supported part of its first variation.  Shrinking-height
area estimates and ambient blow-ups give multiplicity-one vertical
planes at the relevant finite heights, forcing the axis-supported part
of the first variation to vanish.  The horizontal component of this
defect is the distributional height derivative of the horizontal level
flux.  The resulting zero flux fixes the centers of the limiting level
circles, excluding the dipole and half-dipole, whose centers at level
$s>0$ are $e/s$.  Thus finite-height translation balance for the exact
equation excludes profiles allowed by the formal limit.

\subsection{Outline of the proof}

The proof proceeds from weak compactness to a centered geometric
limit, then to smoothness and the two-term expansion.

\smallskip
\noindent\emph{Tail estimates and compactness.}
Finite-height stress estimates control the graph area across the
puncture.  Level-set flux and dilation identities then give, for each
unbounded sign $\varepsilon\in\{-1,1\}$ and a fixed ball
$B_r\Subset B_R$,
\[
 \lim_{t\to\infty}t^n
 \bigl|\{x\in B_r\setminus\{0\}:\varepsilon u(x)>t\}\bigr|
 =C_\varepsilon>0.
\]
The logarithmic truncations $\log(\varepsilon w_t/a)$ on
$\{\varepsilon w_t>a\}$, with fixed $a>0$, extended by zero elsewhere,
have bounded variation and controlled integrability.
A profile decomposition and an exact balance between total variation
and the nonlinear source exclude residual variation and yield strict
$BV$ convergence.  The weak formulations use finite-perimeter theory
\cite{Giusti1984,AFP2000,Maggi2012}, Anzellotti's pairing
\cite{Anzellotti1983}, and Reshetnyak continuity
\cite{Reshetnyak1968}.

\smallskip
\noindent\emph{Rigidity of the limiting level sets.}
Passing to the limit in the stress identities on finite value bands
shows that almost
every limiting level has constant mean curvature in the distributional
sense.  The finite-perimeter Alexandrov theorem of Delgadino and Maggi
\cite{DelgadinoMaggi2019}, extending \cite{Alexandrov1962}, identifies
such levels as finite unions of equal-radius balls.  It does not yet
give a single ball or a common center.  Nesting and quantitative matching
reduce the number of balls to at most two, and the exact limiting
divergence equation excludes the two-ball configuration.  The translation argument above then selects the center and the sign.
The full family of rescalings converges to a radial pole centered at
the puncture.

\smallskip
\noindent\emph{Regularity of the high-level end.}
Near a large height $t$, we use the pressure rescaling
\[
 (x,u(x))\longmapsto\bigl(tx,t(u(x)-t)\bigr).
\]
On each fixed vertical slab the subgraphs converge strictly in $BV$ to
the solid round cylinder.  Completion of the boundary currents before
slicing preserves orientation, while the exact mass limit fixes the
multiplicity.  First-variation and support control allow Allard's
regularity theorem \cite{Allard1972,Simon1983} to produce a single
$C^{1,\alpha}$ sheet over the cylinder; elliptic regularity then gives
smoothness.  Since the argument controls the full support,
it excludes small additional components as well as excess sheets.
The high levels form a smooth strictly convex foliation of a full
punctured neighborhood, and the height diverges uniformly with the
selected sign.

\smallskip
\noindent\emph{The two-term expansion.}
We parametrize this foliation by height and Gauss normal.  For its
support function $h(t,\omega)$, the variables $q=th$ and $Z=t^2/2$
satisfy an exact coupled system near the round state.  Translation
balance controls the degree-one modes; a uniform elliptic estimate and
the spherical spectral gap give exponential decay of the higher angular
modes in $Z$.  The remaining mean modes satisfy the radial system up to
an exponentially small error.  Its underlying integrable structure is
classical \cite{RieraRisler2002,Risler2015}.  Here a coercive Hamiltonian
barrier controls the perturbed mean system, and inversion from Gauss to
Euclidean polar coordinates gives
\eqref{eq:positive-expansion-main} and its sign reverse.

\smallskip
\noindent\emph{Bounded singularities.}
The finite-height estimates also give the weak
$W^{1,1}_{\mathrm{loc}}$ extension.  To prove continuity for
$2\le n\le7$, we use the calibrated subgraph as a local perimeter
quasiminimizer.  Codimension-one regularity, together with a directed
tangent-cone argument when $n=7$, yields regularity of the completed
interface.  Analytic elliptic regularity for the exact equation then
implies that the interface is analytic.  A nontrivial finite vertical
cluster segment cannot terminate on such an analytic hypersurface, so
the cluster interval reduces to one point.  This proves that the height
has a finite limit without assuming a gradient bound.

\subsection{Organization of the paper}

Section~\ref{sec:finite-band-complete} proves the finite-height estimates
and the critical tail law.  Sections~\ref{sec:log-profiles}
and~\ref{sec:one-ball} establish logarithmic compactness and level-set
rigidity.  Section~\ref{sec:planar-defects} excludes the planar dipole
and half-dipole candidates.  Section~\ref{sec:pressure-complete} gives
the smooth high-level foliation, and Section~\ref{sec:round-complete}
derives the asymptotic expansion.  Section~\ref{sec:bounded} treats
bounded singularities and completes the classification.

\section{Finite-height estimates and the critical tail}
\label{sec:finite-band-complete}

Throughout this section, $n\geq2$, $B_r=B_r(0)\subset\mathbb R^n$,
$k=n-1$, and $\omega_n=|B_1|$.  For a solution of \eqref{eq:pde} we write
\[
 W:=\sqrt{1+|Du|^2}.
\]
Thus $\mathcal A(Du)=Du/W$, $|\mathcal A(Du)|\leq1$, and \eqref{eq:pde} reads
$\operatorname{div}(\mathcal A(Du))=-u$.

For a measurable set $E$, its Lebesgue measure is $|E|$,
$P(E;U)$ denotes its perimeter in an open set $U$, $\partial^*E$ is its
reduced boundary, and $\nu_E$ is the measure-theoretic outer unit normal.
All topological boundaries carrying a subscript, such as
$\partial_U E$, are relative to that open set; an unsubscripted
$\partial E$ is the boundary in $\mathbb R^n$.  We write
$P(E)=P(E;\mathbb R^n)$ when no localization is needed.  The symbol
$C$ denotes a finite positive constant whose dependence is recorded when it
matters and which may change from line to line.

Whenever a working ball $B_{r_0}\Subset B_R$ has been fixed, the localized
tails are
\[
 E_t^+=\{x\in B_{r_0}\setminus\{0\}:u(x)>t\},
 \qquad
 E_t^-=\{x\in B_{r_0}\setminus\{0\}:u(x)<-t\}.
\]
All unqualified tail volumes, perimeters, and rescalings refer to the
working ball specified at the beginning of the relevant proof phase.
 
The Noether stress
\begin{equation}\label{eq:stress-complete}
 \mathsf S=\frac{Du\otimes Du}{W}-\left(W-\frac{u^2}{2}\right)\Id
\end{equation}
satisfies
\begin{equation}\label{eq:stress-identities-complete}
 \operatorname{div}\mathsf S=0,
 \qquad \mathsf S Du=\left(\frac{u^2}{2}-W^{-1}\right)Du.
\end{equation}
Both formulas follow by differentiating $W$ and using $\operatorname{div}(\mathcal A(Du))=-u$.

The first estimate localizes the area to an arbitrary bounded height band.
It requires no information about either tail of $u$.

\begin{lemma}\label{lem:finite-band-complete}
Fix $0<R_0<R$.  For every finite interval $[a,b]$ there is
$C=C(n,a,b,R_0,u)$ such that
\begin{equation}\label{eq:finite-band-complete}
 \int_{B_r\cap\{a<u<b\}}W\,dx\leq Cr^{n-1}
 \qquad(0<r\leq R_0).
\end{equation}
\end{lemma}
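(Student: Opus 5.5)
The plan is to run a monotonicity argument for the dilation field built from the Noether stress \eqref{eq:stress-complete}, localized to the height band by a cutoff in the values of $u$. Fix a smooth $\chi:\R\to[0,1]$ with $\chi=1$ on $[a,b]$ and $\operatorname{supp}\chi\subset[a-1,b+1]$. On $\operatorname{supp}\chi(u)$ the height is bounded, but nothing is known about $Du$. Consider the vector field $X=\chi(u)\,\mathsf S x$. By \eqref{eq:stress-identities-complete},
\[
 \operatorname{div}X=\chi(u)\operatorname{tr}\mathsf S+\chi'(u)\Bigl(\frac{u^2}{2}-W^{-1}\Bigr)Du\cdot x ,
\]
with
\[
 \operatorname{tr}\mathsf S=\frac{|Du|^2}{W}-nW+\frac n2u^2=-(n-1)W-W^{-1}+\frac n2u^2\le-(n-1)W+C(a,b)
\]
on the band.

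The term containing $\chi'$ is the one that looks dangerous, since it involves $Du$ on a wider band. It splits into two pieces:
\begin{itemize}
\item $\chi'(u)\tfrac{u^2}{2}Du\cdot x=x\cdot D[G(u)]$, where $G'=\chi'\,s^2/2$. Here $G$ is bounded and supported in $(a-1,\infty)$ in a controlled way; equivalently choose $G$ with $G(s)=0$ for $s\le a-1$, so $G$ is constant for $s\ge b+1$ and bounded by $C(a,b)$.
\item $-\chi'(u)W^{-1}Du\cdot x$. This has modulus at most $|\chi'|\,|x|$, because $|Du|/W\le1$.
\end{itemize}
Integrating the first piece by parts over $B_r\setminus B_\rho$ gives $-n\int G(u)$ plus sphere terms. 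All of these are bounded by $C(a,b)\,r^n$, and the inner sphere term vanishes as $\rho\to0$ since $n\ge2$.

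Next, integrate $\operatorname{div}X$ over the annulus $B_r\setminus \overline B_\rho$ and examine the two boundary spheres. Write $\nu=x/|x|$. Then
\[
 \mathsf Sx\cdot\nu=|x|\Bigl(\frac{(Du\cdot\nu)^2}{W}-W+\frac{u^2}{2}\Bigr)\le |x|\Bigl(\frac{u^2}{2}-\frac1W\Bigr),
\]
using $(Du\cdot\nu)^2\le W^2-1$. On the inner sphere the term enters with a plus sign after moving it across, so it is at most $C\rho\cdot\rho^{n-1}\to0$. This is the key point: no integrability of $W$ near the puncture is needed, because the sign is favorable. On the outer sphere one simply keeps the term, bounding $-\chi\,\mathsf Sx\cdot\nu\le r\,\chi W+Cr$. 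Setting $F(r)=\int_{B_r}\chi(u)W\,dx$, which is finite for a.e. $r$ by the same argument run with $\rho\to0$ (or via the coarea formula, $F'(r)=\int_{\partial B_r}\chi W$ a.e.), the result is a differential inequality
\[
 (n-1)F(r)\le rF'(r)+C(n,a,b)\,r^n .
\]

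This inequality says $\bigl(r^{1-n}F(r)\bigr)'\ge -C$, so $F(r)\le r^{n-1}\bigl(F(R_0)R_0^{1-n}+CR_0\bigr)$ for $0<r\le R_0$. Since $u$ is $C^2$ up to $\partial B_{R_0}$ and away from $0$, it remains to see that $F(R_0)<\infty$. This follows by integrating the same identity on $B_{R_0}$, where the only possibly unbounded contribution near $0$ has the favorable sign. The bound \eqref{eq:finite-band-complete} then follows because $\{a<u<b\}\subset\{\chi(u)=1\}$.

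I expect the main obstacle to be justifying the limit $\rho\to0$ rigorously. One must check that every inner-sphere contribution either has the good sign or is $O(\rho^{n-1})$ with bounded coefficients, and that $F$ is absolutely continuous so that the ODE comparison is legitimate. I would handle this by proving the integrated inequality directly for $F_\rho(r)=\int_{B_r\setminus B_\rho}\chi W$ with an error $o(1)$ as $\rho\to0$, and only afterwards letting $\rho\to0$ by monotone convergence.
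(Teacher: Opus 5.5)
Your proposal is correct and follows the paper's argument essentially step for step. The paper works with $\mathbb Q_\chi=-\{\chi(u)\mathsf S-\mathcal H_\chi(u)\Id\}$; your separate integration by parts of $x\cdot D[G(u)]$ is exactly that $\mathcal H_\chi(u)\Id$ correction, since your $G$ differs from $\mathcal H_\chi$ only by an additive constant costing $O(r^n)$, and your favorable inner-sphere sign is the paper's $\mathbb P_\chi\nu\cdot\nu\ge0$. The annular inequality, the monotone-convergence proof of $F(R_0)<\infty$, and the integration of $(r^{1-n}F)'\ge-C$ are then the same as in the paper.
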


\begin{proof}
Choose $\chi\in C_c^\infty(\R)$ with $\chi\geq0$ and $\chi\geq1$ on $[a,b]$, and put
\[
 \mathcal H_\chi(s)=\frac12\int_0^s\tau^2\chi'(\tau)\,d\tau,
 \qquad \mathcal B_\chi(s)=\frac12\chi(s)s^2-\mathcal H_\chi(s).
\]
Both $\mathcal H_\chi$ and $\mathcal B_\chi$ are bounded.  Define
\[
 \mathbb Q_\chi=-\{\chi(u)\mathsf S-\mathcal H_\chi(u)\Id\}=\mathbb P_\chi-\mathcal B_\chi(u)\Id,
 \qquad \mathbb P_\chi=\chi(u)W(\Id-\mathcal A(Du)\otimes\mathcal A(Du)).
\]
Using \eqref{eq:stress-identities-complete} and
$\mathcal H_\chi'(s)=s^2\chi'(s)/2$, we obtain
\[
 \operatorname{div}\mathbb Q_\chi=\chi'(u)\mathcal A(Du),
 \qquad
 \operatorname{tr}\mathbb Q_\chi=\chi(u)W(n-|\mathcal A(Du)|^2)-n\mathcal B_\chi(u)
 \geq(n-1)\chi(u)W-C.
\]
Since $|\mathcal A(Du)|\leq1$, $\mathbb P_\chi$ is positive semidefinite and, for every unit vector $\nu$,
\begin{equation}\label{eq:Q-normal-complete}
 0\leq \mathbb P_\chi\nu\cdot\nu\leq \chi(u)W,
 \qquad \mathbb Q_\chi\nu\cdot\nu\leq \chi(u)W+C.
\end{equation}

 Put $\Omega_{\varepsilon,r}=B_r\setminus\overline{B_\varepsilon}$.
 Since
 \[
  \operatorname{div}(\mathbb Q_\chi x)=\operatorname{tr}\mathbb Q_\chi+x\cdot\operatorname{div}\mathbb Q_\chi,
 \]
 the divergence theorem gives
 \[
 \begin{aligned}
  \int_{\Omega_{\varepsilon,r}}\operatorname{tr}\mathbb Q_\chi\,dx
  &=r\int_{\partial B_r}\mathbb Q_\chi\nu\cdot\nu\,d\mathcal H^{n-1}
    -\varepsilon\int_{\partial B_\varepsilon}
       \mathbb Q_\chi\nu\cdot\nu\,d\mathcal H^{n-1}\\
  &\quad-\int_{\Omega_{\varepsilon,r}}
       x\cdot\operatorname{div}\mathbb Q_\chi\,dx.
 \end{aligned}
 \]
 Here the outward normal of $\Omega_{\varepsilon,r}$ on
 $\partial B_\varepsilon$ is $-\nu$.  The four terms are estimated as follows:
 \[
 \begin{aligned}
  (n-1)\int_{\Omega_{\varepsilon,r}}\chi(u)W\,dx
  &\leq \int_{\Omega_{\varepsilon,r}}\operatorname{tr}\mathbb Q_\chi\,dx
      +C|\Omega_{\varepsilon,r}|,\\
  r\int_{\partial B_r}\mathbb Q_\chi\nu\cdot\nu\,d\mathcal H^{n-1}
  &\leq r\int_{\partial B_r}\chi(u)W\,d\mathcal H^{n-1}+Cr^n,\\
  -\varepsilon\int_{\partial B_\varepsilon}
       \mathbb Q_\chi\nu\cdot\nu\,d\mathcal H^{n-1}
  &=-\varepsilon\int_{\partial B_\varepsilon}\mathbb P_\chi\nu\cdot\nu
       \,d\mathcal H^{n-1}
    +\varepsilon\int_{\partial B_\varepsilon}\mathcal B_\chi(u)\,d\mathcal H^{n-1}\\
  &\leq C\varepsilon^n,\\
  \left|\int_{\Omega_{\varepsilon,r}}
       x\cdot\operatorname{div}\mathbb Q_\chi\,dx\right|
  &\leq \|\chi'\|_{L^\infty}
       \int_{\Omega_{\varepsilon,r}}|x|\,|\mathcal A(Du)|\,dx
   \leq Cr^{n+1}\leq CR_0r^n.
 \end{aligned}
 \]
 Combining these inequalities yields
\begin{equation}\label{eq:annular-M-complete}
 (n-1)\int_{B_r\setminus B_\varepsilon}\chi(u)W\,dx
 \leq r\int_{\partial B_r}\chi(u)W\,d\mathcal H^{n-1}
 +C(r^n+\varepsilon^n).
\end{equation}

 Finally, fix $r=R_0$.  The outer integral is finite because the solution is smooth near
 $\partial B_{R_0}$.  Letting $\varepsilon\downarrow0$ in
 \eqref{eq:annular-M-complete} and using monotone convergence proves that
\[
 M(R_0):=\int_{B_{R_0}}\chi(u)W\,dx<\infty.
\]
 Put $g_\chi(x)=\chi(u(x))W(x)$.  The preceding conclusion says that
 $g_\chi\in L^1(B_{R_0})$.  Polar coordinates give
 \[
  M(r)=\int_{B_r}g_\chi(x)\,dx
      =\int_0^r m(s)\,ds,
  \qquad
  m(s)=\int_{\partial B_s}g_\chi\,d\mathcal H^{n-1}\in L^1(0,R_0).
 \]
 Hence $M\in AC([0,R_0])$ and, for almost every $r\in(0,R_0)$,
 \[
  M'(r)=m(r)=\int_{\partial B_r}\chi(u)W\,d\mathcal H^{n-1}.
 \]
 Fix such an $r$.  Letting $\varepsilon\downarrow0$ in
 \eqref{eq:annular-M-complete} gives
 \[
  \int_{B_r\setminus B_\varepsilon}\chi(u)W\,dx\longrightarrow M(r),
  \qquad C\varepsilon^n\longrightarrow0,
 \]
 and therefore
\begin{equation}\label{eq:M-differential-complete}
 (n-1)M(r)\leq rM'(r)+Cr^n,
 \qquad
 \left(\frac{M(r)}{r^{n-1}}\right)'
 =\frac{rM'(r)-(n-1)M(r)}{r^n}\geq-C.
\end{equation}
 The quotient is absolutely continuous on each interval
 $[r,R_0]\subset(0,R_0]$, so integrating from $r$ to $R_0$ gives
\[
 M(r)\leq
 \left(\frac{M(R_0)}{R_0^{n-1}}+CR_0\right)r^{n-1}.
\]
Since $\chi\geq1$ on $[a,b]$, this proves \eqref{eq:finite-band-complete} for every $r$.
\end{proof}

\begin{corollary}\label{cor:finite-band-stress-complete}
For every finite $a<b$ and $0<r\leq R_0$,
\begin{equation}\label{eq:finite-band-stress-complete}
 \int_{B_r\cap\{a<u<b\}}|\mathsf S|\,dx\leq Cr^{n-1}.
\end{equation}
\end{corollary}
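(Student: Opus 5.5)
The plan is a pointwise comparison: on the band $\{a<u<b\}$, bound $|\mathsf S|$ by a constant multiple of $W$, and then invoke Lemma~\ref{lem:finite-band-complete} directly.

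\emph{Step 1: bound the rank-one part.} Take $|\cdot|$ to be the operator norm; any other matrix norm differs by a factor depending only on $n$. For the first term of \eqref{eq:stress-complete} we have
\[
 \left|\frac{Du\otimes Du}{W}\right|=\frac{|Du|^2}{W}\le W ,
\]
since $|Du|^2\le W^2$.

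\emph{Step 2: bound the isotropic part.} The second term has norm $\bigl|W-\tfrac{u^2}{2}\bigr|\le W+\tfrac{u^2}{2}$. On $\{a<u<b\}$ we have $u^2\le \max(a^2,b^2)=:c_{a,b}$. Because $W\ge1$, this gives
\[
 \tfrac{u^2}{2}\le \tfrac{c_{a,b}}{2}\,W .
\]

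\emph{Step 3: combine.} Putting the two bounds together,
\[
 |\mathsf S|\le \bigl(2+\tfrac{c_{a,b}}{2}\bigr)W\qquad\text{on }\{a<u<b\}.
\]
Integrating over $B_r\cap\{a<u<b\}$ and applying \eqref{eq:finite-band-complete} yields \eqref{eq:finite-band-stress-complete}. The constant depends on $n,a,b,R_0,u$, exactly as in Lemma~\ref{lem:finite-band-complete}.

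There is no genuine obstacle in this argument. The only point to watch is that the $u^2/2$ term must be absorbed into $W$ through $W\ge1$, rather than into a volume term $Cr^n$. Either route would suffice, since $r^n\le R_0 r^{n-1}$, but the absorption keeps the constants clean.
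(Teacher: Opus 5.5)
Your proposal is correct and is exactly the paper's argument: on the band $\{a<u<b\}$ the definition \eqref{eq:stress-complete} gives $|\mathsf S|\le C_{n,a,b}W$, and Lemma~\ref{lem:finite-band-complete} then yields \eqref{eq:finite-band-stress-complete}. Your explicit constant $2+\max(a^2,b^2)/2$ is simply the paper's $C_{n,a,b}$ made explicit.
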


\begin{proof}
On a finite height band, \eqref{eq:stress-complete} gives $|\mathsf S|\leq C_{n,a,b}W$.
 Thus \eqref{eq:finite-band-stress-complete} follows from
 Lemma~\ref{lem:finite-band-complete}.
\end{proof}

For later use, let $\chi_r$ satisfy $\chi_r=0$ on $B_r$,
$\chi_r=1$ outside $B_{2r}$, and $|D\chi_r|\leq C/r$.  If
$2r\leq R_0$, then
\begin{equation}\label{eq:finite-band-stress-cutoff}
\begin{aligned}
 \int_{\{a<u<b\}}|\mathsf S|\,|D\chi_r|\,dx
 &\leq\frac Cr\int_{B_{2r}\cap\{a<u<b\}}|\mathsf S|\,dx\\
 &\leq Cr^{n-2}.
\end{aligned}
\end{equation}
For $n\geq3$ the right-hand side tends to zero as $r\downarrow0$.  In dimension two it is only bounded; this is precisely the critical horizontal translation defect isolated later.
 
 \subsection{One-tail identities and the critical volume law}
\label{sec:level-volume-complete}

Assume in this section only that
\begin{equation}\label{eq:one-tail-limsup}
 \limsup_{x\to0}u(x)=+\infty.
\end{equation}
Fix $r_0\in(0,R)$.  Unless another radius is specified, all tail sets,
perimeters, level surfaces, and rescalings below are taken relative to
$B_{r_0}$.  In the notation introduced above, write
\[
 E_t:=E_t^+,\qquad V(t):=|E_t|,\qquad
 \rho(t):=\sup_{x\in E_t}|x|.
\]
Every sufficiently high finite $E_t$ is nonempty.  Moreover
$\rho(t)\to0$: for every $0<\delta<r_0$, the function $u$ is bounded on
$\overline B_{r_0}\setminus B_\delta$, so $E_t\subset B_\delta$ once $t$
exceeds that bound.  Choose $t_0>0$ so large that
$E_s\Subset B_{r_0}$ for every $s\geq t_0$.  Thus every $E_s$ is a bounded
measurable subset of $\mathbb R^n$; whenever it has finite perimeter, its
relative and full-space perimeters agree.

We next justify coarea across the puncture.  For $t_0<a<b<\infty$, let
 \[
  T_{a,b}(s)=\max\{a,\min\{s,b\}\}.
 \]
 The Sobolev chain rule on $B_{r_0}\setminus\{0\}$ gives
 \[
  D(T_{a,b}(u))=\mathbf1_{\{a<u<b\}}Du\quad\text{a.e.}
 \]
 Moreover, $|Du|\leq W$ and Lemma~\ref{lem:finite-band-complete} imply
 \[
  \int_{B_{r_0}\cap\{a<u<b\}}|Du|\,dx
  \leq\int_{B_{r_0}\cap\{a<u<b\}}W\,dx<\infty.
 \]
 Let $\varphi\in C_c^1(B_{r_0};\mathbb R^n)$, put
 $\Omega_\varepsilon=B_{r_0}\setminus\overline B_\varepsilon$, and let
 $\nu_\varepsilon$ be the outer normal of $B_\varepsilon$.  Since the outer
 normal of $\Omega_\varepsilon$ on the inner sphere is $-\nu_\varepsilon$,
 integration by parts gives
 \[
 \begin{aligned}
  \int_{\Omega_\varepsilon}T_{a,b}(u)\operatorname{div}\varphi\,dx
  &=-\int_{\Omega_\varepsilon}\mathbf1_{\{a<u<b\}}
       Du\cdot\varphi\,dx\\
  &\quad-\int_{\partial B_\varepsilon}T_{a,b}(u)
       \varphi\cdot\nu_\varepsilon\,d\mathcal H^{n-1}.
 \end{aligned}
 \]
 The boundary term satisfies
 \[
 \begin{aligned}
  \left|\int_{\partial B_\varepsilon}T_{a,b}(u)
       \varphi\cdot\nu_\varepsilon\,d\mathcal H^{n-1}\right|
  &\leq \max\{|a|,|b|\}\|\varphi\|_{L^\infty}
       \mathcal H^{n-1}(\partial B_\varepsilon)\\
  &=O(\varepsilon^{n-1})\longrightarrow0.
 \end{aligned}
 \]
 Both volume integrands are integrable on $B_{r_0}$, the second by the
 finite-band estimate above.  Hence dominated convergence, together with the
 vanishing boundary term, allows us to let $\varepsilon\downarrow0$ and gives

 \[
  \int_{B_{r_0}}T_{a,b}(u)\operatorname{div}\varphi\,dx
  =-\int_{B_{r_0}}\mathbf1_{\{a<u<b\}}Du\cdot\varphi\,dx.
 \]
 Hence
 \[
  T_{a,b}(u)\in W^{1,1}(B_{r_0}),
  \qquad
  D(T_{a,b}(u))=\mathbf1_{\{a<u<b\}}Du
  \quad\text{in }\mathcal D'(B_{r_0}).
 \]
 The Sobolev coarea formula now gives
 \begin{equation}\label{eq:finite-level-coarea}
  \int_a^b P(E_s;B_{r_0})\,ds
  =\int_{B_{r_0}\cap\{a<u<b\}}|Du|\,dx<\infty.
 \end{equation}
 Because $E_s\Subset B_{r_0}$ on this range,
 $P(E_s;B_{r_0})=P(E_s;\mathbb R^n)$.

 To justify the weighted coarea formula, note first that the critical set of
 $u$ has measure zero.  On every compact set
 $K\Subset B_{r_0}\setminus\{0\}$ the gradient is bounded, so the
 capillary equation is uniformly elliptic there.  Standard elliptic
 bootstrapping and analytic regularity give
 \[
  u\in C^\omega(B_{r_0}\setminus\{0\})
 \]
 \cite{GilbargTrudinger2001,Morrey1958}.  Since
 \eqref{eq:one-tail-limsup} excludes constant solutions, some derivative
 $\partial_{i_0}u$ is a nontrivial analytic function; hence
 \begin{equation}\label{eq:critical-set-null}
  |\{Du=0\}|=0.
 \end{equation}
 Likewise, for every $s\in\mathbb R$, the nontrivial analytic function
 $u-s$ has a zero set of Lebesgue measure zero.
 Applying coarea with the weight $|Du|^{-1}$ on $\{|Du|>0\}$ and using
 \eqref{eq:critical-set-null}, we obtain, for $t_0<a<b<\infty$,
 \begin{equation}\label{eq:volume-absolute-continuity}
  V(a)-V(b)
  =\int_a^b\int_{\partial^*E_s}\frac1{|Du|}
       \,d\mathcal H^{n-1}\,ds.
 \end{equation}
 In particular, $V\in AC_{\mathrm{loc}}((t_0,\infty))$.

 By \eqref{eq:finite-level-coarea}, $E_t$ has finite perimeter for almost
 every $t>t_0$. Since $u\in C^\infty(B_{r_0}\setminus\{0\})$, Sard's
 theorem shows that almost every $t$ is a regular value. Throughout this
 subsection, identities involving regular-level boundary integrals or level
 derivatives are understood to hold for almost every sufficiently high
 $t$; exceptional null sets are immaterial when integrating in the level
 variable. Whenever a specific level is selected, it is chosen from the
 common full-measure set on which the required identities hold.

 Set
 \[
  \alpha:=W^{-1},\qquad
  \sigma_t:=\frac{|Du|}{W},\qquad
  \nu_t:=-\frac{Du}{|Du|},\qquad
  P(t):=\mathcal H^{n-1}(\partial^*E_t).
 \]
 At a regular level, $\nu_t$ is the measure-theoretic outer normal and
 $\sigma_t^2+\alpha^2=1$.

 Applying Gauss--Green to $E_t\setminus\overline B_\varepsilon$, using
 $\operatorname{div}(\mathcal A(Du))=-u$, $\mathcal A(Du)\cdot\nu_t=-\sigma_t$, and $|\mathcal A(Du)|\le1$,
 gives
 \[
  \int_{E_t\setminus B_\varepsilon}u\,dx
  =\int_{\partial^*E_t\setminus B_\varepsilon}\sigma_t\,d\mathcal H^{n-1}
   +O(\varepsilon^{n-1}).
 \]
 Since $u>t>0$ on $E_t$, monotone convergence yields
 \begin{equation}\label{eq:positive-source-integrable}
  u\mathbf1_{E_t}\in L^1(B_{r_0}),
 \end{equation}
 and hence, after letting $\varepsilon\downarrow0$,
 \[
  \int_{E_t}u\,dx
  =\int_{\partial^*E_t}\sigma_t\,d\mathcal H^{n-1}.
 \]
 Increasing $t_0$ if necessary, assume that $t_0$ is such a regular level.
 For every $t\ge t_0$ define
 \[
  J(t):=\int_{E_t}u\,dx,\qquad
  G_1(t):=\int_t^\infty V(s)\,ds.
 \]
 The first quantity is finite because
 $0\le u\mathbf1_{E_t}\le u\mathbf1_{E_{t_0}}\in L^1$, and at regular
 levels it agrees with the boundary flux above.
Tonelli's theorem gives the layer-cake identity
\begin{equation}\label{eq:tail-J}
\begin{aligned}
 J(t)=\int_{E_t}u\,dx
 &=tV(t)+\int_{E_t}(u-t)\,dx\\
 &=tV(t)+\int_{E_t}\int_t^{u(x)}ds\,dx\\
 &=tV(t)+\int_t^\infty V(s)\,ds
  =tV(t)+G_1(t).
\end{aligned}
\end{equation}
This calculation also proves $G_1(t)<\infty$ and identifies \(J\) with a
locally absolutely continuous representative.  By
\eqref{eq:volume-absolute-continuity}, for almost every $t$,
\[
 V'(t)=-\int_{\partial^*E_t}\frac1{|Du|}\,d\mathcal H^{n-1}
       =-\int_{\partial^*E_t}\frac\alpha{\sigma_t}\,d\mathcal H^{n-1}.
\]
Since $G_1'(t)=-V(t)$ almost everywhere, differentiating
\eqref{eq:tail-J} yields
\begin{equation}\label{eq:tail-derivative-ledger}
 G_1'(t)=-V(t),\qquad
 V'(t)=-\int_{\partial^*E_t}\frac\alpha{\sigma_t}
       \,d\mathcal H^{n-1},\qquad
 J'(t)=V(t)+tV'(t)+G_1'(t)=tV'(t)
\end{equation}
for almost every \(t>t_0\).
Finally,
\[
 P(t)-J(t)=\int_{\partial^*E_t}(1-\sigma_t)
       \,d\mathcal H^{n-1}\geq0.
\]
The pointwise identities $\sigma_t^2+\alpha^2=1$ and
$0<\sigma_t,\alpha\leq1$ imply
\[
 1-\sigma_t=\frac{\alpha^2}{1+\sigma_t}
 \leq\frac{\alpha^2}{\sigma_t}\leq\frac\alpha{\sigma_t}.
\]
Therefore
\begin{equation}\label{eq:tail-PJ}
 0\leq P(t)-J(t)
 \leq\int_{\partial^*E_t}\frac\alpha{\sigma_t}\,d\mathcal H^{n-1}
 =-V'(t)
\end{equation}
for almost every sufficiently high $t$.

\paragraph{The translation balance.} The next conclusion is needed directly in dimensions $n\ge3$; in dimension two its possible failure is retained as a defect measure and analyzed in \cref{sec:planar-defects}.

\begin{lemma}\label{lem:tail-translation}
Assume $n\geq3$.  For almost every sufficiently high $t$,
\begin{equation}\label{eq:tail-translation}
 \int_{\partial^*E_t}\alpha\nu_t\,d\mathcal H^{n-1}=0.
\end{equation}
\end{lemma}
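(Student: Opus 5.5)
We apply the divergence-free stress $\mathsf S$ from \eqref{eq:stress-identities-complete} to the superlevel set $E_t$, cut off near the puncture by $\chi_r$. By \eqref{eq:finite-band-stress-cutoff}, the cutoff error is $O(r^{n-2})$, which vanishes because $n\ge3$. The boundary term on a regular level surface will turn out to be exactly the vector $\int\alpha\nu_t$, up to constants.

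\emph{Step 1: the normal stress on a level surface.} On $\partial^*E_t$ at a regular level we have $\nu_t=-Du/|Du|$. The second identity in \eqref{eq:stress-identities-complete} then gives
\[
\mathsf S\nu_t=\Bigl(\frac{t^2}{2}-\alpha\Bigr)\nu_t .
\]
Since $\int_{\partial^*E_t}\nu_t\,d\mathcal H^{n-1}$ is the divergence-theorem flux of a constant vector, it should vanish once the puncture is handled. Establishing this reduces \eqref{eq:tail-translation} to showing $\int_{\partial^*E_t}\mathsf S\nu_t=0$.

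\emph{Step 2: the band argument.} Working with $E_t$ alone is awkward, because $\mathsf S$ is not integrable near the puncture on the unbounded tail $\{u>t\}$. Instead we take a finite band. For a constant vector $e$, consider the vector field $\mathsf S e$, which is divergence free. Pair it with $\chi_r\,\psi(u)$, where $\psi$ is a smooth cutoff in the height variable equal to $1$ near $t$ and supported in a finite interval $(a,b)$ above $t_0$. Integration then gives
\[
\int \psi'(u)\,\mathsf S e\cdot Du\,\chi_r\,dx=-\int\psi(u)\,\mathsf S e\cdot D\chi_r\,dx .
\]
The right side is $O(r^{n-2})\to0$ by \eqref{eq:finite-band-stress-cutoff}. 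On the left, $\mathsf S Du=(u^2/2-\alpha)Du$ and $\mathsf S$ is symmetric, so the integrand equals $\psi'(u)(u^2/2-\alpha)\,e\cdot Du$. By coarea, with the integrability supplied by Lemma~\ref{lem:finite-band-complete} and dominated convergence as $r\to0$, the left side becomes
\[
\int_a^b\psi'(s)\int_{\partial^*E_s}\Bigl(\frac{s^2}{2}-\alpha\Bigr)e\cdot(-\nu_s)\,d\mathcal H^{n-1}\,ds=0 .
\]

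\emph{Step 3: removing the $s^2/2$ term.} For a.e.\ $s$, the term $\int_{\partial^*E_s}\nu_s$ vanishes. To see this, apply the same cutoff argument to the vector field $e\,\mathbf1_{\{u>s\}}$; equivalently, use $D(T_{a,b}(u))$ as already established distributionally on $B_{r_0}$ in the coarea justification. Indeed, $\int\psi'(u)\,e\cdot Du\,dx=\int \operatorname{div}(\Psi(u)e)=0$ with $\Psi$ a bounded primitive of $\psi'$, by the $W^{1,1}(B_{r_0})$ statement, since $\Psi(u)$ has compact support in $B_{r_0}$ for high bands. So $\int\psi'(s)\int_{\partial^*E_s}\nu_s\,ds=0$ for all such $\psi$, and varying $\psi$ gives $\int_{\partial^*E_s}\nu_s=0$ for a.e.\ $s$. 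This step requires care: the band weight $s^2$ multiplies this term, but since $\int_{\partial^*E_s}\nu_s=0$ pointwise a.e.\ in $s$, the weight is harmless.

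\emph{Step 4: conclusion.} We are left with $\int\psi'(s)\,f(s)\,ds=0$ for all $\psi\in C_c^\infty((t_0,\infty))$, where $f(s)=e\cdot\int_{\partial^*E_s}\alpha\nu_s$. This $f$ is locally integrable, since $|f(s)|\le P(s)$ and $P\in L^1_{\mathrm{loc}}$. Hence $f$ is a.e.\ constant on $(t_0,\infty)$, say equal to $c_e$. To show $c_e=0$, integrate $|f|$ over $(t,t+1)$. This is bounded by
\[
\int_{\{t<u<t+1\}}\alpha|Du|\,dx\le|\{u>t\}|=V(t)\to0 ,
\]
using $\alpha|Du|\le1$. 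So $c_e=0$, and letting $e$ range over a basis gives \eqref{eq:tail-translation}.

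The main obstacle is Step 2's limit $r\to0$: it requires the finite-band integrability of $|\mathsf S|$ and of $|Du|$ across the puncture, which Corollary~\ref{cor:finite-band-stress-complete} supplies. This is exactly where $n\ge3$ enters.
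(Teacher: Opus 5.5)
Your proposal is correct and follows essentially the paper's argument: the stress flux over a finite height band with an $O(r^{n-2})$ puncture cutoff, Gauss--Green on the bounded sets $E_s$ to discard the $\tfrac{s^2}{2}\nu_s$ term, level-independence of $\int_{\partial^*E_s}\alpha\nu_s$, and vanishing of that constant from the decay of $V$. The differences are technical only: you use a smooth height weight $\psi(u)$ with distributional constancy instead of the sharp slab between two regular levels, and the averaged bound $\int_t^{t+1}|f|\,ds\le|e|\,V(t)$ instead of selecting levels with $-V'(s_j)\to0$. One wording fix in Step~3: weights of the form $\psi'$ with $\psi$ compactly supported give only constancy of $\int_{\partial^*E_s}\nu_s$, not its vanishing. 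Either allow arbitrary weights $\theta(u)$ with a bounded primitive, or apply Gauss--Green directly to the bounded finite-perimeter set $E_s$, as you already note in Step~1.
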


\begin{proof}
On \(u=s\), the stress satisfies
\begin{equation}\label{eq:tail-stress-flux}
 \mathsf S\nu_s=\left(\frac{s^2}{2}-\alpha\right)\nu_s.
\end{equation}
 Fix regular levels $t<T$, a constant vector
 \(e\in\mathbb R^n\), and a
 puncture cutoff $\chi_\delta$.  Since $\operatorname{div}\mathsf S=0$, integration
 by parts on $\{t<u<T\}$ gives
 \[
 \begin{aligned}
  0
  &=-\int_{\{t<u<T\}}\mathsf S e\cdot D\chi_\delta\,dx
    +\int_{\partial^*E_t}\chi_\delta(\mathsf S e)\cdot\nu_t
       \,d\mathcal H^{n-1}\\
  &\quad-\int_{\partial^*E_T}\chi_\delta(\mathsf S e)\cdot\nu_T
       \,d\mathcal H^{n-1}.
 \end{aligned}
 \]
 The minus sign on the upper level occurs because the outward normal of the
 slab there is $-\nu_T$.  There is no contribution from
 $\partial B_{r_0}$ because $E_t\Subset B_{r_0}$.  For fixed $t,T$,
 \[
 \begin{aligned}
  \left|\int_{\{t<u<T\}}\mathsf S e\cdot D\chi_\delta\,dx\right|
  &\leq\frac{C|e|}{\delta}
       \int_{B_{2\delta}\cap\{t<u<T\}}|\mathsf S|\,dx\\
  &\leq C_{t,T}|e|\delta^{n-2}\longrightarrow0.
 \end{aligned}
 \]
 On either fixed level surface, \eqref{eq:tail-stress-flux} makes the
 boundary integrand bounded and integrable.  Dominated convergence therefore
 permits $\chi_\delta\to1$ and yields, since the equality holds for every
 $e$,
 \begin{equation}\label{eq:tail-translation-flux-independence}
  \int_{\partial^*E_t}\left(\frac{t^2}{2}-\alpha\right)\nu_t
       \,d\mathcal H^{n-1}
  =\int_{\partial^*E_T}\left(\frac{T^2}{2}-\alpha\right)\nu_T
       \,d\mathcal H^{n-1}.
 \end{equation}
 For every bounded finite-perimeter set $E_s$ and every constant vector
 $e$, Gauss--Green gives
 \[
  e\cdot\int_{\partial^*E_s}\nu_s\,d\mathcal H^{n-1}
  =\int_{E_s}\operatorname{div}e\,dx=0.
 \]
 Hence $\int_{\partial^*E_s}\nu_s\,d\mathcal H^{n-1}=0$, and
 \eqref{eq:tail-translation-flux-independence} reduces to
 \[
  \int_{\partial^*E_t}\alpha\nu_t\,d\mathcal H^{n-1}
  =\int_{\partial^*E_T}\alpha\nu_T\,d\mathcal H^{n-1}.
 \]
 Moreover,
 \[
 \begin{aligned}
  \left|\int_{\partial^*E_s}\alpha\nu_s\,d\mathcal H^{n-1}\right|
  &\leq\int_{\partial^*E_s}\alpha\,d\mathcal H^{n-1}\\
  &\leq\int_{\partial^*E_s}\frac\alpha{\sigma_s}\,d\mathcal H^{n-1}
   =-V'(s).
 \end{aligned}
 \]
 Since
 \[
  \int_{t_0}^\infty[-V'(s)]\,ds
  =V(t_0)-\lim_{T\to\infty}V(T)<\infty,
 \]
 the integrals of $-V'$ over $[j,j+1]$ tend to zero. Hence one may choose
 $s_j\in[j,j+1]$ from the common full-measure set so that
 \[
  0\le -V'(s_j)
  \le 2\int_j^{j+1}[-V'(s)]\,ds\longrightarrow0.
 \]
 The flux in \eqref{eq:tail-translation-flux-independence} is independent of
 the regular level after using $\int_{\partial^*E_s}\nu_s=0$; evaluating it
 along this sequence therefore forces it to vanish.
 For any fixed regular level $t$, 
 \eqref{eq:tail-translation-flux-independence} then gives
 \[
  \left|\int_{\partial^*E_t}\alpha\nu_t\,d\mathcal H^{n-1}\right|
  =\left|\int_{\partial^*E_{s_j}}\alpha\nu_{s_j}
       \,d\mathcal H^{n-1}\right|
  \leq -V'(s_j)\longrightarrow0,
 \]
 which proves \eqref{eq:tail-translation}.
\end{proof}

\begin{remark}\label{rem:translation-dimension-two}
The restriction $n\ge3$ in Lemma~\ref{lem:tail-translation} is essential
for this direct stress-cutoff proof.  When $n=2$ the cutoff cost is only
$O(1)$ and is retained as a height-dependent horizontal translation
defect.  It will be identified with the horizontal component of the
axis-supported first variation in Section~\ref{sec:planar-defects}.
\end{remark}

The dilation identity is most conveniently written in terms of the boundary
flux and its nonnegative defect
\[
 \mathcal K_{\rm dil}(t):=\int_{\partial^*E_t}\alpha\,x\cdot\nu_t\,d\mathcal H^{n-1},
 \qquad
 \mathcal R_{\rm def}(t):=n\int_{\partial^*E_t}\frac{\alpha^2}{\sigma_t}\,d\mathcal H^{n-1}.
\]
For almost every $t$ at which
\eqref{eq:tail-derivative-ledger} holds,
\[
\begin{aligned}
 |\mathcal K_{\rm dil}(t)|
 &\leq\rho(t)\int_{\partial^*E_t}\alpha\,d\mathcal H^{n-1}\\
 &\leq\rho(t)\int_{\partial^*E_t}\frac\alpha{\sigma_t}
      \,d\mathcal H^{n-1}
  =\rho(t)[-V'(t)]<\infty.
\end{aligned}
\]
The other integrand that appears below is also integrable, because
\[
 \sigma_t^{-1}-1
 =\frac{\alpha^2}{\sigma_t(1+\sigma_t)}
 \leq\frac\alpha{\sigma_t},
 \qquad
 \frac{\alpha^2}{\sigma_t}\leq\frac\alpha{\sigma_t}.
\]
In particular,
\[
 0\leq\int_{\partial^*E_t}
 \left((n-1)(\sigma_t^{-1}-1)+\frac{\alpha^2}{\sigma_t}\right)
 \,d\mathcal H^{n-1}
 \leq n[-V'(t)].
\]

Fix regular levels $t<T$.  Since
$\operatorname{div}(\mathsf S x)=\operatorname{tr}\mathsf S$, integration by parts with the
puncture cutoff $\chi_\delta$ gives
\[
\begin{aligned}
 \int_{\{t<u<T\}}\chi_\delta\operatorname{tr}\mathsf S\,dx
 &=\int_{\partial^*E_t}\chi_\delta(\mathsf S x)\cdot\nu_t
      \,d\mathcal H^{n-1}\\
 &\quad-\int_{\partial^*E_T}\chi_\delta(\mathsf S x)\cdot\nu_T
      \,d\mathcal H^{n-1}\\
 &\quad-\int_{\{t<u<T\}}Sx\cdot D\chi_\delta\,dx.
\end{aligned}
\]
The cutoff term vanishes, since $|x||D\chi_\delta|\leq C$ and
\[
 \left|\int_{\{t<u<T\}}Sx\cdot D\chi_\delta\,dx\right|
 \leq C\int_{B_{2\delta}\cap\{t<u<T\}}|\mathsf S|\,dx
 \leq C_{t,T}\delta^{n-1}\longrightarrow0.
\]
The remaining volume and boundary integrands are integrable by the
finite-band estimate and the finite perimeters of the two levels, so
dominated convergence permits $\delta\downarrow0$.

On $u=s$,
\[
 \mathsf S\nu_s=\left(\frac{s^2}{2}-\alpha\right)\nu_s,
 \qquad
 \int_{\partial^*E_s}x\cdot\nu_s\,d\mathcal H^{n-1}
 =\int_{E_s}\operatorname{div}x\,dx=nV(s).
\]
Consequently
\[
 \int_{\partial^*E_s}(\mathsf S x)\cdot\nu_s\,d\mathcal H^{n-1}
 =\frac n2s^2V(s)-\mathcal K_{\rm dil}(s).
\]
Remembering that the upper slab normal is $-\nu_T$, we obtain
\[
 \int_{\{t<u<T\}}\operatorname{tr}\mathsf S\,dx
 =\left(\frac n2t^2V(t)-\mathcal K_{\rm dil}(t)\right)
  -\left(\frac n2T^2V(T)-\mathcal K_{\rm dil}(T)\right).
\]
The signed coarea formula rewrites the left-hand side as
\[
 \int_t^T\int_{\partial^*E_s}
 \frac{\operatorname{tr}\mathsf S}{|Du|}\,d\mathcal H^{n-1}\,ds.
\]
Hence $nt^2V(t)/2-\mathcal K_{\rm dil}(t)$ is locally absolutely continuous and, for almost
every $t$,
\[
 \left(\frac n2t^2V(t)-\mathcal K_{\rm dil}(t)\right)'
 =-\int_{\partial^*E_t}\frac{\operatorname{tr}\mathsf S}{|Du|}
      \,d\mathcal H^{n-1}.
\]
Since
\[
\begin{aligned}
 -\frac{\operatorname{tr}\mathsf S}{|Du|}
 &=(n-1)\frac W{|Du|}+\frac\alpha{|Du|}
   -\frac n2t^2\frac1{|Du|}\\
 &=\frac{n-1}{\sigma_t}+\frac{\alpha^2}{\sigma_t}
   -\frac n2t^2\frac1{|Du|},
\end{aligned}
\]
we have
\[
 \left(\frac n2t^2V-K\right)'
 =(n-1)\int_{\partial^*E_t}\frac1{\sigma_t}\,d\mathcal H^{n-1}
  +\int_{\partial^*E_t}\frac{\alpha^2}{\sigma_t}\,d\mathcal H^{n-1}
  +\frac n2t^2V'(t).
\]
Expanding the derivative on the left and cancelling the two copies of
$nt^2V'(t)/2$ gives
\begin{equation}\label{eq:tail-dilation}
\begin{aligned}
 \mathcal K_{\rm dil}'(t)
 &=ntV(t)-(n-1)\int_{\partial^*E_t}\frac1{\sigma_t}
      \,d\mathcal H^{n-1}
   -\int_{\partial^*E_t}\frac{\alpha^2}{\sigma_t}
      \,d\mathcal H^{n-1}\\
 &=ntV(t)-(n-1)P(t)\\
 &\quad-\int_{\partial^*E_t}
  \left((n-1)(\sigma_t^{-1}-1)+\frac{\alpha^2}{\sigma_t}\right)
  \,d\mathcal H^{n-1}.
\end{aligned}
\end{equation}

Because \(J=tV+G_1\),
\[
 tV-(n-1)G_1=ntV-(n-1)J.
\]
Subtracting \eqref{eq:tail-dilation} and using
$P-J=\int_{\partial^*E_t}(1-\sigma_t)$ gives
\[
\begin{aligned}
 tV-(n-1)G_1-K'
 &=\int_{\partial^*E_t}
 \left[(n-1)(1-\sigma_t)+(n-1)(\sigma_t^{-1}-1)
       +\frac{\alpha^2}{\sigma_t}\right]d\mathcal H^{n-1}\\
 &=\int_{\partial^*E_t}
 \left[(n-1)(\sigma_t^{-1}-\sigma_t)+\frac{\alpha^2}{\sigma_t}\right]
 d\mathcal H^{n-1}\\
 &=n\int_{\partial^*E_t}\frac{\alpha^2}{\sigma_t}
      \,d\mathcal H^{n-1},
\end{aligned}
\]
where \(\sigma_t^{-1}-\sigma_t=\alpha^2/\sigma_t\).  Therefore
\begin{align}
 tV(t)-(n-1)G_1(t)&=\mathcal K_{\rm dil}'(t)+\mathcal R_{\rm def}(t),\label{eq:tail-master}\\
 0\leq \mathcal R_{\rm def}(t)
 &\leq n\int_{\partial^*E_t}\frac\alpha{\sigma_t}
      \,d\mathcal H^{n-1}
  =n[-V'(t)],\label{eq:tail-R-bound}\\
 |\mathcal K_{\rm dil}(t)|
 &\leq\rho(t)\int_{\partial^*E_t}\frac\alpha{\sigma_t}
      \,d\mathcal H^{n-1}
  =\rho(t)[-V'(t)].\label{eq:tail-K-bound}
\end{align}

All identities above are henceforth used in their almost-everywhere sense.
Whenever a particular regular level is needed, it is chosen from the common
full-measure set on which the relevant identities hold.

\begin{proposition}\label{prop:critical-volume-complete}
There is a constant $C_*>0$ such that
\begin{equation}\label{eq:critical-volume-complete}
 \lim_{t\to\infty}t^nV(t)=C_*,\qquad
 \lim_{t\to\infty}t^{n-1}J(t)=\frac{nC_*}{n-1},\qquad
 C_*\geq\omega_n(n-1)^n.
\end{equation}
\end{proposition}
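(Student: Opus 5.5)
The plan is to rewrite the master identity \eqref{eq:tail-master} as a first-order equation for the weighted tail $\Phi(t):=t^{n-1}G_1(t)$, show that $\Phi$ has a finite limit, convert this into the pointwise volume law by a monotone Tauberian step, and get the lower bound from the isoperimetric inequality at well-chosen levels.

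\emph{Step 1: an ODE for $\Phi$.} Since $G_1'=-V$, the left side of \eqref{eq:tail-master} is $-tG_1'-(n-1)G_1$. Multiplying by $t^{n-2}$ gives
\[
 -\Phi'(t)=t^{n-2}\bigl(\mathcal K_{\rm dil}'(t)+\mathcal R_{\rm def}(t)\bigr)
\]
for almost every large $t$, with $\Phi$ locally absolutely continuous. Hence, for $t<T$,
\[
 \Phi(t)-\Phi(T)=\int_t^T s^{n-2}\mathcal R_{\rm def}\,ds
 +\bigl[s^{n-2}\mathcal K_{\rm dil}\bigr]_t^T-(n-2)\int_t^T s^{n-3}\mathcal K_{\rm dil}\,ds .
\]
The $\mathcal K_{\rm dil}$ terms are controlled by \eqref{eq:tail-K-bound}, and the $\mathcal R_{\rm def}$ term by \eqref{eq:tail-R-bound}. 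For both I integrate $s^{m}(-V')$ by parts, which produces $s^mV$ plus $\int s^{m-1}V$. I then bound $V(s)$ by $G_1(s-1)$, using that $V$ is nonincreasing.

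\emph{Step 2: boundedness and a limit for $\Phi$.} Since $\mathcal R_{\rm def}\ge0$, $\Phi$ is nonincreasing up to the $\mathcal K_{\rm dil}$ contributions. These carry the factor $\rho(s)\to0$ and are of the size $s^{n-2}V\lesssim s^{-1}\Phi$. A bootstrap on $\sup_{[t_0,T]}\Phi$, with constant independent of $T$, should therefore give $\Phi$ bounded. The remainders are then integrable, and the $\mathcal R_{\rm def}$ and $\mathcal K_{\rm dil}$ contributions converge. So $\Phi(t)\to L\in[0,\infty)$.

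\emph{Step 3: Tauberian conversion.} Because $V$ is nonincreasing, $t^{n-1}\int_t^\infty V\to L$ implies $t^nV(t)\to (n-1)L=:C_*$. To see this, compare $\int_t^{\lambda t}V$ with $(\lambda-1)tV(t)$ and with $(\lambda-1)tV(\lambda t)$, then let $\lambda\downarrow1$. From $J=tV+G_1$ we then get $t^{n-1}J\to C_*+C_*/(n-1)=nC_*/(n-1)$.

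\emph{Step 4: the lower bound, the main obstacle.} Here the error $-V'$ must be shown negligible at suitable levels. On average over $[t,2t]$ one has $-V'\lesssim V/t\sim t^{-n-1}$, while $P\ge J\sim t^{1-n}$. So I choose levels $s_j\to\infty$ with $s_j^{n-1}(-V'(s_j))\to0$. At these levels \eqref{eq:tail-PJ} gives $P=J+o(s^{1-n})$. The isoperimetric inequality $P\ge n\omega_n^{1/n}V^{(n-1)/n}$ (with $V>0$ since $E_t\neq\emptyset$) then yields, after dividing by $s^{1-n}$,
\[
 \frac{n}{n-1}\,s^nV+o(1)\ \ge\ n\omega_n^{1/n}\bigl(s^nV\bigr)^{(n-1)/n}.
\]
If $C_*$ were $0$, the left side would be $o(1)$ while the right side is $n\omega_n^{1/n}(s^nV)^{(n-1)/n}$. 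This cannot be, because for small $x=s^nV>0$ the inequality $\tfrac{n}{n-1}x\ge n\omega_n^{1/n}x^{(n-1)/n}$ fails, and the $o(1)$ term must be handled relative to $x$. This relative-error issue, and the exact bookkeeping behind the bootstrap in Step 2, are the delicate points. Granting them, passing to the limit gives $(C_*)^{1/n}\ge (n-1)\omega_n^{1/n}$, that is $C_*\ge\omega_n(n-1)^n>0$.
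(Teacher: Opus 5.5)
Your Steps 1--3 are correct and take a different route from the paper. The paper builds the moment hierarchy
\[
tG_m+\mathfrak I^{m-1}\mathcal K_{\rm dil}=(n-m-1)G_{m+1}+\mathfrak I^m\mathcal R_{\rm def}
\]
up to the terminal index $m=n-1$. There the corrected quantity is exactly monotone, so only the integral bounds \eqref{eq:K-integrability-hierarchy} are needed. It then recovers $V$ by $n-1$ monotone-density steps. You work with $\Phi=t^{n-1}G_1$ directly, which needs only one Tauberian step. The price is that for $n\ge3$ the quantity $\Phi+t^{n-2}\mathcal K_{\rm dil}$ is not monotone, because of the leftover term $(n-2)t^{n-3}\mathcal K_{\rm dil}$, so you need the a priori bootstrap. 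That bootstrap does close. Since $|\mathcal K_{\rm dil}|$ is bounded only by the pointwise quantity $\rho(-V')$, take the endpoints of your integrated identity at levels where $-V'$ is controlled by its average over a dyadic interval. Then pass to all levels using $\Phi(T)\le 2^{n-1}\Phi(T')$ for $T'\in[T/2,T]$, which follows from monotonicity of $G_1$.

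The genuine gap is the positivity $C_*>0$, which you leave ``granted'' in Step 4; it is not a bookkeeping issue. At your selected levels you only know $-V'(s_j)=o(s_j^{1-n})$, which is an absolute smallness. If $C_*=0$, then $V^{(n-1)/n}$ may be much smaller than $s^{1-n}$, so both sides of your inequality are $o(1)$ and nothing follows. Replacing $s^{n-1}J$ by $\frac{n}{n-1}s^nV$ also uses only the limit relation and gives no relative control.

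The missing idea is to argue by contradiction at almost every level rather than at selected ones. Suppose $L=0$. Your Step 3 gives $t^nV\to0$, hence
\[
tV=(t^nV)^{1/n}V^{(n-1)/n}=o\bigl(V^{(n-1)/n}\bigr).
\]
Let $\delta(t):=\sup_{s\ge t}s^nV(s)\to0$ and split $G_1(t)$ at $t+\varepsilon V(t)^{-1/n}$. This gives
\[
 G_1(t)\le\Bigl(\varepsilon+\frac{\delta(t)}{n-1}\,\varepsilon^{1-n}\Bigr)V(t)^{(n-1)/n},
\]
so $J=tV+G_1=o(V^{(n-1)/n})$. The isoperimetric inequality and \eqref{eq:tail-PJ} then give
\[
-V'\ge \tfrac n2\,\omega_n^{1/n}V^{(n-1)/n}
\]
for almost every sufficiently large $t$, that is, $-\frac{d}{dt}V^{1/n}\ge\frac12\omega_n^{1/n}$ on a whole tail. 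So $V$ would vanish at a finite height. This is impossible, because under \eqref{eq:one-tail-limsup} every $E_t$ is a nonempty open set and $V(t)>0$. Only once positivity is known does your selected-level computation give $C_*\ge\omega_n(n-1)^n$, exactly as in the paper.
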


\begin{proof}
Write
\[
 \mathfrak If(t):=\int_t^\infty f(s)\,ds,
 \qquad G_0:=V,\qquad G_m:=\mathfrak I^mV\quad(m\ge1).
\]
Whenever finite,
\[
 G_m(t)=\frac1{(m-1)!}\int_t^\infty(s-t)^{m-1}V(s)\,ds.
\]
The first moment is finite by \eqref{eq:tail-J}.  From
\eqref{eq:tail-K-bound}, monotonicity of $\rho$, and integration by parts,
\begin{equation}\label{eq:K-integrability-hierarchy}
 \int_T^\infty |\mathcal K_{\rm dil}(t)|\,dt\le \rho(T)V(T),
 \qquad
 |\mathfrak I^m\mathcal K_{\rm dil}(t)|\le \rho(t)G_{m-1}(t)\quad(m\ge1),
\end{equation}
and, whenever $G_m$ is finite,
\begin{equation}\label{eq:IK-integrability-hierarchy}
 \int_T^\infty|\mathfrak I^m\mathcal K_{\rm dil}(t)|\,dt\le \rho(T)G_m(T).
\end{equation}
In particular $\mathcal K_{\rm dil}\in L^1(T,\infty)$ and there are good levels tending to
infinity on which $\mathcal K_{\rm dil}\to0$.  This conclusion uses only the scalar bound
\eqref{eq:tail-K-bound}; it does not use the horizontal translation
identity.

\smallskip
\noindent\textbf{The terminal step when $n=2$.}
In this case \eqref{eq:tail-master} is already terminal:
\[
 M(t):=tG_1(t)+\mathcal K_{\rm dil}(t),\qquad M'(t)=-\mathcal R_{\rm def}(t)\le0.
\]
The monotone function $M$ has a finite limit $\ell\ge0$: along good
levels with $\mathcal K_{\rm dil}\to0$ one has $M=tG_1+o(1)\ge o(1)$, while $M$ is bounded
above by its value at one fixed level.  Moreover
\[
 \int_T^\infty|\mathcal K_{\rm dil}(t)|\,dt\le\rho(T)V(T)\longrightarrow0.
\]
For every large $t$, average $|\mathcal K_{\rm dil}|$ on
$[t-\sqrt t,t]$ and $[t,t+\sqrt t]$ and choose good points
$t_-\le t\le t_+$ there for which $\mathcal K_{\rm dil}(t_\pm)\to0$.  Then
$t_\pm/t\to1$, $M(t_\pm)\to\ell$, and monotonicity of $G_1$ gives
\[
 \frac{t}{t_+}\,t_+G_1(t_+)
 \le tG_1(t)
 \le \frac{t}{t_-}\,t_-G_1(t_-).
\]
Hence
\begin{equation}\label{eq:n2-terminal-moment}
 tG_1(t)\longrightarrow\ell.
\end{equation}
Set $C_*=\ell$ for the moment; positivity will be proved below.

\smallskip
\noindent\textbf{The moment hierarchy when $n\ge3$.}
For $m=1$, \eqref{eq:tail-master} gives
\[
 \frac d{dt}\{tG_1+\mathcal K_{\rm dil}\}=-(n-2)G_1-\mathcal R_{\rm def}\le0.
\]
Let $L$ be its limit.  Since $\mathcal K_{\rm dil}\in L^1$ there are levels on which
$\mathcal K_{\rm dil}\to0$, so $L\ge0$.  If $L>0$, then outside the finite-measure set
$\{|\mathcal K_{\rm dil}|>L/2\}$ one has $tG_1(t)\ge L/2$.  Thus
$\int^\infty G_1(t)\,dt=\infty$, while integrating the preceding
differential inequality would force $tG_1+\mathcal K_{\rm dil}\to-\infty$, a contradiction.
Hence the limit is zero, and integration from $t$ to infinity gives
\[
 tG_1+\mathcal K_{\rm dil}=(n-2)G_2+\mathfrak I\mathcal R_{\rm def}.
\]

The same argument iterates.  Suppose $1\le m\le n-2$ and $G_m$ is
finite.  Differentiating gives
\begin{equation}\label{eq:moment-differential}
 \frac d{dt}\bigl(tG_m+\mathfrak I^{m-1}\mathcal K_{\rm dil}\bigr)
 =-(n-m-1)G_m-\mathfrak I^{m-1}\mathcal R_{\rm def}.
\end{equation}
When $m<n-1$ the coefficient $n-m-1$ is positive.  By
\eqref{eq:IK-integrability-hierarchy}, $\mathfrak I^{m-1}\mathcal K_{\rm dil}$ is integrable
on every tail.  The same finite-measure bad-set argument therefore shows
that the decreasing quantity on the left of
\eqref{eq:moment-differential} has limit zero.  Consequently
\begin{equation}\label{eq:moment-hierarchy-complete}
 tG_m+\mathfrak I^{m-1}\mathcal K_{\rm dil}
 =(n-m-1)G_{m+1}+\mathfrak I^m\mathcal R_{\rm def},
 \qquad 1\le m\le n-2.
\end{equation}
This identity inductively proves the finiteness of the next moment.

At the terminal index $m=n-1$,
\[
 \frac d{dt}\bigl(tG_{n-1}+\mathfrak I^{n-2}\mathcal K_{\rm dil}\bigr)
 =-\mathfrak I^{n-2}\mathcal R_{\rm def}\le0.
\]
The pointwise estimate in \eqref{eq:K-integrability-hierarchy} gives
$\mathfrak I^{n-2}\mathcal K_{\rm dil}(t)\to0$.  Hence a finite limit
\begin{equation}\label{eq:terminal-moment-limit}
 \frac{C_*}{(n-1)!}:=\lim_{t\to\infty}tG_{n-1}(t)\ge0
\end{equation}
exists.  For $n=2$, \eqref{eq:terminal-moment-limit} is exactly
\eqref{eq:n2-terminal-moment}.

\smallskip
\noindent\textbf{Positivity of the terminal constant.}
Assume $C_*=0$.  The moment representation at $t/3$ gives
\[
 G_{n-1}(t/3)
 \ge \frac{t^{n-1}}{3^{n-1}(n-2)!}V(t),
\]
with the evident interpretation $(n-2)!=1$ when $n=2$.  Therefore
\begin{equation}\label{eq:small-critical-volume}
 t^nV(t)\longrightarrow0.
\end{equation}
Put $\delta(t)=\sup_{s\ge t}s^nV(s)\to0$.  Splitting $G_1(t)$ at
$t+\varepsilon V(t)^{-1/n}$ gives
\[
 G_1(t)
 \le \varepsilon V(t)^{(n-1)/n}
 +\frac{\delta(t)}{n-1}\varepsilon^{1-n}V(t)^{(n-1)/n}.
\]
First $t\to\infty$ and then $\varepsilon\downarrow0$ yield
\[
 G_1=o\bigl(V^{(n-1)/n}\bigr).
\]
Equation \eqref{eq:small-critical-volume} also gives
$tV=o(V^{(n-1)/n})$, hence
\[
 J(t)=o\bigl(V(t)^{(n-1)/n}\bigr).
\]
For almost every high $t$, the Euclidean isoperimetric inequality and
\eqref{eq:tail-PJ} imply
\[
 n\omega_n^{1/n}V(t)^{(n-1)/n}
 \le P(t)\le J(t)-V'(t).
\]
Thus
\[
 -\frac d{dt}V(t)^{1/n}\ge c_n>0
\]
on a sufficiently high tail, forcing $V$ to vanish at a finite height,
contrary to the assumed unbounded tail.  Therefore $C_*>0$.

\smallskip
\noindent\textbf{Recovery of the critical density and the sharp lower bound.}
We use the elementary monotone-density fact
\[
 t^\beta\mathfrak If(t)\to A
 \quad\Longrightarrow\quad
 t^{\beta+1}f(t)\to\beta A
\]
for nonnegative nonincreasing $f$.  Applying it successively to
$G_m=\mathfrak IG_{m-1}$ and starting from
\eqref{eq:terminal-moment-limit} gives
\begin{equation}\label{eq:critical-volume-density-recovery}
 t^nV(t)\longrightarrow C_*,\qquad
 t^{n-1}G_1(t)\longrightarrow\frac{C_*}{n-1}.
\end{equation}
Since $J=tV+G_1$,
\[
 t^{n-1}J(t)\longrightarrow\frac{nC_*}{n-1}.
\]
Finally
\[
 \int_T^{2T}[-V'(s)]\,ds=V(T)-V(2T)=O(T^{-n}).
\]
Choose a good $t_T\in[T,2T]$ so that
$-V'(t_T)=o(t_T^{1-n})$.  At this level
\[
 n\omega_n^{1/n}V(t_T)^{(n-1)/n}
 \le P(t_T)\le J(t_T)-V'(t_T).
\]
Divide by $t_T^{1-n}$ and use
\eqref{eq:critical-volume-density-recovery}.  The result is
\[
 C_*\ge\omega_n(n-1)^n.
\]
This proves \eqref{eq:critical-volume-complete} in every dimension
$n\ge2$.
\end{proof}

 One final identity will be used in the pressure slabs.  Graph coarea and
 \eqref{eq:tail-R-bound} give, for $t_0<a<b<\infty$,
 \begin{equation}\label{eq:finite-graph-band-identity}
 \int_{B_{r_0}\cap\{a<u<b\}}W\,dx
 =\int_a^b\left(J(s)+\frac1n\mathcal R_{\rm def}(s)\right)ds.
\end{equation}
Moreover, for every fixed $L>0$, the volume asymptotic and
$R\leq n(-V')$ imply
\begin{equation}\label{eq:narrow-R-vanishing}
 t^n\int_{t-L/t}^{t+L/t}\mathcal R_{\rm def}(s)\,ds
 \leq nt^n\{V(t-L/t)-V(t+L/t)\}\longrightarrow0.
\end{equation}
All arguments in this section apply verbatim to $-u$ when the negative tail is unbounded.
 
 \subsection{Morrey and shrinking-height consequences}
\begin{corollary}\label{cor:L1-morrey}
For every solution and every sufficiently small $r$,
\[
 \int_{B_r}|u|\,dx\le Cr^{n-1}.
\]
Consequently, for every finite $a<b$,
\begin{equation}\label{common:shrinking-band}
 \int_{B_r\cap\{a<u<b\}}\sqrt{1+|Du|^2}\,dx
 \le C\bigl(r^{n-1}|b-a|+r^n\bigr).
\end{equation}
In graph variables, on every compact height interval $I$ and every
finite-height axis point,
\begin{equation}\label{common:ambient-area}
 \mathcal H^n\bigl(\Sigma\cap B^{n+1}_r((0,z_0))\bigr)\le C_{z_0}r^n.
\end{equation}
\end{corollary}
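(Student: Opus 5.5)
We prove the three assertions of Corollary~\ref{cor:L1-morrey} in order: the $L^1$ Morrey bound for $u$ comes from the tail asymptotics, the shrinking-band estimate from the stress identity behind Lemma~\ref{lem:finite-band-complete}, and the ambient area bound from the graph geometry.

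\emph{The $L^1$ Morrey bound.} We split $|u|$ into a bounded band and the two tails. Fix a working ball $B_{r_0}$ and a level $t_1$ large enough that the conclusions of Section~\ref{sec:level-volume-complete} hold for $u$ (if $\limsup u=+\infty$) and for $-u$ (if $\liminf u=-\infty$); a bounded tail contributes nothing beyond a band. On $\{|u|\le t_1\}$ the integrand is at most $t_1$, so this part is $O(r^n)$. For the positive tail we use the layer-cake formula
\[
\int_{B_r\cap\{u>t_1\}}u\,dx=t_1|B_r\cap E_{t_1}|+\int_{t_1}^\infty|B_r\cap E_s|\,ds
\]
and bound $|B_r\cap E_s|\le\min\{\omega_nr^n,V(s)\}$. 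Proposition~\ref{prop:critical-volume-complete} gives $V(s)\le Cs^{-n}$. Splitting the $s$-integral at $s=1/r$ yields
\[
\int_{t_1}^{1/r}\omega_nr^n\,ds+\int_{1/r}^\infty Cs^{-n}\,ds\le Cr^{n-1}.
\]
The negative tail is handled identically, since the section applies verbatim to $-u$.

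\emph{The shrinking band.} Here we want the sharper factor $|b-a|$. We rerun the proof of Lemma~\ref{lem:finite-band-complete} with a cutoff $\chi$ satisfying $\chi\ge1$ on $[a,b]$, $\chi$ supported in $[a-1,b+1]$, and $\|\chi'\|_\infty\le C$. The constant in \eqref{eq:annular-M-complete} is then uniform for $[a,b]$ inside a fixed compact range; for a genuinely shrinking band one applies the estimate to a fixed enclosing band. The key replacement is the gradient term: instead of the crude bound $\int|x||\chi'(u)||\mathcal A(Du)|\le Cr^{n+1}$, we write
\[
\int_{B_r}|x|\,|\chi'(u)|\,|\mathcal A(Du)|\,dx\le r\int_{B_r\cap\{\chi'(u)\ne0\}}|Du|\,dx,
\]
which is controlled by the coarea formula over the level sets. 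The cleaner route is coarea directly:
\[
\int_{B_r\cap\{a<u<b\}}W\,dx\le|B_r|+\int_{B_r\cap\{a<u<b\}}|Du|\,dx=Cr^n+\int_a^b P(E_s;B_r)\,ds.
\]
It then suffices to show $P(\{u>s\};B_r)\le Cr^{n-1}$ uniformly for $s$ in a compact range. For this we compare with $B_r$ using the divergence identity: integrating $\operatorname{div}\mathcal A(Du)=-u$ over $\{u>s\}\cap B_r$, with the puncture handled as in Section~\ref{sec:level-volume-complete}, gives
\[
\int_{\partial^*E_s\cap B_r}\sigma_s\,d\mathcal H^{n-1}\le\mathcal H^{n-1}(\partial B_r)+\int_{B_r}|u|\,dx\le Cr^{n-1},
\]
using the first part. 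The obstacle is that $\sigma_s<1$, so this flux bound does not control the full perimeter. The defect $\int(1-\sigma_s)\le\int\alpha/\sigma_s$ is not pointwise controlled in $s$, but after integration in $s$ it equals
\[
\int_a^b\!\int_{\partial^*E_s\cap B_r}\frac{\alpha}{\sigma_s}\,d\mathcal H^{n-1}\,ds=\int_{B_r\cap\{a<u<b\}}\frac{1}{W}\,dx\le Cr^n.
\]
Combining the two bounds gives $Cr^{n-1}|b-a|+Cr^n$. This pairing of the flux bound with the defect term is the main step to check carefully.

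\emph{Ambient area.} The graph piece $\Sigma\cap B^{n+1}_r((0,z_0))$ lies over $B_r$ at heights in $(z_0-r,z_0+r)$. Its area equals $\int_{B_r\cap\{|u-z_0|<r\}}W\,dx$. Applying the shrinking-band estimate with $|b-a|=2r$ gives a bound of $Cr^n$, with constants uniform for $z_0$ in the compact interval $I$.
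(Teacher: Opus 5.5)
Your proposal is correct and is essentially the paper's proof. The $L^1$ bound and the ambient area bound are argued exactly as there, and your slice-wise flux bound $\int_{\partial^*E_s\cap B_r}\sigma_s\,d\mathcal H^{n-1}\le\mathcal H^{n-1}(\partial B_r)+\int_{B_r}|u|\,dx$, integrated over $s\in(a,b)$ by coarea, is the layer-cake form (via $h(u)=\int_a^b\mathbf 1_{\{u>s\}}\,ds$) of the paper's single test of the equation with $\eta\chi_\delta h(u)$, where $h'=1$ on $[a,b]$ and $0\le h\le|b-a|$; the paper's version uses a smooth cutoff instead of $\mathbf 1_{B_r}$ and needs no regular values.

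Three clean-ups. Delete the abandoned rerun of Lemma~\ref{lem:finite-band-complete} and the pointwise perimeter claim, which you never prove and do not need. Also correct the slip that $\int_a^b\int_{\partial^*E_s\cap B_r}\alpha/\sigma_s\,d\mathcal H^{n-1}\,ds$ equals $|B_r\cap\{a<u<b\}|$ rather than $\int W^{-1}$; this is harmless, since both are at most $|B_r|$. More simply, writing $W=|Du|^2/W+W^{-1}$ and applying coarea to the first term gives the estimate in one line.
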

\begin{proof}
For the first estimate use the distribution formula separately on the
positive and negative parts.  Above a fixed tail threshold the critical
law gives $V_\pm(t)\le Ct^{-n}$, whereas for bounded $t$ the trivial bound
$|B_r|\le Cr^n$ is better.  Hence
\[
 \int_{B_r}|u|\,dx
 =\int_0^\infty|B_r\cap\{|u|>t\}|\,dt
 \le C\int_0^\infty\min\{r^n,t^{-n}\}\,dt
 \le Cr^{n-1},
\]
where $n>1$ is exactly what makes the final integral finite.

We next record the dependence of the finite-band estimate on the height
width.  Choose a nondecreasing Lipschitz function $h$ with
$h'=1$ on $[a,b]$ and $0\le h\le|b-a|$.  Let $\eta$ be supported in
$B_{2r}$, equal one on $B_r$, and satisfy $|D\eta|\le C/r$; let
$\chi_\delta$ be a puncture cutoff.  Test
$\operatorname{div}(Du/W)=-u$ by $\eta\chi_\delta h(u)$.  Since
\[
 \frac{Du}{W}\cdot Du=\frac{|Du|^2}{W}=W-W^{-1}\ge W-1,
\]
the term containing $h'(u)$ controls the graph area on the desired band.
The inner-sphere error is $O(|b-a|\delta^{n-1})$ and disappears for every
$n\ge2$.  Letting $\delta\downarrow0$ and estimating the remaining cutoff
and source terms gives
\[
 \int_{B_r\cap\{a<u<b\}}W\,dx
 \le Cr^n+C|b-a|r^{n-1}
      +C|b-a|\int_{B_{2r}}|u|\,dx.
\]
The first part of the corollary absorbs the last term, and therefore
\[
 \int_{B_r\cap\{a<u<b\}}W\,dx
 \le C\bigl(r^{n-1}|b-a|+r^n\bigr).
\]

Finally, an ambient ball
$B_r^{n+1}((0,z_0))$ projects into $B_r$ and restricts the graph height to
$[z_0-r,z_0+r]$.  Applying the preceding estimate with a height interval
of length $2r$ gives
\[
 \mathcal H^n\bigl(\Sigma\cap B_r^{n+1}((0,z_0))\bigr)\le C_{z_0}r^n.
\]
\end{proof}

\section{Logarithmic profiles and finite-perimeter quantization}
\label{sec:log-profiles}
Fix an unbounded sign $\varepsilon\in\{+1,-1\}$, retain the
working ball $B_{r_0}$ from the tail analysis, and set
\[
 \Omega_t=tB_{r_0},\qquad
 w_t(y)=t^{-1}u(y/t),\qquad
 Z_t=\frac{Dw_t}{\sqrt{t^{-4}+|Dw_t|^2}},\qquad
 Z_t^\varepsilon=\varepsilon Z_t.
\]
The fields $w_t,Z_t$ are used on $\Omega_t\setminus\{0\}$; the
logarithmic truncation below has compact support in $\Omega_t$ and is
extended by zero to $\mathbb R^n$.
For $a>0$ define the globally Lipschitz truncation
\[
 \mathfrak l_a(s):=
 \begin{cases}
  0,&s\le a,\\[1mm]
  \log(s/a),&s>a,
 \end{cases}
\]
and put
\begin{equation}\label{eq:log-profile-def}
 \Phi_{a,t}^\varepsilon:=\mathfrak l_a(\varepsilon w_t).
\end{equation}
Thus the logarithm is taken only on the region $\{\varepsilon w_t>a\}$;
in particular \eqref{eq:log-profile-def} is defined on all of $\Omega_t$.
The parameter $a$ is retained because the proof later uses arbitrary
positive finite value bands.

\subsection{Exact logarithmic ledgers}
\begin{proposition}\label{prop:log-ledgers}
For every $a>0$, every $\lambda\ge0$, and every finite $p>0$,
\begin{align}
 |\{\Phi_{a,t}^\varepsilon>\lambda\}|
 &\longrightarrow C_\varepsilon a^{-n}e^{-n\lambda},\label{eq:log-distribution}\\
 \|\Phi_{a,t}^\varepsilon\|_{L^p}^p
 &\longrightarrow \frac{\Gamma(p+1)}{n^p}C_\varepsilon a^{-n}.\label{eq:log-Lp}
\end{align}
Moreover $\Phi_{a,t}^\varepsilon\in BV(\mathbb R^n)$ after zero extension,
\begin{align}
 |D\Phi_{a,t}^\varepsilon|(\mathbb R^n)
 &=t^{n-1}\int_{at}^{\infty}\frac{P_\varepsilon(s)}s\,ds,\label{eq:log-BV}\\
 \int(Z_t^\varepsilon,D\Phi_{a,t}^\varepsilon)
 &=a\int \Phi_{a,t}^\varepsilon e^{\Phi_{a,t}^\varepsilon}\,dy
 =t^{n-1}\int_{at}^{\infty}\frac{J_\varepsilon(s)}s\,ds.\label{eq:log-pairing}
\end{align}
Finally
\begin{equation}\label{eq:log-defect}
0\le |D\Phi_{a,t}^\varepsilon|-
\int(Z_t^\varepsilon,D\Phi_{a,t}^\varepsilon)
\le Ct^{-2},
\end{equation}
and
\begin{equation}\label{eq:log-B-limit}
 |D\Phi_{a,t}^\varepsilon|(\mathbb R^n)
 \longrightarrow B_{\varepsilon,a}:=
 \frac{nC_\varepsilon}{(n-1)^2}a^{1-n}.
\end{equation}
\end{proposition}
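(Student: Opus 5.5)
The plan is to reduce every claim to the one-tail quantities $V_\varepsilon,J_\varepsilon,P_\varepsilon$ of Section~\ref{sec:finite-band-complete}, applied to $\varepsilon u$, through the scaling $y=tx$. We have $\varepsilon w_t(y)>s$ if and only if $\varepsilon u(y/t)>st$. Hence
\[
 |\{\Phi^\varepsilon_{a,t}>\lambda\}|=|\{\varepsilon w_t>ae^\lambda\}|=t^nV_\varepsilon(ate^\lambda).
\]
Proposition~\ref{prop:critical-volume-complete} then gives \eqref{eq:log-distribution} directly. For \eqref{eq:log-Lp} we write
\[
 \|\Phi\|_p^p=\int_0^\infty p\lambda^{p-1}t^nV_\varepsilon(ate^\lambda)\,d\lambda .
\]
The critical law gives $V(s)\le Cs^{-n}$ for large $s$, so the integrand is dominated by $C\lambda^{p-1}a^{-n}e^{-n\lambda}$, and dominated convergence yields $\Gamma(p+1)n^{-p}C_\varepsilon a^{-n}$. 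Since $at\to\infty$, only the tail range above $t_0$ ever enters.

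\textbf{BV, total variation and pairing.} The truncation $\mathfrak l_a(\varepsilon w_t)$ is a Lipschitz function of $T_{at,b}(\varepsilon u)$ composed with scaling, up to a cutoff at large height. The $W^{1,1}$ statement proved before \eqref{eq:finite-level-coarea}, combined with the finite integral $\int_{at}^\infty P(s)/s\,ds$, gives $\Phi\in W^{1,1}$ after zero extension. Indeed, $E_{at}\Subset B_{r_0}$, so $\Phi$ vanishes near $\partial\Omega_t$.

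The coarea formula in the level variable gives
\[
 |D\Phi|=\int_0^\infty P(\{\Phi>\lambda\})\,d\lambda .
\]
Substituting $s=ate^\lambda$ and using $P(\{\varepsilon w_t>s/t\})=t^{n-1}P_\varepsilon(s)$ yields \eqref{eq:log-BV}. Finiteness follows from $P\le J-V'$ (see \eqref{eq:tail-PJ}), $J(s)=O(s^{1-n})$, and an integration by parts of the term $V'/s$.

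For the pairing, $\Phi$ is Sobolev away from the puncture and $0$ is null, so the pairing equals $\int Z^\varepsilon_t\cdot D\Phi\,dy$. On the positive set we have $Z^\varepsilon\cdot D\Phi=|Dw_t|^2/(w\sqrt{t^{-4}+|Dw_t|^2})$. By coarea in the level variable, each level contributes $\int_{\partial^*}\sigma=J$ at that level, giving $t^{n-1}\int_{at}^\infty J_\varepsilon(s)/s\,ds$. The middle expression comes from the rescaled equation $\operatorname{div}Z^\varepsilon_t=-\varepsilon w_t$, or, avoiding the need to justify integration by parts at the puncture, from the layer-cake identity. We use $\varepsilon w_t=ae^\Phi$ and compute
\[
 \int\Phi\,ae^\Phi=\int_a^\infty\frac{1}{\tau}\int_{\{\varepsilon w_t>\tau\}}\varepsilon w_t\,d\tau,
\]
and the inner integral is $t^{n-1}J(\tau t)$, which matches after substituting $s=\tau t$.

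\textbf{Defect and limit.} The difference of \eqref{eq:log-BV} and \eqref{eq:log-pairing} is
\[
 t^{n-1}\int_{at}^\infty\frac{P-J}{s}\,ds\le t^{n-1}\int_{at}^\infty\frac{-V'(s)}{s}\,ds .
\]
Integrating by parts, this is at most $t^{n-1}V(at)/(at)\le Ct^{-2}$, which gives \eqref{eq:log-defect}. The limit \eqref{eq:log-B-limit} follows from \eqref{eq:log-pairing} together with $s^{n-1}J(s)\to nC_\varepsilon/(n-1)$:
\[
 t^{n-1}\int_{at}^\infty s^{-n}\,\frac{nC_\varepsilon}{n-1}\,ds\to\frac{nC_\varepsilon}{(n-1)^2}a^{1-n}.
\]

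\textbf{Main obstacle.} The main obstacle is rigor at the puncture: justifying membership in $BV(\mathbb R^n)$, so that the puncture carries no singular part of $D\Phi$, and identifying the Anzellotti pairing with the pointwise integrand. I would handle both by truncating at height $b$, using the $W^{1,1}(B_{r_0})$ statement for $T_{a,b}$, and letting $b\to\infty$ by monotone convergence. The finiteness of the $\int P/s$ and $\int J/s$ tails supplies the needed control.
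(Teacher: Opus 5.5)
Your proposal is correct and follows essentially the same route as the paper: the exact superlevel identity $\{\Phi_{a,t}^\varepsilon>\lambda\}=tE^\varepsilon_{ate^\lambda}$ with dominated convergence, bounded truncations plus coarea for the total variation, the bound $0\le P-J\le -V'$ for the defect, and the asymptotics of $J$ for the limit. The one variation is in the pairing. You obtain its value through coarea and the level-flux identity $J(s)=\int_{\partial^*E_s}\sigma_s$, and the middle expression by layer cake, whereas the paper integrates by parts globally against $\operatorname{div}Z_t^\varepsilon=-\varepsilon w_t$ with a puncture cutoff; the two are equivalent, because that flux identity was itself derived by the same puncture Gauss--Green argument.
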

\begin{proof}
The superlevel identity is exact:
\[
 \{\Phi_{a,t}^\varepsilon>\lambda\}
 =tE^\varepsilon_{ae^\lambda t}.
\]
The critical tail law and a decreasing-tail envelope give a uniform
majorant $Ce^{-n\lambda}$, hence layer cake and dominated convergence
prove \eqref{eq:log-distribution}--\eqref{eq:log-Lp}.

For $M<\infty$ put $\Phi_M=\min\{\Phi_{a,t}^\varepsilon,M\}$.
Coarea on the punctured rescaled domain gives
\[
 |D\Phi_M|
 =t^{n-1}\int_{at}^{ae^Mt}\frac{P_\varepsilon(s)}s\,ds.
\]
Multiplying by a cutoff that vanishes on $B_\eta$ creates at most
$CM\eta^{n-1}$ variation; this tends to zero for every $n\ge2$.  Thus no
variation atom is created at the puncture.  Letting $M\to\infty$ proves
\eqref{eq:log-BV}.

The same bounded truncation makes the Anzellotti pairing legitimate.  The
puncture integration-by-parts error is again $O(M\eta^{n-1})$.  On the
support of the logarithmic profile, $\varepsilon w_t=ae^{\Phi}$, so
\[
 \int(Z_t^\varepsilon,D\Phi_M)
 =-\int\Phi_M\,\operatorname{div}Z_t^\varepsilon
 \longrightarrow a\int\Phi e^\Phi.
\]
Coarea on level sets gives the last expression in
\eqref{eq:log-pairing}.  Since $0\le P-J\le -V'$, the difference of
\eqref{eq:log-BV} and \eqref{eq:log-pairing} is bounded by
\[
 \frac{t^{n-1}}{at}V_\varepsilon(at)
 =a^{-1}t^{n-2}V_\varepsilon(at)=O_{a}(t^{-2}).
\]
Finally, with $s=t\sigma$ and
$J_\varepsilon(t\sigma)\sim\frac{nC_\varepsilon}{n-1}
(t\sigma)^{1-n}$,
\[
 t^{n-1}\int_{at}^\infty\frac{J_\varepsilon(s)}s\,ds
 \longrightarrow
 \frac{nC_\varepsilon}{(n-1)^2}a^{1-n},
\]
which proves \eqref{eq:log-B-limit}.
\end{proof}

\subsection{Quantitative profile decomposition}
The following concentration--compactness lemma is purely $BV$ and will be
used with $f_j=\Phi_{a,t_j}^\varepsilon$.  Its only integrability
requirement is $p>2$; there is no restriction $p<n$.
\begin{lemma}
\label{lem:quantitative-bv-profiles}
Let $p>2$ and let $f_j\geq0$ satisfy
\[
\sup_j\bigl(\|f_j\|_{BV(\mathbb R^n)}+\|f_j\|_{L^p(\mathbb R^n)}
                   +|\{f_j>0\}|\bigr)<\infty.
\]
Here $\|f\|_{BV(\mathbb R^n)}:=\|f\|_{L^1(\mathbb R^n)}
+|Df|(\mathbb R^n)$.
After passage to a subsequence there are at most countably many nonzero
$f^a\in BV(\mathbb R^n)\cap L^p(\mathbb R^n)$, translation sequences
$z_j^a$, integers
$A_j\to A_\infty\in\{0,1,2,\ldots\}\cup\{\infty\}$, and mutually disjoint
balls $B_{\rho_j^a}(z_j^a)$, $a\leq A_j$, with $\rho_j^a\to\infty$ for
each fixed $a$.  Define
\[
 \pi_j^a=f_j\mathbf1_{B_{\rho_j^a}(z_j^a)},
 \qquad
 \mathfrak r_j=f_j\mathbf1_{\mathbb R^n\setminus
                  \bigcup_{a\leq A_j}B_{\rho_j^a}(z_j^a)}.
\]
Then
\begin{equation}\label{eq:summed-global-profile-errors}
\begin{aligned}
 &\sum_{a\leq A_j}
 \bigl(\|\pi_j^a(\,\cdot+z_j^a)-f^a\|_{L^1}
       +\|\pi_j^a(\,\cdot+z_j^a)-f^a\|_{L^2}\bigr)\\
 &\hspace{28mm}+\|\mathfrak r_j\|_{L^1}+\|\mathfrak r_j\|_{L^2}\longrightarrow0,
\end{aligned}
\end{equation}
and the balls can be chosen simultaneously so that
\begin{equation}\label{eq:cutting-upper-ledger}
 \sum_{a\leq A_j}|D\pi_j^a|(\mathbb R^n)+|D\mathfrak r_j|(\mathbb R^n)
 \leq |Df_j|(\mathbb R^n)+o(1).
\end{equation}
\end{lemma}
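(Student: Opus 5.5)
We follow the standard concentration--compactness extraction (Lions' profile decomposition adapted to $BV$), made quantitative through an iterative ball-selection procedure. First, the uniform bound on $|\{f_j>0\}|$ together with the $L^p$ bound and $p>2$ gives uniform control of the tails: by Hölder, $\|f\mathbf 1_{\{f>M\}}\|_{L^q}$ for $q\in\{1,2\}$ is bounded by $C M^{q-p}$-type quantities, and the support bound prevents $L^1$ and $L^2$ mass from dispersing at low height. Hence $L^1$ and $L^2$ vanishing of a remainder follows once we know its $L^1_{\mathrm{loc}}$ mass on every unit cube tends to zero uniformly. For this we use a $BV$ Lieb-type lemma: if $\sup_z\int_{Q_1(z)} g_j\to0$ while $\|g_j\|_{BV}$ stays bounded, then $\|g_j\|_{L^r}\to 0$ for $1<r<n/(n-1)$ by the Sobolev inequality on cubes summed over a lattice, and interpolation with the $L^p$ bound (and the support bound for $L^1$) upgrades this to $L^1$ and $L^2$.

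The extraction itself proceeds as follows. Let $\mu_\ast(g)=\sup_z\int_{Q_1(z)}g$. If $\limsup\mu_\ast(f_j)>0$, we pick $z_j^1$ nearly attaining it, pass to a subsequence with $f_j(\cdot+z_j^1)\to f^1\ne0$ in $L^1_{\mathrm{loc}}$ (by $BV$ compactness), and $f^1\in BV\cap L^p$ by lower semicontinuity. We then repeat on the remainder outside a large ball. Because profiles carry $L^1$ mass bounded below in terms of $\mu_\ast$, and the total $L^1$ mass is bounded, only finitely many profiles exceed any threshold. A diagonal argument yields countably many profiles with $A_j\to A_\infty$, and the centres become mutually divergent, $|z_j^a-z_j^b|\to\infty$. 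Radii $\rho_j^a\to\infty$ are chosen slowly, smaller than $\tfrac13\min_{b\ne a}|z_j^a-z_j^b|$ so the balls are disjoint, and such that $\pi_j^a(\cdot+z_j^a)\to f^a$ in $L^1$ and $L^2$ (locally, by strong $L^1_{\mathrm{loc}}$ convergence plus $L^p$ interpolation, while the tail of $f^a$ outside large balls is small). Summing the errors over $a\le A_j$ requires letting $A_j$ grow slowly, so that the summed error in \eqref{eq:summed-global-profile-errors} is $o(1)$. The remainder $\mathfrak r_j$ then has vanishing $\mu_\ast$, because any nonvanishing piece would give a further profile, contradicting exhaustion of the mass. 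The lemma above then gives $\|\mathfrak r_j\|_{L^1}+\|\mathfrak r_j\|_{L^2}\to0$.

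The main obstacle is the cutting ledger \eqref{eq:cutting-upper-ledger}: restricting to a ball creates the jump $\int_{\partial B_\rho}f_j^{\mathrm{tr}}\,d\mathcal H^{n-1}$, and these must sum to $o(1)$ over all $A_j$ balls. I would choose each radius by averaging. Coarea in the radial variable gives $\int_{\rho}^{2\rho}\int_{\partial B_s(z)}f_j\,ds\le\int_{B_{2\rho}\setminus B_\rho}f_j$, so some $s\in[\rho,2\rho]$ has boundary trace at most $\rho^{-1}\int_{\text{annulus}}f_j$. To make this small we rather select among $N$ dyadic annuli $[2^k\rho,2^{k+1}\rho]$, $k<N$, one where $|Df_j|$ plus $f_j$ mass is at most $1/N$ of the total. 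This uses only the finite total $\|f_j\|_{BV}$, not decay, and we let $N=N_j\to\infty$ slowly, consistent with the separation of the centres. Allocating the error budget $\varepsilon_j/2^a$ to the $a$-th ball (with correspondingly larger $N$), the total added variation is $\sum_a(\text{trace})\le\sum_a 2^{-a}\varepsilon_j\|f_j\|_{BV}=o(1)$. Since $|D(f\mathbf 1_B)|=|Df|\llcorner B+\text{trace on }\partial B$, and the interiors of the balls and of the complement are disjoint, summing gives exactly \eqref{eq:cutting-upper-ledger}. The remaining technical point is that the chosen annuli must still lie inside the region of separation and respect $\rho_j^a\to\infty$; the profile convergence must also survive the enlargement from $\rho$ to $2^{N}\rho$, which holds because the profile's tail beyond $\rho$ is already small and the shells carry little mass by choice.
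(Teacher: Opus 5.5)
Your proposal is correct and follows essentially the paper's route. Both arguments use a cube-localized $BV$ Sobolev inequality as the vanishing criterion (the paper takes the exponent $(n+1)/n$), iterative extraction of mutually escaping profiles at near-maximal cube mass, a slow diagonal choice of $A_j$ and the radii, radial averaging to place the cuts, and maximality of the extraction to make the remainder vanish. Your pigeonhole over $N_a\sim 2^a/\varepsilon_j$ dyadic annuli is valid but unnecessary: your first averaging bound already gives trace at most $C/\rho$, and the paper simply takes $\rho_j^a\in[L_j,2L_j]$ with $A_j/L_j\to0$, so the added variation is $O(A_j/L_j)$. One justification should be corrected: convergence of $\pi_j^a(\cdot+z_j^a)$ on the enlarged ball comes from keeping the whole candidate range $B_{2^{N}\rho}$ inside the region where the local $L^1\cap L^2$ error is small (as in the paper's choice of $J_m$), not from the selected shell carrying little mass.
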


\begin{proof}
The proof has four steps.

\emph{Vanishing criterion.}
Tile $\mathbb R^n$ by unit cubes $Q_z=z+[0,1)^n$ and let $Q_z^*$ be fixed
concentric enlargements with bounded overlap. Put
\[
 \mathfrak q_{\rm cc}(g):=\sup_{z\in\mathbb Z^n}\int_{Q_z}|g|.
\]
The local $BV$ Sobolev inequality and interpolation on each cube give
\begin{equation}\label{eq:cubevanishing}
 \|g\|_{L^{(n+1)/n}}^{(n+1)/n}
 \le C \mathfrak q_{\rm cc}(g)^{1/n}\bigl(|Dg|(\mathbb R^n)+\|g\|_{L^1}\bigr).
\end{equation}
Hence, under the bounds of the lemma,
$\mathfrak q_{\rm cc}(g_j)\to0$ implies $g_j\to0$ in $L^{(n+1)/n}$. Interpolation with the
uniform $L^p$ bound gives $L^2$ convergence, and the support-measure bound
gives $L^1$ convergence.

\emph{Extraction of mutually escaping profiles.}
Assume that the preceding vanishing alternative does not hold. Choose a unit
cube carrying a positive limiting amount of $f_j$-mass, translate it to the
origin, and use local $BV$ compactness and interpolation to obtain a nonzero
profile $f^1$.  Having selected $z_j^1,\dots,z_j^A$, define
\[
 \Lambda_A:=\sup_{\{\xi_j\}}
 \left\{\limsup_{j\to\infty}\int_{Q_{\xi_j}}f_j:
 |\xi_j-z_j^a|\to\infty\ \text{for every }1\le a\le A\right\}.
\]
If $\Lambda_A=0$ the extraction stops.  Otherwise choose cube centers
$\xi_j$ in the defining class for which the displayed $\limsup$ is at
least $\Lambda_A/2$, pass to a subsequence on which the corresponding
cube masses have a positive limit, and set $z_j^{A+1}:=\xi_j$.
Local $BV$ compactness then extracts the next nonzero profile.
Thus
\[
 |z_j^a-z_j^b|\to\infty\qquad(a\ne b),
\]
and for every fixed $a$,
\[
 f_j(\,\cdot+z_j^a)\to f^a
 \quad\text{in }L^1_{\rm loc}\cap L^2_{\rm loc}.
\]
The disjointness of fixed-radius balls around finitely many centers and the
uniform $L^2$ bound imply $\sum_A\Lambda_A^2<\infty$; hence
$\Lambda_A\to0$ if infinitely many profiles are extracted.

\emph{Simultaneous low-cost cutting.}
We make the diagonal choice explicit.  Suppose first that
$A_\infty=\infty$.  For each integer $m\ge1$, choose $L_m\ge m^2$, with
$L_m\uparrow\infty$, so large that
\[
 \sum_{a=1}^m\Bigl(
 \|f^a\|_{L^1(\mathbb R^n\setminus B_{L_m})}
 +\|f^a\|_{L^2(\mathbb R^n\setminus B_{L_m})}\Bigr)<\frac1m.
\]
For this fixed $m$, mutual escape of the first $m$ centers and their local
$L^1\cap L^2$ convergence allow us to choose $J_m$, strictly increasing in
$m$, so that for every $j\ge J_m$ and every $m\ge2$,
\[
 4L_m<\min_{1\le a<b\le m}|z_j^a-z_j^b|,
\]
with the separation condition understood as vacuous for $m=1$, and
\[
 \sum_{a=1}^m\Bigl(
 \|f_j(\cdot+z_j^a)-f^a\|_{L^1(B_{2L_m})}
 +\|f_j(\cdot+z_j^a)-f^a\|_{L^2(B_{2L_m})}\Bigr)<\frac1m.
\]
For $J_m\le j<J_{m+1}$ set $A_j=m$ and $L_j=L_m$.  Then
$A_j\to\infty$, $L_j\to\infty$, $A_j/L_j\to0$, and the displayed
separation, local-error, and profile-tail bounds all tend to zero.  If
$A_\infty<\infty$, keep $A_j=A_\infty$ for all large $j$ and choose
$L_j\uparrow\infty$ sufficiently slowly so that the same three bounds hold;
then again $A_j/L_j\to0$.

For each $a\le A_j$, radial Fubini and the uniform $L^1$ bound give a radius
$\rho_j^a\in[L_j,2L_j]$ for which
\[
 \int_{\partial B_{\rho_j^a}(z_j^a)}f_j^*\,d\mathcal H^{n-1}
 \le \frac{C}{L_j},
 \qquad
 |Df_j|(\partial B_{\rho_j^a}(z_j^a))=0.
\]
The selected balls are mutually disjoint. Defining $\pi_j^a$ and $\mathfrak r_j$ as in
the statement, the $BV$ product and trace formulas yield
\[
 \sum_{a\le A_j}|D\pi_j^a|+|D\mathfrak r_j|
 \le |Df_j|+C\frac{A_j}{L_j}
 =|Df_j|+o(1),
\]
which is \eqref{eq:cutting-upper-ledger}. Since
$L_j\le\rho_j^a\le2L_j$, the local-error bound on $B_{2L_j}$ controls the
part inside the cutting ball, while the profile-tail bound outside
$B_{L_j}$ controls the omitted part. Consequently
\[
 \sum_{a\le A_j}
 \bigl(\|\pi_j^a(\cdot+z_j^a)-f^a\|_{L^1}
      +\|\pi_j^a(\cdot+z_j^a)-f^a\|_{L^2}\bigr)\to0.
\]

\emph{The remainder.}
We claim $q(\mathfrak r_j)\to0$. Otherwise there are $\delta>0$ and unit cubes
$Q_{\xi_j}$ with $\int_{Q_{\xi_j}}\mathfrak r_j\ge\delta$. Since $\mathfrak r_j$ vanishes on
every selected ball and $\rho_j^a\to\infty$, the centers $\xi_j$ escape from
each fixed family $z_j^1,\dots,z_j^A$. By maximality of the extraction,
$\delta\le\Lambda_A$ for every fixed $A$. This contradicts either the
stopping condition in the finite-profile case or $\Lambda_A\to0$ in the
infinite-profile case. The vanishing criterion therefore gives
$\|\mathfrak r_j\|_{L^1}+\|\mathfrak r_j\|_{L^2}\to0$, proving
\eqref{eq:summed-global-profile-errors}.
\end{proof}

We record the consequences of the decomposition that will be used below.
By definition and disjointness of the balls,
\begin{gather}
 f_j=\sum_{a\leq A_j}\pi_j^a+\mathfrak r_j\quad\text{a.e.},
 \qquad \pi_j^a\pi_j^b=0\ (a\ne b),\quad \pi_j^a\mathfrak r_j=0,
 \label{eq:pointwise-profile-split}\\
 |z_j^a-z_j^b|\longrightarrow\infty\quad(a\ne b),
 \qquad \rho_j^a\longrightarrow\infty.
 \label{eq:profile-separation}
\end{gather}
Here $f^a$ denotes the $a$-th translated profile limit, characterized by
$\pi_j^a(\cdot+z_j^a)\to f^a$ in $L^1\cap L^2$.
Pointwise disjointness and \eqref{eq:summed-global-profile-errors} give
the exact scalar ledgers
\begin{align}
 \lim_j\|f_j\|_{L^1}&=\sum_{a=1}^{A_\infty}\|f^a\|_{L^1},
 \label{eq:profile-L1-ledger}\\
 \lim_j\|f_j\|_{L^2}^2&=\sum_{a=1}^{A_\infty}\|f^a\|_{L^2}^2.
 \label{eq:profile-L2-ledger}
\end{align}
Finally, lower semicontinuity and \eqref{eq:cutting-upper-ledger} imply
\begin{equation}\label{eq:profile-variation-lower-ledger}
 \sum_{a=1}^{A_\infty}|Df^a|(\mathbb R^n)
 +\liminf_j|D\mathfrak r_j|(\mathbb R^n)
 \leq\liminf_j|Df_j|(\mathbb R^n).
\end{equation}

Apply the lemma to $f_j=\Phi_{a,t_j}^{\varepsilon}$, where
$t_j\to\infty$ is arbitrary.  Put
\[
 \rho_{\varepsilon}(s):=\sup\{|x|:x\in E_s^{\varepsilon}\}.
\]
By construction, the unit extraction cube based at each $z_j^\alpha$ has
positive $\Phi_{a,t_j}^{\varepsilon}$-mass; it therefore meets
\[
 \{\Phi_{a,t_j}^{\varepsilon}>0\}=t_jE_{at_j}^{\varepsilon}.
\]
Since the diameter of the cube is fixed,
\begin{equation}\label{eq:centerlocation}
 |z_j^\alpha|\leq t_j\rho_{\varepsilon}(at_j)+O(1)=o(t_j).
\end{equation}
Consequently the translated natural domains exhaust $\mathbb R^n$.
The translated puncture is the point $-z_j^\alpha$; it is not assumed to
escape to infinity.

\subsection{Signed source compactness and exact profile calibration}
\begin{lemma}\label{lem:log-signed-source}
Let $t_j\to\infty$ and apply Lemma~\ref{lem:quantitative-bv-profiles} to
$f_j=\Phi_{a,t_j}^\varepsilon$.  After a common subsequence, every
nonzero profile $\phi^\alpha$ has a field
$\mathcal Z^\alpha\in L^\infty_{\rm loc}(\mathbb R^n;\mathbb R^n)$ and a source
\(w^\alpha\in L^1_{\rm loc}(\mathbb R^n)\) with
$|\mathcal Z^\alpha|\le1$ and
\[
 \operatorname{div}\mathcal Z^\alpha=-w^\alpha,
 \qquad
 w^\alpha=ae^{\phi^\alpha}
 \quad\text{a.e.\ on }\{\phi^\alpha>0\}.
\]
Moreover
\begin{equation}\label{eq:log-profile-lower}
 |D\phi^\alpha|(\mathbb R^n)
 \ge a\int\phi^\alpha e^{\phi^\alpha}\,dy.
\end{equation}
In fact equality holds for every profile, the residual variation tends to
zero, and each localized profile converges strictly in $BV$.
\end{lemma}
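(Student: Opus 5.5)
We pass to the limit in the rescaled equation along each profile center, obtain \eqref{eq:log-profile-lower} from the Anzellotti pairing, and then compare with the exact ledgers of Proposition~\ref{prop:log-ledgers} to force equality everywhere.

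\emph{Limit field and limit source.} Write $\tau_j^\alpha g:=g(\cdot+z_j^\alpha)$ for translation by the profile center. The rescaled fields satisfy $|Z_{t_j}^\varepsilon|\le1$ and $\operatorname{div}Z_{t_j}^\varepsilon=-\varepsilon w_{t_j}$ on $\Omega_{t_j}\setminus\{0\}$. By \eqref{eq:centerlocation}, the translated domains $\Omega_{t_j}-z_j^\alpha$ exhaust $\mathbb R^n$.

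\begin{enumerate}[label=\textnormal{(\arabic*)},leftmargin=2.5em]
\item After a diagonal subsequence, common to all countably many profiles, $\tau_j^\alpha Z_{t_j}^\varepsilon\weakstar\mathcal Z^\alpha$ in $L^\infty_{\rm loc}$ with $|\mathcal Z^\alpha|\le1$.
\item For the source I need uniform equi-integrability of $\tau_j^\alpha w_{t_j}$ on balls $B_\rho(y_0)$ that need not be centered at the puncture. The critical law, Proposition~\ref{prop:critical-volume-complete}, applies to both tails: on the bounded side the tail is empty at high levels, on the unbounded side $V_\pm(s)\le Cs^{-n}$. Changing variables $\tau=s/t$ gives $|\{|w_t|>\tau\}|\le C\tau^{-n}$ uniformly in $t$ for $\tau\ge\tau_0$.
\item The layer-cake formula then gives, for $k\ge\tau_0$,
\[
 \int_{B_\rho(y_0)}|w_t|\,\mathbf1_{\{|w_t|>k\}}\,dy
 \le k\,|\{|w_t|>k\}|+\int_k^\infty|\{|w_t|>\tau\}|\,d\tau
 \le Ck^{1-n},
\]
independently of $t$ and of the center $y_0$. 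Together with the trivial bound $k|B_\rho|$ on $\{|w_t|\le k\}$, this gives uniform $L^1_{\rm loc}$ bounds and equi-integrability, precisely because $n\ge2$.
\item Dunford--Pettis yields $\varepsilon\,\tau_j^\alpha w_{t_j}\rightharpoonup w^\alpha$ weakly in $L^1_{\rm loc}$. The translated puncture $-z_j^\alpha$ carries no atom: the cutoff error is $O(\eta^{n-1})$ as in Proposition~\ref{prop:log-ledgers}. Hence $\operatorname{div}\mathcal Z^\alpha=-w^\alpha$ distributionally on $\mathbb R^n$.
\item On $\{\tau_j^\alpha f_j>0\}$ one has $\varepsilon w_{t_j}=ae^{f_j}$. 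Since $\tau_j^\alpha f_j\to\phi^\alpha$ in $L^1_{\rm loc}$ and a.e., the weak limit equals $ae^{\phi^\alpha}$ a.e.\ on $\{\phi^\alpha>0\}$.
\end{enumerate}

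\emph{Lower bound.} For $\phi_M=\min\{\phi^\alpha,M\}$ multiplied by a cutoff $\eta_R$, we have $\phi_M\in BV\cap L^\infty$ and $\operatorname{div}\mathcal Z^\alpha\in L^1_{\rm loc}$. The Anzellotti pairing therefore gives
\[
 \int(\mathcal Z^\alpha,D(\eta_R\phi_M))=\int\eta_R\phi_M\,w^\alpha\,dy .
\]
Moreover $|(\mathcal Z^\alpha,D\phi_M)|\le|D\phi_M|$ as measures.
\begin{itemize}
\item The cutoff term is bounded by $R^{-1}M\|\phi^\alpha\|_{L^1}\to0$ as $R\to\infty$.
\item Then $M\to\infty$: monotone convergence applies since $w^\alpha\ge0$ on the support. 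Integrability of $\phi^\alpha e^{\phi^\alpha}$ follows from Fatou and the uniform distribution bound $|\{\Phi>\lambda\}|\le Ce^{-n\lambda}$ from the proof of Proposition~\ref{prop:log-ledgers}.
\end{itemize}
This gives $|D\phi^\alpha|(\mathbb R^n)\ge a\int\phi^\alpha e^{\phi^\alpha}$, which is \eqref{eq:log-profile-lower}.

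\emph{Equality, vanishing residual, strict convergence.} By \eqref{eq:log-pairing}, \eqref{eq:log-defect} and \eqref{eq:log-B-limit},
\[
 |Df_j|(\mathbb R^n)\to B_{\varepsilon,a}=\lim_j a\int f_je^{f_j}\,dy .
\]
The main obstacle is splitting the nonlinear quantity $\int f_je^{f_j}$ along the profile decomposition. I would use the pointwise disjoint decomposition \eqref{eq:pointwise-profile-split}, so that $\int f_je^{f_j}=\sum_\alpha\int\pi_j^\alpha e^{\pi_j^\alpha}+\int\mathfrak r_je^{\mathfrak r_j}$, with all terms vanishing off the supports.
\begin{itemize}
\item Truncate at height $\Lambda$. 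On $\{f_j\le\Lambda\}$ the integrand is Lipschitz in $f_j$, so the $L^1$ errors in \eqref{eq:summed-global-profile-errors} give convergence to $\sum_\alpha\int\phi^\alpha e^{\phi^\alpha}$ and kill the remainder.
\item The part above $\Lambda$ is bounded uniformly by $\int_\Lambda^\infty(1+\lambda)e^{\lambda}Ce^{-n\lambda}\,d\lambda\to0$ as $\Lambda\to\infty$, using $n\ge2$.
\end{itemize}
Hence $B_{\varepsilon,a}=\sum_\alpha a\int\phi^\alpha e^{\phi^\alpha}$. Combining with \eqref{eq:log-profile-lower} and \eqref{eq:profile-variation-lower-ledger},
\[
 B_{\varepsilon,a}\le\sum_\alpha|D\phi^\alpha|(\mathbb R^n)+\liminf_j|D\mathfrak r_j|(\mathbb R^n)\le B_{\varepsilon,a}.
\]
Therefore all inequalities are equalities: \eqref{eq:log-profile-lower} holds with equality for every profile, and $\liminf_j|D\mathfrak r_j|=0$. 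Applying the same argument to every subsequence upgrades this to $|D\mathfrak r_j|\to0$. Finally, \eqref{eq:cutting-upper-ledger} with lower semicontinuity for each $\pi_j^\alpha$ gives $|D\pi_j^\alpha|(\mathbb R^n)\to|D\phi^\alpha|(\mathbb R^n)$, which together with the $L^1$ convergence is strict $BV$ convergence of each localized profile.
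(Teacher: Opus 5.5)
Your proposal is correct and follows the paper's proof step for step: translated weak-star and Dunford--Pettis compactness with the $O(\eta^{n-1})$ puncture cutoff, the truncated Anzellotti calibration giving \eqref{eq:log-profile-lower}, the squeeze between $B_{\varepsilon,a}$ and \eqref{eq:profile-variation-lower-ledger}, and the overshoot argument for strict convergence. The only deviation is in closing the source budget: you split $\int f_je^{f_j}$ directly, using pointwise disjointness, truncation at height $\Lambda$ and the uniform tail $Ce^{-n\lambda}$, whereas the paper first proves the level exhaustion \eqref{eq:log-level-exhaustion} and then integrates it against $(1+\lambda)e^\lambda$. Your route is equally valid for this lemma, but the paper's route also yields \eqref{eq:log-level-exhaustion} itself, which Proposition~\ref{prop:profile-quantization} reuses.
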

\begin{proof}
The critical tail bounds for both signs give, whenever the opposite sign is
unbounded,
\[
 \int_{\{(w_t)^\pm>M\}}(w_t)^\pm\,dy\le CM^{1-n}.
\]
If the opposite sign is bounded near the puncture its rescaled negative
part is $O(t^{-1})$ on compact sets.  Hence the translated sources are uniformly integrable.  By
Banach--Alaoglu the translated flux fields converge weak-star locally in
$L^\infty$, while Dunford--Pettis gives weak $L^1_{\rm loc}$ convergence
of the translated sources.  To remove the translated puncture, test with
a radial cutoff that vanishes on $B_\eta$ and equals one outside
$B_{2\eta}$.  The flux-cutoff term is $O(\eta^{n-1})$ because
$|Z_t|\le1$, and the source contribution on $B_{2\eta}$ tends to zero
uniformly as $\eta\downarrow0$ by uniform integrability.  First letting
$j\to\infty$ and then $\eta\downarrow0$ therefore gives
$\operatorname{div}\mathcal Z^\alpha=-w^\alpha$ on all of $\mathbb R^n$.  On
$\{\phi^\alpha>0\}$, almost-everywhere profile convergence and uniform
integrability identify $w^\alpha=ae^{\phi^\alpha}$.

It remains to justify the global calibration inequality.  Let
$T_M(s)=\min\{s,M\}$ for $s\ge0$, and choose
$\chi_R\in C_c^1(B_{2R})$ with $\chi_R=1$ on $B_R$ and
$|D\chi_R|\le C/R$.  Since $|\mathcal Z^\alpha|\le1$, the Anzellotti pairing gives
\[
 |DT_M(\phi^\alpha)|(\mathbb R^n)
 \ge \int \chi_R\,(\mathcal Z^\alpha,DT_M(\phi^\alpha)).
\]
Using $\operatorname{div}\mathcal Z^\alpha=-w^\alpha$ to integrate the right-hand
side by parts, the outer-cutoff error is bounded by
\[
 \frac{C}{R}\int_{B_{2R}\setminus B_R}\phi^\alpha\,dy\longrightarrow0,
\]
because $\phi^\alpha\in L^1(\mathbb R^n)$.  Letting $R\to\infty$ and
then $M\to\infty$, and using $w^\alpha=ae^{\phi^\alpha}$ on
$\{\phi^\alpha>0\}$, gives \eqref{eq:log-profile-lower}.

It remains to close the nonlinear source budget.  This step is where the
logarithmic variable replaces the old $L^1+L^2$ algebraic ledger.  By the
pointwise disjoint decomposition,
\[
 |\{f_j>\lambda\}|
 =\sum_{\alpha\le A_j}|\{\pi_j^\alpha>\lambda\}|
  +|\{\mathfrak r_j>\lambda\}|.
\]
Since
\[
 \int_0^\infty|\{\mathfrak r_j>\lambda\}|\,d\lambda=\|\mathfrak r_j\|_{L^1}\to0,
\]
after one scalar subsequence the residual superlevel measure tends to zero
for almost every $\lambda>0$.  For every fixed profile $\alpha$, the
$L^1$ convergence
$\pi_j^\alpha(\cdot+z_j^\alpha)\to\phi^\alpha$ similarly gives
\[
 |\{\pi_j^\alpha>\lambda\}|\to
 |\{\phi^\alpha>\lambda\}|
\]
for almost every $\lambda$.

To pass from finitely many profiles to the whole family, fix such a
$\lambda>0$ and an integer $A$.  Chebyshev gives
\[
 \sum_{\alpha=A+1}^{A_j}|\{\pi_j^\alpha>\lambda\}|
 \le \lambda^{-1}
 \sum_{\alpha=A+1}^{A_j}\|\pi_j^\alpha\|_{L^1},
\]
while
\[
 \sum_{\alpha>A}|\{\phi^\alpha>\lambda\}|
 \le \lambda^{-1}
 \sum_{\alpha>A}\|\phi^\alpha\|_{L^1}.
\]
The exact $L^1$ ledger shows that the first right-hand side converges to
the second profile tail.  First let $j\to\infty$ with $A$ fixed and then
let $A\to A_\infty$.  Using the exact distribution
\eqref{eq:log-distribution} of $f_j=\Phi_{a,t_j}^\varepsilon$ gives, for
almost every $\lambda>0$,
\begin{equation}\label{eq:log-level-exhaustion}
 \sum_\alpha |\{\phi^\alpha>\lambda\}|
 =C_\varepsilon a^{-n}e^{-n\lambda}.
\end{equation}
Thus neither a diffuse residual nor an infinite profile tail can carry any
part of the limiting logarithmic distribution.  Since
\[
 \int\phi e^\phi
 =\int_0^\infty(1+\lambda)e^\lambda
   |\{\phi>\lambda\}|\,d\lambda,
\]
Tonelli and \eqref{eq:log-level-exhaustion} give
\[
 a\sum_\alpha\int\phi^\alpha e^{\phi^\alpha}
 =\frac{nC_\varepsilon}{(n-1)^2}a^{1-n}=B_{\varepsilon,a}.
\]
The variation upper ledger from
Lemma~\ref{lem:quantitative-bv-profiles} therefore gives
\[
 B_{\varepsilon,a}\ge
 \sum_\alpha|D\phi^\alpha|+\liminf|D\mathfrak r_j|
 \ge a\sum_\alpha\int\phi^\alpha e^{\phi^\alpha}
 =B_{\varepsilon,a}.
\]
Every inequality is an equality.  Hence $|D\mathfrak r_j|\to0$ and every profile is
exactly calibrated.  If one localized profile retained a positive
variation overshoot, lower semicontinuity for finitely many other profiles
and the cutting upper ledger would yield
$B_{\varepsilon,a}\ge B_{\varepsilon,a}+\delta$, a contradiction.  Thus
the localized profiles converge strictly in $BV$.
\end{proof}

\subsection{Finite-value stress passage}
\begin{lemma}\label{lem:point-CMC-removable}
Let $E\subset\mathbb R^n$, $n\ge2$, have finite measure and finite
perimeter.  Suppose that for one point $p$ and one constant $s>0$,
\[
 \int_{\partial^*E}\operatorname{div}_{\partial^*E}X\,d\mathcal H^{n-1}
 =s\int_{\partial^*E}X\cdot\nu_E\,d\mathcal H^{n-1}
\]
for every $X\in C_c^1(\mathbb R^n\setminus\{p\};\mathbb R^n)$.  Then the
same identity holds for every $X\in C_c^1(\mathbb R^n;\mathbb R^n)$.
\end{lemma}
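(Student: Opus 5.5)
Given $X\in C_c^1(\mathbb R^n;\mathbb R^n)$, we split it with a capacity-type cutoff at $p$ and show that both sides of the identity are continuous under that cutoff. The error terms are controlled because the codimension of a point inside an $(n-1)$-dimensional boundary is $n-1\ge1$, and the perimeter measure has bounded density at $p$.

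\emph{Cutoff choice.} For $n\ge3$ we use a standard cutoff $\eta_\delta$ with $\eta_\delta=0$ on $B_\delta(p)$, $\eta_\delta=1$ outside $B_{2\delta}(p)$, and $|D\eta_\delta|\le C/\delta$. For $n=2$ we use a logarithmic cutoff, $\eta_\delta=0$ on $B_{\delta^2}(p)$, $\eta_\delta=1$ outside $B_\delta(p)$, and $|D\eta_\delta|\le C/(|x-p|\log(1/\delta))$. Replacing $\eta_\delta$ by a mollification makes $\eta_\delta X$ an admissible $C^1_c(\mathbb R^n\setminus\{p\})$ field. Since $\operatorname{div}_{\partial^*E}(\eta_\delta X)=\eta_\delta\operatorname{div}_{\partial^*E}X+D^{\partial^*E}\eta_\delta\cdot X$, the hypothesis gives
\[
 \int_{\partial^*E}\eta_\delta\operatorname{div}_{\partial^*E}X\,d\mathcal H^{n-1}
 +\int_{\partial^*E}D^{\partial^*E}\eta_\delta\cdot X\,d\mathcal H^{n-1}
 =s\int_{\partial^*E}\eta_\delta X\cdot\nu_E\,d\mathcal H^{n-1}.
\]
The first and third terms converge to the desired ones by dominated convergence, because $P(E)<\infty$ and $\mathcal H^{n-1}(\partial^*E\cap\{p\})=0$. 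It remains to show that the middle term tends to zero. It is bounded by $\|X\|_\infty\int_{\partial^*E}|D\eta_\delta|\,d\mathcal H^{n-1}$.

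\emph{Density bound.} This is the main step. I would first prove that $\mathcal H^{n-1}(\partial^*E\cap B_r(p))\le Cr^{n-1}$ for small $r$. The identity holds for fields supported away from $p$, so $E$ has generalized mean curvature $s$ on $\mathbb R^n\setminus\{p\}$. I would run the monotonicity argument with the radial field $X=\eta_\epsilon(x)\,\gamma(|x-p|)(x-p)$, where $\gamma$ is a cutoff in the radius, and let $\epsilon\to0$ at the inner radius. The inner cutoff term is bounded by $\frac{C}{\epsilon}\epsilon\,P(E;B_{2\epsilon}(p))=C\,P(E;B_{2\epsilon}(p))\to0$, because $|x-p|\,|D\eta_\epsilon|\le C$ and $P(E)<\infty$. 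This yields the usual monotonicity of $r^{1-n}P(E;B_r(p))e^{sr}$ with the bounded-mean-curvature correction. This is the same mechanism as the dilation cutoff estimate before Proposition~\ref{prop:critical-volume-complete}. Monotonicity then gives the density bound $P(E;B_r(p))\le C(s,P(E))\,r^{n-1}$, with the constant obtained by comparison at a fixed radius $r_1$ using $P(E)$.

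\emph{Conclusion.} For $n\ge3$,
\[
 \int_{\partial^*E}|D\eta_\delta|\le \frac C\delta P(E;B_{2\delta}(p))\le C\delta^{n-2}\to0.
\]
For $n=2$, dyadic summation of the density bound gives
\[
 \int_{\partial^*E\cap(B_\delta\setminus B_{\delta^2})}\frac{d\mathcal H^1}{|x-p|}\le C\log(1/\delta),
\]
so the cutoff term is bounded by $C/\log(1/\delta)\cdot\log(1/\delta)$. This is only $O(1)$, so for $n=2$ I would switch cutoffs.

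\emph{The planar case.} Take $\eta_\delta$ to be the standard cutoff and use only $P(E;B_{2\delta}(p))\to0$. Now $D^{\partial^*E}\eta_\delta\cdot X=\eta_\delta'(|x-p|)\,\big(\tfrac{x-p}{|x-p|}\big)^{T}\cdot X$. I would split $X=X(p)+(X-X(p))$. The variable part is $O(\delta)$, and the cutoff integral of it is then at most $C\,P(E;B_{2\delta})\to0$. For the constant part, the term $\int D^{\partial^*E}\eta_\delta\cdot e$ is controlled by applying the hypothesis to the field $(1-\eta_\delta)\varphi e$, where $\varphi\in C^1_c$ is a fixed cutoff equal to $1$ near $p$. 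This shows the term equals quantities that are $O(P(E;B_{2\delta}(p)))$ up to a term independent of $\delta$. Letting $\delta\to0$ forces that residual to be zero.

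I expect this planar step to be the main obstacle. If the constant-vector argument fails, I would instead average the estimate over $\delta\in[\delta_0^2,\delta_0]$ with the logarithmic weight and exploit the $o(1)$ density near $p$ given by the monotonicity formula.
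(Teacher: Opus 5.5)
Your argument for $n\ge3$ is sound, and it takes a different route from the paper. The radial field $\gamma(|x-p|)(x-p)$ vanishes at $p$, so the inner cutoff costs only $C\,P(E;B_{2\epsilon}(p))\to0$. Monotonicity then gives $P(E;B_r(p))\le Cr^{n-1}$, and the cutoff error is $O(\delta^{n-2})$.

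\textbf{The gap is the planar case.} Your removal of the variable part $X-X(p)$ is correct; it is exactly the paper's first step. What remains is $\lim_{\delta\to0}\int_{\partial^*E}D^{\partial^*E}\eta_\delta\cdot e\,d\mathcal H^1$, which is precisely a possible point force $b\cdot e$ at $p$, and your manipulation does not evaluate it.

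First, $(1-\eta_\delta)\varphi e$ equals $e$ on a neighbourhood of $p$. It is not admissible, so the hypothesis cannot be applied to it. If you apply the hypothesis instead to the admissible field $\eta_\delta\varphi e$, you get, for small $\delta$,
\[
 \int_{\partial^*E}D^{\partial^*E}\eta_\delta\cdot e\,d\mathcal H^{1}
 =s\int_{\partial^*E}\eta_\delta\varphi\,e\cdot\nu_E\,d\mathcal H^{1}
 -\int_{\partial^*E}\eta_\delta\,D^{\partial^*E}\varphi\cdot e\,d\mathcal H^{1}.
\]
The right side converges to $s\int_{\partial^*E}\varphi\,e\cdot\nu_E-\int_{\partial^*E}\operatorname{div}_{\partial^*E}(\varphi e)$. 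This shows the limit exists and is independent of $\delta$, but nothing forces it to vanish.

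No argument localized near $p$ can force it. Take a corner of a lens, the intersection of two disks of radius $1/s$. The identity holds for every field supported in a small punctured neighbourhood of the corner, yet there is a nonzero point force equal to the sum of the two outward unit conormals. Your logarithmic-averaging fallback is blocked by the same example. Note also that your proof never uses $|E|<\infty$, which is a sign that the planar case cannot close this way.

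\textbf{The missing idea is a global translation test, which is what the paper does.} The residual
\[
 s\int_{\partial^*E}\varphi\,e\cdot\nu_E\,d\mathcal H^{1}-\int_{\partial^*E}\operatorname{div}_{\partial^*E}(\varphi e)\,d\mathcal H^{1}
\]
is the same for every $\varphi\in C^1_c$ with $\varphi=1$ near $p$, because the difference of two such fields is admissible. Take $\varphi=\eta_R$ with $\eta_R=1$ on $B_R(p)$, $\operatorname{spt}\eta_R\subset B_{2R}(p)$, and $|D\eta_R|\le C/R$.
\begin{itemize}
\item Since $\operatorname{div}_{\partial^*E}(\eta_R e)=D^{\partial^*E}\eta_R\cdot e$, the tangential term is at most $C|e|P(E)/R$.
\item Finite-perimeter Gauss--Green gives $\int_{\partial^*E}\eta_R\,e\cdot\nu_E=\int_E e\cdot D\eta_R\,dx$, which is at most $C|e|\,|E|/R$.
\end{itemize}
Letting $R\to\infty$ kills the residual.

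Once you add this step, the density bound and monotonicity are unnecessary in every dimension. The paper shows the defect distribution has the form $b\cdot X(p)$ using only $P(E;B_{2r}(p))\to0$, since $|D(\chi_rX)|\le C\|DX\|_\infty$ when $X(p)=0$. It then kills $b$ by the large-cutoff test, uniformly for all $n\ge2$.
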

\begin{proof}
Let
\[
 \mu=\mathcal H^{n-1}\llcorner\partial^*E,
 \qquad
 A=(\Id-\nu_E\otimes\nu_E)\mu,
\]
and define the order-one distribution
\[
 \Lambda(X)=\int A:DX-s\int X\cdot\nu_E\,d\mu.
\]
By hypothesis $\operatorname{spt}\Lambda\subset\{p\}$.

We first show that no derivative of a Dirac mass can occur.  Suppose
$X(p)=0$, and choose $\chi_r=1$ on $B_r(p)$, supported in $B_{2r}(p)$,
with $|D\chi_r|\le C/r$.  Since $\Lambda$ is supported at $p$,
\[
 \Lambda(X)=\Lambda(\chi_rX).
\]
The mean-value estimate gives
$\sup_{B_{2r}(p)}|X|\le2r\|DX\|_\infty$.  Consequently
\[
 |\Lambda(X)|
 \le C(1+sr)\|DX\|_\infty\,\mu(B_{2r}(p)).
\]
The perimeter measure has no atom at a point, so
$\mu(B_{2r}(p))\to0$.  Hence $\Lambda(X)=0$ whenever $X(p)=0$.
Therefore there is a vector $b\in\mathbb R^n$ such that
\[
 \Lambda(X)=b\cdot X(p).
\]

It remains to kill this possible Dirac force.  Fix $e\in\mathbb R^n$ and
let $\eta_R=1$ on $B_R(p)$, vanish outside $B_{2R}(p)$, and satisfy
$|D\eta_R|\le C/R$.  Testing with $X=\eta_Re$ gives
\[
 \left|\int A:D(\eta_Re)\right|\le \frac{C|e|}{R}P(E)\longrightarrow0.
\]
By finite-perimeter Gauss--Green,
\[
 \int_{\partial^*E}\eta_Re\cdot\nu_E\,d\mu
 =\int_E e\cdot D\eta_R\,dx,
\]
whose absolute value is at most $C|e||E|/R\to0$.  Thus
$b\cdot e=0$ for every $e$, and hence $b=0$.  Therefore
$\Lambda\equiv0$, which is the global CMC identity.
\end{proof}

\begin{lemma}\label{lem:log-CMC-passage}
For every nonzero logarithmic profile and for almost every $s>a$, the
finite-perimeter set
\[
 F_s=\{ae^{\phi}>s\}
\]
satisfies
\begin{equation}\label{eq:log-CMC}
 \int_{\partial^*F_s}\operatorname{div}_{\partial^*F_s}X\,d\mathcal H^{n-1}
 =s\int_{\partial^*F_s}X\cdot\nu_s\,d\mathcal H^{n-1}
\end{equation}
for every $X\in C_c^1(\mathbb R^n;\mathbb R^n)$.
\end{lemma}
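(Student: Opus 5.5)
Our plan is to pass to the limit in the stress identity on a finite value band, working with the translated rescalings $\varepsilon w_{t_j}(\cdot+z_j^\alpha)$. Then we disintegrate the limiting identity in the value variable, and finally use Lemma~\ref{lem:point-CMC-removable} to remove the translated puncture.

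\textbf{Rescaled band identity.} For the rescaled solution, the stress $\mathsf S_t=\frac{Dw_t\otimes Dw_t}{W_t}-(W_t-\frac{t^4w_t^2}{2})\Id$, suitably normalized by $t^{-2}$, is divergence free away from the puncture. Fix $a<s_1<s_2$ and a cutoff $\psi(\varepsilon w_t)$ supported in $(s_1,s_2)$, and use the test $X\psi(\varepsilon w_t)$ with $X\in C_c^1(\mathbb R^n\setminus\{p\};\mathbb R^n)$, where $p$ is the limit of the translated puncture $-z_j^\alpha$. The support of $X$ stays away from $p$, so no cutoff error arises at all; in particular the planar defect does not enter at this stage. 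Coarea then turns the band identity into an integral in the level $\tau$ of
\[
\int_{\{\varepsilon w_t=\tau\}}\Bigl(\operatorname{div}_{\Sigma}X-\tau X\cdot\nu\Bigr)\,d\mathcal H^{n-1},
\]
plus error terms. These errors come from the mismatch between $\mathsf S_t\nu$ and $(\tau^2/2)\nu$, and from the tangential-projection defect $\nu\otimes\nu$ versus $\mathcal A\otimes\mathcal A$. After rescaling they are controlled by $\alpha$, $1-\sigma$ and $\alpha^2/\sigma$. By \eqref{eq:tail-PJ}, \eqref{eq:tail-R-bound} and the rescaled version of \eqref{eq:log-defect}, their total over the band is $O(t^{-2})$, or at any rate $o(1)$. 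An equivalent route is to write the identity as $\int\psi(\varepsilon w_t)\,(\Id-\mathcal A_t\otimes\mathcal A_t):DX\,|D\varepsilon w_t|\,dy$ plus a source term $\int \Psi(\varepsilon w_t)\,\operatorname{div}X$, where $\Psi'(\tau)=\tau\psi(\tau)$ is obtained from the $\operatorname{div}(\mathcal H\,\Id)$ part.

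\textbf{Limit passage.} This is the main step. We use the strict $BV$ convergence of the localized profiles from Lemma~\ref{lem:log-signed-source}. Since the profiles are calibrated, the defect $|Dw_t|-(\mathcal A_t,Dw_t)$ tends to zero, so $\mathcal A_t$ is asymptotically the unit normal $-D\phi/|D\phi|$ in measure with respect to $|D\Phi|$. Reshetnyak continuity, applied to the functions $\psi(ae^{\Phi})$ and their $BV$ derivatives $ae^{\Phi}\psi'\,D\Phi$, which converge strictly, lets us pass to the limit in the integrand $(\Id-\nu\otimes\nu):DX$, which is continuous and positively $0$-homogeneous in the normal. The source term passes by $L^1$ convergence. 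The outcome is the identity
\[
\int \psi(ae^\phi)(\Id-\nu\otimes\nu):DX\,d|D(ae^\phi)|=-\int\Psi(ae^\phi)\operatorname{div}X\,dy .
\]
Coarea then rewrites both sides as $\int_a^\infty \psi(\tau)\bigl[\int_{\partial^*F_\tau}\operatorname{div}_{\partial^*F_\tau}X-\tau X\cdot\nu_\tau\bigr]d\tau=0$, where Fubini together with $\Psi'=\tau\psi$ converts $\int\Psi\operatorname{div}X$ into $\int\tau\psi(\tau)\int_{F_\tau}\operatorname{div}X$. Since $\psi$ is arbitrary, the bracket vanishes for a.e.\ $\tau$. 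To get the null set independent of $X$, we run the argument over a countable dense family of fields $X$ and use continuity in $X$.

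\textbf{Removing the puncture and the main obstacle.} Lemma~\ref{lem:point-CMC-removable} upgrades the identity from $C_c^1(\mathbb R^n\setminus\{p\})$ to all $X\in C_c^1(\mathbb R^n)$, since $F_\tau$ has finite measure and finite perimeter for a.e.\ $\tau$. This yields \eqref{eq:log-CMC}. We expect the main obstacle to be the Reshetnyak step. That step requires strict convergence of the relevant truncated value band, not just of $\phi$ itself. We would obtain it by applying strict convergence of the truncations $\min\{\max\{\phi,\lambda_1\},\lambda_2\}$, deduced from strict convergence of $\phi$ via the coarea additivity of the variation across levels. We also need that the calibration defect, which is small, controls the difference $|\mathcal A_t-\nu|$ in $L^1(|D\Phi|)$. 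This follows from $1-\sigma\le\alpha/\sigma$ and from $|\mathcal A_t\cdot\nu|=\sigma$, after which the remaining tangential part is bounded by $\sqrt{1-\sigma^2}$.
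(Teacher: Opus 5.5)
Your proposal is correct. It follows the paper's architecture: the rescaled stress $\mathbb T_j$ on a finite value band, passage to the limit by strict $BV$ convergence and Reshetnyak, disintegration in the value variable, and Lemma~\ref{lem:point-CMC-removable} at the limiting puncture. What differs is how you localize in the value variable.

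\textbf{Comparison with the paper.} The paper works on a sharp band $\{s<w_j<R\}$ between two selected good levels. This produces traction terms on $\{w_j=s\}$ and $\{w_j=R\}$, so the paper must:
\begin{enumerate}
\item choose $s,R$ at which the level sets converge strictly, and use their bounded perimeters to kill the $t_j^{-4}/\mathfrak a_j$ corrections;
\item pass the tangential term levelwise by Reshetnyak, then by Vitali in the level parameter;
\item compare with a separate layer-cake identity in order to differentiate in $(s,R)$.
\end{enumerate}
Your smooth weight $\psi(w_j)$ removes all boundary terms. The only correction, $\psi'(w_j)\,t_j^{-4}\mathfrak a_j^{-1}X\cdot Dw_j$, is $O(t_j^{-4})$ pointwise. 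One Reshetnyak application followed by coarea then gives $\int\psi(\tau)\bigl[\mathcal A^{\rm tan}(\tau)-\tau\mathcal B^{\rm nor}(\tau)\bigr]\,d\tau=0$ directly. You also treat all $n\ge2$ uniformly through the point-removal lemma, whereas the paper uses a puncture cutoff when $n\ge3$. Both routes rest on the same input: $L^1(d\lambda)$ convergence of the level perimeters, deduced from local strict convergence of $\Psi_j$.

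\textbf{Slips to fix.}
\begin{enumerate}
\item \emph{Reshetnyak step.} The measure in your identity is $\psi(w_j)|Dw_j|$. This is $|D\Lambda(w_j)|$ with $\Lambda'=\psi\ge0$. It is not $|D\psi(w_j)|=|\psi'(w_j)|\,|Dw_j|$, so applying Reshetnyak to $\psi(ae^{\Psi_j})$ passes the wrong quantity to the limit. Instead, apply strict convergence and Reshetnyak to $\Lambda(w_j)=\Lambda(ae^{\Psi_j})$. This is a monotone Lipschitz function of $\Psi_j$, so its strict convergence follows from that of $\Psi_j$ by your coarea-splitting argument. Restricting to nonnegative $\psi$ is enough for the a.e.\ conclusion.
\item \emph{Sign.} The displayed limit identity should carry $+\int\Psi(ae^\phi)\operatorname{div}X$. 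Your coarea bracket already has the correct sign.
\item \emph{Escaping puncture.} Record the case $|\xi_j|\to\infty$ along a subsequence; there no removal step is needed.
\item \emph{Calibration remark.} The appeal to calibration is unnecessary. $\mathcal A_t$ is exactly parallel to $Dw_t$, and all replacement errors ($\mathfrak a_t$ versus $|Dw_t|$, $\mathcal A_t\otimes\mathcal A_t$ versus $\nu\otimes\nu$) are $O(t^{-2})$ pointwise on the band.
\end{enumerate}
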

\begin{proof}
Let $t_j\to\infty$ be the sequence which produces the profile, let
$z_j=o(t_j)$ be its translation, and set
\[
 \Psi_j(y)=\Phi_{a,t_j}^{\varepsilon}(y+z_j),\qquad
 w_j(y)=\varepsilon w_{t_j}(y+z_j).
\]
On the positive profile region the limiting source is
$w=ae^{\phi}$.  Put
\[
 \mathfrak a_j=(t_j^{-4}+|Dw_j|^2)^{1/2},
\]
and introduce the rescaled stress
\begin{equation}\label{eq:finite-value-rescaled-stress}
 \mathbb T_j
 =\frac{Dw_j\otimes Dw_j}{\mathfrak a_j}
  -\left(\mathfrak a_j-\frac{w_j^2}{2}\right)\Id.
\end{equation}
Away from the translated puncture $\xi_j=-z_j$ one has
$\operatorname{div}\mathbb T_j=0$.  At a regular level $\{w_j=r\}$,
with $\nu_{j,r}$ the outer normal of $\{w_j>r\}$,
\begin{equation}\label{eq:finite-value-traction}
 \mathbb T_j\nu_{j,r}
 =\left(\frac{r^2}{2}-\frac{t_j^{-4}}{\mathfrak a_j}\right)
   \nu_{j,r}.
\end{equation}

\smallskip
\noindent\textbf{Step 1: simultaneous good physical levels.}
Choose a smooth bounded exhaustion $\mathcal O_m\Subset \mathcal O_{m+1}$ of $\mathbb R^n$
with $|D\phi|(\partial \mathcal O_m)=0$.  Local strict $BV$ convergence of
$\Psi_j$ gives
\[
 \Psi_j\to\phi\quad\hbox{in }L^1(\mathcal O_m),\qquad
 |D\Psi_j|(\mathcal O_m)\to|D\phi|(\mathcal O_m).
\]
For $\lambda>0$ put
\[
 \mathcal P_{j,m}(\lambda)=P(\{\Psi_j>\lambda\};\mathcal O_m),\qquad
 \mathcal P_m(\lambda)=P(\{\phi>\lambda\};\mathcal O_m).
\]
Coarea and lower semicontinuity imply
$\mathcal P_m\le\liminf_j\mathcal P_{j,m}$ a.e., while the strict convergence gives
$\int\mathcal P_{j,m}\to\int\mathcal P_m$.  Since
$\mathcal P_m\le\liminf_j\mathcal P_{j,m}$ a.e.,
$(\mathcal P_m-\mathcal P_{j,m})_+\to0$ a.e.; moreover
$0\le(\mathcal P_m-\mathcal P_{j,m})_+\le\mathcal P_m\in L^1(d\lambda)$.
Dominated convergence therefore gives
\[
 \int (\mathcal P_m-\mathcal P_{j,m})_+\,d\lambda\longrightarrow0.
\]
Using the convergence of the total integrals,
\[
 \int (\mathcal P_{j,m}-\mathcal P_m)_+\,d\lambda
 =\int(\mathcal P_{j,m}-\mathcal P_m)\,d\lambda
  +\int(\mathcal P_m-\mathcal P_{j,m})_+\,d\lambda
 \longrightarrow0.
\]
Therefore
\[
 \mathcal P_{j,m}\longrightarrow\mathcal P_m\qquad\hbox{in }L^1(d\lambda).
\]
After one scalar diagonal extraction, for every $m$ and for almost every
$\lambda>0$,
\begin{equation}\label{eq:finite-value-level-strict}
 \mathbf1_{\{\Psi_j>\lambda\}}\to\mathbf1_{\{\phi>\lambda\}}
 \quad\hbox{in }L^1(\mathcal O_m),\qquad
 P(\{\Psi_j>\lambda\};\mathcal O_m)\to P(\{\phi>\lambda\};\mathcal O_m).
\end{equation}
Since $r=ae^{\lambda}$, this gives one common full-measure set of physical
values $r>a$ on which the vector measures
$D\mathbf1_{\{w_j>r\}}$ converge strictly on every $\mathcal O_m$ to
$D\mathbf1_{F_r}$.  We also remove the countable union of the null sets of
critical values of the smooth functions $w_j$.

\smallskip
\noindent\textbf{Step 2: the prelimit finite-band stress identity.}
Fix two such values $a<s<R<\infty$ and a compactly supported
$C^1$ vector field $X$.  If $n\ge3$, remove the translated puncture by a
radial cutoff.  In physical variables the band corresponds, for each fixed
$j$, to the finite signed height interval $st_j<\varepsilon u<Rt_j$; no
bound uniform in $j$ is asserted.  The finite-band stress estimate gives,
with $j$ fixed before the puncture radius is sent to zero, a cutoff cost
bounded by
\[
 C_jt_j^{-1}\eta^{n-2}\longrightarrow0
 \qquad(\eta\downarrow0).
\]
If $n=2$, pass to a further subsequence so that either $|\xi_j|\to\infty$ or
$\xi_j\to\xi_\infty$.  In the first case a fixed compact support eventually misses
the puncture.  In the second case we first take
$X\in C_c^1(\mathbb R^2\setminus\{\xi_\infty\};\mathbb R^2)$, so again no cutoff
is needed.  Thus in all cases presently allowed the divergence theorem
applied to $\mathbb T_jX$ on $\{s<w_j<R\}$ yields
\begin{align}
 &\int_{\{s<w_j<R\}}
 \left[\left(\frac{Dw_j\otimes Dw_j}{\mathfrak a_j}-\mathfrak a_jI\right):DX
       +\frac{w_j^2}{2}\operatorname{div}X\right]dy \notag\\
 &\quad=\int_{\{w_j=s\}}
 \left(\frac{s^2}{2}-\frac{t_j^{-4}}{\mathfrak a_j}\right)
 X\cdot\nu_{j,s}\,d\mathcal H^{n-1}
 -\int_{\{w_j=R\}}
 \left(\frac{R^2}{2}-\frac{t_j^{-4}}{\mathfrak a_j}\right)
 X\cdot\nu_{j,R}\,d\mathcal H^{n-1}.
\label{eq:finite-value-prelimit-stress}
\end{align}
The elementary estimates
\begin{equation}\label{eq:finite-value-a-defect}
 0\le\mathfrak a_j-|Dw_j|\le t_j^{-2},\qquad
 0\le\frac{t_j^{-4}}{\mathfrak a_j}\le t_j^{-2}
\end{equation}
show that replacing the gradient part in the bulk by
\[
 \left(\frac{Dw_j\otimes Dw_j}{|Dw_j|}-|Dw_j|I\right):DX
\]
costs $o(1)$, and the two boundary corrections in
\eqref{eq:finite-value-prelimit-stress} also tend to zero because the
selected-level perimeters stay bounded.  For the source term we use only
the logarithmic-profile convergence, not an unproved strong convergence of
the full sources.  On $\{w_j>a\}$ one has $w_j=ae^{\Psi_j}$ and
$\Psi_j\to\phi$ in measure locally.  Since the selected values $s,R$
belong to the common good set,
\[
 \mathbf1_{\{s<w_j<R\}}\longrightarrow
 \mathbf1_{\{s<ae^\phi<R\}}
 \quad\text{in }L^1_{\rm loc}.
\]
Thus
\[
 \mathbf1_{\{s<w_j<R\}}w_j^2
 =\mathbf1_{\{s<w_j<R\}}a^2e^{2\Psi_j}
 \longrightarrow
 \mathbf1_{\{s<w<R\}}w^2
 \quad\text{in }L^1_{\rm loc},
\]
because these functions are bounded by $R^2$ on the fixed value band.
This is exactly the source convergence required in the bulk term.

\smallskip
\noindent\textbf{Step 3: passage of the tangential stress.}
For a selected physical level $r$ put
\[
 \mathcal A_j^{\rm tan}(r)=\int_{\partial^*\{w_j>r\}}
       \operatorname{div}_{\partial^*\{w_j>r\}}X\,d\mathcal H^{n-1},
 \qquad
 \mathcal B_j^{\rm nor}(r)=\int_{\partial^*\{w_j>r\}}X\cdot\nu_{j,r}\,d\mathcal H^{n-1}.
\]
Strict convergence in \eqref{eq:finite-value-level-strict} and
Reshetnyak continuity give $\mathcal A_j^{\rm tan}(r)\to\mathcal A^{\rm tan}(r)$ and $\mathcal B_j^{\rm nor}(r)\to\mathcal B^{\rm nor}(r)$ for
a.e.\ $r\in[s,R]$, where the same formulas with $F_r$ define $\mathcal A^{\rm tan},\mathcal B^{\rm nor}$.
Moreover
\[
 |\mathcal A_j^{\rm tan}(r)|\le C_XP(\{w_j>r\};\mathcal O_m),
\]
and the perimeter densities converge in $L^1([s,R])$.  They are therefore
uniformly integrable, and Vitali's theorem gives
\[
 \int_s^R\mathcal A_j^{\rm tan}(r)\,dr\longrightarrow\int_s^R\mathcal A^{\rm tan}(r)\,dr.
\]
Coarea applied to the gradient part of
\eqref{eq:finite-value-prelimit-stress} consequently yields the limit
identity
\begin{equation}\label{eq:finite-value-integrated-limit}
 -\int_s^R\mathcal A^{\rm tan}(r)\,dr
 +\frac12\int_{F_s\setminus F_R}w^2\operatorname{div}X\,dy
 =\frac{s^2}{2}\mathcal B^{\rm nor}(s)-\frac{R^2}{2}\mathcal B^{\rm nor}(R).
\end{equation}

\smallskip
\noindent\textbf{Step 4: differentiation in the value parameter.}
Finite-perimeter Gauss--Green and layer cake give the exact identity
\begin{equation}\label{eq:finite-value-layercake-identity}
 \frac12\int_{F_s\setminus F_R}w^2\operatorname{div}X\,dy
 -\frac{s^2}{2}\mathcal B^{\rm nor}(s)+\frac{R^2}{2}\mathcal B^{\rm nor}(R)
 =\int_s^R r\mathcal B^{\rm nor}(r)\,dr.
\end{equation}
Comparing \eqref{eq:finite-value-integrated-limit} and
\eqref{eq:finite-value-layercake-identity} gives
\[
 \int_s^R\bigl(\mathcal A^{\rm tan}(r)-r\mathcal B^{\rm nor}(r)\bigr)\,dr=0
\]
for every pair $s<R$ in the common full-measure set.  Hence
$\mathcal A^{\rm tan}(r)=r\mathcal B^{\rm nor}(r)$ for almost every $r>a$.  Taking a countable dense family of
compactly supported test fields and then using continuity in the $C^1$
norm gives \eqref{eq:log-CMC} simultaneously for every test field.

In the planar case with $\xi_j\to\xi_\infty$ the preceding argument has so far
proved \eqref{eq:log-CMC} for test fields supported away from $\xi_\infty$.
Lemma~\ref{lem:point-CMC-removable} removes this one-point defect and
completes the proof.
\end{proof}

\subsection{Equal-ball quantization and exhaustion}
\begin{proposition}\label{prop:profile-quantization}
For every sequence $t_j\to\infty$, after a subsequence there are finitely
many nonzero logarithmic profiles.  For almost every $s>a$, each
$F_s^\alpha=\{ae^{\phi^\alpha}>s\}$ is a finite union of essentially
disjoint balls of radius $k/s$, and
\begin{equation}\label{eq:quantization}
 C_\varepsilon=m_\varepsilon\omega_nk^n,
 \qquad m_\varepsilon\in\mathbb N.
\end{equation}
Moreover the union of all profile balls exhausts the prelimit superlevel in
symmetric difference.
\end{proposition}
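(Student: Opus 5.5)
Fix a subsequence along which Lemma~\ref{lem:log-signed-source} and Lemma~\ref{lem:log-CMC-passage} hold for every profile $\phi^\alpha$ simultaneously. Fix a good value $s>a$ in the common full-measure set. By Lemma~\ref{lem:log-CMC-passage}, $F_s^\alpha$ is a set of finite measure and finite perimeter whose distributional mean curvature is the constant $s$ (in the normalization where $\operatorname{div}_{\partial^*F}X$ integrates to $H\,X\cdot\nu$, and $H=(n-1)/\varrho$ for a ball of radius $\varrho$).

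We then apply the Delgadino--Maggi theorem. It identifies $F_s^\alpha$, up to null sets, as a finite union of essentially disjoint balls of common radius $(n-1)/s=k/s$. Consequently
\[
 |F_s^\alpha|=N_\alpha(s)\,\omega_nk^ns^{-n},\qquad N_\alpha(s)\in\mathbb N,\ N_\alpha(s)\ge1 \text{ when } |F_s^\alpha|>0.
\]

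Next we insert this into the exhaustion identity \eqref{eq:log-level-exhaustion}. With $s=ae^\lambda$ it reads
\[
 \sum_\alpha|F_s^\alpha|=C_\varepsilon s^{-n}.
\]
Multiplying by $s^n$ gives
\[
 \sum_\alpha N_\alpha(s)\,\omega_nk^n=C_\varepsilon
\]
for almost every $s>a$. Each nonzero profile has $|F_s^\alpha|>0$ for $s$ slightly above $a$, so $N_\alpha(s)\ge1$ there. Hence the number of nonzero profiles is at most $C_\varepsilon/(\omega_nk^n)<\infty$. The integer
\[
 m_\varepsilon(s):=\sum_\alpha N_\alpha(s)=C_\varepsilon/(\omega_nk^n)
\]
is independent of $s$, and this proves \eqref{eq:quantization}. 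One should also check that $m_\varepsilon$ does not depend on the chosen $a$ or subsequence. This follows because $C_\varepsilon$ is the fixed constant of Proposition~\ref{prop:critical-volume-complete}.

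For exhaustion in symmetric difference, we combine three facts:
\begin{itemize}
\item the remainder satisfies $\|\mathfrak r_j\|_{L^1}\to0$;
\item the profile pieces converge, $\pi_j^\alpha(\cdot+z_j^\alpha)\to\phi^\alpha$ in $L^1$, and there are now finitely many of them;
\item the level convergence \eqref{eq:finite-value-level-strict} holds at good $\lambda$.
\end{itemize}
Together these give
\[
 \Bigl|\{f_j>\lambda\}\,\triangle\,\bigcup_\alpha(z_j^\alpha+\{\phi^\alpha>\lambda\})\Bigr|\to0.
\]
The point is that the measures on both sides match exactly in the limit, by \eqref{eq:log-level-exhaustion}, and each piece converges locally in $L^1$.

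\textbf{Main obstacle.} The hard step is the Delgadino--Maggi application. It must be verified that the CMC identity from Lemma~\ref{lem:log-CMC-passage} is exactly their hypothesis: the distributional mean curvature of the reduced boundary is constant with the correct sign convention (outer normal, $H>0$). This requires checking that $s$ corresponds to $(n-1)/\text{radius}$ rather than, for example, the radius $1/s$ seen in the dipole discussion. A second point to verify is that their theorem is stated for arbitrary finite-perimeter sets of finite volume, so that no a priori mean-convexity or density bounds are needed. We would first test the normalization on a single ball $B_\varrho$, where $\int\operatorname{div}_\partial X=\frac{n-1}{\varrho}\int X\cdot\nu$, which forces $\varrho=k/s$.
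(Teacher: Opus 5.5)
Your proposal is correct, and its core is the paper's: Delgadino--Maggi at a common good level, then the exhaustion identity \eqref{eq:log-level-exhaustion} and integrality of $\sum_\alpha m_\alpha(s)$. Their hypothesis is exactly \eqref{eq:log-CMC} for a set of finite perimeter and finite volume, with no density assumption, and the radius is $k/s$, as you checked.

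The genuine difference is in how you prove the profile family is finite. The paper first proves that ball components cannot migrate between profiles. For good levels $s<r$ with $r/s<2^{1/n}$, each ball of $F_r^\alpha$ lies in a single ball of $F_s^\alpha$, and this assignment is injective, so $m_\alpha(r)\le m_\alpha(s)$. Chaining good levels and using constancy of the total then makes each $m_\alpha$ a.e.\ constant. Hence, at one single good level, every nonzero profile contributes at least one ball.

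You bypass this by using that each nonzero profile is visible for $s$ close to $a$. As written, ``slightly above $a$'' depends on $\alpha$, and with a priori countably many profiles there need not be a common level. The fix is one line. For any finite subfamily of $N$ nonzero profiles, choose a good $s>a$ close enough to $a$ that $|F_s^\alpha|>0$ for each of them. The exhaustion identity then gives $N\le C_\varepsilon/(\omega_nk^n)$, and this bounds the whole family.

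Your route is shorter and uses only monotonicity of $s\mapsto|F_s^\alpha|$. The paper's route yields extra information the statement does not require: each profile carries a level-independent number of balls.

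For the exhaustion, your item (iii) is not needed. Once the family is finite, the construction in Lemma~\ref{lem:quantitative-bv-profiles} gives $A_j=A_\infty$ for large $j$. The pointwise disjoint splitting \eqref{eq:pointwise-profile-split}, Chebyshev for $\{\mathfrak r_j>\lambda\}$, and the global $L^1$ convergence of each translated piece from \eqref{eq:summed-global-profile-errors} then already give the symmetric-difference statement.
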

\begin{proof}
The extracted family is initially at most countable.  For each profile
$\phi^\alpha$, Lemma~\ref{lem:log-CMC-passage} supplies a full-measure
set $G_\alpha\subset(a,\infty)$ on which the CMC identity holds.  Intersect
these sets with the coarea full-measure sets on which
$|F_s^\alpha|<\infty$ and $P(F_s^\alpha)<\infty$, and then take the
countable intersection
\[
 G:=\bigcap_\alpha G_\alpha.
\]
For every $s\in G$ and every profile index, the finite-perimeter
Alexandrov theorem of Delgadino--Maggi \cite[Theorem~\refnum{1}]{DelgadinoMaggi2019} applies to
$F_s^\alpha$.
Since the boundary dimension is $k=n-1$, the scalar mean-curvature
identity with coefficient $s$ forces every component to be a ball of
radius $k/s$.  Thus
\begin{equation}\label{eq:profile-ball-decomposition}
 F_s^\alpha
 =\bigcup_{h=1}^{m_\alpha(s)}B_{k/s}(c_h^\alpha(s))
 \qquad\text{modulo null sets},
\end{equation}
with pairwise essentially disjoint interiors.

The logarithmic distribution is exhausted exactly by the profiles.  With
$\lambda=\log(s/a)$, equation \eqref{eq:log-level-exhaustion} becomes
\begin{equation}\label{eq:profile-volume-exhaustion-physical-level}
 \sum_\alpha|F_s^\alpha|=C_\varepsilon s^{-n}
 \qquad\text{for a.e.\ }s>a.
\end{equation}
Since each radius-$k/s$ ball has volume
$\omega_nk^ns^{-n}$, we obtain
\begin{equation}\label{eq:total-ball-count}
 \sum_\alpha m_\alpha(s)
 =\frac{C_\varepsilon}{\omega_nk^n}
 \qquad\text{for a.e.\ }s>a.
\end{equation}
The left side is integer-valued.  Hence the quotient on the right is an
integer, denoted $m_\varepsilon$, and
\[
 C_\varepsilon=m_\varepsilon\omega_nk^n.
\]
In particular the total number of ball components is finite for a.e.\ $s$.

We next show that components cannot be transferred between different
profiles as the level changes.  Fix $s<r$ in the common good set with
$r/s<2^{1/n}$.  Choose the open representatives in
\eqref{eq:profile-ball-decomposition}.  Nesting holds modulo null sets:
$F_r^\alpha\subset F_s^\alpha$ a.e.\  A connected inner ball is in fact
contained in one outer ball.  Indeed, if it had a point outside the union
of the closed outer balls, openness would give a positive-volume violation
of the a.e.\ inclusion.  If it met two distinct outer balls, those outer
balls have disjoint interiors and can meet only tangentially; near a
tangency point their union misses a full transverse wedge.  Since the
inner ball is open and connected, crossing from one outer ball to the
other would again create a positive-volume subset outside the outer union.
Thus every radius-$k/r$ component of the inner set lies in one
radius-$k/s$ component of the outer set.  Two distinct inner components
cannot lie in the same outer ball, because that would force
\[
 2\omega_n(k/r)^n\le\omega_n(k/s)^n,
\]
contrary to $(r/s)^n<2$.  Thus the component assignment is injective and
$m_\alpha(r)\le m_\alpha(s)$.  Any two good levels can be joined, after
arbitrarily small perturbations inside the full-measure set, by a finite
chain with successive ratios below $2^{1/n}$.  Since the total sum in
\eqref{eq:total-ball-count} is constant, each nonnegative integer-valued
$m_\alpha$ is a.e.\ constant.  Every nonzero profile has a positive superlevel of positive measure and
hence has at least one component on the common good set.  Thus only finitely
many profiles can be nonzero.  Relabel them as
\[
  \phi^1,\dots,\phi^N,
  \qquad
  N:=\#\{\alpha:\phi^\alpha\not\equiv0\}<\infty.
\]

It remains to recover the prelimit superlevel globally.  The disjoint
profile decomposition gives, for $\lambda=\log(s/a)$,
\[
 \{\Phi_{a,t_j}^\varepsilon>\lambda\}
 =\left(\bigcup_{\alpha\le A_j}\{\pi_j^\alpha>\lambda\}\right)
  \mathbin{\dot\cup}\{\mathfrak r_j>\lambda\}
 \quad\text{modulo null sets}.
\]
For each fixed profile,
$\|\pi_j^\alpha(\cdot+z_j^\alpha)-\phi^\alpha\|_{L^1}\to0$ implies, after
one scalar subsequence,
\[
 |\{\pi_j^\alpha>\lambda\}
   \mathbin\triangle(z_j^\alpha+\{\phi^\alpha>\lambda\})|
 \longrightarrow0
\]
for a.e.\ $\lambda$.  The residual contribution tends to zero because
$\|\mathfrak r_j\|_{L^1}\to0$, and the profile tail is controlled by the exact
$L^1$ ledger exactly as in the proof of
\eqref{eq:log-level-exhaustion}.  Consequently, for every $s$ in one
common full-measure set,
\begin{equation}\label{eq:profile-global-symmetric-exhaustion}
 \left|
 t_jE_{st_j}^\varepsilon
 \mathbin\triangle
 \bigcup_{\alpha=1}^{N}\left(z_j^\alpha+F_s^\alpha\right)
 \right|\longrightarrow0.
\end{equation}
The right-hand side is a union of exactly $m_\varepsilon$ essentially
disjoint balls of radius $k/s$.  This proves the quantization and the
symmetric-difference exhaustion.
\end{proof}

\section{Uniform one-ball geometry and translation charge}
\label{sec:one-ball}
For each unbounded sign we now pass from subsequential logarithmic profiles
to physical high levels.

\subsection{Stable matching}
\begin{lemma}
\label{lem:frag-stable-matching}
Let \(0<r<R\) and \(2r^n>R^n\).  Suppose
\[
\mathcal U_j^{\rm match}=\bigcup_{i=1}^m B_R(a_{j,i}),\qquad
\mathcal W_j^{\rm match}=\bigcup_{i=1}^m B_r(b_{j,i}),
\]
where distinct centers in the first family are separated by
\(2R-o(1)\), distinct centers in the second family are separated by
\(2r-o(1)\), and
\[
|\mathcal W_j^{\rm match}\setminus \mathcal U_j^{\rm match}|\longrightarrow0.
\]
Then, after relabeling,
\begin{equation}
|b_{j,i}-a_{j,i}|\leq R-r+o(1),
\qquad i=1,\dots,m.
\label{eq:frag-stable-displacement}
\end{equation}
\end{lemma}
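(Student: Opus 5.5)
The plan is to argue by contradiction along subsequences. We reduce to a limiting configuration in which each small ball lies, up to a null set, inside a union of big balls with disjoint interiors. The connectedness and tangency argument already used in the proof of Proposition~\ref{prop:profile-quantization} then puts each small ball inside a single big ball. Since $2r^n>R^n$, the induced assignment is injective, and this gives the matching. Concretely, suppose \eqref{eq:frag-stable-displacement} fails for every relabeling. Then there are $\delta>0$ and a subsequence on which, for every permutation $\tau$ of $\{1,\dots,m\}$, some index $i$ has $|b_{j,i}-a_{j,\tau(i)}|>R-r+\delta$. It suffices to produce a permutation that violates this for large $j$.

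\emph{Step 1: localization and compactness.} Fix $i$ and translate so that $b_{j,i}=0$. Only centers $a_{j,h}$ with $|a_{j,h}|<R+r$ can meet $B_r(0)$, and there are at most $m$ of them. After a further subsequence, each $a_{j,h}$ either converges to some $a_h^\infty$ or satisfies $|a_{j,h}|\ge R+r$ eventually; there are finitely many indices, so one common subsequence works for all $i$ and $h$. Since $\mathbf 1_{B_R(a_{j,h})}\to\mathbf 1_{B_R(a_h^\infty)}$ in $L^1$, the hypothesis $|\mathcal W_j^{\rm match}\setminus\mathcal U_j^{\rm match}|\to0$ yields
\[
 \Bigl|B_r(0)\setminus\bigcup_h \overline{B_R(a_h^\infty)}\Bigr|=0 .
\]
The separation hypothesis $|a_{j,h}-a_{j,h'}|\ge 2R-o(1)$ passes to the limit as $|a_h^\infty-a_{h'}^\infty|\ge2R$, so the limiting closed balls have pairwise disjoint interiors and meet at most tangentially.

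\emph{Step 2: one container.} The complement of a finite union of closed radius-$R$ balls is open, so any point of the open ball $B_r(0)$ outside the union gives a neighborhood of positive measure outside the union. Hence $B_r(0)\subset\bigcup_h\overline{B_R(a_h^\infty)}$ pointwise. Now suppose $B_r(0)$ met two distinct limiting balls. Being connected, it would have to pass from one to the other. The union of the two closed balls and the remaining ones is closed, and near a tangency point the union misses an open transverse wedge, which would meet $B_r(0)$. More simply, an open connected set covered by finitely many closed balls with disjoint interiors cannot meet two of them: the pieces $B_r(0)\cap\overline{B_R(a_h^\infty)}$ would be relatively closed, pairwise intersecting in at most finitely many tangency points, and removing finitely many points from a ball of dimension $n\ge2$ leaves it connected, which gives a contradiction. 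I expect this covering and connectivity step to be the only genuinely delicate point, and I would present it through the wedge argument exactly as in Proposition~\ref{prop:profile-quantization}. Consequently $B_r(0)\subset\overline{B_R(a_{h(i)}^\infty)}$ for a single index $h(i)$, that is, $|a_{h(i)}^\infty|\le R-r$. Undoing the translation gives $|b_{j,i}-a_{j,h(i)}|\le R-r+o(1)$.

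\emph{Step 3: injectivity and relabeling.} Suppose $h(i)=h(i')$ with $i\ne i'$. Then both small balls lie, up to $o(1)$ in measure, inside one ball of radius $R$. Their centers are separated by $2r-o(1)$, so their overlap has measure $o(1)$. This would give $2\omega_n r^n\le\omega_nR^n+o(1)$, contradicting $2r^n>R^n$. Thus $i\mapsto h(i)$ is an injective map of $\{1,\dots,m\}$ to itself, hence a permutation. Relabeling by this permutation yields \eqref{eq:frag-stable-displacement} along the subsequence, which contradicts the choice of $\delta$ and proves the lemma.
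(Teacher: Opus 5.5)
Your proposal is correct and follows the paper's proof essentially step by step. Like the paper, you translate to each inner ball and extract limiting outer centers, upgrade a.e.\ coverage to pointwise coverage by the closed limiting balls, use tangency together with connectedness of $B_r$ to obtain a single container with $|a|\le R-r$, get injectivity from $2r^n>R^n$, and remove the extraction by a bad-subsequence contradiction (your alternative of deleting the finitely many tangency points, which keeps $B_r$ connected for $n\ge2$, is also valid).
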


\begin{proof}
Fix an inner index \(i\), translate by \(-b_{j,i}\), and pass to a
subsequence on which every bounded sequence \(a_{j,h}-b_{j,i}\) converges.
The translated inner ball satisfies
\[
 |B_r\setminus(\mathcal U_j^{\rm match}-b_{j,i})|
 \leq |\mathcal W_j^{\rm match}\setminus \mathcal U_j^{\rm match}|\longrightarrow0.
\]
The limiting radius-\(R\) balls have disjoint interiors and cover \(B_r\)
up to a null set.  If a point of \(B_r\) lay outside the union of their
closed balls, then, since only finitely many limiting balls occur, a whole
neighborhood of that point would remain uncovered, contradicting the a.e.
coverage.  Thus the closed limiting balls cover \(B_r\) pointwise.  This
cover is supplied by one ball.  Indeed, two distinct limiting balls can
meet only at a tangency point \(p\); if their centers are \(a_1,a_2\), then
for every unit vector \(\tau\perp(a_1-a_2)\) and every \(\rho>0\),
\[
 |p+\rho\tau-a_1|^2=|p+\rho\tau-a_2|^2=R^2+\rho^2>R^2.
\]
Thus two tangent balls do not cover any neighborhood of \(p\), whereas
all other limiting balls stay a positive distance from \(p\).  Connectedness
of \(B_r\) therefore gives \(B_r\subset\overline{B_R(a)}\) for one limiting
center \(a\), and
\[
 |a|+r\leq R,\qquad |a|\leq R-r.
\]
Repeating this for the finitely many inner balls gives an assignment to
outer balls.  It is injective: if two inner balls were assigned to the same
outer ball, asymptotic disjointness and containment would give
\[
 2\omega_n r^n\leq\omega_nR^n,
\]
contrary to \(2r^n>R^n\).  Hence the assignment is a permutation and,
after relabeling, \eqref{eq:frag-stable-displacement} holds on the chosen
subsequence.  Since the initial subsequence was arbitrary, a bad-subsequence
contradiction removes the extraction and proves the assertion for the full
sequence.
\end{proof}

\subsection{Uniform physical clusters and the two-ball obstruction}
\begin{proposition}\label{prop:frag-uniform-clusters}
For an unbounded sign $\varepsilon$ there are
$m_\varepsilon\in\mathbb N$, a function $\eta_\varepsilon(t)\downarrow0$,
and centers $a_i^\varepsilon(t)$ such that
\begin{align}
 t^n\left|E_t^\varepsilon\triangle
 \bigcup_{i=1}^{m_\varepsilon}B_{k/t}(a_i^\varepsilon(t))\right|
 &\le\eta_\varepsilon(t),\label{eq:frag-uniform-cluster}\\
 t|a_i^\varepsilon(t)-a_j^\varepsilon(t)|&\ge2k-\eta_\varepsilon(t).
 \label{eq:frag-uniform-separation}
\end{align}
Moreover
\begin{equation}\label{eq:frag-center-bound}
 t|a_i^\varepsilon(t)|\le k+o(1),
\end{equation}
and $m_\varepsilon\le2$.
\end{proposition}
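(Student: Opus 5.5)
The plan is to obtain \eqref{eq:frag-uniform-cluster}--\eqref{eq:frag-uniform-separation} by a contradiction argument built on Proposition~\ref{prop:profile-quantization}, then derive \eqref{eq:frag-center-bound} by chaining Lemma~\ref{lem:frag-stable-matching} across geometrically spaced levels, and finally read off $m_\varepsilon\le2$ from the elementary geometry of disjoint equal balls whose closures all contain the puncture. The integer $m_\varepsilon$ is fixed once and for all by \eqref{eq:quantization}: $C_\varepsilon=m_\varepsilon\omega_nk^n$, and $C_\varepsilon$ is the single constant from Proposition~\ref{prop:critical-volume-complete}. It therefore does not depend on the sequence $t_j$.

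\emph{Uniform clusters.} Define $\eta_\varepsilon(t)$ as the supremum over $\tau\ge t$ of the infimum, over all $m_\varepsilon$-tuples of centers, of
\[
 \tau^n\Bigl|E_\tau^\varepsilon\triangle\bigcup_i B_{k/\tau}(a_i)\Bigr|+\max_{i\ne j}\bigl(2k-\tau|a_i-a_j|\bigr)_+ .
\]
It suffices to show that this infimum tends to zero. If it did not, there would be $t_j\to\infty$ on which it stays above some $\delta>0$. Apply Proposition~\ref{prop:profile-quantization} to this sequence, say with $a=1/2$. By \eqref{eq:profile-global-symmetric-exhaustion}, for almost every $s$ the set $t_jE^\varepsilon_{st_j}$ is, in symmetric difference, $o(1)$-close to $m_\varepsilon$ essentially disjoint balls of radius $k/s$. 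Their translated centers are separated by at least $2k/s-o(1)$: within one profile the balls are essentially disjoint, and centers belonging to different profiles diverge by \eqref{eq:profile-separation}. The level $s=1$ itself may be exceptional. To handle it, choose good levels $s_\pm$ arbitrarily close to $1$ and use the nesting $E_{s_+t_j}\subset E_{t_j}\subset E_{s_-t_j}$ together with Lemma~\ref{lem:frag-stable-matching}, which applies because $2s_-^n>s_+^n$. This lets us replace the level-$s_\pm$ balls by radius-$k$ balls, at an error controlled by $|s_\pm-1|$ and by the volume law. The result contradicts the choice of $\delta$, so we may take $\eta_\varepsilon\downarrow0$.

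\emph{Center bound.} Take levels $t_\ell=t\theta^\ell$ with $1<\theta<2^{1/n}$. Nesting $E_{t_{\ell+1}}\subset E_{t_\ell}$ together with \eqref{eq:frag-uniform-cluster}, rescaled by $t_\ell$, places us exactly in the setting of Lemma~\ref{lem:frag-stable-matching} with $R=k$ and $r=k/\theta$. After relabeling we obtain
\[
 |a_i(t_{\ell+1})-a_i(t_\ell)|\le \frac{k}{t_\ell}-\frac{k}{t_{\ell+1}}+\frac{o(1)}{t_\ell}.
\]
The error terms form a geometric series, so summing over $\ell$ telescopes. Hence $a_i(t_\ell)$ converges to some point $c_i$ with $t|a_i(t)-c_i|\le k+o(1)$. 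Since $E_\tau\subset B_{\rho(\tau)}$ and $\rho(\tau)\to0$, the ball $B_{k/t_\ell}(a_i(t_\ell))$ carries most of its volume inside $B_{\rho(t_\ell)}$, which forces $c_i=0$. This gives \eqref{eq:frag-center-bound}. Consistency of the labels across the whole chain, and across different starting values $t$, follows from the permutation statement in the lemma.

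\emph{The bound $m_\varepsilon\le2$.} Rescale by $t$ and pass to a limit along any sequence. This produces $m_\varepsilon$ balls of radius $k$ with pairwise disjoint interiors by \eqref{eq:frag-uniform-separation}, and with centers $c_i$ satisfying $|c_i|\le k$ by \eqref{eq:frag-center-bound}. So every closed ball contains the origin. Two such balls with disjoint interiors must be tangent at the origin, which forces $c_2=-c_1$ with $|c_1|=k$. A third ball would need its center $c_3$ opposite to both $c_1$ and $c_2$, which is impossible. Hence $m_\varepsilon\le2$.

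I expect the main obstacle to be the uniform, non-subsequential matching step: making the labels coherent along the full geometric chain, and controlling the accumulated $o(1)/t_\ell$ errors so that their sum is $o(1/t)$. The separation hypothesis of Lemma~\ref{lem:frag-stable-matching} must hold uniformly at every level of the chain, and this is exactly what the function $\eta_\varepsilon$ constructed in the first step provides.
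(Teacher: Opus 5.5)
Your proposal is correct and follows essentially the paper's route: a bad-sequence contradiction built on Proposition~\ref{prop:profile-quantization} at admissible levels near $1$, then telescoping Lemma~\ref{lem:frag-stable-matching} along $t,\theta t,\theta^2t,\dots$ with $1<\theta<2^{1/n}$ together with the fact that the physical centers tend to the puncture, and finally the antipodality argument that gives $m_\varepsilon\le2$.

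The differences are only cosmetic. The paper uses a single admissible level $s>1$ and enlarges those balls concentrically to radius $k$, with error at most $2C_\varepsilon(1-s^{-n})+o(1)$ by nesting and the volume law; this makes your lower level $s_-$ and the matching step there unnecessary. The label-coherence issue you flag is also harmless, because each starting height $t$ may use its own relabeled chain.
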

\begin{proof}
Fix an arbitrary $t_j\to\infty$.  For an admissible $s>1$,
Proposition~\ref{prop:profile-quantization} approximates $t_jE_{st_j}^\varepsilon$
by $m_\varepsilon$ balls of radius $k/s$.  Enlarge them concentrically to
radius $k$.  Since $E_{st_j}^\varepsilon\subset E_{t_j}^\varepsilon$ and
both volumes are fixed by the critical law, the symmetric-difference error
with $t_jE_{t_j}^\varepsilon$ is at most
$2C_\varepsilon(1-s^{-n})+o(1)$.  Let $s\downarrow1$ and diagonalize.
Thus every sequence has a subsequence with an $o(1)$ radius-$k$ cluster
approximation.  A bad-sequence contradiction uniformizes this in $t$ and
gives a common error $e_\varepsilon(t)\to0$ in
\eqref{eq:frag-uniform-cluster}--\eqref{eq:frag-uniform-separation}.
Replacing it by the tail envelope
$\eta_\varepsilon(t):=\sup_{s\ge t}e_\varepsilon(s)$ gives the stated
$\eta_\varepsilon(t)\downarrow0$.

Fix $1<q<2^{1/n}$.  Apply the matching lemma to heights $t$ and $qt$.
After compatible labeling, and after replacing the resulting $o(1)$ error
by its tail supremum,
\[
 |a_i(qt)-a_i(t)|\le
 k\left(\frac1t-\frac1{qt}\right)+\frac{\delta_q(t)}t,
 \qquad \delta_q(t)\downarrow0.
\]
The physical centers tend to the puncture: otherwise a principal ball
would remain in a compact annulus where the actual high superlevel is
empty, contradicting the symmetric-difference estimate.  Iterating the
matching inequality along $t,q t,q^2t,\dots$ and telescoping yields
\eqref{eq:frag-center-bound}.  Any subsequential scaled centers lie in
$\overline B_k$ and are pairwise at distance at least $2k$; hence there
are at most two.  If there are two they are antipodal.
\end{proof}

\begin{proposition}\label{prop:frag-signed-compactness}
Let $t_j\to\infty$.  After passage to a subsequence there are
$w_\infty\in L^1_{\mathrm{loc}}(\mathbb R^n)$ and
$Z_\infty\in L^\infty_{\mathrm{loc}}(\mathbb R^n;\mathbb R^n)$ such that
\begin{equation}\label{eq:signed-natural-compactness}
 w_{t_j}\longrightarrow w_\infty
 \quad\text{strongly in }L^1_{\mathrm{loc}}(\mathbb R^n),
 \qquad
 Z_{t_j}\weakstar Z_\infty,
 \qquad
 \operatorname{div}Z_\infty=-w_\infty.
\end{equation}
For each sign $\varepsilon\in\{+1,-1\}$ whose tail is unbounded, there is
one common full-measure set of $s>0$ such that
\begin{equation}\label{eq:signed-cluster-limit-family}
 \{\varepsilon w_\infty>s\}
 =A_s^\varepsilon
 =\bigcup_{i=1}^{m_\varepsilon}B_{k/s}(c_i^\varepsilon(s))
 \quad\text{modulo null sets},
 \qquad
 |c_i^\varepsilon(s)|\le \frac{k}{s},
\end{equation}
with pairwise disjoint interiors.  If that sign is bounded near the
puncture, $A_s^\varepsilon=\varnothing$.  Moreover, for every
$\delta,M>0$,
\begin{align}
 (Z_\infty,D T_M((w_\infty-\delta)_+))
 &=|D T_M((w_\infty-\delta)_+)|,
 \label{eq:signed-positive-calibration}\\
 (-Z_\infty,D T_M((-w_\infty-\delta)_+))
 &=|D T_M((-w_\infty-\delta)_+)|
 \label{eq:signed-negative-calibration}
\end{align}
as local Radon measures.  In addition, for each sign
$\varepsilon\in\{+1,-1\}$, each $\delta,M>0$, and every
$U\Subset\mathbb R^n$ satisfying
$|D T_M((\varepsilon w_\infty-\delta)_+)|(\partial U)=0$,
\begin{equation}\label{eq:signed-prelimit-strict-bv}
 T_M((\varepsilon w_{t_j}-\delta)_+)
 \longrightarrow T_M((\varepsilon w_\infty-\delta)_+)
 \qquad\hbox{strictly in }BV(U).
\end{equation}
The same conclusion remains true after replacing
$\varepsilon w_{t_j}$ by $\varepsilon w_{t_j}-c_j$ for any constants
$c_j\to0$, with the limiting truncation unchanged.
\end{proposition}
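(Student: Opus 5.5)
We will first obtain local compactness of the full signed rescalings $w_t$, not just of their logarithmic truncations, and then identify the positive and negative parts of the limit through the cluster description of Proposition~\ref{prop:frag-uniform-clusters}.

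\textbf{Compactness.} The critical volume law of Proposition~\ref{prop:critical-volume-complete}, applied to each unbounded sign, gives
\[
 \int_{\{|w_t|>M\}}|w_t|\,dy\le CM^{1-n}.
\]
If a sign is bounded, its rescaled part is $O(t^{-1})$ on compact sets. Hence $\{w_t\}$ is uniformly integrable on compacts. Since $|Z_t|\le1$, Banach--Alaoglu gives $Z_{t_j}\weakstar Z_\infty$, and Dunford--Pettis gives weak $L^1_{\rm loc}$ convergence of $w_{t_j}$. The puncture is removed as in Lemma~\ref{lem:log-signed-source}: a cutoff costs $O(\eta^{n-1})$ plus a uniformly small source term, which yields $\operatorname{div}Z_\infty=-w_\infty$.

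For strong convergence we use bounded truncations. For fixed $\delta,M$, the functions $T_M((\varepsilon w_t-\delta)_+)$ are bounded in $BV_{\rm loc}$. Indeed, coarea bounds their variation by $t^{n-1}\int_{\delta t}^{(\delta+M)t}P_\varepsilon(s)\,ds$, and the finite-band estimate \eqref{eq:finite-graph-band-identity} together with $J(s)\sim \frac{nC_\varepsilon}{n-1}s^{1-n}$ keeps this bounded. Rellich then gives $L^1_{\rm loc}$ convergence of each truncation. Together with the tail bound, letting $\delta\downarrow0$ and $M\uparrow\infty$ upgrades this to strong $L^1_{\rm loc}$ convergence of $w_{t_j}$, provided the region $\{|w_t|\le\delta\}$ contributes only $O(\delta)$.

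\textbf{Identifying the superlevels.} For a.e.\ $s$, the set $\{\varepsilon w_{t_j}>s\}$ equals $t_jE^\varepsilon_{st_j}$. Proposition~\ref{prop:frag-uniform-clusters}, applied at height $st_j$, approximates this set in measure by $m_\varepsilon$ balls of radius $k/s$ with centers $t_ja_i(st_j)$. By \eqref{eq:frag-center-bound}, these centers satisfy $|t_ja_i|\le k/s+o(1)$. After a diagonal subsequence over a countable dense set of $s$, the centers converge to $c_i^\varepsilon(s)$, so $|c_i^\varepsilon(s)|\le k/s$, and the separation bound \eqref{eq:frag-uniform-separation} gives disjoint interiors. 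For the remaining $s$ we use monotonicity in $s$ and $L^1$ convergence of the indicators at a.e.\ level. A bounded sign yields the empty set.

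\textbf{Calibration and strict convergence.} We expect this to be the main obstacle. The Anzellotti pairing is lower semicontinuous only in one direction, so we argue by a budget. The prelimit identity \eqref{eq:log-defect} applies to the truncation $T_M((\varepsilon w_t-\delta)_+)$ by the same coarea computation with weight $1$ instead of $1/s$. It states that the variation minus $\int(Z^\varepsilon_t,D\,\cdot)$ is $O(t^{-2})$. The pairing equals $\int T_M(\cdot)\,\varepsilon w_t$ plus a boundary term on compact sets, and this converges by the strong convergence above. The limit variation equals the same expression computed from the ball family, because the limiting levels are exactly the radius-$k/s$ balls and the CMC flux of a radius-$k/s$ ball matches the critical constants.

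This gives $\limsup|D(\cdot)_{t_j}|\le \int(Z_\infty,D(\cdot)_\infty)\le|D(\cdot)_\infty|$. Combined with lower semicontinuity, every inequality is an equality, which proves \eqref{eq:signed-positive-calibration}, \eqref{eq:signed-negative-calibration} and the global strict convergence. Localizing to $U$ with $|D(\cdot)|(\partial U)=0$ is standard. The case with shifts $c_j\to0$ changes only $\delta$ by $o(1)$; we handle it by monotone sandwiching between the truncations at $\delta\pm\eta$.
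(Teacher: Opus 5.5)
Your proposal is correct. The calibration and strict-convergence step is the same chain the paper uses; you use the integrated form of the prelimit defect $0\le|Dw_t|-Z_t\cdot Dw_t\le t^{-2}$, together with convergence of the localized pairing $-\int fZ\cdot D\chi+\int\chi f\,\varepsilon w$, $|Z_\infty|\le1$, and lower semicontinuity.

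The genuine difference is how you obtain strong $L^1_{\rm loc}$ convergence of $w_{t_j}$. The paper extracts global symmetric-difference limits of $\{\pm w_{t_j}>s\}$ at rational $s$ from the cluster approximations of Proposition~\ref{prop:frag-uniform-clusters}. It then builds $w_\infty$ by layer cake and concludes via convergence in measure and Vitali. You instead use Rellich compactness of the bounded truncations and reassemble $w_{t_j}$ from the pointwise identity $w=(w-\delta)_+-(-w-\delta)_++r_\delta$ with $|r_\delta|\le\delta$ (so your proviso holds automatically), plus the tail bound. This decouples compactness from the cluster geometry, which then enters only in identifying the limit. Because strong convergence is already available, the ball description follows directly at every continuity level of $w_\infty$, without the paper's monotone-representative step. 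The paper's route, by contrast, obtains compactness and the ball structure simultaneously.

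Your budget remark is a genuine extra. Since $C_\varepsilon=m_\varepsilon\omega_nk^n$ and $k=n-1$, for $v=\varepsilon w_\infty$ one has $\int_{\{v>s\}}v\,dy=m_\varepsilon|S^{n-1}|k^{n-1}s^{1-n}=P(\{v>s\})$. Hence $(\varepsilon Z_\infty,Df_\infty)(\mathbb R^n)=\int f_\infty v\,dy=|Df_\infty|(\mathbb R^n)$. Combined with $|(\varepsilon Z_\infty,Df_\infty)|\le|Df_\infty|$, this forces the limiting calibration as measures without any prelimit information. The paper does not need this.

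Two corrections.

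\emph{Coarea scaling.} Since $\{\varepsilon w_t>\delta+\lambda\}=tE^\varepsilon_{t(\delta+\lambda)}$ and $d\lambda=ds/t$, the coarea formula gives $t^{n-2}\int_{\delta t}^{(\delta+M)t}P_\varepsilon(s)\,ds$, not $t^{n-1}\int\cdots$. With your prefactor the bound would grow like $t$, so the claimed uniform $BV$ bound holds only after this fix. The variation--pairing defect is then $t^{n-2}V_\varepsilon(\delta t)=O(\delta^{-n}t^{-2})$.

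\emph{Shifted truncations.} Pointwise sandwiching between the truncations at $\delta\pm\eta$ controls the $L^1$ limit of the shifted truncations but not their total variation. It is also unnecessary. A constant shift changes neither the gradient nor the pointwise defect, and the shifted truncations still converge in $L^1_{\rm loc}$ to the same $f$. So the chain applies verbatim, as in the paper.
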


\begin{proof}
\emph{Step 1: simultaneous signed level-set compactness.}
For every positive rational $s$ and every unbounded sign, apply
Proposition~\ref{prop:frag-uniform-clusters} at the physical height $st_j$.  After
rescaling by $t_j$, the approximating balls have radius $k/s$, while
\eqref{eq:frag-center-bound} gives
\[
 |t_j a_i^\varepsilon(st_j)|\le \frac{k+o(1)}{s}.
\]
A diagonal extraction over the positive rationals and the finitely many
centers at each rational level therefore gives global symmetric-difference
limits $A_s^\varepsilon$ of the form displayed in
\eqref{eq:signed-cluster-limit-family}.  The nesting of the physical tail
sets and the disjointness of the positive and negative phases pass to the
limit.  Enlarging the selected set by any prescribed countable family of
continuity levels and then taking the unique right-continuous nested
representatives gives the same families for almost every $s>0$.  If one
sign is bounded near the puncture, its tail sets are empty at all
sufficiently large physical heights, so its limiting family is empty.

Define
\[
 w_\infty(y)
 :=\sup\{s>0:y\in A_s^+\}
   -\sup\{s>0:y\in A_s^-\}.
\]
For every compact $K\Subset\mathbb R^n$, the critical tail estimates give
\begin{equation}\label{eq:signed-tail-uniform-integrability}
 |K\cap\{|w_{t_j}|>s\}|\le \min\{|K|,Cs^{-n}\}
 \qquad(s>0),
\end{equation}
where a bounded sign contributes only $o(1)$ on fixed compact sets.
Convergence of the positive and negative level sets on a dense countable
family implies $w_{t_j}\to w_\infty$ in measure on $K$.  Since $n>1$,
\eqref{eq:signed-tail-uniform-integrability} gives uniform integrability:
\[
 \sup_j\int_{K\cap\{|w_{t_j}|>M\}}|w_{t_j}|\,dy
 \le C M^{1-n}\longrightarrow0.
\]
Vitali's theorem proves the strong local $L^1$ convergence in
\eqref{eq:signed-natural-compactness}.  The same cluster approximation and
center bound make every fixed selected level globally tight, so the
symmetric-difference convergence to $A_s^\varepsilon$ is global.

\emph{Step 2: the limiting divergence equation.}
Weak-star compactness gives, after a further subsequence,
$Z_{t_j}\weakstar Z_\infty$ and $|Z_\infty|\le1$.  On the punctured
rescaled domains,
\[
 \operatorname{div}Z_{t_j}=-w_{t_j}.
\]
Insert a radial cutoff which vanishes on $B_r$ and equals one outside
$B_{2r}$.  Since $|Z_{t_j}|\le1$, the cutoff flux is $O(r^{n-1})$,
uniformly in $j$.  The source error tends to zero by the strong local
$L^1$ convergence.  First let $j\to\infty$ and then $r\downarrow0$.
No point source survives, and
$\operatorname{div}Z_\infty=-w_\infty$ on all of $\mathbb R^n$.

\emph{Step 3: bounded-truncation calibration.}
On the smooth punctured rescaled domain one has the pointwise defect
\begin{equation}\label{eq:signed-pointwise-calibration-defect}
 0\le |Dw_{t_j}|-Z_{t_j}\cdot Dw_{t_j}\le t_j^{-2},
\end{equation}
and the same estimate holds for $(-w_{t_j},-Z_{t_j})$.  Fix
$\delta,M>0$ and approximate $T_M((\,\cdot-\delta)_+)$ by smooth
nondecreasing $1$-Lipschitz truncations.  Testing
\eqref{eq:signed-pointwise-calibration-defect} with a compact spatial
cutoff and a puncture cutoff gives, on every compact set, a uniform $BV$
bound for $T_M((w_{t_j}-\delta)_+)$.  The puncture term is
$O(Mr^{n-1})$ and therefore vanishes for every $n\ge2$.
Strong local $L^1$ convergence fixes the $BV$ limit.  Passing to the limit
in the bounded Anzellotti identity and using
$\operatorname{div}Z_\infty=-w_\infty$ gives, for every nonnegative
compactly supported $\chi$,
\[
 \int\chi\,|DT_M((w_\infty-\delta)_+)|
 \le \int\chi\,(Z_\infty,DT_M((w_\infty-\delta)_+))
 \le \int\chi\,|DT_M((w_\infty-\delta)_+)|.
\]
To justify the middle limit explicitly, write the prelimit pairing by
integration by parts:
\[
 \int \chi\, Z_{t_j}\cdot Df_j
 =-\int f_j Z_{t_j}\cdot D\chi
   +\int \chi f_j w_{t_j},
 \qquad f_j=T_M((w_{t_j}-\delta)_+).
\]
The first term passes by strong $L^1$ convergence of $f_j$ and weak-star
convergence of $Z_{t_j}$.  For the second, $0\le f_j\le M$, strong
$L^1$ convergence of $w_{t_j}$ and bounded convergence in measure of
$f_j$ give $f_jw_{t_j}\to f w_\infty$ in $L^1_{\rm loc}$.  Thus the
prelimit pairings converge to the Anzellotti pairing of the limit.  Hence
equality holds throughout, which is
\eqref{eq:signed-positive-calibration}.  Replacing
$(w_{t_j},Z_{t_j})$ by $(-w_{t_j},-Z_{t_j})$ proves
\eqref{eq:signed-negative-calibration}.

It remains to record the prelimit strictness used later.  Put
$Z^\varepsilon:=\varepsilon Z_\infty$, and let
$f_j=T_M((\varepsilon w_{t_j}-\delta)_+)$ and
$f=T_M((\varepsilon w_\infty-\delta)_+)$.  The pointwise calibration
defect gives, on every compact $U$,
\[
 0\le |Df_j|(U)-(Z_{t_j}^\varepsilon,Df_j)(U)
 \le C_Ut_j^{-2}.
\]
For $U$ with $|Df|(\partial U)=0$, approximate $\mathbf1_U$ from inside
and outside by smooth cutoffs in the preceding pairing identity.  The
pairing convergence already proved above and the exact limiting
calibration give
\[
 \limsup_j|Df_j|(U)\le (Z^\varepsilon,Df)(U)=|Df|(U),
\]
whereas lower semicontinuity gives the reverse inequality.  Hence the
total variations converge, and the strong $L^1(U)$ convergence yields
\eqref{eq:signed-prelimit-strict-bv}.  If $c_j\to0$, the same argument
applies to $T_M((\varepsilon w_{t_j}-c_j-\delta)_+)$: the gradients and
pointwise calibration defect are unchanged, while the truncations converge
in $L^1_{\rm loc}$ to the same $f$.  This proves the final assertion.
\end{proof}

\begin{proposition}\label{prop:two-ball-impossible}
For every unbounded sign, $m_\varepsilon=1$.  Consequently
\begin{equation}\label{eq:sharp-tail-constant}
 C_\varepsilon=\omega_nk^n.
\end{equation}
\end{proposition}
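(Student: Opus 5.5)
We argue by contradiction: assume $m_\varepsilon=2$ for some unbounded sign, say $\varepsilon=+1$ after replacing $u$ by $-u$. By Proposition~\ref{prop:frag-uniform-clusters} the two scaled centers $ta_i(t)$ lie in $\overline B_k$ and are at mutual distance at least $2k-o(1)$. Every subsequential limit configuration is therefore a pair of antipodal points $\pm ke$ with $e\in\mathbb S^{n-1}$, and the two balls of radius $k$ are tangent at the puncture.

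Take a sequence $t_j\to\infty$ and apply Proposition~\ref{prop:frag-signed-compactness}. This gives $w_\infty$ and $Z_\infty$ with $\operatorname{div}Z_\infty=-w_\infty$ on all of $\mathbb R^n$. For a.e.\ $s>0$, the positive phase $\{w_\infty>s\}$ is a union of two disjoint balls $B_{k/s}(c_1(s))\cup B_{k/s}(c_2(s))$ with $|c_i(s)|\le k/s$. Disjointness together with the center bound forces $c_{1,2}(s)=\pm(k/s)e(s)$: both balls pass through the origin and are tangent there, and the nesting in $s$ fixes the direction $e(s)\equiv e$. Hence, on the region $\{w_\infty>0\}$, the limit is explicit:
\[
 w_\infty(y)=\frac{2k\,|y\cdot e|}{|y|^2}.
\]
This is the reflection-symmetric absolute value of the dipole $D_e$, scaled by $k$. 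We now test $\operatorname{div}Z_\infty=-w_\infty$ against this explicit profile.

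The calibration \eqref{eq:signed-positive-calibration} forces $Z_\infty=-Dw_\infty/|Dw_\infty|$ on the positive phase, $|D w_\infty|$-a.e. Away from the hyperplane $e^\perp$, every level set is a pair of spheres, so there $Z_\infty$ is the inward unit normal field $-\nu$ of each sphere family. Near the tangency hyperplane $\Pi=e^\perp$ these fields point in opposite directions across $\Pi$: on the side $y\cdot e>0$ the level spheres are centered at $+(k/s)e$, and as $y\to\Pi$ their inward normals tend to $+e$, while on the other side they tend to $-e$.

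Consider the flux of $Z_\infty$ through a thin slab $\{|y\cdot e|<h\}\cap B_\rho(y_0)$ with $y_0\in\Pi\setminus\{0\}$. In the dipole case, the positive set near $y_0$ fills both half-spaces adjacent to $\Pi$. On $\Pi$ itself, however, $w_\infty=0$, so $\Pi\setminus\{0\}$ lies outside $\{w_\infty>\delta\}$ for every $\delta$. The divergence equation with the $L^1_{\rm loc}$ source $w_\infty$ implies that the normal trace of $Z_\infty$ on $\Pi$ has no jump: the flux across a slab of thickness $h$ is $O(h)$. The calibrated traces from the two sides, however, are $+e$ and $-e$, giving a jump of size $2$ in $Z_\infty\cdot e$. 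This jump would create a singular divergence $-2\mathcal H^{n-1}\llcorner\Pi$, contradicting $\operatorname{div}Z_\infty=-w_\infty\in L^1_{\rm loc}$.

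To make this rigorous, we integrate $\operatorname{div}Z_\infty=-w_\infty$ against a test function $\psi(y')\theta_h(y\cdot e)$. Here $\psi$ is supported away from the origin in $\Pi$, and $\theta_h$ is a plateau of width $h$. We compute $\int Z_\infty\cdot D(\psi\theta_h)$ using the explicit normals on each side, and obtain $-2\int\psi+o(1)$ as $h\to0$. The source side is $O(h)$, which gives the contradiction.

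The main obstacle is identifying $Z_\infty$ near $\Pi$ from the calibration alone, since the calibration only holds on truncations $T_M((w_\infty-\delta)_+)$. To handle this, we use the strict convergence \eqref{eq:signed-prelimit-strict-bv} together with the fact that the positive-phase levels near $y_0$ are smooth spheres. The pairing then determines $Z_\infty\cdot\nu=-1$ on $\partial\{w_\infty>\delta\}$ for a.e.\ $\delta$. Letting $\delta\downarrow0$, these level spheres accumulate on $\Pi$ from both sides with normals $\to\mp e$. Averaging over $\delta$ by coarea yields the trace values above.

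Once the two-ball configuration is excluded, we have $m_\varepsilon=1$, and \eqref{eq:quantization} gives $C_\varepsilon=\omega_nk^n$.
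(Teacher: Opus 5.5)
Your proposal is correct and is essentially the paper's argument: fix the antipodal tangent pair $B_{k/s}(\pm ke/s)$ with a level-independent direction $e$, reconstruct $\varepsilon w_\infty=2k|e\cdot y|/|y|^2$, identify the calibrated field off $e^\perp$, and contradict $\operatorname{div}Z_\infty=-w_\infty\in L^1_{\rm loc}$ through the jump of its normal trace ($+e$ versus $-e$), which would add the singular term $2\mathcal H^{n-1}\lfloor e^\perp$; localizing your test function away from the origin is a harmless variant of the paper's remark that no atom sits at $0$. Two small corrections: the calibration gives $Z_\infty=+Dw_\infty/|Dw_\infty|$ (the inward normal, which is what you actually use afterwards), and the limiting calibration \eqref{eq:signed-positive-calibration} applied to the explicit profile, which is $W^{1,1}_{\rm loc}$ with nonvanishing gradient off $e^\perp$, already identifies $Z_\infty$ there without any appeal to prelimit strict convergence or coarea averaging.
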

\begin{proof}
Assume $m_\varepsilon=2$ and fix a blow-down sequence.  First apply
Proposition~\ref{prop:frag-signed-compactness} to obtain the full-space signed limit
$w_\infty$, its field $Z_\infty$, and the calibrated level-set limits.
On a countable dense set of common good levels take the same subsequence so
that the two scaled centers converge.  By Proposition~\ref{prop:frag-uniform-clusters}, at level
$s$ the limiting centers are $\pm ke(s)/s$.  For nearby good levels,
nested matching gives, after the harmless interchange of the two labels,
\[
 \left|\frac{k}{r}e(r)-\frac{k}{s}e(s)\right|
 \le k\left(\frac1s-\frac1r\right).
\]
Squaring gives $e(r)\cdot e(s)\ge1$.  Finite good-level chains therefore
produce one fixed $e\in\mathbb S^{n-1}$ such that for almost every $s>0$
\[
 \{\varepsilon w_\infty>s\}
 =B_{k/s}(ke/s)\cup B_{k/s}(-ke/s).
\]
For $y\notin H=e^\perp$, one of the two tangent balls contains $y$ for
all sufficiently small positive levels.  Thus
$\varepsilon w_\infty(y)>0$ for a.e.\ $y\notin H$; any opposite-sign
part is confined to the null hyperplane $H$.  Layer cake therefore
reconstructs the whole signed limit a.e.:
\[
 v(y):=\varepsilon w_\infty(y)
 =\frac{2k|e\cdot y|}{|y|^2}.
\]
Finite-value calibration \eqref{eq:signed-positive-calibration} forces, off
$H=e^\perp$, the signed limiting field $Z:=\varepsilon Z_\infty$ to satisfy
\[
 Z(y)=\operatorname{sgn}(e\cdot y)e-
 \frac{2|e\cdot y|}{|y|^2}y.
\]
On the two sides of $H$ the traces are $e$ and $-e$.  Hence
\[
 \operatorname{div}Z=-v+2\mathcal H^{n-1}\lfloor H.
\]
There is no atom at the origin since $Z$ is bounded and
$\mathcal H^{n-1}(\partial B_r)=O(r^{n-1})$.  This contradicts the exact full-space blow-down equation
$\operatorname{div}(\varepsilon Z_\infty)=-\varepsilon w_\infty=-v$ supplied by
Proposition~\ref{prop:frag-signed-compactness}.  Thus $m_\varepsilon=1$ and
\eqref{eq:sharp-tail-constant} follows from quantization.
\end{proof}

\begin{corollary}\label{cor:one-ball-signed-compactness}
In the setting of Proposition~\ref{prop:frag-signed-compactness}, every unbounded sign
has exactly one limiting ball.  Thus, for almost every $s>0$,
\begin{equation}\label{eq:one-ball-limit-family}
 \{\varepsilon w_\infty>s\}=B_{k/s}(c_\varepsilon(s)),
 \qquad |c_\varepsilon(s)|\le k/s,
\end{equation}
and the bounded truncations are calibrated by the corresponding signed
field.  If a sign is bounded near the puncture, its limiting positive
family is empty.
\end{corollary}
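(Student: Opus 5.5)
This corollary is a direct combination of Proposition~\ref{prop:frag-signed-compactness} with Proposition~\ref{prop:two-ball-impossible}. Fix a blow-down sequence and pass to the subsequence given by Proposition~\ref{prop:frag-signed-compactness}. That proposition supplies, for each unbounded sign $\varepsilon$ and almost every $s>0$, the representation
\[
 \{\varepsilon w_\infty>s\}=\bigcup_{i=1}^{m_\varepsilon}B_{k/s}(c_i^\varepsilon(s)),
 \qquad |c_i^\varepsilon(s)|\le k/s,
\]
modulo null sets. Here $m_\varepsilon$ is the integer from Propositions~\ref{prop:profile-quantization} and~\ref{prop:frag-uniform-clusters}. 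That integer is determined by the tail constant, $C_\varepsilon=m_\varepsilon\omega_nk^n$, so it is independent of the chosen sequence and of $s$.

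Proposition~\ref{prop:two-ball-impossible} shows $m_\varepsilon=1$ for every unbounded sign. Substituting this into \eqref{eq:signed-cluster-limit-family} gives \eqref{eq:one-ball-limit-family}, with $c_\varepsilon(s):=c_1^\varepsilon(s)$ and the center bound inherited directly.

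\emph{Calibration.} The calibration assertion is \eqref{eq:signed-positive-calibration} for $\varepsilon=+1$ and \eqref{eq:signed-negative-calibration} for $\varepsilon=-1$. Both are stated for the signed field $\varepsilon Z_\infty$ acting on $T_M((\varepsilon w_\infty-\delta)_+)$, for all $\delta,M>0$, and they hold whatever the value of $m_\varepsilon$. The strict $BV$ convergence \eqref{eq:signed-prelimit-strict-bv} is likewise unchanged.

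\emph{Bounded sign.} If one sign is bounded near the puncture, Proposition~\ref{prop:frag-signed-compactness} already records $A_s^\varepsilon=\varnothing$ for all $s>0$. The reason is that the physical tail sets are empty at large heights, so after rescaling $\{\varepsilon w_{t_j}>s\}$ is empty for large $j$, and so is its $L^1_{\rm loc}$ limit. Hence $\{\varepsilon w_\infty>s\}$ is null for almost every $s>0$.

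\emph{Main point to check.} The only real issue is consistency: the integer $m_\varepsilon$ appearing in the cluster family \eqref{eq:signed-cluster-limit-family} must be the same integer that Proposition~\ref{prop:two-ball-impossible} excludes from being $2$. This holds because the limiting ball count at almost every level is $C_\varepsilon/(\omega_nk^n)$, by the volume exhaustion \eqref{eq:profile-volume-exhaustion-physical-level}, and this quotient does not depend on the subsequence. No further analysis is needed.
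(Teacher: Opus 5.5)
Your proposal is correct and follows the paper's proof, which simply combines Proposition~\ref{prop:frag-signed-compactness} with Proposition~\ref{prop:two-ball-impossible}. Your added remarks are accurate elaborations of that one-line argument: $m_\varepsilon$ is independent of the sequence because $C_\varepsilon=m_\varepsilon\omega_nk^n$, calibration is unaffected by the value of $m_\varepsilon$, and the bounded-sign family is empty.
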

\begin{proof}
Combine Propositions~\ref{prop:frag-signed-compactness} and~\ref{prop:two-ball-impossible}.
\end{proof}

\subsection{Moving balls and horizontal translation charge}
\begin{lemma}
\label{lem:bridge-moving-ball-area}
Let \(I\Subset(0,\infty)\), let \(v\) be a measurable function, finite
almost everywhere, whose clipped representatives
\[
 v_{a,b}:=\max\{a,\min\{v,b\}\}
 \quad\text{belong to }BV_{\rm loc}(\mathbb R^n)
 \qquad(0<a<b<\infty),
\]
and
suppose that for almost every \(s\in I\)
\[
\{v>s\}=B(c(s),k/s)
\]
modulo null sets.  Suppose also that \(Z\in L^\infty_{\rm loc}\),
\(|Z|\leq1\), $\operatorname{div}Z\in L^1_{\rm loc}$, and that the
Anzellotti calibration
\[
(Z,Dv)=|Dv|\qquad\text{on }\{v\in I\}
\]
holds in the sense of bounded truncations supported in the value band.
Then \(c\) has a locally Lipschitz representative on \(I\).  The map
\[
\mathfrak S(s,\theta)=c(s)+\frac{k}{s}\theta
\qquad ((s,\theta)\in I\times S^{n-1})
\]
has multiplicity one almost everywhere on the swept region and
\begin{equation}\label{eq:bridge-moving-ball-jacobian}
J_\mathfrak S(s,\theta)=
\left(\frac{k}{s}\right)^{n-1}
\left(\frac{k}{s^2}-c'(s)\cdot\theta\right)
\quad\text{for a.e.\ }(s,\theta).
\end{equation}
The last factor is nonnegative.  Consequently, for every
\(\eta\in C_c(I)\),
\begin{equation}\label{eq:bridge-moving-ball-vector-formula}
\int_{\{v\in I\}}\eta(v)Z\,dy
=\frac{|S^{n-1}|}{n}
 \int_I\eta(s)
 \left(\frac{k}{s}\right)^{n-1}c'(s)\,ds.
\end{equation}
\end{lemma}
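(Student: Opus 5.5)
The argument has three steps: Lipschitz regularity of the center from nesting, the Jacobian of the level-set parametrization, and the vector formula from calibration plus coarea.

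\emph{Step 1: Lipschitz center.} For good levels $s<r$ in $I$ the superlevels are nested modulo null sets, so $\{v>r\}\subset\{v>s\}$ a.e. Both sets are open balls, so this forces $B(c(r),k/r)\subset\overline{B(c(s),k/s)}$, i.e.
\[
 |c(r)-c(s)|\le \frac ks-\frac kr .
\]
Hence $c$ is locally Lipschitz on the full-measure good set, with constant $k/\inf I^2$. It extends uniquely to a Lipschitz representative on $I$, and $c'$ exists a.e. Differentiating the containment along any direction $\theta$ gives $c'(s)\cdot\theta\le k/s^2$ at points of differentiability. This is exactly the nonnegativity of the last factor in \eqref{eq:bridge-moving-ball-jacobian}.

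\emph{Step 2: Jacobian and multiplicity.} The Jacobian formula is a direct computation. The differential of $\mathfrak S$ sends the tangent space of $S^{n-1}$ to itself, scaled by $k/s$. The $s$-derivative is $c'(s)-k\theta/s^2$, whose normal component is $c'\cdot\theta-k/s^2$. Taking absolute values and using Step 1 gives \eqref{eq:bridge-moving-ball-jacobian}.

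For multiplicity one I use that a.e.\ point $y$ of the swept region has a well-defined value $v(y)=s$. Since $\{v>s'\}$ are nested balls, $y$ lies on $\partial B(c(s),k/s)$ only for $s=v(y)$, apart from a null set of levels where two spheres touch. This uses the essential uniqueness of the level: the strict inequality of radii and nesting mean distinct spheres meet only at tangency points. Then $\theta$ is determined by $y$ and $s$.

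\emph{Step 3: the vector formula.} Calibration $(Z,Dv)=|Dv|$ on the band identifies $Z=\nu$ with $|Dv|$-a.e.\ on the level spheres, where $\nu=-Dv/|Dv|$ is the outward sphere normal $\theta$. I must be careful that $\int\eta(v)Z\,dy$ is a Lebesgue integral, not a $|Dv|$ integral. So I would instead compute via the area formula:
\[
 \int_{\{v\in I\}}\eta(v)Z\,dy=\int_I\int_{S^{n-1}}\eta(s)\,Z(\mathfrak S(s,\theta))\,J_{\mathfrak S}\,d\theta\,ds .
\]
This requires knowing $Z$ pointwise a.e.\ in the swept region, not just on the jump part. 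At a.e.\ point $v$ is approximately differentiable (its level sets are smooth spheres moving Lipschitzly), so $Dv$ is absolutely continuous there. Calibration then gives $Z=\theta$ Lebesgue-a.e.\ wherever $J_{\mathfrak S}>0$. The set where $J_{\mathfrak S}=0$ has measure zero in $y$ by the area formula.

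Substituting $Z=\theta$:
\[
 \int_{S^{n-1}}\theta\Bigl(\frac ks\Bigr)^{n-1}\Bigl(\frac k{s^2}-c'\cdot\theta\Bigr)d\theta=-\Bigl(\frac ks\Bigr)^{n-1}\frac{|S^{n-1}|}{n}\,c'(s),
\]
using $\int\theta=0$ and $\int\theta\otimes\theta=\frac{|S^{n-1}|}{n}\mathrm{Id}$. The sign must be reconciled with the paper's convention. If the statement's $Z$ is the calibrating field in the sense $(Z,Dv)=|Dv|$, then $Z=Dv/|Dv|=-\theta$, which yields the stated positive sign.

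\emph{Main obstacle.} The hardest point is justifying $Z=-\theta$ Lebesgue-a.e., given only a measure-theoretic calibration of bounded truncations. I would approach it by coarea: for $\eta\ge0$,
\[
 \int\eta(v)\,(Z,Dv)=\int_I\eta(s)\int_{\partial B_s}Z\cdot(-\nu)\,d\mathcal H^{n-1}\,ds
\]
in the slicing sense, and equality with $\int\eta\,|Dv|$ forces the normal trace of $Z$ on a.e.\ sphere to be $1$. Since $|Z|\le1$, this gives $Z=-\theta$ on a.e.\ sphere, and Fubini through $\mathfrak S$ with positive Jacobian transfers this to Lebesgue-a.e.\ $y$. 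Proving that Anzellotti normal traces coincide with pointwise values on a.e.\ slice uses $\operatorname{div}Z\in L^1_{\rm loc}$ and Lebesgue points. That is where I expect the care to lie.
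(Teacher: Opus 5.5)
Your Step 1, the Jacobian computation, the spherical moment identities, and the final sign $Z=-\theta$ (since $v$ decreases outward) all agree with the paper.

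\textbf{Main gap: absolute continuity of $Dv$.} In Step 3 you assert that $Dv$ is absolutely continuous on the swept region because $v$ is approximately differentiable almost everywhere. That implication is false: every $BV_{\rm loc}$ function is approximately differentiable a.e., including functions with jump or Cantor parts. Without $|Dv|\ll\mathcal L^n$ on the band, the measure identity $(Z,Dv)=|Dv|$ gives no information about $Z$ on a set of positive Lebesgue measure. So substituting $Z=-\theta$ into the area formula is unjustified.

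Your fallback via Anzellotti normal traces on almost every sphere does not avoid this. The natural way to identify those traces with pointwise values of $Z$ leaf by leaf is to test with truncations of $v$ itself, which again needs $v\in W^{1,1}$ on the shell.

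The paper supplies the missing fact as follows. On the shell $S_{a,b}=B(c(a),k/a)\setminus B(c(b),k/b)$ it writes $v$ as an arrival time, $v(y)-a=\int_a^b\mathbf 1_{B(c(s),k/s)}(y)\,ds$. It differentiates under the integral using $D\mathbf 1_{B(c(s),k/s)}=-\theta\,\mathcal H^{n-1}\lfloor\partial B(c(s),k/s)$. It then converts the resulting parameter integral $\int_a^b\int_{S^{n-1}}\xi(\mathfrak S)\cdot\theta\,(k/s)^{n-1}\,d\theta\,ds$ into a Lebesgue integral by the area formula. This step uses multiplicity one and, crucially, that the tangency set $\{c'(s)\cdot\theta=k/s^2\}$ contains at most one $\theta$ per level. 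Hence that set is parameter-null and has Lebesgue-null image. The result is
\[
\nabla v(\mathfrak S(s,\theta))=-\frac{\theta}{k/s^2-c'(s)\cdot\theta}\neq0,
\]
after which the pairing is pointwise, $Z\cdot\nabla v=|\nabla v|$, and $|Z|\le1$ forces $Z=-\theta$ a.e.

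\textbf{Secondary gap: multiplicity one.} Your multiplicity argument is also incomplete. That two distinct spheres meet only at an internal tangency does not make the multiply covered set null. Nor is the exceptional set a null set of levels: if $c(s)=c_0-(k/s)e$ on an interval, every sphere in that interval passes through $c_0$.

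The paper fixes this with a volume count. For $y\in S_{a,b}$, continuity of $s\mapsto|y-c(s)|-k/s$ gives multiplicity at least one. On the other hand, $\int_a^b\int_{S^{n-1}}J_{\mathfrak S}\,d\theta\,ds=\omega_nk^n(a^{-n}-b^{-n})=|S_{a,b}|$, which forces multiplicity exactly one a.e.

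Alternatively, $s\mapsto|y-c(s)|-k/s$ is nondecreasing because $|c'|\le k/s^2$, so each $y$ lies on the spheres for an interval of levels. Nondegenerate intervals belonging to distinct points have disjoint interiors, so only countably many points lie on more than one sphere.
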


\begin{proof}
Choose \(0<a_0<\inf I\le\sup I<b_0<\infty\).
Replacing \(v\) by \(v_{a_0,b_0}\) preserves every superlevel in \(I\),
the swept region, and the stated calibration there.  We may therefore
work with this locally \(BV\) representative throughout the proof.
For $r<s$ in the full-measure set of good levels on which the ball
representation holds, nesting gives
\[
 B(c(s),k/s)\subset B(c(r),k/r),
 \qquad
 |c(r)-c(s)|\le k(r^{-1}-s^{-1}).
\]
The a.e.\ inclusion of these open balls is automatically pointwise, since a
nonempty open difference would have positive measure.  The good set is
dense in $I$, so the estimate extends $c$ uniquely to a locally Lipschitz
representative on all of $I$, with $|c'(s)|\le k/s^2$ a.e.\  For
$\mathfrak S(s,\theta)=c(s)+(k/s)\theta$,
\[
 D_\theta\mathfrak S[\tau]=\frac{k}{s}\tau,
 \qquad
 \partial_s\mathfrak S=c'(s)-\frac{k}{s^2}\theta,
\]
so
\[
 J_\mathfrak S(s,\theta)=
 \left(\frac{k}{s}\right)^{n-1}
 \left(\frac{k}{s^2}-c'(s)\cdot\theta\right),
\]
which proves \eqref{eq:bridge-moving-ball-jacobian}.

Fix good levels $a<b$ with $[a,b]\Subset I$ and let
$S_{a,b}=B(c(a),k/a)\setminus B(c(b),k/b)$.  The map $\mathfrak S$ covers this
shell.  By the area formula,
\[
 \int_a^b\int_{S^{n-1}}J_\mathfrak S\,d\theta\,ds
 =\omega_nk^n(a^{-n}-b^{-n})=|S_{a,b}|.
\]
Since the area-formula multiplicity is at least one on the swept shell, it
must equal one a.e.

The level-set representation also identifies $v$ with the arrival time of
the nested balls: by layer cake, for a.e.\ $y\in S_{a,b}$,
\[
 v(y)-a=\int_a^b\mathbf1_{B(c(s),k/s)}(y)\,ds.
\]
We differentiate this identity directly in the sense of distributions,
which avoids any inverse-map regularity issue.  Since
$D\mathbf1_{B(c(s),k/s)}=-\theta\,\mathcal H^{n-1}
\lfloor\partial B(c(s),k/s)$, Fubini gives, for every
$\xi\in C_c^1(S_{a,b};\mathbb R^n)$,
\[
 \int_{S_{a,b}}\xi\cdot dDv
 =-\int_a^b\int_{S^{n-1}}
   \xi(\mathfrak S(s,\theta))\cdot\theta
   \left(\frac{k}{s}\right)^{n-1}d\theta\,ds.
\]
The area-formula multiplicity is one a.e.\ and
\[
 J_\mathfrak S(s,\theta)=
 \left(\frac{k}{s}\right)^{n-1}
 \left(\frac{k}{s^2}-c'(s)\cdot\theta\right).
\]
For a.e.\ fixed $s$, the zero set of the last factor is empty unless
$|c'(s)|=k/s^2$, in which case it consists of a single point of
$S^{n-1}$.  Since $n\ge2$, the tangency set has zero $n$-dimensional
parameter measure.  On every compact substrip of $I\times S^{n-1}$ the
map $\mathfrak S$ is Lipschitz, so its image is Lebesgue-null.  Thus $Dv$ is
absolutely continuous on
the swept shell and, away from the tangency set,
\begin{equation}\label{eq:bridge-arrival-gradient}
 \nabla v(\mathfrak S(s,\theta))
 =-\frac{\theta}{k/s^2-c'(s)\cdot\theta}.
\end{equation}
In particular,
\[
 \int_{S_{a,b}}|\nabla v|\,dy
 =\int_a^b\int_{S^{n-1}}
   \left(\frac{k}{s}\right)^{n-1}d\theta\,ds<\infty,
\]
so $v\in W^{1,1}_{\rm loc}$ on the swept region.  The calibration now
reduces to the pointwise identity $Z\cdot\nabla v=|\nabla v|$; since
$|Z|\le1$, \eqref{eq:bridge-arrival-gradient} implies $Z=-\theta$ a.e.

A final application of the area formula yields
\[
\begin{aligned}
 \int_{S_{a,b}}\eta(v)Z\,dy
 &=-\int_a^b\int_{S^{n-1}}\eta(s)\theta
   \left(\frac{k}{s}\right)^{n-1}
   \left(\frac{k}{s^2}-c'(s)\cdot\theta\right)d\theta\,ds\\
 &=\frac{|S^{n-1}|}{n}\int_a^b\eta(s)
   \left(\frac{k}{s}\right)^{n-1}c'(s)\,ds,
\end{aligned}
\]
using $\int_{S^{n-1}}\theta=0$ and
$\int_{S^{n-1}}\theta\otimes\theta=(|S^{n-1}|/n)\Id$.  Exhausting $I$
proves \eqref{eq:bridge-moving-ball-vector-formula}.
\end{proof}

For a regular physical positive level define
\begin{equation}\label{eq:physical-translation-flux}
 \mathcal F(t)=\int_{\partial^*E_t}\frac1{\sqrt{1+|Du|^2}}\,\nu_t\,d\mathcal H^{n-1}.
\end{equation}
The same definition, after replacing $u$ by $-u$, is used for the negative
sign.

\begin{proposition}\label{prop:zero-charge-centers}
If $\mathcal F(t)=0$ for almost every sufficiently high regular level of
an unbounded sign, then every one-ball blow-down family is centered:
$c_\varepsilon(s)=0$ for almost every $s>0$.  Consequently the whole
natural rescaling converges without extraction,
\begin{equation}\label{eq:wholeblowdown-pressure}
 \varepsilon w_t(y)\longrightarrow\frac{k}{|y|}
 \quad\text{in }L^1_{\rm loc}(\mathbb R^n).
\end{equation}
\end{proposition}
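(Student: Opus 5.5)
The plan is to convert the vanishing of the physical translation flux into the vanishing of $c_\varepsilon'$ by passing $\mathcal F$ to the blow-down limit. Lemma~\ref{lem:bridge-moving-ball-area} then shows that the limit field integrated over a value band is exactly the drift of the centers. Throughout, work with a fixed blow-down sequence $t_j\to\infty$ and the subsequential limits $(w_\infty,Z_\infty)$ of Proposition~\ref{prop:frag-signed-compactness}. By Corollary~\ref{cor:one-ball-signed-compactness}, $v:=\varepsilon w_\infty$ has superlevels $B_{k/s}(c_\varepsilon(s))$ for a.e.\ $s>0$. The bounded truncations are calibrated by $Z:=\varepsilon Z_\infty$, so the hypotheses of Lemma~\ref{lem:bridge-moving-ball-area} hold on every $I\Subset(0,\infty)$.

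\textbf{Step 1: rescaled flux identity.} For $\eta\in C_c^1((0,\infty))$, compute $\int\eta(\varepsilon w_{t_j})Z^\varepsilon_{t_j}\,dy$ by the prelimit coarea formula. On a regular level $\{\varepsilon w_{t_j}=s\}$ one has $Z^\varepsilon_{t_j}=-\sigma\nu$ with $\sigma=|Dw|/\mathfrak a_j$. Hence
\[
 \int\eta(\varepsilon w_{t_j})Z^\varepsilon_{t_j}\,dy=-\int\eta(s)\int_{\{\varepsilon w_{t_j}=s\}}\frac{\sigma}{|Dw_{t_j}|}\nu\,d\mathcal H^{n-1}\,ds .
\]
Write $\sigma/|Dw|=1/|Dw|-(1-\sigma)/|Dw|$. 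The first piece integrates to $\int\nu=0$ by Gauss--Green; the bounded level sets make this legitimate, and no puncture correction arises since $n\ge2$ and the sets have finite perimeter. The second piece is $\alpha^2/((1+\sigma)|Dw|)$ type, which should be compared with the rescaled $\mathcal F$. In physical variables, $\alpha/\sigma-\alpha\ge0$ differs from $\alpha$ only by a term bounded by $-V'$. So I would instead write the integrand as $\alpha\nu\cdot(\text{weight})$ and aim for
\[
 \Bigl|\int\eta(\varepsilon w_{t_j})Z^\varepsilon_{t_j}\,dy\Bigr|\le C_\eta\, t_j^{n-1}\!\int_{\operatorname{spt}\eta\, t_j}\bigl(|\mathcal F(\tau)|+\text{(error)}\bigr)\frac{d\tau}{t_j}.
\]
The error is controlled by $\int(1-\sigma)$, and hence by $-V'$ exactly as in \eqref{eq:tail-PJ}. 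After rescaling, $t_j^{n}\bigl(V(at_j)-V(bt_j)\bigr)$ is bounded. I need a decaying factor, so I would use the more precise bound $\int_{\partial^*E}(1-\sigma)/|Du|\le\int\alpha^2/\sigma\cdot(\ldots)$. This brings in $\mathcal R_{\rm def}$, whose rescaled integral vanishes by \eqref{eq:narrow-R-vanishing}-type estimates over bands of width $O(t)$. I expect to use $|Du|\sim t^2$ there, which gives an extra $t^{-2}$.

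\textbf{Step 2: limit.} With $\mathcal F=0$, the prelimit integral therefore tends to zero. Weak-star convergence of $Z_{t_j}$ and the strong $L^1_{\rm loc}$ convergence with good values give $\eta(\varepsilon w_{t_j})\to\eta(v)$ strongly in $L^1_{\rm loc}$. The sets are globally tight by \eqref{eq:frag-center-bound}, so $\int\eta(v)Z\,dy=0$. Then \eqref{eq:bridge-moving-ball-vector-formula} gives $\int\eta(s)s^{1-n}c'(s)\,ds=0$ for all $\eta$, so $c$ is constant on $(0,\infty)$. Since $|c(s)|\le k/s\to0$ as $s\to\infty$, we get $c\equiv0$.

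\textbf{Step 3: whole-family convergence.} Layer cake now gives $v=k/|y|$ for the positive part. For the opposite sign, suppose it is also unbounded. Its balls are then centered at the origin as well, by the same argument applied to $-u$ if its flux also vanishes; otherwise I would argue directly that the disjointness of the two phases is impossible with a centered ball $B_{k/s}(0)$ containing a neighborhood of the origin at every level. Either way the opposite part is null. The limit is therefore independent of the subsequence, and \eqref{eq:wholeblowdown-pressure} follows by the subsequence principle. \textbf{Main obstacle:} Step 1. The task is to show that the discrepancy between $\int Z$ over a band and $\int\mathcal F$ genuinely vanishes after scaling, rather than being merely bounded. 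I would try the $\mathcal R_{\rm def}$ bound first.
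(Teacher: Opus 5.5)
There is a genuine gap in Step~1, the step you yourself flag. The obstacle you describe does not actually exist, but the route you propose around it does not close. The decomposition $\sigma/|Dw|=1/|Dw|-(1-\sigma)/|Dw|$ fails immediately. The integral $\int_{\{\varepsilon w_{t_j}=s\}}|Dw_{t_j}|^{-1}\nu\,d\mathcal H^{n-1}$ is a \emph{weighted} normal integral, and Gauss--Green gives $\int_{\partial^*E}\nu\,d\mathcal H^{n-1}=0$ only for a constant weight, so this piece need not vanish.

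The fallback does not work either. On value bands $[at,bt]$, the estimate \eqref{eq:narrow-R-vanishing} does not apply, since it concerns bands of width $L/t$. The bound $\mathcal R_{\rm def}\le n(-V')$ then gives only an $O(1)$ error after scaling, just as the $-V'$ bound did. Improving this by using $|Du|\sim t^2$ on the levels would be circular. Pointwise gradient control is obtained only in Section~\ref{sec:pressure-complete} and later, and that analysis takes \eqref{eq:wholeblowdown-pressure} as its starting point.

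The missing observation is that there is no error term at all. Since $Dw_t(y)=t^{-2}Du(y/t)$ and $\sqrt{t^{-4}+|Dw_t|^2}=t^{-2}W(y/t)$, one has $Z_t(y)=\mathcal A(Du)(y/t)$. On the rescaled level $\{\varepsilon w_t=s\}$, which is the physical level $\varepsilon u=ts$, this gives
\[
 \frac{Z_t^\varepsilon}{|D(\varepsilon w_t)|}=-\frac{\sigma\,\nu}{t^{-2}|Du|}=-t^2\,\frac{\nu}{W}=-t^2\alpha\nu ,
\]
because $\sigma/|Du|=1/W=\alpha$ exactly. Combined with $d\mathcal H^{n-1}_y=t^{n-1}d\mathcal H^{n-1}_x$, coarea yields the exact identity
\[
 \int\eta(\varepsilon w_t)Z_t^\varepsilon\,dy=-t^{n+1}\int_0^\infty\eta(s)\,\mathcal F_\varepsilon(ts)\,ds .
\]
Under the hypothesis, the left side therefore vanishes identically for all large $t$. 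Your tentative prefactor $t^{n-1}\int\cdots d\tau/t$ is missing precisely the factor $t^2$ coming from $1/|Dw_t|=t^2/|Du|$.

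With this identity in place, your Steps~2 and~3 agree with the paper's argument. In Step~3 you do not need the flux of $-u$, nor the remark about a neighborhood of the origin. Since $\{\varepsilon w_\infty>s\}=B_{k/s}(0)$ for a.e.\ $s$ and these balls exhaust $\mathbb R^n$ as $s\downarrow0$, one has $\varepsilon w_\infty>0$ a.e. The opposite phase is therefore null automatically, and layer cake gives $\varepsilon w_\infty=k/|y|$.
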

\begin{proof}
Fix a blow-down sequence $t_j\to\infty$ and let
$v=\varepsilon w_\infty$ be its one-ball limit.  For
$\eta\in C_c^1((0,\infty))$, coarea on the natural scale gives the exact
identity
\begin{equation}\label{eq:translation-flux-natural-scaling}
 \int_{\mathbb R^n}\eta(\varepsilon w_t)Z_t^\varepsilon\,dy
 =-t^{n+1}\int_0^\infty\eta(s)\mathcal F_\varepsilon(ts)\,ds.
\end{equation}
Indeed, on the rescaled level $\varepsilon w_t=s$ one has
$Z_t^\varepsilon=-\sigma\nu$ and
$|D(\varepsilon w_t)|=t^{-2}|Du|$, while
$d\mathcal H_y^{n-1}=t^{n-1}d\mathcal H_x^{n-1}$.
Thus the assumed physical zero flux makes the left-hand side of
\eqref{eq:translation-flux-natural-scaling} vanish for all large $t$.

If $\operatorname{spt}\eta\subset[a,b]\Subset(0,\infty)$, the uniform
one-ball theorem makes the set $\{\varepsilon w_{t_j}>a\}$ globally tight
in the rescaled variables.  Strong local $L^1$ convergence therefore
upgrades to
\[
 \eta(\varepsilon w_{t_j})\to\eta(v)
 \qquad\hbox{strongly in }L^1(\mathbb R^n).
\]
By Proposition~\ref{prop:frag-signed-compactness}, after the same extraction
$Z_{t_j}\weakstar Z_\infty$; set $Z:=\varepsilon Z_\infty$.  Then
$Z_{t_j}^\varepsilon\weakstar Z$.  Together with the preceding global
$L^1$ convergence this yields
\[
 \int\eta(v)Z\,dy=0.
\]
Applying Lemma~\ref{lem:bridge-moving-ball-area} on any compact value band
$I$ gives
\[
 0=\frac{|S^{n-1}|}{n}
 \int_I\eta(s)\left(\frac{k}{s}\right)^{n-1}c_\varepsilon'(s)\,ds.
\]
The weight is strictly positive; hence $c_\varepsilon'=0$ a.e.\ and the
locally Lipschitz center path is constant.  The universal center bound
$|c_\varepsilon(s)|\le k/s$ then forces this constant to be zero by
letting $s\to\infty$ through good levels.

Thus every subsequential natural limit is $k/|y|$.  If the whole family
failed to converge locally in $L^1$, a bad sequence would contain a
subsequence with a different limit, contradicting the preceding
uniqueness.  This proves \eqref{eq:wholeblowdown-pressure}.
\end{proof}

\begin{corollary}\label{cor:highdim-centering}
If $n\ge3$, every unbounded sign is centered.  Moreover the positive and
negative tails cannot both be unbounded.
\end{corollary}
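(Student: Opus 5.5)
The first assertion should follow from chaining two earlier results. For $n\ge3$, Lemma~\ref{lem:tail-translation} gives
\[
\int_{\partial^*E_t}\alpha\nu_t\,d\mathcal H^{n-1}=0
\]
for almost every sufficiently high $t$, and this integral is exactly $\mathcal F(t)$ in \eqref{eq:physical-translation-flux}. The one-tail analysis of Section~\ref{sec:level-volume-complete} was stated for $+u$ and applies verbatim to $-u$. So for either unbounded sign $\varepsilon$ the physical translation flux vanishes at almost every high regular level. Proposition~\ref{prop:zero-charge-centers} then gives $c_\varepsilon(s)=0$ for almost every $s$, together with the whole-family convergence $\varepsilon w_t\to k/|y|$ in $L^1_{\mathrm{loc}}$. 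That is the centering statement.

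For the second assertion, suppose both tails are unbounded. The idea is to apply the centering conclusion to each sign along the same rescaling family. By \eqref{eq:wholeblowdown-pressure} with $\varepsilon=+1$ we get $w_t\to k/|y|$ in $L^1_{\mathrm{loc}}$. Applying it with $\varepsilon=-1$, we get $-w_t\to k/|y|$, i.e.\ $w_t\to -k/|y|$. Both statements concern the same functions $w_t$. Uniqueness of $L^1_{\mathrm{loc}}$ limits then forces $k/|y|=-k/|y|$ almost everywhere, which is absurd since $k=n-1>0$.

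The same contradiction can be read off at the level of Proposition~\ref{prop:frag-signed-compactness}. There $\{w_\infty>s\}$ and $\{-w_\infty>s\}$ would be the balls $B_{k/s}(0)$ for almost every $s$, and these two sets must be disjoint.

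The only point needing care is that Proposition~\ref{prop:zero-charge-centers} for the negative sign uses $\mathcal F$ defined with $u$ replaced by $-u$. This is legitimate because the flux identity and Lemma~\ref{lem:tail-translation} depend only on the equation, which is invariant under $u\mapsto-u$. The finite-band cutoff estimate \eqref{eq:finite-band-stress-cutoff} also makes no sign assumption. No further obstacle is expected, since both assertions reduce immediately to the cited results.
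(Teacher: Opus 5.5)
Your proposal is correct and follows the paper's proof: Lemma~\ref{lem:tail-translation} gives $\mathcal F=0$ for each sign separately, Proposition~\ref{prop:zero-charge-centers} then centers each unbounded sign, and a common blow-down limit rules out simultaneous tails. The paper phrases that last step as your secondary argument, namely that the disjoint limits $\{w_\infty>s\}$ and $\{w_\infty<-s\}$ would both equal $B_{k/s}$, and your uniqueness-of-$L^1_{\mathrm{loc}}$-limits version is an equivalent repackaging of it.
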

\begin{proof}
The translation-stress cutoff in Lemma~\ref{lem:tail-translation} gives
$\mathcal F=0$ for each sign separately.  Apply
Proposition~\ref{prop:zero-charge-centers}.  If both signs were unbounded, for a common
blow-down and almost every $s>0$ both
$\{w_\infty>s\}$ and $\{w_\infty<-s\}$ would equal the same ball
$B_{k/s}$.  The prelimit level sets are disjoint and converge globally in
symmetric difference, a contradiction.
\end{proof}

For the positive one-tail branch there exist $M<\infty$ and $r_1>0$
such that
\begin{equation}\label{eq:lower-bound-after-twotail}
 u\ge -M\qquad\text{on }B_{r_1}\setminus\{0\}.
\end{equation}
For $n\ge3$ this follows from the preceding two-tail exclusion.  For
$n=2$ it will follow after the critical defect models are eliminated in
\cref{sec:planar-defects}.

\section{The planar critical translation defect}
\label{sec:planar-defects}
In this section $n=2$ and $k=1$.  The finite-height stress cutoff is only
$O(1)$, so horizontal translation charge need not vanish before the axis
geometry is understood.  We show that its only possible equality models are
$H_e$ and $D_e$, and that neither can arise from an actual punctured
capillary graph.

\subsection{Cluster-set topology}
\begin{lemma}\label{lem:cluster-interval}
For every continuous $u:B_R\setminus\{0\}\to\mathbb R$ in dimension two,
\[
 \mathcal C_0(u)=[\liminf_{x\to0}u(x),\limsup_{x\to0}u(x)]
\]
as an interval in the extended real line.  In particular
$\mathcal C_0(u)=\{+\infty\}$ if and only if $u(x)\to+\infty$ uniformly,
and similarly for $-\infty$.
\end{lemma}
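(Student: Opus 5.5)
The plan is to prove the identity by showing two inclusions, using only continuity of $u$ and connectedness of small annuli. Write $\ell^-=\liminf_{x\to0}u(x)$ and $\ell^+=\limsup_{x\to0}u(x)$ in $[-\infty,+\infty]$. The inclusion $\mathcal C_0(u)\subset[\ell^-,\ell^+]$ is immediate from the definitions: any limit $u(x_j)\to a$ along $x_j\to0$ satisfies $\ell^-\le a\le\ell^+$. The extreme values $\ell^\pm$ themselves lie in $\mathcal C_0(u)$ by the usual diagonal choice of sequences realizing the $\liminf$ and $\limsup$, including the cases where they are $\pm\infty$.

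For the reverse inclusion, fix $a$ with $\ell^-<a<\ell^+$, where $a$ is finite. For each $j$, sequences realizing $\ell^\pm$ give points $p_j,q_j\in B_{1/j}\setminus\{0\}$ with $u(p_j)<a<u(q_j)$. The punctured disk $B_{1/j}\setminus\{0\}$ is path connected in dimension two; the same holds for every $n\ge2$, but only $n=2$ is needed here. So I join $p_j$ to $q_j$ by a path inside it, for instance along circular arcs and radial segments. The intermediate value theorem then gives $x_j$ on this path with $u(x_j)=a$ and $|x_j|<1/j$. Hence $a\in\mathcal C_0(u)$.

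This covers the open interval, and the endpoints were handled above. So $\mathcal C_0(u)$ equals the closed extended interval $[\ell^-,\ell^+]$. The degenerate case $\ell^-=\ell^+$ is included trivially.

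For the final assertion, $\mathcal C_0(u)=\{+\infty\}$ is equivalent, by the identity just proved, to $\ell^-=+\infty$. This means that for every $M$ there is $\delta>0$ with $u>M$ on $B_\delta\setminus\{0\}$, which is exactly uniform divergence $u(x)\to+\infty$ as $x\to0$. Conversely, uniform divergence forces every cluster value to be $+\infty$. The case of $-\infty$ follows by applying this to $-u$.

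There is no genuine obstacle here. The only point needing care is the bookkeeping of extended-real endpoints, where the diagonal extraction must handle infinite $\liminf$ and $\limsup$. The single geometric input is the path-connectedness of punctured disks.
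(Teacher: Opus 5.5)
Your proof is correct and takes essentially the same route as the paper: both arguments use only continuity of $u$ and connectedness of punctured disks. The paper phrases this as the intersection of the nested compact intervals $\overline{u(B_r\setminus\{0\})}\subset[-\infty,+\infty]$ as $r\downarrow0$, which handles automatically the extended-real endpoints that you treat by hand with diagonal sequences and the intermediate value theorem.
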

\begin{proof}
For each $r$, the image of the connected punctured disk $B_r\setminus\{0\}$
is an interval, and its closure in the compact extended real line is a
compact interval.  These intervals are nested as $r\downarrow0$, and their
intersection is the cluster set.  The endpoint and uniform-divergence
statements follow immediately.
\end{proof}

\subsection{Finite half-line clusters and the half-dipole}
Assume for contradiction that
\begin{equation}\label{eq:halfline-hyp}
 \mathcal C_0(u)=[\ell,+\infty],\qquad \ell\in\mathbb R.
\end{equation}
In the half-line branch write
\[
 \mathcal L_\ell:=\{0\}\times[\ell,+\infty),\qquad p_*:=(0,\ell),
 \qquad B_r^*:=B_r\setminus\{0\}.
\]
Then $u$ is bounded below near the puncture.  Choose $R_1\in(0,R)$ and
$M<\infty$ such that
\begin{equation}\label{eq:halfline-lower-bound}
 u\ge -M\qquad\text{on }B_{R_1}\setminus\{0\}.
\end{equation}
The following two planar lemmas are the only topological input in the
half-line exclusion.
\begin{lemma}
\label{lem:no-trapped-cold-component}
There is \(c_M>0\) with the following property.  Let \(t\ge1\) be a
regular value, and let \(D\) be a connected component of
\(\{u<t\}\cap B_{R_1}^*\) whose closure does not meet
\(\partial B_{R_1}\).  Then

\begin{equation*}
 |D|\ge c_M.
\end{equation*}

The conclusion also allows \(0\in\overline D\).
\end{lemma}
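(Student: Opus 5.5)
The plan is to run a sublevel-set isoperimetric ODE inside $D$, with the equation's flux identity as the driving inequality and the lower bound \eqref{eq:halfline-lower-bound} controlling the source.

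\textbf{Step 1: set-up.} For $s\in\mathbb R$ put
\[
D_s:=\{x\in D:u(x)<s\},\qquad A(s):=|D_s|,\qquad m:=\inf_D u\ge -M .
\]
Then $A$ is nondecreasing, $A(s)>0$ for $s>m$, and $A(t)=|D|$. Because $\overline D$ avoids $\partial B_{R_1}$, every $D_s$ is compactly contained in $B_{R_1}$ away from the outer circle. Its relative boundary in $B_{R_1}^*$ consists of pieces of $\{u=s\}$ and of $\{u=t\}$. On all of these pieces the outer normal of $D_s$ is $Du/|Du|$: on $\{u=s\}$ this holds because $u$ increases outward, and on $\partial D\subset\{u=t\}$ it holds because $D$ is a component of $\{u<t\}$.

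\textbf{Step 2: flux identity for regular $s$.} Apply Gauss--Green to $D_s\setminus\overline B_\varepsilon$, as in the derivation of $J(t)=\int\sigma_t$ in Section~\ref{sec:level-volume-complete}. The inner circle contributes at most $2\pi\varepsilon$ because $|\mathcal A(Du)|\le1$, and this vanishes in dimension two. The limit $\varepsilon\downarrow0$ is justified by $u\ge -M$ together with Corollary~\ref{cor:L1-morrey}. The result is
\[
\int_{D_s}(-u)\,dx=\int_{\partial^*D_s}\sigma\,d\mathcal H^1>0 .
\]
Two consequences follow.

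First, take $s$ slightly above $m$. If $m>0$, the left side would be negative, which is impossible. Hence $m\le0$, so the value interval $(m,t)$ has length at least $t\ge1$. This argument is valid whether or not $0\in\overline D$, because the puncture flux vanishes.

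Second, by $u\ge-M$ we get $\int_{\partial^*D_s}\sigma\le MA(s)$.

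\textbf{Step 3: differential inequality.} Use $1-\sigma\le\alpha/\sigma$ exactly as in \eqref{eq:tail-PJ}, together with the weighted coarea formula $A'(s)=\int_{\partial^*D_s}|Du|^{-1}\ge\int\alpha/\sigma$ (valid a.e.\ by the analyticity argument behind \eqref{eq:volume-absolute-continuity}). This gives
\[
P(D_s)\le MA(s)+A'(s).
\]
Combining with the planar isoperimetric inequality $P(D_s)\ge2\sqrt\pi\,A(s)^{1/2}$ yields
\[
A'(s)\ge2\sqrt\pi\,A^{1/2}-MA .
\]
Whenever $A\le\pi/M^2$, the right side is at least $\sqrt\pi A^{1/2}$, and therefore $\tfrac{d}{ds}A^{1/2}\ge\sqrt\pi/2$.

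\textbf{Step 4: conclusion.} Integrate over $s\in(m,t)$, an interval of length $\ge1$, starting from $A(m^+)\ge0$. Either $A$ exceeds $\pi/M^2$ somewhere, or $A(t)^{1/2}\ge\sqrt\pi/2$. In both cases $|D|\ge c_M:=\min\{\pi/M^2,\pi/4\}$.

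\textbf{Main obstacle.} The delicate point is the rigorous treatment of $D_s$. Its boundary must be finite perimeter for a.e.\ $s$; this follows from coarea via the finite-band area estimate of Lemma~\ref{lem:finite-band-complete}, applied to the truncation $T_{m,s}(u)$ restricted to $D$. One must also check that no boundary mass hides at the puncture. In two dimensions a point carries zero perimeter and zero flux, which is exactly why the case $0\in\overline D$ is admissible.
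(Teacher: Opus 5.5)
Your proof is correct, but it takes a different route from the paper's.

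\textbf{The paper's route.} The paper tests the equation with $t-u$ on $D$ to obtain the energy identity
$\int_D|\nabla u|^2/\sqrt{1+|\nabla u|^2}=\int_D(u^2-tu)$.
It bounds the right side by $(M^2+tM)|D|$, which gives
$|Dv|(\mathbb R^2)\le(1+M^2+tM)|D|$ for $v=(t-u)\mathbf 1_D$.
It then combines the planar $BV$--Sobolev inequality with $-\int_Du\ge0$ (your observation $m\le0$ in integrated form) to reach
$t|D|\le C_{BV}(1+M^2+tM)|D|^{3/2}$.

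\textbf{Your route.} You run on the sublevels $D_s$ the same isoperimetric differential inequality that the paper uses for superlevels in the positivity step of Proposition~\ref{prop:critical-volume-complete}. The ingredients are:
\begin{itemize}
\item the flux identity $\int_{\partial^*D_s}\sigma=-\int_{D_s}u\le MA(s)$;
\item the bound $1-\sigma\le\alpha/\sigma$, as in \eqref{eq:tail-PJ};
\item coarea for $A'(s)$;
\item the planar isoperimetric inequality.
\end{itemize}

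\textbf{What each approach buys.} Your version gives an explicit constant $c_M=\min\{\pi/M^2,\pi/4\}$, with better $M$-dependence than the paper's $\bigl(C_{BV}(1+M+M^2)\bigr)^{-2}$. It also never uses that $t$ itself is a regular value. The price is a.e.-level bookkeeping:
\begin{itemize}
\item Sard's theorem;
\item monotonicity, or absolute continuity, of $A$;
\item the identification $P(D_s)=\mathcal H^1(\{u=s\}\cap D)$;
\item removability of the puncture for $\chi_{D_s}$.
\end{itemize}
The paper instead works at the single level $t$ and only needs the zero extension of $v$ to lie in $BV(\mathbb R^2)$.

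\textbf{Two minor corrections.}
\begin{itemize}
\item For $s<t$, continuity keeps $\overline{D_s}$ away from $\partial D\cap B_{R_1}^*\subset\{u=t\}$. So $\partial D_s\cap B_{R_1}^*$ lies entirely in $\{u=s\}$; there are no $\{u=t\}$ pieces, and this is exactly what your normal-orientation claim needs.
\item The passage $\varepsilon\downarrow0$ in the flux identity needs only $-M\le u<s$ on $D_s$, not Corollary~\ref{cor:L1-morrey}.
\end{itemize}
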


\begin{proof}
Set $v=(t-u)\mathbf1_D$.  For a.e.\ sufficiently small $\varepsilon>0$,
put \(D_\varepsilon=D\setminus\overline B_\varepsilon\); these radii
are admissible for the finite-perimeter Gauss--Green formulas below.  The
outer normal of \(D_\varepsilon\) on its inner circular boundary is
\(-e_r\), while \(v\) has zero trace on the ordinary level boundary of
\(D\).  Testing
\(\operatorname{div}\mathcal A(\nabla u)=-u\) by \(t-u\) on
\(D_\varepsilon\) therefore gives
\[
 \begin{split}
 \int_{D_\varepsilon}
   \frac{|\nabla u|^2}{\sqrt{1+|\nabla u|^2}}\,dx
 &=\int_{D_\varepsilon}(u^2-tu)\,dx\\
 &\quad+
 \int_{D\cap\partial B_\varepsilon}
   (t-u)\mathcal A(\nabla u)\cdot e_r\,d\mathcal H^1.
 \end{split}
\]
The last term has absolute value at most
\(2\pi\varepsilon(t+M)\).  Letting \(\varepsilon\downarrow0\) gives

\begin{equation*}
 \int_D\frac{|\nabla u|^2}{\sqrt{1+|\nabla u|^2}}\,dx
 =\int_D(u^2-tu)\,dx.
\end{equation*}

For Gauss--Green, the outer normal of \(D\) on the ordinary level
boundary is \(\nabla u/|\nabla u|\).  Hence
\[
 \begin{split}
 -\int_{D_\varepsilon}u\,dx
 &=\int_{\partial^*D\setminus B_\varepsilon}
   \frac{|\nabla u|}{\sqrt{1+|\nabla u|^2}}\,d\mathcal H^1\\
 &\quad-
 \int_{D\cap\partial B_\varepsilon}
   \mathcal A(\nabla u)\cdot e_r\,d\mathcal H^1.
 \end{split}
\]
The last integral is bounded by \(2\pi\varepsilon\).  Letting
\(\varepsilon\downarrow0\) gives

\begin{equation}
 -\int_Du\,dx
 =\int_{\partial^*D}
   \frac{|\nabla u|}{\sqrt{1+|\nabla u|^2}}\,d\mathcal H^1
 \ge0.
 \label{eq:cold-gauss-green}
\end{equation}

On \(0\le u<t\), \(u^2-tu\le0\); on \(-M\le u<0\), it is at most
\(M^2+tM\).  Thus

\begin{equation*}
 0\le
 \int_D\frac{|\nabla u|^2}{\sqrt{1+|\nabla u|^2}}\,dx
 \le(M^2+tM)|D|.
\end{equation*}

Since

\[
 |\nabla u|\le1+
 \frac{|\nabla u|^2}{\sqrt{1+|\nabla u|^2}},
\]

the zero extension of \(v\) belongs to \(BV(\mathbb R^2)\): the trace
of $t-u$ vanishes on the ordinary level boundary, so no jump term is
created there, and the puncture is $\mathcal H^1$-null.  Moreover it
satisfies

\begin{equation*}
 |Dv|(\mathbb R^2)\le(1+M^2+tM)|D|.
\end{equation*}

The planar \(BV\)-Sobolev inequality and
\eqref{eq:cold-gauss-green} now yield

\[
 t|D|
 \le\int_D(t-u)
 \le |D|^{1/2}\|v\|_{L^2}
 \le C_{BV}(1+M^2+tM)|D|^{3/2}.
\]

For \(t\ge1\),

\begin{equation*}
 |D|\ge
 \frac{1}{C_{BV}^2(1+M+M^2)^2}
 =:c_M>0.
\end{equation*}
\end{proof}

Choose \(0<R_0<R_1\) so small that \(\pi R_0^2<c_M\), and choose a
regular value

\begin{equation*}
 t_0>\max\left\{1,\ell,\max_{\partial B_{R_0}}u\right\}.
\end{equation*}

For every regular \(t\ge t_0\), the set
\(\{u<t\}\cap B_{R_0}^*\) is connected.  Indeed, a collar of
\(\partial B_{R_0}\) lies in \(\{u<t\}\).  If \(C\) were another
component, let \(D\) be the component of
\(\{u<t\}\cap B_{R_1}^*\) containing \(C\).  If \(D\) left \(B_{R_0}\),
a path in the connected open set \(D\) would first meet
\(\partial B_{R_0}\) in the collar and would connect \(C\) to the collar
component, a contradiction.  Thus \(D\subset B_{R_0}\), contrary to
Lemma~\ref{lem:no-trapped-cold-component}, because
\[
 |D|\le |B_{R_0}|=\pi R_0^2<c_M.
\]

For arbitrary \(t>t_0\), choose regular values \(t_k\uparrow t\).  Given
\(x,y\in\{u<t\}\cap B_{R_0}^*\), for all sufficiently large \(k\),
\[
 \max\{u(x),u(y)\}<t_k<t.
\]
Thus \(x\) and \(y\) belong to the same connected set
\(\{u<t_k\}\cap B_{R_0}^*\), which is contained in
\(\{u<t\}\cap B_{R_0}^*\).  This proves connectedness for every
\(t\ge t_0\).  Since \(\ell<t\) is a cluster value, the connected open set
accumulates at \(0\); joining such points to the outer collar gives paths
that cross every intervening annulus.

\begin{lemma}
\label{lem:zero-phase-every-annulus}
Let $r_j\downarrow0$ be a sequence for which the shifted natural rescalings
\begin{equation}\label{eq:halfline-shifted-natural-scaling}
 U_j(y):=r_j\bigl(u(r_jy)-\ell\bigr)
\end{equation}
converge strongly in $L^1_{\mathrm{loc}}(\mathbb R^2)$ to a limit $U$,
with the bounded positive truncations converging locally strictly in $BV$.
Then, for every $0<a<b<\infty$,
\[
 \operatorname*{ess\,inf}_{\{a<|y|<b\}}U=0.
\]
\end{lemma}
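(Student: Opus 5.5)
The claim has two halves. The lower bound $U\ge0$ a.e.\ is the easier one. By \eqref{eq:halfline-lower-bound} we have $U_j\ge -r_j(M+\ell)$ on $B_{R_1/r_j}$. Strong $L^1_{\rm loc}$ convergence then gives $U\ge0$ a.e., so $\operatorname*{ess\,inf}_{\{a<|y|<b\}}U\ge0$. The real content is the upper bound: we must rule out $U\ge\delta>0$ a.e.\ on the annulus $A_{a,b}=\{a<|y|<b\}$. We argue by contradiction and assume this for some $\delta>0$.

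\emph{Step 1: a path through the annulus in the cold phase.} For every fixed $t$, the connectivity argument just given shows that $\{u<\ell+t r_j\}$ cannot be used directly, because the threshold moves with $j$. Instead, take $t_j:=\ell+\tau r_j$ with $0<\tau<\delta$. We need $t_j>t_0$, which is not automatic since $\ell<t_0$. The correct statement to use is therefore that, for every fixed $t>\ell$, the connected open set $\{u<t\}\cap B^*_{R_0}$ contains points arbitrarily close to $0$ and meets the collar. Connectivity was proved only for $t\ge t_0$, so we combine two facts. First, for every $t\in(\ell,t_0)$, each component of $\{u<t\}\cap B^*_{R_1}$ whose closure accumulates at $0$ either reaches $\partial B_{R_1}$ or has area at least $c_M$, by Lemma~\ref{lem:no-trapped-cold-component} (this needs $t\ge1$; otherwise shift $u$, or rerun the proof of the lemma with constants depending on $\ell$ and $M$). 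Second, since $\ell$ is a cluster value, points with $u<t$ accumulate at $0$. The component $D$ of such a point cannot be contained in $B_{\rho}$ with $\pi\rho^2<c_M$, so it exits $B_\rho$. Taking $t=t_j=\ell+\tau r_j$ and $\rho=2b r_j$, we obtain a connected arc $\gamma_j$ inside $\{u<t_j\}$ running from $\partial B_{a r_j/2}$ to $\partial B_{2br_j}$. After rescaling, $\gamma_j/r_j$ is a continuum crossing $A_{a,b}$ on which $U_j<\tau$.

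\emph{Step 2: the crossing has positive measure in the limit.} A curve alone has zero area, so we thicken it using the equation. On the rescaled band $\{U_j<\tau\}$, the bounded positive truncation $T_\tau(U_j^+)$ converges strictly in $BV$ to $T_\tau(U^+)$. Under our assumption, $T_\tau(U^+)\equiv\tau$ on $A_{a,b}$, so $|DT_\tau(U_j^+)|(A'_{a,b})\to0$ on any slightly smaller annulus $A'_{a,b}$. Now slice by circles $\partial B_\rho$, $\rho\in(a,b)$. Each circle meets $\gamma_j/r_j$, so it contains a point where $T_\tau(U_j^+)<\tau$. By Fubini, for a set of radii of measure close to $b-a$, the slice has variation tending to $0$. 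Hence on such circles $T_\tau(U_j^+)$ is uniformly below $\tau$, and is at most $\tau'<\delta$ everywhere on the circle. This gives a set of positive measure in $A_{a,b}$ where $U_j\le\tau'$ for all large $j$, and in the limit $U\le\tau'<\delta$ on a positive-measure set. This contradicts $U\ge\delta$ a.e.

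\emph{Main obstacle.} The hard step is Step 1, which requires a cold crossing at the shrinking threshold $\ell+\tau r_j$ rather than at a fixed level above $t_0$. This is where Lemma~\ref{lem:no-trapped-cold-component} must be used carefully near $\ell$. If its constant degenerates as $t\downarrow\ell$, the fallback is to work at a fixed level $t=\ell+\tau'$. We then use the Step 2 slicing with the truncation $T_M(U_j^+)$ and the fact that $U_j<\tau' /r_j\cdot r_j$ is not sufficient. The more robust way to close this is through the cluster interval: since $\ell$ is the liminf, every small circle $\partial B_{\rho r_j}$ whose slice has small oscillation must carry values near $\ell$. Otherwise the connected cold set would be trapped inside $B_{\rho r_j}$, which has area below $c_M$.
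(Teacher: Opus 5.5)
Your architecture matches the paper's: positivity from \eqref{eq:halfline-lower-bound}, then a contradiction from a cold crossing path combined with BV slicing. However, Step~1 rests on a scaling error that makes it unnecessary and, as written, invalid. Since $U_j(y)=r_j(u(r_jy)-\ell)$, one has $\{U_j<\tau\}=r_j^{-1}\{u<\ell+\tau/r_j\}$. The physical threshold is therefore $\ell+\tau/r_j\to+\infty$, not $\ell+\tau r_j$.

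For large $j$ this threshold exceeds $t_0$. The connectivity of $\{u<t\}\cap B_{R_0}^*$ for every $t\ge t_0$, proved just before the lemma, then applies directly. This connected open set contains a collar of $\partial B_{R_0}$ and points of $B_{ar_j}$ (because $\ell<t$ is a cluster value). Hence it contains a path joining them, and after rescaling this path crosses $A_{a,b}$ inside $\{U_j<\tau\}$. That is exactly the paper's argument, and your ``main obstacle'' does not exist.

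Your Step~1 would not go through as written. Lemma~\ref{lem:no-trapped-cold-component} is stated only for regular $t\ge1$. Its proof yields a constant that tends to $0$ as $t\downarrow0$, and it gives nothing for $t\le0$, since the bound $t|D|\le\int_D(t-u)$ becomes vacuous. So for $\ell\le0$ there is no area lower bound for trapped components at levels near $\ell$. Nor can you ``shift $u$'', because $\operatorname{div}\mathcal A(Du)=-u$ is not invariant under $u\mapsto u+c$. The fallback paragraph at the end should simply be dropped.

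Step~2 is the paper's slicing argument in functional form, but it needs a quantitative margin. A point on each circle where $T_\tau(U_j^+)<\tau$, together with small slice variation, says nothing about $U_j$, because $T_\tau(U_j^+)\le\tau$ everywhere anyway. The fix is as follows.
\begin{itemize}
\item Take the crossing path in $\{U_j<\tau_0\}$ and truncate at a strictly larger level $\tau\in(\tau_0,\delta)$.
\item Use $f_j=T_{\tau-\alpha}((U_j-\alpha)_+)$ with $0<\alpha<\tau_0$, so that the strict-convergence statement \eqref{eq:signed-prelimit-strict-bv}, which is formulated with a positive lower cutoff, applies.
\item The limit is constant on $A_{a,b}$, so $|Df_j|(A')\to0$ on every $A'\Subset A_{a,b}$.
\item For most radii the slice variation is then below $\tau-\tau_0$, so $U_j<\tau$ on the whole circle.
\item This gives $|\{U_j<\tau\}\cap A'|\ge c>0$ for large $j$, which contradicts $U\ge\delta$ via convergence in measure.
\end{itemize}
The paper phrases the same contradiction through the level set $E_j=\{U_j>s\}$. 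Most circles meet $E_j$ because $|A_{a,b}\setminus E_j|\to0$, and every circle meets the cold path. Those slices therefore have one-dimensional perimeter at least $2$, contradicting $\Per(E_j;A_{a_1,b_1})\to0$. The two formulations are interchangeable once the threshold is scaled correctly.
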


\begin{proof}
Since $U_j=w_{t_j}-\ell/t_j$, the constant shift leaves the gradient and
natural flux unchanged; the shifted strict-$BV$ conclusion in
\eqref{eq:signed-prelimit-strict-bv} supplies the stated subsequence and
truncation convergence.  The lower bound \eqref{eq:halfline-lower-bound} gives
$U_j\ge-r_j(M+|\ell|)$ on each fixed compact set for all large $j$; hence
$U\ge0$.  We first record the level-set consequence of the assumed
strict convergence of bounded positive truncations.  Fix a bounded positive
value interval $[\alpha,\beta]$ and a bounded open
$K\Subset\mathbb R^2$ such that
$|D T_{\beta-\alpha}((U-\alpha)_+)|(\partial K)=0$; such sets may be
chosen to form an exhaustion.  Coarea and perimeter lower semicontinuity give
\[
 P(\{U>s\};K)\le\liminf_jP(\{U_j>s\};K)
 \quad\text{for a.e.\ }s,
\]
while strict $BV$ convergence of
$T_{\beta-\alpha}((U_j-\alpha)_+)$ gives convergence of the integrals of
these perimeters over $(\alpha,\beta)$.  On such a $K$, write $P_j(s)=P(\{U_j>s\};K)$ and $P(s)=P(\{U>s\};K)$.
Lower semicontinuity gives $(P-P_j)_+\to0$ a.e., while
$0\le(P-P_j)_+\le P\in L^1(ds)$.  Dominated convergence and the
convergence of the coarea integrals yield
\[
 \int(P-P_j)_+\,ds\longrightarrow0,\qquad
 \int(P_j-P)_+\,ds
 =\int(P_j-P)\,ds+\int(P-P_j)_+\,ds\longrightarrow0.
\]
Thus $\int|P_j-P|\,ds\to0$.  After one scalar diagonal extraction
over such an open exhaustion and bounded positive value intervals, the
level indicators converge in $L^1_{\rm loc}$ and their perimeters
converge for a full-measure set of $s>0$; hence
\[
 \mathbf1_{\{U_j>s\}}\longrightarrow\mathbf1_{\{U>s\}}
 \quad\text{locally strictly in }BV.
\]
Suppose now that $U\ge m>0$ almost everywhere on $A_{a,b}$, and choose
\(s\in(0,m)\) from this full-measure set.  Next choose \(a<a_1<b_1<b\) so that
\(|D\mathbf1_{\{U>s\}}|\) does not charge either boundary circle.  With
\(E_j=\{U_j>s\}\), strict convergence gives

\begin{equation}
 |A_{a,b}\setminus E_j|\longrightarrow0,
 \qquad
 \Per(E_j;A_{a_1,b_1})\longrightarrow0.
 \label{eq:cold-strict-bv}
\end{equation}

For large \(j\), the physical threshold
\(\ell+s/r_j\) exceeds \(t_0\), and

\[
 \{U_j<s\}=r_j^{-1}\{u<\ell+s/r_j\}
\]

contains a path crossing \(A_{a_1,b_1}\).  Let \(B_j\) be the set of
radii \(\rho\in(a_1,b_1)\) for which
\(E_j\cap\partial B_\rho=\varnothing\).  Polar Fubini and the first
limit in \eqref{eq:cold-strict-bv} give
\[
 2\pi a_1|B_j|
 \le \int_{B_j}\mathcal H^1(\partial B_\rho)\,d\rho
 \le |A_{a,b}\setminus E_j|\longrightarrow0,
\]
so \(|B_j|\to0\).  For almost every
remaining radius the circle contains a nonempty open arc of each phase,
so its one-dimensional perimeter is at least \(2\).  The \(BV\) slicing
inequality gives

\[
 \Per(E_j;A_{a_1,b_1})
 \ge2(b_1-a_1-|B_j|),
\]

contradicting \eqref{eq:cold-strict-bv}.
\end{proof}

\begin{proposition}\label{prop:halfline-half-dipole-blowdown}
Every sequence $r_j\downarrow0$ has a subsequence and $e\in\mathbb S^1$
such that
\[
 r_j\bigl(u(r_j\,\cdot)-\ell\bigr)\to H_e
 \quad\text{strongly in }L^1_{\rm loc}(\mathbb R^2).
\]
The bounded positive truncations converge locally strictly in $BV$.
\end{proposition}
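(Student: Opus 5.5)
The plan is to reduce the statement to the one-ball blow-down theory of Section~\ref{sec:one-ball} and then use Lemma~\ref{lem:zero-phase-every-annulus} to pin the centers onto the boundary of the admissible disk.

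\emph{Step 1: compactness and the one-ball structure.} Put $t_j=1/r_j$. Then $U_j=w_{t_j}-\ell r_j$, and the constant shift $c_j=\ell r_j\to0$ is allowed in the last assertion of Proposition~\ref{prop:frag-signed-compactness}. Since $+\infty\in\mathcal C_0(u)$, the positive tail is unbounded. Because of \eqref{eq:halfline-lower-bound}, the negative sign is bounded near the puncture, so its limiting family is empty. After a subsequence, Proposition~\ref{prop:frag-signed-compactness} and Corollary~\ref{cor:one-ball-signed-compactness} (with $k=1$) give three things:
\begin{itemize}
\item $U_j\to U$ strongly in $L^1_{\rm loc}(\mathbb R^2)$ with $U\ge0$;
\item the bounded positive truncations converge locally strictly in $BV$, by \eqref{eq:signed-prelimit-strict-bv};
\item for a.e.\ $s>0$,
\[
 \{U>s\}=B_{1/s}(c(s)),\qquad |c(s)|\le 1/s .
\]
\end{itemize}
This already proves the strict-$BV$ assertion of the proposition. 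It also means $U$ satisfies the hypotheses of Lemma~\ref{lem:zero-phase-every-annulus}.

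\emph{Step 2: tangency at the puncture.} Fix $0<a<b$. By Lemma~\ref{lem:zero-phase-every-annulus}, for every $m>0$ the set $\{U>m\}$ fails to contain the annulus $\{a<|y|<b\}$ up to a null set. A disk $B_{1/s}(c(s))$ contains that annulus as soon as $|c(s)|+b<1/s$. Hence for a.e.\ $s$,
\[
 |c(s)|\ge 1/s-b .
\]
Since $b>0$ is arbitrary, and there are countably many choices along $b\downarrow0$, we obtain $|c(s)|=1/s$ for a.e.\ $s$. So we may write $c(s)=e(s)/s$ with $e(s)\in\mathbb S^1$.

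Nesting of the superlevel disks gives, for good levels $r<s$,
\[
 |c(r)-c(s)|\le 1/r-1/s .
\]
Exactly as in the proof of Proposition~\ref{prop:two-ball-impossible}, squaring this inequality yields $e(r)\cdot e(s)\ge1$. Hence $e(s)\equiv e$ is constant on the good set.

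\emph{Step 3: identification of the limit.} For $s>0$, the superlevel set of $H_e$ is
\[
 \{H_e>s\}=\{2e\cdot y>s|y|^2\}=B_{1/s}(e/s).
\]
So $U$ and $H_e$ have the same superlevel sets for a.e.\ $s$. Since $U\ge0$, the layer-cake formula gives $U=H_e$ a.e. Thus every sequence $r_j\downarrow0$ has a subsequence along which $U_j\to H_e$ in $L^1_{\rm loc}(\mathbb R^2)$, with the direction $e$ allowed to depend on the subsequence.

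\emph{Main obstacle.} The delicate point is Step~2: we must extract tangency, $|c(s)|=1/s$, from the purely measure-theoretic conclusion of Lemma~\ref{lem:zero-phase-every-annulus} at a.e.\ level. This requires choosing the countable family of annuli and the level-$s$ null sets compatibly before applying the nesting argument of Step~2. I expect that applying the lemma along $b=1/q$, $q\in\mathbb N$, and intersecting the resulting full-measure sets of levels suffices.
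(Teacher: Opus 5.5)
Your proposal is correct and follows the paper's proof essentially step for step: compactness with shifted strict-$BV$ truncations from Proposition~\ref{prop:frag-signed-compactness}, the one-ball representation $\{U>s\}=B_{1/s}(c(s))$ with $|c(s)|\le 1/s$, tangency $|c(s)|=1/s$ from Lemma~\ref{lem:zero-phase-every-annulus}, a constant direction obtained by squaring the nesting inequality, and finally layer cake. The differences are cosmetic: you identify $U$ directly with the unshifted limit $w_\infty$ (since $\ell r_j\to0$) rather than re-applying the uniform one-ball theorem at the shifted heights $\ell+st_j$; and your ``main obstacle'' is not a real difficulty, because the lemma constrains $U$ on every annulus at once, so tangency follows at each fixed good level $s$ with no compatibility of null sets required.
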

\begin{proof}
Fix $r_j\downarrow0$, put $t_j=r_j^{-1}$, and write
\[
 U_j=w_{t_j}-\ell/t_j.
\]
By Proposition~\ref{prop:frag-signed-compactness} and
\eqref{eq:signed-prelimit-strict-bv}, after a subsequence
$U_j\to U$ strongly in $L^1_{\rm loc}$ and every bounded positive
truncation converges locally strictly in $BV$.  The lower bound
\eqref{eq:halfline-lower-bound} gives $U\ge0$.

Fix a positive continuity level $s$ for the limiting family and set
\[
 q_j(s):=\ell+s t_j.
\]
Then exactly
\[
 \{U_j>s\}=t_jE_{q_j(s)}^+.
\]
Apply the uniform one-ball theorem at the physical height $q_j(s)$.  Since
$q_j(s)/t_j=s+\ell/t_j\to s$, after multiplication by the spatial factor
$t_j$ the approximating radius is
\[
 t_j/q_j(s)=\frac1{s+\ell/t_j}\longrightarrow\frac1s,
\]
and the sharp center bound gives
\[
 t_j|a^+(q_j(s))|
 \le \frac{1+o(1)}{s+\ell/t_j}.
\]
Diagonalizing over a countable dense family of such levels and using
nesting therefore gives, for almost every $s>0$,
\[
 \{U>s\}=B_{1/s}(c(s)),\qquad |c(s)|\le1/s.
\]
By the annular essential-infimum lemma above, strict inequality
$|c(s)|<1/s$ is impossible: such a disk contains a nonempty annulus
centered at the origin.  Hence $c(s)=e(s)/s$.  Nested matching at nearby
good levels gives
\[
 |e(r)/r-e(s)/s|\le1/s-1/r,
\]
and squaring forces $e(r)=e(s)$.  Thus $e$ is independent of level, and
layer cake gives $U=H_e$.
\end{proof}

\subsection{Current completion and the axis-supported first variation}
\begin{lemma}
For every \(z_0\in\mathbb R\), there are \(C_{z_0},\rho_0>0\) such that

\begin{equation}
 \mathcal H^2\bigl(\Sigma\cap B_\rho^3((0,z_0))\bigr)
 \le C_{z_0}\rho^2
 \qquad(0<\rho<\rho_0).
 \label{eq:halfline-quadratic-area}
\end{equation}

Moreover \(E\) has locally finite perimeter in
\(\mathcal O:=B_R\times\mathbb R\), and the relative current

\begin{equation*}
 T:=\partial[\![E]\!]
\end{equation*}

is locally integral and satisfies \(\partial T=0\) in every ambient ball
compactly contained in \(\mathcal O\).
\end{lemma}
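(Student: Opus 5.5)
The quadratic area bound \eqref{eq:halfline-quadratic-area} is essentially already contained in Corollary~\ref{cor:L1-morrey}, specialized to $n=2$. An ambient ball $B_\rho^3((0,z_0))$ meets $\Sigma$ only in points $(x,u(x))$ with $|x|<\rho$ and $|u(x)-z_0|<\rho$. The graph area over that set is therefore at most
\[
 \int_{B_\rho\cap\{z_0-\rho<u<z_0+\rho\}}\sqrt{1+|Du|^2}\,dx ,
\]
and the shrinking-band estimate \eqref{common:shrinking-band} with $b-a=2\rho$ bounds this by $C(\rho\cdot2\rho+\rho^2)$. The constant is uniform for $z_0$ in a compact set, because the test function $h$ in that proof varies continuously with the band. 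I would restate this one-line reduction and take $\rho_0$ to be the smallness radius of the Morrey estimate.

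For local finiteness of the perimeter of $E$ in $\mathcal O=B_R\times\mathbb R$, I would fix a compact box $\overline{B_{r}}\times[a,b]$ with $r<R$. Away from the axis, $E$ is the subgraph of a smooth function, and $D\mathbf 1_E$ is the graph measure $\mathcal H^2\llcorner\Sigma$ with the upward normal $(-Du,1)/W$. The total variation in the box is then $\int_{B_r\cap\{a<u<b\}}W\,dx$, which is finite by Lemma~\ref{lem:finite-band-complete}. The delicate point is that the axis $\{0\}\times[a,b]$ carries no extra variation. I would show this with a horizontal puncture cutoff $\chi_\delta(x)$, applied to a test field $\Phi\in C^1_c(\text{box};\mathbb R^3)$. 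Compute $\int_E\operatorname{div}(\chi_\delta\Phi)$ by Fubini in the vertical direction, and bound the error term $\int_E \Phi\cdot D\chi_\delta$.

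A direct bound on that error fails: it is at most $C\delta^{-1}|E\cap(B_{2\delta}\times[a,b])|\le C\delta$, but this is not the right quantity, since $E$ contains the full region below the graph. Instead I would write $\int_E \Phi\cdot D\chi_\delta=\int_{B_{2\delta}}D\chi_\delta(x)\cdot\int_{a}^{\min(u(x),b)}\Phi_h(x,z)\,dz\,dx$. The inner integral is bounded by $C(b-a)$, so the term is $O(\delta^{-1}\cdot\delta^2)=O(\delta)\to0$. Hence $D\mathbf 1_E$ in the box equals the graph measure restricted to $0<|x|$, with finite mass. The axis has $\mathcal H^2$-measure zero, so no singular part can hide there.

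Given finite perimeter, the current $T=\partial[\![E]\!]$ is integer multiplicity. Its multiplicity is one on the reduced boundary, which is the smooth graph off the axis, and it is locally rectifiable with finite mass by the preceding bound. Finally, $\partial T=\partial\partial[\![E]\!]=0$ holds identically in the sense of currents in any ball $B\Subset\mathcal O$, since $[\![E]\!]$ is a $3$-current in $\mathcal O$. So $T$ is locally integral with no boundary, with no separate argument at the axis needed. The main obstacle I expect is the local finite-perimeter step near the axis, specifically the bookkeeping ensuring that $\mathbf 1_E$ has no variation concentrated on the segment of the axis within bounded heights. The height clipping at $[a,b]$ combined with the $O(\delta^2)$ area of $B_{2\delta}$ should suffice, and it holds in every dimension $n\ge2$.
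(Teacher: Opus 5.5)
Your proposal is correct, and it mostly follows the paper. The area bound is handled exactly as in the paper: project the ambient ball and apply \eqref{eq:halfline-quadratic-area}'s source, the shrinking-band estimate \eqref{common:shrinking-band}, on a band of width $2\rho$. The current statement is also the same: De Giorgi's structure theorem for integrality and $\partial T=\partial^2[\![E]\!]=0$.

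For local finiteness of the perimeter you take a slightly different route.
\begin{itemize}
\item The paper removes the solid cylinder $\{|x|\le\varepsilon\}$. It bounds $\Per(E_\varepsilon;K)$ uniformly by the graph area plus the artificial lateral boundary, of area $2\pi\varepsilon(b-a)$. Lower semicontinuity then gives $E\in BV_{\rm loc}$, and only afterwards does the paper use that the axis is $\mathcal H^2$-null to see that it carries no perimeter.
\item You test $\mathbf 1_E$ directly against $\chi_\delta\Phi$. This identifies $D\mathbf 1_E$ with the graph measure over the punctured disk in one step, so you do not even need the $\mathcal H^2$-nullity of the axis.
\end{itemize}
Both arguments rest on the same fact: removing a thin cylinder around the axis at bounded heights costs nothing, whether done with a sharp or a smoothed cutoff. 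They yield the same conclusion, and yours gives the exact formula for $D\mathbf 1_E$ slightly more directly.

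One correction: your remark that the ``direct bound fails'' is mistaken. Since $\Phi$ is compactly supported in the box,
\[
 \Bigl|\int_E\Phi\cdot D\chi_\delta\Bigr|
 \le C\|\Phi\|_\infty\,\delta^{-1}\,|B_{2\delta}|\,(b-a)=O(\delta),
\]
and this is exactly the estimate you need. Your Fubini rewrite reproduces it verbatim, and the extent of $E$ below the graph plays no role. The aside about uniformity of the constant in $z_0$ is also unnecessary, since the statement allows $C_{z_0}$ to depend on $z_0$.
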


\begin{proof}
The ambient ball in \eqref{eq:halfline-quadratic-area} projects into
\(B_\rho\) and restricts graph height to
\([z_0-\rho,z_0+\rho]\).  Thus
\eqref{common:shrinking-band} gives
\eqref{eq:halfline-quadratic-area}.

Fix a compact cylinder $K=B_\rho\times(a,b)\Subset\mathcal O$, and put
$E_\varepsilon=E\cap\{|x|>\varepsilon\}$.  Its artificial boundary is
contained in
$S_\varepsilon=\{|x|=\varepsilon\}\times(a,b)$, so
\begin{equation}
 \mathcal H^2(S_\varepsilon)=2\pi\varepsilon(b-a),
 \qquad
 |(E_\varepsilon\mathbin\triangle E)\cap K|
 \le\pi\varepsilon^2(b-a).
 \label{eq:halfline-puncture-cylinder-cost}
\end{equation}
The part of $\partial E_\varepsilon\cap K$ lying on the graph is bounded
uniformly by a finite cover of $K$ and
\eqref{eq:halfline-quadratic-area}.  Hence
\[
 \sup_{0<\varepsilon<\rho/2}\Per(E_\varepsilon;K)<\infty,
 \qquad
 \mathbf1_{E_\varepsilon}\to\mathbf1_E
 \quad\hbox{in }L^1(K).
\]
Lower semicontinuity therefore gives $E\in BV_{\mathrm{loc}}(\mathcal O)$.
Away from the axis, $\partial^*E$ is the original smooth graph with its
subgraph normal; the axis is $\mathcal H^2$-null and contributes no
perimeter by \eqref{eq:halfline-puncture-cylinder-cost}.  Consequently
$T=\partial[\![E]\!]$ is precisely integration over that graph with the
subgraph orientation, and
\[
 \partial T=\partial^2[\![E]\!]=0
 \quad\hbox{in every }U\Subset\mathcal O.
\]
\end{proof}

Let \(V\) denote the integral varifold associated with the graph.  Off the
axis, let \(\mathbf H_\Sigma\) be its classical mean-curvature vector; in a
fixed finite-height cylinder, \(|\mathbf H_\Sigma|=|z|\).

\begin{proposition}
\label{prop:halfline-axis-first-variation}
The first variation of \(V\) is the sum of its classical
prescribed-mean-curvature part and a vector Radon measure supported on
\(\mathcal L_\ell\).  More precisely,

\begin{equation}
 \delta V(X)
 =-\int \mathbf H_\Sigma\cdot X\,d\|V\|
   +\mu_{\mathrm{ax}}(X),
 \qquad
 \mu_{\mathrm{ax}}
 =\mathbf g_{\rm ax}(z)\mathcal H^1\llcorner\mathcal L_\ell,
 \label{eq:halfline-axis-density}
\end{equation}

for some \(\mathbf g_{\rm ax}\in L^\infty_{\mathrm{loc}}(\mathcal L_\ell;\mathbb R^3)\).  In particular,
\(\mu_{\mathrm{ax}}\) has no atom at \(p_*=(0,\ell)\).
\end{proposition}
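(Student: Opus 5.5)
We split the first variation into its off-axis part and a distribution concentrated on the vertical axis. Afterwards we control that distribution with the quadratic area bound \eqref{eq:halfline-quadratic-area} and the shrinking-band estimate \eqref{common:shrinking-band}.

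\emph{Step 1: support of the defect.} Define
\[
 \Lambda(X):=\delta V(X)+\int\mathbf H_\Sigma\cdot X\,d\|V\|,
 \qquad X\in C^1_c(\mathcal O;\mathbb R^3).
\]
The mean-curvature term is locally finite, because $|\mathbf H_\Sigma|=|z|$ is bounded on finite-height cylinders and $\|V\|$ is locally finite. Away from the axis the graph is smooth and satisfies the prescribed mean curvature equation, so $\Lambda$ vanishes on fields supported off $\{0\}\times\mathbb R$.

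Above the cluster interval, points $(0,z)$ with $z<\ell$ have no graph nearby. Indeed, $u\ge\ell-o(1)$ near $0$, so a small neighborhood of $(0,z)$ misses $\Sigma$. Hence $\operatorname{spt}\Lambda\subset\mathcal L_\ell$. (The point $z=\ell$ itself is allowed, and the segment lies in $\mathcal L_\ell$.)

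\emph{Step 2: order zero and density.} Take $X$ supported in a cylinder $B_\rho\times(a,b)$ and insert a horizontal cutoff $\chi_\delta(|x|)$. Then
\[
 \Lambda(X)=\lim_{\delta\downarrow0}\Bigl(-\int\operatorname{div}_\Sigma(\chi_\delta)\,\ldots\Bigr),
\]
and the commutator term is bounded by
\[
 \frac{C}{\delta}\,\|X\|_\infty\,\mathcal H^2\bigl(\Sigma\cap(B_{2\delta}\times(a,b))\bigr).
\]
By \eqref{common:shrinking-band} with $n=2$, this is at most
\[
 C\|X\|_\infty\Bigl((b-a)+\delta\Bigr).
\]
This is the critical $O(1)$ cutoff cost identified in \eqref{eq:finite-band-stress-cutoff}. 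Letting $\delta\to0$ shows that $\Lambda$ is a distribution of order zero, i.e.\ a vector Radon measure. Moreover its total variation on $\{0\}\times(a,b)$ is bounded by $C(b-a)$, with $C$ uniform for heights in a compact range.

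The main point is to make this bound hold for all short intervals, including those abutting $z=\ell$. I would apply \eqref{common:shrinking-band} to bands of length $b-a$ on the radius $2\delta$, letting $\delta\to0$ first so that the $r^n$ term disappears. Then $|\mu_{\rm ax}|(\{0\}\times(a,b))\le C_I(b-a)$, so $|\mu_{\rm ax}|\le C_I\mathcal H^1\llcorner\mathcal L_\ell$ on compact height ranges $I$. The Radon--Nikodym theorem then gives an $L^\infty_{\rm loc}$ density $\mathbf g_{\rm ax}$.

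\emph{Step 3: no atom at $p_*$.} The bound $|\mu_{\rm ax}|(\{0\}\times(\ell-\delta',\ell+\delta'))\le C\delta'\to0$ already excludes an atom at $p_*$. Since the support lies in $\{z\ge\ell\}$, this is the absolute continuity just proved.

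The main obstacle is Step 2's uniformity: ensuring that the constant in \eqref{common:shrinking-band} does not degenerate at the endpoint height $\ell$ and that no mass concentrates there. I would handle this by using the Morrey bound of Corollary~\ref{cor:L1-morrey}, whose constant depends only on the height window, together with the lower bound \eqref{eq:halfline-lower-bound}.
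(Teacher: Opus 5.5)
Your proposal is correct and follows the paper's proof essentially step for step. A horizontal cutoff reduces the defect to $-\lim_{\delta\downarrow0}\int_\Sigma\nabla_\Sigma\chi_\delta\cdot X\,d\mathcal H^2$; the $(1-\chi_\delta)$ error terms you leave implicit vanish by \eqref{eq:halfline-quadratic-area}. The shrinking-band estimate \eqref{common:shrinking-band} with $n=2$ then gives $|\mu_{\rm ax}|(\{0\}\times J)\le C_{I_0}|J|$, and Riesz representation plus Radon--Nikodym give a bounded density on the axis cluster set $\mathcal L_\ell$, hence no atom at $p_*$.

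Two small remarks. The endpoint worry is unnecessary, since \eqref{common:shrinking-band} holds for every finite band with constants uniform on compact height ranges. Also, in Step 1 the points $(0,z)$ with $z<\ell$ lie below, not above, the cluster interval.
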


\begin{proof}
Let \(\chi_\eta(x)\) vanish on \(B_\eta\), equal \(1\) outside
\(B_{2\eta}\), and satisfy \(|D\chi_\eta|\le C/\eta\).  The smooth
first-variation identity for \(\chi_\eta X\) is
\[
 \int_\Sigma\chi_\eta\operatorname{div}_\Sigma X\,d\mathcal H^2
 +\int_\Sigma\nabla_\Sigma\chi_\eta\cdot X\,d\mathcal H^2
 =-\int_\Sigma\chi_\eta\mathbf H_\Sigma\cdot X\,d\mathcal H^2.
\]
Moreover,
\[
 \begin{aligned}
 &\left|\int_\Sigma(1-\chi_\eta)
       \operatorname{div}_\Sigma X\,d\mathcal H^2\right|
 \le2\|DX\|_\infty\mathcal H^2(\Sigma\cap\{|x|<2\eta\}),\\
 &\left|\int_\Sigma(1-\chi_\eta)
       \mathbf H_\Sigma\cdot X\,d\mathcal H^2\right|
 \le\|\mathbf H_\Sigma\|_\infty\|X\|_\infty
       \mathcal H^2(\Sigma\cap\{|x|<2\eta\}).
 \end{aligned}
\]
Both right-hand sides tend to zero by
\eqref{eq:halfline-quadratic-area}.  Letting \(\eta\downarrow0\) therefore
gives

\begin{equation}
 \mu_{\mathrm{ax}}(X)
 :=\delta V(X)+\int\mathbf H_\Sigma\cdot X\,d\|V\|
 =-\lim_{\eta\downarrow0}
   \int_\Sigma\nabla_\Sigma\chi_\eta\cdot X\,d\mathcal H^2.
 \label{eq:halfline-axis-cutoff}
\end{equation}

Fix a compact height interval
\(I_0\Subset\mathbb R\).  If \(X\) is supported in a sufficiently small
base cylinder over a subinterval \(J\subset I_0\), then

\begin{align*}
 |\mu_{\mathrm{ax}}(X)|
 &\le \|X\|_\infty
 \liminf_{\eta\downarrow0}\frac{C}{\eta}
 \int_{B_{2\eta}\cap\{u\in J\}}
       \sqrt{1+|\nabla u|^2}\,dx \\
 &\le C_{I_0}|J|\,\|X\|_\infty.
\end{align*}

Here the direct application of
\eqref{common:shrinking-band} first gives
\(C_{I_0}|J|+O(\eta)\), and the last term vanishes as
\(\eta\downarrow0\).

Taking \(J=I_0\) shows that the distribution in
\eqref{eq:halfline-axis-cutoff} is of order zero on every compact
cylinder.  The Riesz representation theorem therefore makes it a vector
Radon measure.  Applying the dual characterization of total variation to
fields supported over \(J\) gives

\[
 |\mu_{\mathrm{ax}}|(\{0\}\times J)\le C_{I_0}|J|.
\]

The distribution is supported on the finite-height axis cluster set,
which is exactly \(\mathcal L_\ell\) under
\eqref{eq:halfline-hyp}.  Radon--Nikodym now gives
\eqref{eq:halfline-axis-density}.  A singleton has zero
\(\mathcal H^1\)-measure, so no endpoint atom is possible.
\end{proof}

\subsection{The height-localized Bregman identity}

Fix \(z_0\ge\ell\) and \(\rho_j\downarrow0\).  Define

\begin{equation*}
 U_j(X):=\rho_j\bigl(u(\rho_jX)-\ell\bigr),
 \qquad
 \zeta_j(X):=\frac{u(\rho_jX)-z_0}{\rho_j}.
\end{equation*}

Then

\begin{equation}
 \rho_j^2\zeta_j=U_j-\rho_j(z_0-\ell).
 \label{eq:halfline-scale-relation}
\end{equation}

Put

\begin{equation*}
 W_j:=\sqrt{1+|D\zeta_j|^2},
 \qquad F_j:=\frac{D\zeta_j}{W_j},
 \qquad \gamma_j^{\rm src}:=\rho_jz_0+\rho_j^2\zeta_j=\rho_ju(\rho_jX).
\end{equation*}

Since \(D\zeta_j(X)=Du(\rho_jX)\),

\begin{equation}
 \operatorname{div}F_j=-\gamma_j^{\rm src},
 \qquad
 DU_j=\rho_j^2Du(\rho_jX),
 \qquad
 \frac{DU_j}{\sqrt{\rho_j^4+|DU_j|^2}}=F_j.
 \label{eq:halfline-exact-environmental-flux}
\end{equation}

The shrinking-height estimate rescales to

\begin{equation}
 \int_{B_A\cap\{a<\zeta_j<b\}}W_j\,dX
 \le C\bigl(A|b-a|+A^2\bigr)
 \label{eq:halfline-environmental-area}
\end{equation}

for bounded \(A\), compact \([a,b]\), and all large \(j\).

After a subsequence,
Proposition~\ref{prop:halfline-half-dipole-blowdown} gives
\(U_j\to H_e\).  Rotate so that \(e=e_1\), and write
\(\Pi_-=\{X_1<0\}\), \(\Pi_+=\{X_1>0\}\).

\begin{lemma}
\label{lem:halfline-inactive-flux}
On every compact subset of \(\Pi_-\),

\begin{equation}
 F_j\longrightarrow e_1\quad\text{strongly in }L^2,
 \qquad
 W_j^{-1}\longrightarrow0\quad\text{strongly in }L^2.
 \label{eq:halfline-inactive-strong}
\end{equation}
\end{lemma}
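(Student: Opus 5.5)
The plan is to identify the weak-$*$ limit of $F_j$ on all of $\mathbb R^2$, show that it equals $e_1$ on $\Pi_-$, and then upgrade weak convergence to strong convergence using the pointwise constraint $|F_j|\le1$.

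\emph{Identification of the limit field.} By \eqref{eq:halfline-exact-environmental-flux}, $F_j=DU_j/\sqrt{\rho_j^4+|DU_j|^2}$. Since $U_j=w_{t_j}-\ell/t_j$ with $t_j=\rho_j^{-1}$, this is exactly the natural flux $Z_{t_j}$. By Proposition~\ref{prop:frag-signed-compactness}, applied with the shift $c_j=\ell/t_j\to0$, a further subsequence gives $F_j\weakstar Z$ locally in $L^\infty$, with $|Z|\le1$ and
\[
 \operatorname{div}Z=-H_{e_1}\quad\text{on all of }\mathbb R^2 .
\]
Moreover the bounded positive truncations of $H_{e_1}$ are calibrated by $Z$. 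On $\Pi_+$ the level sets of $H_{e_1}$ are the disks $B_{1/s}(e_1/s)$. Lemma~\ref{lem:bridge-moving-ball-area}, whose proof yields $Z=-\theta$ on the moving circles, then gives
\[
 Z(y)=e_1-\frac{2y_1}{|y|^2}\,y\qquad\text{on }\Pi_+ .
\]
This is the same formula as in the proof of Proposition~\ref{prop:two-ball-impossible}. Its trace on $\{y_1=0\}\setminus\{0\}$ from the $\Pi_+$ side is $e_1$.

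\emph{Rigidity on $\Pi_-$.} Since $H_{e_1}=0$ on $\Pi_-$ and the global equation produces no jump measure on the line $\{y_1=0\}$, the field $Z$ restricted to $\Pi_-$ satisfies $\operatorname{div}Z=0$, $|Z|\le1$, and has normal component $Z\cdot e_1=1$ on the line in the weak trace sense. Fix $\eta\in C_c^1(\mathbb R)$ with $0\le\eta\le1$, $\eta=1$ on $[-M,M]$, support in $[-M-1,M+1]$, and $|\eta'|\le 2$. Testing $\operatorname{div}Z=0$ on the strip $\{-L<y_1<0\}$ against $\eta(y_2)$, weighted by a slab average in $y_1$ to make the traces meaningful, should give that the averaged quantity
\[
 x_1\longmapsto\int Z_1(x_1,y_2)\,\eta(y_2)\,dy_2
\]
changes by at most $\int_{-L}^{0}\!\int|Z_2|\,|\eta'|\,dy_2\,dx_1\le 4L$ across the strip. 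Its value at the line equals $\int\eta\ge 2M$. Combined with $Z_1\le1$, this forces the average of $1-Z_1$ over $[-L,0]\times[-M,M]$ to be $O(L/M)$. Letting $M\to\infty$ and using the invariance of the setup under vertical translations gives $Z_1=1$ a.e.\ on $\Pi_-$, and then $|Z|\le1$ forces $Z=e_1$ there. The limit is therefore independent of the subsequence, so the whole sequence converges weak-$*$ to $e_1$ on $\Pi_-$.

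\emph{Upgrade to strong convergence.} On a compact $K\Subset\Pi_-$, the bound $|F_j|\le1$ gives
\[
 \int_K|F_j-e_1|^2\le 2|K|-2\int_KF_j\cdot e_1\longrightarrow0 .
\]
Because $|F_j|^2=1-W_j^{-2}$, we have $\int_KW_j^{-2}=\int_K(1-|F_j|^2)$. This tends to zero, since $|F_j|\to1$ in $L^2(K)$ and hence in $L^1(K)$. This proves \eqref{eq:halfline-inactive-strong}.

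The main obstacle is the rigidity step. One must make the normal trace $Z\cdot e_1=1$ on the dividing line rigorous, via Anzellotti normal traces of divergence-measure fields, and justify the slab-flux computation in the presence of the puncture at the origin. The puncture contributes nothing because $|Z|\le1$ and the cutoff flux is $O(\eta)$ as the cutoff radius $\eta\to0$.
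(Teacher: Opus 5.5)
Your identification of the limit field on $\Pi_+$, the continuity of the normal trace across $\{y_1=0\}$, and the final upgrade from weak-$*$ to strong $L^2$ convergence are correct and agree with the paper. The gap is in the rigidity step on $\Pi_-$, and it is not the trace technicality you flag. Write $g(y_1)=\int Z_1(y_1,y_2)\eta(y_2)\,dy_2$. Your slab computation uses only $Z_1\le1$ and $|Z_2|\le1$. What it actually proves is $\int_{-M}^{M}\bigl(1-Z_1(y_1,y_2)\bigr)\,dy_2\le g(0)-g(y_1)\le 4|y_1|$ for every $M$, hence $\int_{[-L,0]\times\mathbb R}(1-Z_1)\le 4L^2$. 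This is a \emph{bounded} total defect, not a vanishing one. The rectangle average is $O(L/M)$ only because the rectangle grows, and neither $M\to\infty$ nor invariance under vertical translations turns this into $Z_1=1$ a.e.

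In fact no argument based on those two inequalities alone can succeed. Take $\kappa(y)=\min\{|y_1|,\arctan y_2+\tfrac{\pi}{2}\}$ and $Z=(1-\partial_2\kappa,\partial_1\kappa)$. This field is divergence free on $\Pi_-$, satisfies $0\le Z_1\le1$ and $|Z_2|\le1$, and has weak normal trace $Z\cdot e_1=1$ on $\{y_1=0\}$. Yet $Z_1<1$ on the nonempty open set $\{y_1<0,\ \arctan y_2+\tfrac{\pi}{2}<|y_1|\}$. You invoke $|Z|\le1$ only after $Z_1=1$ has already been asserted, so it cannot repair this.

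The Euclidean bound $|Z|\le1$ must enter the rigidity argument itself. Within your slicing framework, use $|Z_2|\le\sqrt{1-Z_1^2}\le\sqrt{2(1-Z_1)}$. Your crude bound gives $D(y_1):=\int_{\mathbb R}(1-Z_1(y_1,\cdot))\,dy_2\le 4|y_1|$. Now take a cutoff equal to one on $[-M,M]$ and decaying linearly over length $N$. Cauchy--Schwarz gives $|g'(y_1)|\le N^{-1}(2N)^{1/2}(2D(y_1))^{1/2}\le C|y_1|^{1/2}N^{-1/2}$. Hence $\int_{-M}^{M}(1-Z_1(y_1,\cdot))\,dy_2\le C|y_1|^{3/2}N^{-1/2}\to0$ as $N\to\infty$, so $Z_1=1$ and then $Z=e_1$.

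The paper argues differently. It takes a stream function $\psi$ on the simply connected half-plane with $D\psi=(-F_2,F_1)$. The bound $|F|\le1$ makes $\psi$ globally $1$-Lipschitz, and the trace condition gives $\psi(0,y)=y$. Comparison with the cones $a\pm\sqrt{x^2+(y-a)^2}$, letting $a\to\pm\infty$, forces $\psi(x,y)=y$, that is, $F=e_1$. In both routes it is the Euclidean unit bound, not the componentwise one, that excludes fields like the example above.
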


\begin{proof}
Every weak-star limit is the calibrated field \(F\) from
Proposition~\ref{prop:frag-signed-compactness}.  On \(\Pi_+\), calibration of
the bounded positive truncations gives

\begin{equation*}
 F=\frac{DH_{e_1}}{|DH_{e_1}|}
  =e_1-H_{e_1}(X)X.
\end{equation*}

Its first component has weak normal trace $1$ on $\{X_1=0\}$.  Here
\[
 DM^\infty_{\mathrm{loc}}(\Omega)
 :=\{G\in L^\infty_{\mathrm{loc}}(\Omega;\mathbb R^2):
          \operatorname{div}G\in\mathcal M_{\mathrm{loc}}(\Omega)\}.
\]
Since $F\in DM^\infty_{\mathrm{loc}}$ and
$\operatorname{div}F=-H_{e_1}\in L^1_{\mathrm{loc}}$ has no line part,
the Gauss--Green formula for bounded divergence-measure fields
\cite[Theorem~\refnum{2.2}]{ChenFrid1999} applies on bounded rectangles in each
half-plane.  Apply it separately on the two sides and subtract the two
identities.  Since $\operatorname{div}F=-H_{e_1}\,dX$ has no
$\mathcal H^1$-component on the interface, the only possible interface
term is the jump of the normal trace; exhausting the rectangles to the
interface therefore gives
\begin{equation}
 \bigl(\operatorname{Tr}_{\Pi_+}F-
       \operatorname{Tr}_{\Pi_-}F\bigr)\cdot e_1=0
 \quad\mathcal H^1\text{-a.e.\ on }\{X_1=0\}.
 \label{eq:halfline-normal-trace-jump}
\end{equation}
Thus the trace from $\Pi_-$ is also $1$.  On $\Pi_-$,
$\operatorname{div}F=0$.  Hence there is a
stream function
\(\psi_{\rm str}\in W^{1,\infty}_{\mathrm{loc}}(\Pi_-)\) such that

\[
 D\psi_{\rm str}=(-F_2,F_1),\qquad |D\psi_{\rm str}|\le1.
\]
Indeed,
\[
 \operatorname{curl}(-F_2,F_1)
 =\partial_{X_1}F_1+\partial_{X_2}F_2
 =\operatorname{div}F=0,
\]
and the half-plane $\Pi_-$ is simply connected.

On the interface, the tangential derivative of the stream-function trace is
\[
 \partial_y\operatorname{Tr}\psi_{\rm str}
 =\operatorname{Tr}_{\Pi_-}F_1=1
 \quad\text{in distributions}.
\]
After an additive normalization, therefore,
\(\operatorname{Tr}\psi_{\rm str}(0,y)=y\).  Since
\(\|D\psi_{\rm str}\|_{L^\infty(\Pi_-)}\le1\), its global \(1\)-Lipschitz
representative satisfies, for \(x<0\) and every \(a\in\mathbb R\),

\[
 a-\sqrt{x^2+(y-a)^2}
 \le\psi_{\rm str}(x,y)\le
 a+\sqrt{x^2+(y-a)^2}.
\]

Letting \(a\to+\infty\) in the left inequality and
\(a\to-\infty\) in the right gives \(\psi_{\rm str}(x,y)=y\), hence \(F=e_1\) on
\(\Pi_-\).

Every subsequential weak-star limit is therefore the extreme point
\(e_1\).  Since

\[
 |F_j-e_1|^2\le2(1-e_1\cdot F_j),
 \qquad
 W_j^{-2}=1-|F_j|^2\le2(1-e_1\cdot F_j),
\]

weak-star convergence proves \eqref{eq:halfline-inactive-strong}.
\end{proof}

\begin{lemma}
Let \(0\le\varphi\in C_c^1(\mathbb R^2)\) and
\(0\le\chi\in C_c^1(\mathbb R)\).  Then

\begin{equation}
 \int\varphi\chi(\zeta_j)(W_j-e_1\cdot D\zeta_j)\,dX
 \longrightarrow0.
 \label{eq:halfline-bregman}
\end{equation}

Equivalently, with

\begin{equation*}
 N_j=(-F_j,W_j^{-1}),
 \qquad N_0=(-e_1,0),
 \qquad d\mu_j=W_j\,dX,
\end{equation*}

one has

\begin{equation}
 \frac12\int\varphi\chi(\zeta_j)|N_j-N_0|^2\,d\mu_j
 \longrightarrow0.
 \label{eq:halfline-normal-defect}
\end{equation}
\end{lemma}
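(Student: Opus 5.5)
The plan is to reduce the Bregman quantity to terms that converge by integrating by parts against a bounded antiderivative of $\chi$. The flux bound $|F_j|\le1$ and Lemma~\ref{lem:halfline-inactive-flux} then control the result separately on the two half-planes.

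\emph{Equivalence of the two forms.} This is pure algebra. Since $F_j=D\zeta_j/W_j$ and $|F_j|^2+W_j^{-2}=1$,
\[
 |N_j-N_0|^2=|F_j-e_1|^2+W_j^{-2}=2\bigl(1-e_1\cdot F_j\bigr),
\]
and therefore $\tfrac12|N_j-N_0|^2W_j=W_j-e_1\cdot D\zeta_j$. This quantity is nonnegative because $W_j\ge|D\zeta_j|$. So it suffices to prove \eqref{eq:halfline-bregman}.

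\emph{Integration by parts.} Put $\widetilde X(s):=-\int_s^\infty\chi(\tau)\,d\tau$. It is bounded, vanishes above $\operatorname{spt}\chi$, and satisfies $\widetilde X'=\chi$. Write $W_j=F_j\cdot D\zeta_j+W_j^{-1}$ and use $\chi(\zeta_j)D\zeta_j=D\widetilde X(\zeta_j)$. Then use $\operatorname{div}F_j=-\gamma_j^{\rm src}$ and $\operatorname{div}e_1=0$ from \eqref{eq:halfline-exact-environmental-flux}. Together these give
\[
 \int\varphi\chi(\zeta_j)(W_j-e_1\cdot D\zeta_j)
 =-\int\widetilde X(\zeta_j)(F_j-e_1)\cdot D\varphi
 +\int\varphi\widetilde X(\zeta_j)\gamma_j^{\rm src}
 +\int\varphi\chi(\zeta_j)W_j^{-1}.
\]
The integration by parts is first done on $\{|X|>\eta\}$ with a puncture cutoff. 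All integrands in the boundary and cutoff terms are bounded: $|\widetilde X|\le\|\chi\|_{L^1}$ and $|F_j|\le1$. In the plane the cutoff term is therefore $O(\eta)$ and disappears as $\eta\downarrow0$, for each fixed $j$.

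\emph{Convergence of the three terms.} Let $K$ be the compact support of $\varphi$. For $\delta>0$, split $K$ into three pieces: the strip $\{|X_1|<\delta\}$, the part in $\Pi_-$, and the part in $\Pi_+$.

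On the strip, the first and third integrands are bounded uniformly in $j$, so their contribution is $O(\delta)$. The second contributes at most $C\int_{K\cap\{|X_1|<\delta\}}|\gamma_j^{\rm src}|$. Here $\gamma_j^{\rm src}=U_j+\rho_j\ell\to H_{e_1}$ in $L^1_{\rm loc}$ and $H_{e_1}\in L^1_{\rm loc}$ (Proposition~\ref{prop:halfline-half-dipole-blowdown}), so this is also small uniformly in large $j$.

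On $K\cap\Pi_-$ the three terms behave as follows.
\begin{itemize}
\item $F_j\to e_1$ and $W_j^{-1}\to0$ strongly in $L^2$ by Lemma~\ref{lem:halfline-inactive-flux}, and $\widetilde X(\zeta_j)$ and $\chi(\zeta_j)$ are bounded; so the first and third terms tend to zero.
\item $\gamma_j^{\rm src}\to H_{e_1}=0$ in $L^1$ there, so the second term tends to zero.
\end{itemize}

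On $K\cap\Pi_+\cap\{X_1\ge\delta\}$ we have $H_{e_1}\ge c_\delta>0$. The scale relation \eqref{eq:halfline-scale-relation} gives $\zeta_j=\rho_j^{-2}(U_j-\rho_j(z_0-\ell))$. Since $U_j\to H_{e_1}$ in measure, $\zeta_j\to+\infty$ in measure on this set. Hence $\widetilde X(\zeta_j)$ and $\chi(\zeta_j)$ tend to zero in measure while remaining bounded, and consequently:
\begin{itemize}
\item the first and third terms vanish in the limit by dominated convergence, using $|F_j-e_1|\le2$ and $W_j^{-1}\le1$;
\item the second term vanishes by uniform integrability of $\gamma_j^{\rm src}$, which follows from its $L^1_{\rm loc}$ convergence.
\end{itemize}
Letting $j\to\infty$ and then $\delta\downarrow0$ proves \eqref{eq:halfline-bregman}.

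\emph{Main obstacle.} The delicate point is the choice of antiderivative. With the naive primitive $\int_{-\infty}^s\chi$, on $\Pi_+$ the factor tends to a nonzero constant, while $F_j$ converges only weakly to $e_1-H_{e_1}X\neq e_1$, and no term vanishes. Normalizing the primitive at $+\infty$ is what makes the active half-plane harmless, because the heights $\zeta_j$ escape to $+\infty$ there. The inactive half-plane is then handled entirely by the strong convergence of Lemma~\ref{lem:halfline-inactive-flux}. The interface strip and the puncture need only the uniform bounds $|F_j|\le1$ and $|\widetilde X|\le\|\chi\|_{L^1}$ and the $L^1_{\rm loc}$ compactness of the sources.
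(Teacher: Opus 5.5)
Your proposal is correct and follows the paper's proof essentially step for step. It uses the same primitive normalized at $+\infty$ (the paper's $\Gamma$), the same exact identity after subtracting the constant-field test, and the same three-region split, controlled by Lemma~\ref{lem:halfline-inactive-flux} on $\Pi_-$ and by $\zeta_j\to+\infty$ in measure on $\{X_1\ge\delta\}$. The only deviation is the source term: the paper observes that on $\{\Gamma(\zeta_j)\ne0\}$ one has $\ell-1\le u(\rho_jX)\le z_0+\rho_jb$, hence $|\gamma_j^{\rm src}|\le C\rho_j$ there, while your splitting via $\gamma_j^{\rm src}\to H_{e_1}$ in $L^1_{\rm loc}$ is longer but also valid.
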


\begin{proof}
Set

\[
 \Gamma(s):=-\int_s^\infty\chi(\tau)\,d\tau,
 \qquad \Gamma'=\chi.
\]

By \eqref{eq:halfline-environmental-area},
\(\varphi\chi(\zeta_j)D\zeta_j\) is integrable on its finite height window.
Thus \(\varphi\Gamma(\zeta_j)\) is an admissible bounded \(W^{1,1}\) test after
inserting a radial cutoff \(\xi_\eta\), which vanishes on \(B_\eta\), equals
one outside \(B_{2\eta}\), and satisfies
\(|D\xi_\eta|\le C/\eta\).  Since
\(\operatorname{div}F_j=-\gamma_j^{\rm src}\), the complete test identity is
\[
 \begin{aligned}
 &\int \xi_\eta\varphi\chi(\zeta_j)F_j\cdot D\zeta_j
 +\int \xi_\eta\Gamma(\zeta_j)F_j\cdot D\varphi\\
 &\qquad
 +\int \varphi\Gamma(\zeta_j)F_j\cdot D\xi_\eta
 =\int \gamma_j^{\rm src}\xi_\eta\varphi\Gamma(\zeta_j).
 \end{aligned}
\]
The cutoff term satisfies
\[
 \left|\int \varphi\Gamma(\zeta_j)F_j\cdot D\xi_\eta\right|
 \le \frac C\eta|B_{2\eta}\setminus B_\eta|=O(\eta),
\]
because \(|F_j|\le1\) and \(\Gamma\) is bounded.  Dominated convergence
handles the remaining cutoff factors.  Letting \(\eta\downarrow0\) gives

\begin{equation}
 \int\varphi\chi(\zeta_j)F_j\cdot D\zeta_j
 +\int\Gamma(\zeta_j)F_j\cdot D\varphi
 =\int \gamma_j^{\rm src}\varphi\Gamma(\zeta_j).
 \label{eq:halfline-bregman-test}
\end{equation}

Since \(e_1\) is constant,

\begin{equation}
 \int\varphi\chi(\zeta_j)e_1\cdot D\zeta_j
 +\int\Gamma(\zeta_j)e_1\cdot D\varphi=0.
 \label{eq:halfline-constant-field-test}
\end{equation}

Using \(F_j\cdot D\zeta_j=W_j-W_j^{-1}\) and subtracting
\eqref{eq:halfline-constant-field-test} from
\eqref{eq:halfline-bregman-test}, we obtain the exact identity

\begin{align}
 \int\varphi\chi(\zeta_j)(W_j-e_1\cdot D\zeta_j)
 &=\int\varphi\chi(\zeta_j)W_j^{-1}
   +\int \gamma_j^{\rm src}\varphi\Gamma(\zeta_j) \\
 &\quad-\int\Gamma(\zeta_j)(F_j-e_1)\cdot D\varphi.
 \label{eq:halfline-exact-bregman}
\end{align}

Fix \(\delta>0\) and split the support of \(\varphi\) into
\({X_1\le-\delta}\), \({|X_1|<\delta}\), and
\({X_1\ge\delta}\).  On the inactive part, the first and third terms on
the right tend to zero by Lemma~\ref{lem:halfline-inactive-flux}: if
$K=\operatorname{spt}\varphi$, then Cauchy--Schwarz gives
\begin{align*}
 \int_{K\cap\{X_1\le-\delta\}}
   |\varphi\chi(\zeta_j)|W_j^{-1}
 &\le C_K\|W_j^{-1}\|_{L^2(K\cap\{X_1\le-\delta\})}\to0,\\
 \left|\int_{K\cap\{X_1\le-\delta\}}
   \Gamma(\zeta_j)(F_j-e_1)\cdot D\varphi\right|
 &\le C_K\|F_j-e_1\|_{L^2(K\cap\{X_1\le-\delta\})}\to0.
\end{align*}
On $K\cap\{X_1\ge\delta\}$, the half-dipole $H_{e_1}$ has a positive
lower bound and \eqref{eq:halfline-scale-relation} gives
$\zeta_j\to+\infty$ in measure.  Since $W_j^{-1}\le1$, $|F_j-e_1|\le2$,
and $\chi,\Gamma$ are bounded with compact upper support, dominated
convergence makes the same two integrals tend to zero.  Finally,
\begin{align*}
 \int_{K\cap\{|X_1|<\delta\}}|\varphi\chi(\zeta_j)|W_j^{-1}
 &\le C_K\,|K\cap\{|X_1|<\delta\}|,\\
 \left|\int_{K\cap\{|X_1|<\delta\}}
 \Gamma(\zeta_j)(F_j-e_1)\cdot D\varphi\right|
 &\le C_K\,|K\cap\{|X_1|<\delta\}|,
\end{align*}
and both right-hand sides are $O_K(\delta)$.

If \(\Gamma(\zeta_j)\ne0\), then \(\zeta_j\le b\) for a fixed \(b\).  On a fixed
compact base set and for large \(j\),

\[
 \ell-1\le u(\rho_jX)\le z_0+\rho_jb,
\]

where the lower bound follows from
\eqref{eq:halfline-hyp}.  Thus
\(|\gamma_j^{\rm src}|=\rho_j|u(\rho_jX)|\le C\rho_j\) on the source support, and the
source term tends to zero.  Let first \(j\to\infty\) and then
\(\delta\downarrow0\) in \eqref{eq:halfline-exact-bregman}.  This proves
\eqref{eq:halfline-bregman}.  Finally,

\[
 W_j-e_1\cdot D\zeta_j
 =W_j(1-e_1\cdot F_j)
 =\frac12W_j|N_j-N_0|^2,
\]

which gives \eqref{eq:halfline-normal-defect}.
\end{proof}

\subsection{Environmental planes and vanishing of the axis-supported first variation}

\begin{lemma}
\label{lem:halfline-global-constant-polar}
Let \(G\subset\mathbb R^3\) be a set of locally finite perimeter which
is neither null nor conull.  If, for a constant unit vector \(N_0\) and
\(\sigma\in\{-1,1\}\),

\begin{equation}
 D\mathbf1_G=\sigma N_0|D\mathbf1_G|
 \qquad\text{in }\mathbb R^3,
 \label{eq:halfline-constant-polar}
\end{equation}

then \(G\) agrees almost everywhere with a half-space, and
\(\partial[\![G]\!]\) is its multiplicity-one boundary plane.
\end{lemma}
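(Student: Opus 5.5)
The plan is to show that $D\mathbf 1_G$ is invariant under every translation orthogonal to $N_0$, and then reduce to a one-dimensional monotone profile in the $N_0$ direction. Choose coordinates with $N_0=e_3$, replacing $\sigma$ by $-\sigma$ and $G$ by its complement if needed, so that $\sigma=1$.

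The first step is to fix a tangential direction $\tau\perp N_0$. From \eqref{eq:halfline-constant-polar},
\[
 \tau\cdot D\mathbf 1_G=\sigma(\tau\cdot N_0)|D\mathbf 1_G|=0,
\]
so $\partial_\tau\mathbf 1_G=0$ in $\mathcal D'(\mathbb R^3)$. A distribution with vanishing derivatives in the two directions $e_1,e_2$ is, after mollification and passage to the limit, a function of $x_3$ alone. Hence $\mathbf 1_G(x)=g(x_3)$ a.e. for some measurable $g:\mathbb R\to\{0,1\}$. Concretely, mollified indicators are smooth and independent of $x_1,x_2$, and their $L^1_{\rm loc}$ limit inherits this.

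The second step uses the sign. We have $\partial_{3}\mathbf 1_G=\sigma|D\mathbf 1_G|\ge0$ (or $\le0$), so $g'$ is a signed measure of fixed sign on $\mathbb R$. Thus $g$ is monotone, and since it takes only the values $0$ and $1$, it is a.e. the indicator of a half-line $(c,\infty)$ or $(-\infty,c)$. The cases $g\equiv0$ and $g\equiv1$ are excluded because $G$ is neither null nor conull. Therefore $G$ agrees a.e. with the half-space $\{\pm(x_3-c)>0\}$, whose normal is $\mp N_0$; this matches the prescribed $\sigma$. Local finiteness of the perimeter enters only to make $|D\mathbf 1_G|$ a Radon measure, so that the polar identity makes sense.

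The final step is to identify the current. For a half-space $H$, $\partial[\![H]\!]$ is integration over the plane $\partial H$ with multiplicity one and the orientation induced by the outer normal, by De Giorgi's structure theorem or directly by the classical Gauss--Green formula. Since $[\![G]\!]=[\![H]\!]$ as currents when $G=H$ a.e., the claim follows. The only mildly delicate point is the rigidity step from vanishing tangential derivatives to dependence on $x_3$ alone. I would handle it with mollification, and I do not expect a genuine obstacle there.
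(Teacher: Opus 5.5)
Your proposal is correct and follows the paper's proof essentially step by step. The tangential derivatives vanish, so $\mathbf 1_G$ depends only on $X\cdot N_0$ (the paper invokes Fubini where you mollify); the sign condition makes this $\{0,1\}$-valued profile monotone with exactly one jump, and the boundary current of the resulting half-space is its multiplicity-one plane.
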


\begin{proof}
Every distributional derivative tangent to \(N_0^\perp\) vanishes.
After an orthogonal change of coordinates with \(s=X\cdot N_0\), Fubini
therefore gives
\[
 \mathbf1_G(X)=h(s)\quad\text{for a.e.\ }X,
 \qquad h\in BV_{\mathrm{loc}}(\mathbb R;\{0,1\}).
\]
Equation \eqref{eq:halfline-constant-polar} reduces to
\[
 Dh=\sigma|Dh|.
\]
Thus \(h\) is monotone.  Since \(G\) is nontrivial and a
\(\{0,1\}\)-valued monotone function has exactly one jump, there is
\(s_0\in\mathbb R\) such that, up to null sets,
\[
 h=\mathbf1_{(s_0,\infty)}
 \quad\text{or}\quad
 h=\mathbf1_{(-\infty,s_0)}.
\]
Hence \(G\) is a half-space, and the reduced boundary of a
finite-perimeter set has unit current multiplicity.
\end{proof}

\begin{proposition}
\label{prop:halfline-environmental-tangents}
Fix \(z_0>\ell\).  There are \(x_j\to0\) such that \(u(x_j)=z_0\).  Put
\(\rho_j=|x_j|\), and dilate the graph about \(p_0=(0,z_0)\) by
\(\rho_j^{-1}\).  Every convergent subsequence has a further subsequence
whose current and varifold limits are one and the same multiplicity-one
vertical plane.
\end{proposition}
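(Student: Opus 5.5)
Existence of \(x_j\) comes first and is short.  Since \(z_0>\ell\) and \(z_0\) lies in the cluster interval \([\ell,+\infty]\) of Lemma~\ref{lem:cluster-interval}, there are points \(x\to0\) with \(u(x)\to z_0\).  I will upgrade this to exact attainment.  For a fixed small circle, the connected level-set structure established after Lemma~\ref{lem:no-trapped-cold-component} gives paths in \(\{u<z_0\}\) reaching arbitrarily close to the puncture.  On the other side, \(+\infty\) is a cluster value, so \(\{u>z_0\}\) also accumulates at \(0\).  Each punctured disc \(B_r^*\) is connected and meets both phases, so \(u\) takes the value \(z_0\) in it.  This gives \(x_j\to0\) with \(u(x_j)=z_0\).

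Next I will extract limits.  Put \(\rho_j=|x_j|\) and \(\Sigma_j=\rho_j^{-1}(\Sigma-p_0)\), which is the graph of \(\zeta_j\).  The quadratic area bound \eqref{eq:halfline-quadratic-area}, in its rescaled form \eqref{eq:halfline-environmental-area}, gives uniformly bounded mass on compact sets.  The rescaled mean curvature is \(\rho_j\,|z|\to0\) on compact sets, and the axis part of the first variation rescales to \(\rho_j\mu_{\rm ax}\), whose mass on \(B_A\) is \(O(\rho_j A)\) by the \(L^\infty\) density in \eqref{eq:halfline-axis-density}.  So the varifolds \(V_j\) have locally bounded first variation tending to zero.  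Allard compactness then gives a stationary integral varifold limit \(V_\infty\).  Independently, the subgraphs \(G_j=\rho_j^{-1}(E-p_0)\) have locally bounded perimeter, so after a subsequence \(\mathbf 1_{G_j}\to\mathbf 1_G\) in \(L^1_{\rm loc}(\mathbb R^3)\), and the currents \(\partial[\![G_j]\!]\) converge to \(\partial[\![G]\!]\).

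The key identification uses the Bregman identity \eqref{eq:halfline-normal-defect}.  In ambient terms it says that the unit normal \(N_j\) of \(\Sigma_j\) converges to \(N_0=(-e_1,0)\) in \(L^2(\varphi\chi(z)\,d\|V_j\|)\).  Here \(\chi(\zeta_j)\) is exactly a cutoff in the new vertical coordinate and \(\varphi\) a horizontal cutoff, so every compact set of \(\mathbb R^3\) is covered.  I will prove two consequences in order.

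First, the current limit.  The vector measures \(D\mathbf 1_{G_j}=-N_j\|V_j\|\) (sign fixed by the subgraph orientation) satisfy \(|D\mathbf 1_{G_j}+N_0\|V_j\||\to0\) locally in total variation.  Passing to the limit gives \(D\mathbf 1_G=-N_0\,\mu\), where \(\mu\) is the weak limit of \(\|V_j\|\).  Since \(\mu\) is a nonnegative measure and \(|D\mathbf 1_G|\le\mu\), the polar of \(D\mathbf 1_G\) is the constant \(-N_0\).  To apply Lemma~\ref{lem:halfline-global-constant-polar} I still need \(G\) to be neither null nor conull.  This follows because the origin is an accumulation point of the rescaled graph, since \((x_j/\rho_j,0)\in\Sigma_j\) has norm one.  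Combined with the half-dipole structure \(U_j\to H_{e_1}\), this shows that points with \(X_1>0\) lie below the graph while points with \(X_1<0\) lie above it.  The lemma then makes \(G\) a half-space with vertical boundary plane \(P=\{X_1=c\}\) and multiplicity one.

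Second, the varifold limit, which I expect to be the main obstacle: I must show \(\|V_\infty\|=\mathcal H^2\llcorner P\), that is, no extra mass and no cancelling sheets.  The convergence \(N_j\to N_0\) in \(L^2(d\|V_j\|)\) shows that \(V_\infty\) has the constant tangent plane \(N_0^\perp\) \(\|V_\infty\|\)-a.e.  A stationary varifold with constant tangent plane is a sum of parallel planes \(\sum\theta_i[\{X_1=c_i\}]\), by the constancy theorem applied slice by slice.  To rule out extra or doubled planes, I will compare with the current.  On each plane the orientation inherited from \(N_j\approx N_0\) is the same, so sheets cannot cancel in the current limit.  Hence \(\partial[\![G]\!]=\sum\theta_i[\{X_1=c_i\}]\) with consistent orientation, which forces a single plane with \(\theta=1\), because the boundary of a half-space is multiplicity one.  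The point needing care is that there is no mass loss between the current and the varifold.  This follows from the no-cancellation argument: the mass of the current equals \(\lim\int N_0\cdot(-N_j)\,d\|V_j\|=\lim\|V_j\|\) locally.  Hence the current and varifold limits coincide as the same multiplicity-one vertical plane \(P\).
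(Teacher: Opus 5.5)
Your outline follows the paper's: exact-level points, mass and first-variation compactness, the normal-defect identity \eqref{eq:halfline-normal-defect}, the vector-measure identity, and Lemma~\ref{lem:halfline-global-constant-polar}. The existence step is fine. You do not need the level-set connectivity, which is only available above $t_0$, because the cluster value $\ell<z_0$ already puts $\{u<z_0\}$ near $0$. However, two steps do not work as written.

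\textbf{Stationarity of the limit.} The axis part of the first variation is not $O(\rho_j)$ after rescaling. Dilating about $(0,z_0)$ by $\rho_j^{-1}$, the axis contribution to $\delta V_j(Y)$ is $\int_{\mathbb R}\mathbf g_{\rm ax}(z_0+\rho_jZ)\cdot Y(0,Z)\,dZ$. A density on a line is scale invariant relative to the first variation of a surface, which is exactly the planar criticality. So the first variations are bounded but not small, and $V_\infty$ is not known to be stationary. Its stationarity is what Proposition~\ref{prop:halfline-axis-force-vanishes} later deduces \emph{from} the present statement. Once first variation is allowed on $\{0\}\times\mathbb R$, a constant tangent plane $N_0^\perp$ no longer forces a sum of parallel planes: a half-plane in $\{X_1=0\}$ bounded by the axis is a counterexample. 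So your constancy-theorem step fails. It is also unnecessary. The identity $D\mathbf 1_G=-N_0\mu$ that you derive, with $|N_0|=1$ and $\mu=\|V_\infty\|$, gives $|D\mathbf 1_G|=\|V_\infty\|$ outright, not merely $\le$. Hence once $G$ is a half-space, the varifold limit is its boundary plane with multiplicity one, which is how the paper concludes.

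\textbf{Nontriviality of $G$.} You have not shown that $G$ is neither null nor conull, which Lemma~\ref{lem:halfline-global-constant-polar} requires. The inclusion $\{X_1>0\}\times\mathbb R\subset G$ a.e.\ is correct, since $\zeta_j\to+\infty$ in measure there. On $\{X_1<0\}$, however, you only know $U_j\to0$ in $L^1_{\rm loc}$. That says nothing about the sign of $\zeta_j=(U_j-\rho_j(z_0-\ell))/\rho_j^2$. Moreover, the limit plane passes through $(\omega,0)$ with $\omega=\lim x_j/\rho_j$, not through the origin, so it is $\{X_1=\omega_1\}$ and nothing forces $\omega_1=0$. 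Your claim that points with $X_1<0$ lie above the graph is therefore unsupported. Without a density lower bound the graph mass could vanish in the limit, giving $\mu=0$ and $G$ conull.

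The missing idea is the reason for choosing exact-level points. The point $(x_j/\rho_j,0)$ is a smooth density-one point of the rescaled graph at unit horizontal distance from the axis. The ball $B_{2r}((\omega,0))$ avoids the axis, and there the first variation is only the rescaled mean curvature, of size $O(\rho_j)$. Monotonicity then gives $\|V_j\|(B_r((\omega,0)))\ge cr^2$, so $V_\infty\neq0$. The equality $|D\mathbf 1_G|=\|V_\infty\|$ then makes $G$ neither null nor conull.
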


\begin{proof}
Choose \(\delta>0\) with
\(z_0\pm\delta\in(\ell,+\infty)\).  Values below and above \(z_0\) occur
arbitrarily close to the puncture.  Connectedness of every punctured disk
and the intermediate value theorem give exact-level points
\(x_j\to0\) with \(u(x_j)=z_0\).  After a subsequence,
\(x_j/\rho_j\to\omega\in\mathbb S^1\).

Let

\[
 \eta_j(x,z)=\left(\frac{x}{\rho_j},\frac{z-z_0}{\rho_j}\right).
\]

Denote the dilated varifold and subgraph set by \(\mathcal V_j\) and \(\mathcal E_j\), and
write

\[
 \mathcal T_j=\partial[\![\mathcal E_j]\!]
\]

for the corresponding relative boundary current.  The quadratic area
estimate and Proposition~\ref{prop:halfline-axis-first-variation} give
uniform local mass and first-variation bounds, hence varifold, current,
and Caccioppoli compactness.  The rescaled graph contains
\((x_j/\rho_j,0)\to(\omega,0)\).  Choose \(r>0\) so that
\(B_{2r}((\omega,0))\) stays away from the axis.  The rescaled mean
curvature tends uniformly to zero there, and bounded-mean-curvature
monotonicity at the smooth density-one points
\((x_j/\rho_j,0)\) gives, for all large \(j\),
\[
 \|\mathcal V_j\|(B_r((\omega,0)))\ge c r^2
\]
with \(c>0\) independent of \(j\).  Thus every varifold limit is nonzero.

After another subsequence, the natural blow-down is \(H_e\) by
Proposition~\ref{prop:halfline-half-dipole-blowdown}.  Finite sums of
nonnegative product cutoffs \(\varphi(X)\chi(Z)\) approximate every
nonnegative compactly supported ambient cutoff.  Thus
\eqref{eq:halfline-normal-defect} and the local mass bounds give

\begin{equation}
 \int\Phi|N_j-N_0|^2\,d\|\mathcal V_j\|\longrightarrow0,
 \qquad N_0=(-e,0),
 \label{eq:halfline-ambient-normal-defect}
\end{equation}

for every \(0\le\Phi\in C_c(\mathbb R^3)\).

Pass to a subsequence such that

\[
 \mathbf1_{\mathcal E_j}\to\mathbf1_{\mathcal E_\infty}
 \quad\text{in }L^1_{\mathrm{loc}},
 \qquad
 \mathcal T_j\rightharpoonup \mathcal T_\infty=\partial[\![\mathcal E_\infty]\!].
\]

The Hodge-dual vector measure of \(\mathcal T_j\) is
\(N_j\|\mathcal V_j\|\).  Cauchy--Schwarz and
\eqref{eq:halfline-ambient-normal-defect} show that
\(N_j\|\mathcal V_j\|-N_0\|\mathcal V_j\|\to0\) locally in total variation, because for
every compact \(K\Subset\mathbb R^3\),
\[
 \int_K|N_j-N_0|\,d\|\mathcal V_j\|
 \le
 \left(\int_K|N_j-N_0|^2\,d\|\mathcal V_j\|\right)^{1/2}
 \|\mathcal V_j\|(K)^{1/2}\longrightarrow0.
\]
With the
convention \(D\mathbf1_E=-\nu_E|D\mathbf1_E|\), passage to the limit
gives

\begin{equation}
 -D\mathbf1_{\mathcal E_\infty}=N_0\|\mathcal V_\infty\|.
 \label{eq:halfline-vector-measure-identity}
\end{equation}

Taking total variations yields

\begin{equation}
 |D\mathbf1_{\mathcal E_\infty}|=\|\mathcal V_\infty\|.
 \label{eq:halfline-no-hidden-multiplicity}
\end{equation}

The limit is global on the expanding ambient domains and is nonzero.
Lemma~\ref{lem:halfline-global-constant-polar} therefore makes
\(\mathcal E_\infty\) a half-space.  Equation
\eqref{eq:halfline-no-hidden-multiplicity} shows that its current and
varifold boundaries are the same multiplicity-one plane.  Since \(N_0\)
is horizontal, that plane is vertical.
\end{proof}

\begin{proposition}
\label{prop:halfline-axis-force-vanishes}
The density \(\mathbf g_{\rm ax}\) in \eqref{eq:halfline-axis-density} vanishes almost
everywhere on \((\ell,+\infty)\).  Consequently,

\begin{equation}
 \mu_{\mathrm{ax}}=0
 \label{eq:halfline-no-axis-force}
\end{equation}
throughout \(\mathcal O=B_R\times\mathbb R\).
\end{proposition}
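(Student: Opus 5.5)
We plan to show that $\mathbf g_{\rm ax}(z_0)=0$ at every $z_0>\ell$ that is simultaneously a Lebesgue point of $\mathbf g_{\rm ax}$ and a point where $\mu_{\rm ax}$ has $\mathcal H^1$-density $\mathbf g_{\rm ax}(z_0)$. The tool is the blow-up of Proposition~\ref{prop:halfline-environmental-tangents}. Fix such a $z_0$ and the sequence $\rho_j=|x_j|\downarrow0$ given there. Dilate about $p_0=(0,z_0)$ by $\rho_j^{-1}$. The first variation of the rescaled varifold $\mathcal V_j$ splits as follows.

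The mean-curvature part is $-\rho_j\int \mathbf H\cdot X\,d\|\mathcal V_j\|$. It is $O(\rho_j)$ on compact sets, since $|\mathbf H_\Sigma|=|z|$ is bounded on finite-height cylinders. The axis part is the pushforward of $\mu_{\rm ax}$. For a test field $X\in C^1_c(\mathbb R^3;\mathbb R^3)$ this pushforward is
\[
 \rho_j^{-1}\mu_{\rm ax}\bigl(X\circ\eta_j\bigr)
 =\int X(0,\tau)\,\mathbf g_{\rm ax}(z_0+\rho_j\tau)\,d\tau .
\]
By the Lebesgue-point choice this converges to $\mathbf g_{\rm ax}(z_0)\int X(0,\tau)\,d\tau$. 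Hence every varifold limit $\mathcal V_\infty$ satisfies
\[
 \delta\mathcal V_\infty=\mathbf g_{\rm ax}(z_0)\,\mathcal H^1\llcorner(\{0\}\times\mathbb R).
\]
For this limit we need the varifold convergence $\mathcal V_j\to\mathcal V_\infty$, which the uniform mass bound \eqref{eq:halfline-quadratic-area} and the first-variation bound supply.

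By Proposition~\ref{prop:halfline-environmental-tangents}, after a further subsequence $\mathcal V_\infty$ is a multiplicity-one vertical plane $P$. A plane is stationary, so $\delta\mathcal V_\infty=0$, and this forces $\mathbf g_{\rm ax}(z_0)=0$. One point needs attention: the axis line might lie inside $P$. Even so, a smooth plane with multiplicity one has zero first variation everywhere, so the identity still forces the line density to vanish.

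Almost every $z_0>\ell$ is admissible, so $\mathbf g_{\rm ax}=0$ a.e.\ on $(\ell,\infty)$. By Proposition~\ref{prop:halfline-axis-first-variation}, $\mu_{\rm ax}$ is supported on $\mathcal L_\ell$ and has no atom at $p_*$. Therefore $\mu_{\rm ax}=0$ on all of $\mathcal O$, which is \eqref{eq:halfline-no-axis-force}.

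The main obstacle is justifying that the varifold limit really is the multiplicity-one plane. We need the limit to be a single stationary plane, with no extra density and no concentration of the first variation at the axis. A hidden additional sheet, or a half-plane ending on the axis, could carry a nonzero boundary force. Proposition~\ref{prop:halfline-environmental-tangents} excludes both through the identity \eqref{eq:halfline-no-hidden-multiplicity}, $|D\mathbf 1_{\mathcal E_\infty}|=\|\mathcal V_\infty\|$, combined with the global half-space structure. So the remaining care is technical. We must choose $z_0$ from the common full-measure set. We must also pass the axis term to the limit using the $L^\infty_{\rm loc}$ bound on $\mathbf g_{\rm ax}$, so that dominated convergence in $\tau$ applies.
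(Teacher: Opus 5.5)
Your proposal is correct and follows the paper's proof essentially step for step. You blow up at a Lebesgue point $z_0>\ell$ of $\mathbf g_{\rm ax}$ along the exact-level scales of Proposition~\ref{prop:halfline-environmental-tangents}. The rescaled mean-curvature term is $O(\rho_j)$, and the rescaled axis term converges to $\int \mathbf g_{\rm ax}(z_0)\cdot Y(0,Z)\,dZ$; varifold convergence to the stationary multiplicity-one plane then forces $\mathbf g_{\rm ax}(z_0)=0$. You then conclude $\mu_{\rm ax}=0$ from the support and no-atom statements of Proposition~\ref{prop:halfline-axis-first-variation}, exactly as the paper does.
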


\begin{proof}
Let \(z_0>\ell\) be a Lebesgue point of \(\mathbf g_{\rm ax}\), and use the exact-level
scales in Proposition~\ref{prop:halfline-environmental-tangents}.  For a
compactly supported smooth ambient field \(Y\), scaling
\eqref{eq:halfline-axis-density} gives

\begin{align*}
 \delta \mathcal V_j(Y)
 &=-\rho_j\int
 \mathbf H_\Sigma((0,z_0)+\rho_jQ)\cdot Y(Q)\,d\|\mathcal V_j\|(Q) \\
 &\quad+\int \mathbf g_{\rm ax}(z_0+\rho_jZ)\cdot Y(0,Z)\,dZ.
\end{align*}

The first term tends to zero by the local mass bound.  The Lebesgue-point
property gives

\[
 \mathbf g_{\rm ax}(z_0+\rho_j\,\cdot)\longrightarrow \mathbf g_{\rm ax}(z_0)
 \quad\text{in }L^1_{\mathrm{loc}}(\mathbb R).
\]

On the other hand, after a subsequence \(\mathcal V_j\) converges to the
stationary multiplicity-one plane \(\mathcal V_\infty\) by
Proposition~\ref{prop:halfline-environmental-tangents}.  Since
\((Q,S)\mapsto\operatorname{div}_S Y(Q)\) is continuous and compactly
supported on the Grassmann bundle, varifold convergence gives
\[
 \delta\mathcal V_j(Y)
 =\int\operatorname{div}_S Y(Q)\,d\mathcal V_j(Q,S)
 \longrightarrow
 \int\operatorname{div}_S Y(Q)\,d\mathcal V_\infty(Q,S)=0.
\]
Hence

\[
 \int \mathbf g_{\rm ax}(z_0)\cdot Y(0,Z)\,dZ=0
\]

for every \(Y\), so \(\mathbf g_{\rm ax}(z_0)=0\).  Since almost every
\(z_0\in(\ell,+\infty)\) is a Lebesgue point, $\mathbf g_{\rm ax}=0$
\(\mathcal H^1\)-almost everywhere on that interval.  By
\eqref{eq:halfline-axis-density},
\[
 \mu_{\mathrm{ax}}
 =\mathbf g_{\rm ax}\,\mathcal H^1\llcorner
   \bigl(\{0\}\times(\ell,+\infty)\bigr)
 +\mu_{\mathrm{ax}}(\{p_*\})\,\delta_{p_*}.
\]
The preceding conclusion gives
$\mathbf g_{\rm ax}\,\mathcal H^1\llcorner(\{0\}\times(\ell,+\infty))=0$, while
Proposition~\ref{prop:halfline-axis-first-variation} gives
$\mu_{\mathrm{ax}}(\{p_*\})=0$.  The same proposition places the support
of $\mu_{\mathrm{ax}}$ in
$\mathcal L_\ell=\{0\}\times[\ell,+\infty)$.  Hence
\eqref{eq:halfline-no-axis-force} holds on all of $\mathcal O$.
\end{proof}

\subsection{The full-cluster model}
Assume now
\begin{equation}\label{eq:fullcluster-hyp}
 \mathcal C_0(u)=[-\infty,+\infty].
\end{equation}
Both signs are unbounded and, by the uniform one-ball theorem, have one
ball at every good level.  For a common blow-down the positive and negative
balls are disjoint and their centers satisfy $|c_\pm(s)|\le1/s$.  Hence
\[
 2/s\le |c_+(s)-c_-(s)|\le |c_+(s)|+|c_-(s)|\le2/s.
\]
Equality throughout gives, after fixing an orientation by nested matching,
\[
 c_+(s)=e/s,\qquad c_-(s)=-e/s.
\]
Layer cake therefore yields the full dipole
\begin{equation}\label{eq:full-dipole-limit}
 U=D_e=\frac{2e\cdot x}{|x|^2}.
\end{equation}
Let
\[
 \mathcal L_0=\{0\}\times\mathbb R\subset\mathbb R^3.
\]
The full-cluster hypothesis implies $\mathcal L_0\subset\overline\Sigma$.
Estimate \eqref{common:ambient-area} shows that adding the
$\mathcal H^2$-null set $\mathcal L_0$ produces a locally integral, multiplicity-one
varifold $V$ associated with
\[
 \widehat\Sigma:=\Sigma\cup \mathcal L_0.
\]

Fix $p_0=(0,z_0)\in\mathcal L_0$ and scales $\rho_j\downarrow0$.  Set
\begin{equation*}
 U_j(X)=\rho_j u(\rho_jX),\qquad
 \varepsilon_j=\rho_j^2,\qquad c_j=\rho_jz_0,
\end{equation*}
and
\begin{equation*}
 \zeta_j(X)=\frac{U_j(X)-c_j}{\varepsilon_j}
       =\frac{u(\rho_jX)-z_0}{\rho_j}.
\end{equation*}
The ambient blow-up of $\widehat\Sigma$ about $p_0$ by $\rho_j^{-1}$ is
the completed graph of $\zeta_j$.  After a subsequence, the signed compactness theorem gives a natural
blow-down limit, and the preceding full-cluster one-ball argument identifies
it with a full dipole.  Thus
\begin{equation}
 U_j\to D_e\quad\text{in }L^1_{\mathrm{loc}}(\mathbb R^2)
 \label{fc:natural-limit-at-axis}
\end{equation}
for some $e\in\mathbb S^1$.

We first establish first-variation compactness before invoking any
regularity theorem.

The rescaled graph has scalar mean curvature
\begin{equation*}
 H_j(X,\zeta)=U_j(X)=c_j+\varepsilon_j\zeta.
\end{equation*}
Write \(\mathbf H_j=-H_jN_j\) for its mean-curvature vector, with the
upward subgraph normal \(N_j\) and the convention
\(\delta \mathcal V_j(Y)=-\int\mathbf H_j\cdot Y\,d\|\mathcal V_j\|\) off the axis.
Scaling \eqref{common:shrinking-band} gives, for bounded $L$,
\begin{equation}
 \mathcal H^2\{|X|<2\eta,\ |\zeta_j|<L\}
 \le C_L(\eta L+\eta^2).
 \label{fc:scaled-strip-area}
\end{equation}
Let $\chi_\eta(X)$ vanish for $|X|<\eta$, equal one for
$|X|>2\eta$, and satisfy $|\nabla\chi_\eta|\le C/\eta$.  For
$Y\in C_c^1(\mathbb R^3;\mathbb R^3)$ supported in a fixed height slab,
the first-variation identity away from the axis, applied to
\(\chi_\eta Y\), is
\[
 \int \chi_\eta\operatorname{div}_{T}Y\,d\mathcal V_j(X,T)
 +\int \nabla_T\chi_\eta\cdot Y\,d\mathcal V_j(X,T)
 =-\int \chi_\eta\mathbf H_j\cdot Y\,d\|\mathcal V_j\|.
\]
Since
\[
 \delta \mathcal V_j(Y)
 =\int \chi_\eta\operatorname{div}_{T}Y\,d\mathcal V_j
  +\int(1-\chi_\eta)\operatorname{div}_{T}Y\,d\mathcal V_j,
\]
letting \(\eta\downarrow0\) gives
\begin{equation}
 \begin{split}
 |\delta \mathcal V_j(Y)|
 &\le \int |H_j||Y|\,d\|\mathcal V_j\|\\
 &\quad+\liminf_{\eta\downarrow0}
   \|DY\|_\infty
   \mathcal H^2\{|X|<2\eta,\ |\zeta_j|<L\}\\
 &\quad+\liminf_{\eta\downarrow0}
   \frac{C\|Y\|_\infty}{\eta}
   \mathcal H^2\{|X|<2\eta,\ |\zeta_j|<L\}.
 \end{split}
 \label{fc:cutoff-first-variation}
\end{equation}
By \eqref{fc:scaled-strip-area}, the term containing \(DY\) tends to zero
with $\eta$, while the final cutoff term in
\eqref{fc:cutoff-first-variation} is bounded uniformly in \(j\).  Hence
\begin{equation}
 \sup_j\bigl(\|\mathcal V_j\|(K)+|\delta \mathcal V_j|(K)\bigr)<\infty
 \label{fc:first-variation-compactness}
\end{equation}
on every ambient compact set.  The varifold compactness theorem and its
integrality conclusion
\cite[Chapter~\refnum{8}, Theorem~\refnum{5.8} and Remark~\refnum{5.9}]{SimonGMT2018} give an integral
varifold subsequence $\mathcal V_j\to \mathcal V_\infty$.

We next use the shrinking-height estimate to identify the limiting slices.

Define
\[
 F_j=\frac{\nabla U_j}
          {\sqrt{\varepsilon_j^2+|\nabla U_j|^2}},
 \qquad m_j=|F_j|.
\]
We next identify the limiting horizontal flux.  Fix a compact
$K\Subset\mathbb R^2\setminus\{0\}$ and first remove a strip
$\{|e\cdot X|<\delta\}$.  On each remaining component the dipole has a
fixed sign and is bounded away from zero, so the finite-value strict-$BV$
calibration for the corresponding sign determines the weak-star limit of
$F_j$ as the unit polar field of $D_e$.  Since that field has unit length,
the extreme-point identity
$|F_j-n_e|^2\le2(1-n_e\cdot F_j)$ upgrades the convergence to strong
$L^2$ there.  The omitted strip has area $O_K(\delta)$ and both fields are
bounded by one; letting $\delta\downarrow0$ gives, on every such compact,
\begin{equation}
 F_j\longrightarrow
 n_e:=\frac{\nabla D_e}{|\nabla D_e|}
 \quad\text{strongly in }L^2_{\rm loc},
 \label{fc:flux-to-ne}
\end{equation}
and hence also weak-star in $L^\infty_{\rm loc}$.
Direct polar-coordinate calculation gives
\begin{equation}
 \operatorname{div}n_e=-D_e
 \quad\text{in }\mathcal D'(\mathbb R^2).
 \label{fc:ne-divergence}
\end{equation}
There is no Dirac mass at zero: the flux of $n_e$ through every centered
circle is zero.
Indeed, after rotating \(e=e_1\) and writing
\(X=r(\cos\theta,\sin\theta)\),
\[
 D_e=\frac{2\cos\theta}{r},\qquad
 n_e\cdot e_r=-\cos\theta,\qquad
 n_e\cdot e_\theta=-\sin\theta,
\]
and therefore
\[
 \operatorname{div}n_e
 =\frac1r\partial_r\!\left(r\,n_e\cdot e_r\right)
  +\frac1r\partial_\theta(n_e\cdot e_\theta)
 =-\frac{2\cos\theta}{r}=-D_e.
\]
Moreover,
\(\int_{\partial B_r}n_e\cdot e_r\,d\mathcal H^1
=-r\int_0^{2\pi}\cos\theta\,d\theta=0\), which rules out an atom at
the origin.

For bounded $\zeta$ put
\[
 s_j(\zeta)=c_j+\varepsilon_j\zeta.
\]
For almost every $\zeta$ and nonnegative
$\phi\in C_c^1(\mathbb R^2)$, put
\(E_{j,\zeta}=\{U_j>s_j(\zeta)\}\).  Its outer normal on the reduced
boundary is \(-\nabla U_j/|\nabla U_j|\), and hence
\(F_j\cdot\nu_{E_{j,\zeta}}=-m_j\).  On
\(E_{j,\zeta}\setminus\overline B_\eta\), whose inner circular normal is
\(-e_r\), Gauss--Green for \(\phi F_j\) gives
\[
 \begin{aligned}
 \int_{E_{j,\zeta}\setminus B_\eta}
       (F_j\cdot\nabla\phi-\phi U_j)\,dX
 &=-\int_{\partial^*E_{j,\zeta}\setminus B_\eta}
       \phi m_j\,d\mathcal H^1\\
 &\quad-\int_{E_{j,\zeta}\cap\partial B_\eta}
       \phi F_j\cdot e_r\,d\mathcal H^1.
 \end{aligned}
\]
The last integral is bounded by
\(2\pi\eta\|\phi\|_\infty\).  Letting \(\eta\downarrow0\) gives
\begin{equation}
 \int_{\partial^*\{U_j>s_j(\zeta)\}}\phi m_j\,d\mathcal H^1
 =
 \int_{\{U_j>s_j(\zeta)\}}
       (\phi U_j-F_j\cdot\nabla\phi)\,dX.
 \label{fc:moving-slice-flux}
\end{equation}
The moving thresholds converge uniformly on every bounded
$\zeta$-interval, locally in the plane.  More precisely, for every compact
$K\Subset\mathbb R^2$, for $|\zeta|\le L$, and once
$|s_j(\zeta)|\le\delta$,
\[
 K\cap\bigl(\{U_j>s_j(\zeta)\}\triangle\{D_e>0\}\bigr)
 \subset
 K\cap\bigl(\{|D_e|\le2\delta\}
              \cup\{|U_j-D_e|>\delta\}\bigr).
\]
Consequently,
\begin{equation}
 \sup_{|\zeta|\le L}
 \left|K\cap\bigl(
 \{U_j>s_j(\zeta)\}\triangle\{D_e>0\}
 \bigr)\right|\longrightarrow0
 \qquad(K\Subset\mathbb R^2).
 \label{fc:moving-threshold-uniformity}
\end{equation}
The common Morrey estimate $\int_{B_r}|u|\le Cr$ gives, after this scaling, $\int_{B_\delta}|U_j|\le C\delta$, and hence supplies uniform integrability at zero, while
\eqref{fc:flux-to-ne} handles the flux term away from zero.  Passing to the
limit in \eqref{fc:moving-slice-flux}, taking in
\eqref{fc:moving-threshold-uniformity} a compact $K$ containing
$\operatorname{spt}\phi$, and using
\eqref{fc:ne-divergence}, gives in the essential $L^\infty$ sense on
bounded $\zeta$-intervals
\begin{equation}
 \int_{\partial^*\{U_j>s_j(\zeta)\}}\phi m_j\,d\mathcal H^1
 \longrightarrow
 \int_{e^\perp}\phi\,d\mathcal H^1.
 \label{fc:slice-calibration-limit}
\end{equation}

\begin{proposition}
\label{fc:environmental-plane}
Every convergent subsequence of the ambient blow-ups at any
$p_0\in \mathcal L_0$ has a further subsequence satisfying
\begin{equation*}
 \mathcal V_j\longrightarrow |P_e|,
 \qquad P_e=e^\perp\times\mathbb R,
\end{equation*}
for some $e\in\mathbb S^1$.  The multiplicity is exactly one.
\end{proposition}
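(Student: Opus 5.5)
The plan follows the half-line argument of Proposition~\ref{prop:halfline-environmental-tangents}, with the Bregman identity replaced by the strong flux convergence \eqref{fc:flux-to-ne} and the slice limit \eqref{fc:slice-calibration-limit}.

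\emph{Step 1: nontrivial limit.} By \eqref{fc:first-variation-compactness} we already have an integral limit $\mathcal V_\infty$. Its nontriviality comes from the full-cluster hypothesis. Every value $z_0$ is attained at points $x_j\to0$, by the intermediate value theorem on punctured disks. Choose $\rho_j=|x_j|$ along the given subsequence, or, for arbitrary scales, use the slice limit: the right side of \eqref{fc:slice-calibration-limit} is positive. By coarea, this forces $\|\mathcal V_j\|$ to carry mass comparable to $\mathcal H^1(e^\perp\cap\operatorname{spt}\phi)$ times the height length in every slab $\{|\zeta|<L\}$, uniformly in $j$.

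\emph{Step 2: the normal is constant in the limit.} I would reproduce the Bregman defect estimate. With $N_j=(-F_j,W_j^{-1})$ and $N_0=(-n_e,0)$, where $n_e$ now varies in $X$, I would show
\[
 \int\Phi|N_j-N_0|^2\,d\|\mathcal V_j\|\to0
\]
for compactly supported $\Phi$ on slabs. The exact identity is obtained by testing $\operatorname{div}F_j=-U_j$ with $\varphi\Gamma(\zeta_j)$, with the puncture cutoff costing $O(\eta)$. Subtracting the same test for the field $n_e$, using \eqref{fc:ne-divergence} with no atom at the origin, gives
\[
 \int\varphi\chi(\zeta_j)W_j(1-n_e\cdot F_j)
 =\int\varphi\chi W_j^{-1}+\int U_j\varphi\Gamma-\int\Gamma(\zeta_j)(F_j-n_e)\cdot D\varphi+\int\Gamma(\zeta_j)\varphi\,D_e.
\]
The flux term vanishes by \eqref{fc:flux-to-ne}. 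Away from $e^\perp$, $\zeta_j\to\pm\infty$ in measure, so $\chi(\zeta_j)$ and the relevant $\Gamma$ factors localize the remaining terms to a $\delta$-strip around $e^\perp$. There the source terms combine as $\int\Gamma(\zeta_j)\varphi(U_j-D_e)$, which is controlled by $L^1$ convergence and the Morrey bound near $0$.

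The term $\int\varphi\chi W_j^{-1}$ in the strip is the main obstacle, because $W_j^{-1}$ need not vanish there a priori. I would first try to handle it with the slice limit. By \eqref{fc:slice-calibration-limit}, the coarea densities $\int\phi m_j\,d\mathcal H^1$ converge to the $\mathcal H^1$-measure of $e^\perp$. Lower semicontinuity of the level perimeters, together with $m_j\le1$, then forces $m_j\to1$ in $L^1$ on the level curves, weighted appropriately. This gives $\int\chi(\zeta_j)\varphi W_j^{-1}\to0$ by graph coarea, since $W_j^{-1}\,d\mathcal H^2=(1-m_j^2)^{1/2}$ up to the coarea factor.

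\emph{Step 3: identification.} With the normal defect vanishing, the current limit satisfies $-D\mathbf 1_{\mathcal E_\infty}=N_0\|\mathcal V_\infty\|$ and $|D\mathbf 1_{\mathcal E_\infty}|=\|\mathcal V_\infty\|$, exactly as in \eqref{eq:halfline-vector-measure-identity}. Since the support of $\|\mathcal V_\infty\|$ lies over $e^\perp$, because $\zeta_j\to\pm\infty$ off the strip, $N_0$ is the constant $\mp e$ there. Lemma~\ref{lem:halfline-global-constant-polar} then makes $\mathcal E_\infty$ a half-space with boundary $P_e$ and multiplicity one. This identification is consistent with the slice mass computed in Step~1.
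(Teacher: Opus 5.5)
\textbf{Verdict.} Your route is correct in outline, but it differs from the paper's and needs three small repairs.

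\textbf{Comparison.} The paper uses no Bregman identity at this point. It first confines $\operatorname{spt}\mathcal V_\infty$ to $P_e$: it splits $A_j=K\cap\{|\zeta_j|\le L\}$ into low- and high-gradient parts and bounds the high-gradient part by coarea and the moving-slice limit \eqref{fc:slice-calibration-limit}, with $\phi$ vanishing on $e^\perp$. Plane support of the integral limit then gives $\nu_{j,3}^2\to0$. Finally, the exact coarea splitting $1/m_j=m_j+(1/m_j-m_j)$ reads off density one.

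You instead subtract the test of the unit field $n_e$, using \eqref{fc:ne-divergence} with no atom. This gives tilt convergence $N_j\to N_0=(-n_e,0)$ in $L^2(\|\mathcal V_j\|)$, and you then identify the limit as in Proposition~\ref{prop:halfline-environmental-tangents}. Your route bypasses \eqref{fc:moving-slice-flux}--\eqref{fc:slice-calibration-limit} entirely and treats both exceptional branches the same way. It also yields the stronger conclusion of normal convergence, and your Step~1 becomes unnecessary. The paper's route avoids a position-dependent reference field and reads the mass density off directly.

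\textbf{Repair 1: the ``main obstacle'' is not one.} Since $|n_e|=1$ a.e.,
\[
W_j^{-2}=1-|F_j|^2\le 2|F_j-n_e|.
\]
By \eqref{fc:flux-to-ne}, which holds on compact sets including the strip around $e^\perp$ (and trivially near $0$, where both fields are bounded by one), this gives $W_j^{-1}\to0$ in $L^2_{\rm loc}$. All three right-hand terms then vanish by $L^1_{\rm loc}$ convergence alone, so no strip localization is needed. The $D_e$ term in your display should carry a minus sign. Your proposed slice-limit fix would not work. Lower semicontinuity bounds level perimeters only from below. On level curves, $W_j^{-1}\,dX$ carries the unbounded density $1/m_j-m_j$, which $m_j\to1$ in $L^1$ does not control.

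\textbf{Repair 2: the order of the support argument.} The fact that $\zeta_j\to\pm\infty$ in base measure does not by itself confine varifold mass, since near-vertical graph pieces over small base sets can carry area. The correct order in your route is this. From $|N_0|=1$ you get $|D\mathbf 1_{\mathcal E_\infty}|=\|\mathcal V_\infty\|$. Then \eqref{fc:moving-threshold-uniformity} gives $\mathcal E_\infty=\{e\cdot X>0\}\times\mathbb R$ directly. That yields the support and multiplicity one at once, so Lemma~\ref{lem:halfline-global-constant-polar} is superfluous. Note that $n_e=e$ on $e^\perp$, so $N_0=(-e,0)$ on $P_e$ with no sign ambiguity.

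\textbf{Repair 3: the discontinuity of $N_0$ on the axis.} The field $N_0$ is discontinuous along the axis. Passing to $N_0\|\mathcal V_j\|\rightharpoonup N_0\|\mathcal V_\infty\|$ therefore requires the uniform thin-cylinder bound \eqref{fc:scaled-strip-area}. This issue did not arise in the half-line case, where $N_0$ was constant.
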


\begin{proof}
First identify the support.  Let
$K\Subset\mathbb R^2\setminus e^\perp$, fix $L,M>0$, and put
\[
 A_j=K\cap\{|\zeta_j|\le L\}.
\]
On $A_j$, $U_j=c_j+\varepsilon_j\zeta_j\to0$ uniformly, whereas $|D_e|$ is
bounded below on $K$.  Hence \eqref{eq:full-dipole-limit} gives
$|A_j|\to0$.  Let $q_j=|\nabla \zeta_j|$, so
$m_j=q_j/\sqrt{1+q_j^2}$.  On $\{q_j\le M\}$,
\[
 \int_{A_j\cap\{q_j\le M\}}\sqrt{1+q_j^2}\,dX
 \le\sqrt{1+M^2}\,|A_j|\to0.
\]
On $\{q_j>M\}$,
\[
 \sqrt{1+q_j^2}
 \le(1+M^{-2})\frac{q_j^2}{\sqrt{1+q_j^2}}.
\]
Choose $\phi\in C_c^1(\mathbb R^2\setminus e^\perp)$ equal to one on
$K$.  Coarea and \eqref{fc:slice-calibration-limit} give
\[
 \int_{A_j\cap\{q_j>M\}}
   \frac{q_j^2}{\sqrt{1+q_j^2}}\,dX
 \le
 \int_{-L}^{L}
 \int_{\partial^*\{U_j>s_j(\zeta)\}}\phi m_j\,d\mathcal H^1\,d\zeta
 \longrightarrow0.
\]
Thus graph mass over $K\times[-L,L]$ tends to zero and
$\operatorname{spt}\mathcal V_\infty\subset P_e$.

Since $\mathcal V_\infty$ is integral and supported in a plane, its tangent plane is
$P_e$ almost everywhere.  If $\nu_{j,3}$ denotes the vertical component
of the graph normal, varifold convergence gives
\begin{equation}
 \int\Phi\,\nu_{j,3}^2\,d\|\mathcal V_j\|\longrightarrow0
 \label{fc:vertical-angle-vanishes}
\end{equation}
for every nonnegative compactly supported $\Phi$.  Graph coarea gives
\begin{equation*}
 \int_{\operatorname{graph}\zeta_j}\phi(X)\chi(\zeta)\,d\mathcal H^2
 =
 \int_{\mathbb R}\chi(\zeta)
 \int_{\partial^*\{U_j>s_j(\zeta)\}}
       \frac{\phi}{m_j}\,d\mathcal H^1\,d\zeta.
\end{equation*}
Moreover, exactly,
\begin{equation*}
 \int_{\mathbb R}\chi(\zeta)
 \int_{\partial^*\{U_j>s_j(\zeta)\}}
 \phi\left(\frac1{m_j}-m_j\right)d\mathcal H^1\,d\zeta
 =
 \int_{\operatorname{graph}\zeta_j}
       \phi(X)\chi(\zeta)\nu_{j,3}^2\,d\mathcal H^2.
\end{equation*}
The right side tends to zero by
\eqref{fc:vertical-angle-vanishes}; the calibrated $m_j$ term tends by
\eqref{fc:slice-calibration-limit}.  Therefore
\[
 \int\phi(X)\chi(\zeta)\,d\|\mathcal V_\infty\|
 =
 \left(\int_{e^\perp}\phi\,d\mathcal H^1\right)
 \left(\int_{\mathbb R}\chi\,d\zeta\right),
\]
which is precisely the multiplicity-one plane.
\end{proof}

\begin{proposition}\label{prop:fullcluster-axis-force-vanishes}
In the full-cluster branch, the completed varifold has no axis-supported
first-variation defect.  Equivalently, after subtracting its smooth
prescribed-mean-curvature part, the remaining vector Radon measure is zero.
\end{proposition}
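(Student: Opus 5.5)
The plan mirrors the half-line argument of Propositions~\ref{prop:halfline-axis-first-variation} and~\ref{prop:halfline-axis-force-vanishes}, with the axis $\mathcal L_0=\{0\}\times\mathbb R$ in place of $\mathcal L_\ell$.

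\emph{Representing the defect as a measure.} Define
\[
 \mu_{\rm ax}(X):=\delta V(X)+\int\mathbf H_\Sigma\cdot X\,d\|V\|
 =-\lim_{\eta\downarrow0}\int_\Sigma\nabla_\Sigma\chi_\eta\cdot X\,d\mathcal H^2 .
\]
The cutoff identity is justified exactly as in \eqref{eq:halfline-axis-cutoff}. The errors involving $\operatorname{div}_\Sigma X$ and $\mathbf H_\Sigma$ on $\{|x|<2\eta\}$ vanish by \eqref{common:ambient-area}, since $|\mathbf H_\Sigma|=|z|$ is bounded on finite-height slabs.

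For $X$ supported over a height window $J$, the shrinking-height estimate \eqref{common:shrinking-band} gives
\[
 |\mu_{\rm ax}(X)|\le \|X\|_\infty\liminf_{\eta\downarrow0}\frac C\eta\int_{B_{2\eta}\cap\{u\in J\}}W\,dx\le C_{I_0}|J|\,\|X\|_\infty .
\]
Hence $\mu_{\rm ax}$ is a vector Radon measure supported on $\mathcal L_0$ with $|\mu_{\rm ax}|(\{0\}\times J)\le C|J|$. By Radon--Nikodym, $\mu_{\rm ax}=\mathbf g_{\rm ax}(z)\,\mathcal H^1\llcorner\mathcal L_0$ with $\mathbf g_{\rm ax}\in L^\infty_{\rm loc}$. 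In particular there are no atoms.

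\emph{Killing the density at Lebesgue points.} Fix a Lebesgue point $z_0$ of $\mathbf g_{\rm ax}$ and any $\rho_j\downarrow0$; every $z_0$ is admissible here because the full-cluster hypothesis \eqref{eq:fullcluster-hyp} puts the whole axis in $\overline\Sigma$. Blow up about $p_0=(0,z_0)$. For compactly supported $Y$ the scaled identity is
\[
 \delta\mathcal V_j(Y)=-\int\mathbf H_j\cdot Y\,d\|\mathcal V_j\|+\int\mathbf g_{\rm ax}(z_0+\rho_jZ)\cdot Y(0,Z)\,dZ .
\]
The rescaled curvature satisfies $|\mathbf H_j|=|c_j+\varepsilon_j\zeta|\to0$ locally uniformly on the support, and the masses are bounded by \eqref{fc:first-variation-compactness}, so the first term tends to zero. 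The second term converges to $\int\mathbf g_{\rm ax}(z_0)\cdot Y(0,Z)\,dZ$ by the Lebesgue-point property.

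By Proposition~\ref{fc:environmental-plane}, a further subsequence satisfies $\mathcal V_j\to|P_e|$, a multiplicity-one plane, which is stationary. Since $\operatorname{div}_SY$ is continuous on the Grassmann bundle, $\delta\mathcal V_j(Y)\to\delta|P_e|(Y)=0$. Therefore $\int\mathbf g_{\rm ax}(z_0)\cdot Y(0,Z)\,dZ=0$ for all $Y$, so $\mathbf g_{\rm ax}(z_0)=0$. Almost every $z_0$ is a Lebesgue point, hence $\mu_{\rm ax}=0$ on all of $B_R\times\mathbb R$.

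\emph{Main obstacle.} The delicate step is the uniform-in-$\eta$ bound on the cutoff term $\eta^{-1}\int_{B_{2\eta}\cap\{u\in J\}}W$. The bound must be linear in $|J|$ and hold at every axis height, including on both signs of $u$, because here both tails are unbounded. This rests on the Morrey estimate in Corollary~\ref{cor:L1-morrey}, which uses the critical volume law for both signs; I would verify that its constant is uniform over a compact height range $I_0$, since $\mu_{\rm ax}$ being absolutely continuous (and not merely a measure) depends on it. The remaining steps only transplant the half-line argument, with the plane limit of Proposition~\ref{fc:environmental-plane} playing the role of Proposition~\ref{prop:halfline-environmental-tangents}.
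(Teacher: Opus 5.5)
Your proposal is correct and follows the paper's own proof essentially step for step. The shrinking-height estimate \eqref{common:shrinking-band}, whose constant depends only on the Morrey constant and not on the band, bounds the cutoff term by $C|J|$ and so gives a locally bounded axis density; at Lebesgue points, the multiplicity-one tangent planes of Proposition~\ref{fc:environmental-plane} then force that density to vanish. The only difference is cosmetic: the paper localizes onto the axis with a cutoff $\psi_r$ and test fields $Y_\delta$ of prescribed axis trace, whereas you reuse the complementary cutoff $\chi_\eta$ from the half-line case, and the resulting estimates are identical.
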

\begin{proof}
Away from $\mathcal L_0$, the original varifold $V$ has its smooth
prescribed-mean-curvature first variation.  The same cutoff estimate used
for \eqref{fc:first-variation-compactness} proves that $\delta V$ is a
Radon measure on ambient compact sets.  Subtract the smooth
mean-curvature part and call the remainder $\mu$; then
$\operatorname{spt}\mu\subset\mathcal L_0$.

Fix a compact vertical interval $J$.  For every subinterval $I\subset J$,
choose a smooth field \(Y_\delta\) with
\(\|Y_\delta\|_\infty\le1\), whose trace on \(\mathcal L_0\) agrees with the prescribed
test field over \(I\) and is supported in an arbitrarily small enlargement
\(I_\delta\).  Let \(\psi_r(x)\) equal one on \(B_r\), vanish outside
\(B_{2r}\), and satisfy \(|D\psi_r|\le C/r\).  Since
\(\operatorname{spt}\mu\subset\mathcal L_0\),
\[
 \begin{aligned}
 \mu(Y_\delta)
 &=\mu(\psi_rY_\delta)\\
 &=\int_\Sigma \psi_r\operatorname{div}_\Sigma Y_\delta
       \,d\mathcal H^2
   +\int_\Sigma \nabla_\Sigma\psi_r\cdot Y_\delta
       \,d\mathcal H^2
   +\int_\Sigma \psi_r\mathbf H_\Sigma\cdot Y_\delta
       \,d\mathcal H^2.
 \end{aligned}
\]
The first and third terms are bounded by
\[
 C_{J,\delta}\mathcal H^2\bigl(
 \Sigma\cap\{|x|<2r,\ u\in I_\delta\}\bigr)
 =O_{J,\delta}(r(|I|+2\delta)+r^2)
\]
and vanish as $r\downarrow0$.  The middle term is bounded by
\[
 \frac Cr\,
 \mathcal H^2\bigl(
 \Sigma\cap\{|x|<2r,\ u\in I_\delta\}\bigr)
 \le C_J(|I|+2\delta+r).
\]
First let $r\downarrow0$ and then $\delta\downarrow0$.  Thus, for every
smooth axis test field $\varphi$ supported in $I$ with
$\|\varphi\|_\infty\le1$, a smooth ambient extension gives
$|\mu(\varphi)|\le C_J|I|$.  Approximating continuous axis fields by smooth
ones and taking the Radon-measure dual supremum yields
\begin{equation*}
 |\mu|(\{0\}\times I)\le C_J|I|.
\end{equation*}
Hence
\begin{equation*}
 \mu=\mathbf g_{\rm ax}(z)\mathcal H^1\lfloor\mathcal L_0
\end{equation*}
for a locally bounded vector-valued density $\mathbf g_{\rm ax}$.

At a Lebesgue point $z_0$ of $\mathbf g_{\rm ax}$, blow up about $(0,z_0)$.  If
$\eta_{p_0,\rho}(p)=(p-p_0)/\rho$ and
$V_\rho=(\eta_{p_0,\rho})_\#V$, then
\[
 \delta V_\rho(Y)=
 \rho^{-1}\delta V(Y\circ\eta_{p_0,\rho}).
\]
For a test field supported in a fixed ball, the smooth part satisfies
\[
 \left|\rho^{-1}\int
   \mathbf H_\Sigma\cdot(Y\circ\eta_{p_0,\rho})\,d\|V\|\right|
 \le C\rho^{-1}\|V\|(B_{C\rho}(p_0))=O(\rho),
\]
whereas the axis-supported part becomes
\[
 \rho^{-1}\int_{\mathcal L_0} \mathbf g_{\rm ax}(z)\cdot
 Y\!\left(0,\frac{z-z_0}{\rho}\right)d\mathcal H^1(z)
 =\int_\mathbb R \mathbf g_{\rm ax}(z_0+\rho\zeta)\cdot Y(0,\zeta)\,d\zeta
 \longrightarrow
 \int_\mathbb R \mathbf g_{\rm ax}(z_0)\cdot Y(0,\zeta)\,d\zeta.
\]
Thus the rescaled defect converges to
$\mathbf g_{\rm ax}(z_0)\mathcal H^1\lfloor\mathcal L_0$.  On the other hand,
Proposition~\ref{fc:environmental-plane} makes every such tangent a
multiplicity-one plane, whose first variation is zero.  For a fixed smooth
test field $Y$, the function $(x,S)\mapsto\operatorname{div}_S Y(x)$ is
continuous and compactly supported on the Grassmann bundle; hence varifold
convergence itself gives convergence of $\delta V_\rho(Y)$.  Therefore
$\mathbf g_{\rm ax}(z_0)=0$.  Thus
\begin{equation}\label{eq:fullcluster-axis-defect-zero}
 \mu=0.
\end{equation}
\end{proof}

\subsection{Axis defect equals the derivative of translation flux}
\begin{proposition}\label{prop:axis-flux-derivative}
In either planar exceptional branch, fix a working ball
\(B_{r_0}\Subset B_R\).  Choose \(T_0>0\) above the maximum of \(u\)
on the closed collar \(\overline B_{r_0}\setminus B_{r_0/2}\), so that
all levels above \(T_0\), localized to \(B_{r_0}\), lie in \(B_{r_0/2}\).
Let \(I\subset(T_0,\infty)\) be an open interval and suppose that,
for test fields compactly supported in \(B_{r_0}\times I\),
\[
 \delta V(Y)=-\int \mathbf H_\Sigma\cdot Y\,d\|V\|
 +\int_I \mathbf g_{\rm ax}(z)\cdot Y(0,z)\,dz.
\]
Let $\mathbf g_{\rm ax,h}$ denote the horizontal component and let $\mathcal F$ be the
horizontal level flux from \eqref{eq:physical-translation-flux}.  Then
\begin{equation}\label{eq:axis-flux-identity}
 D_s\mathcal F=\mathbf g_{\rm ax,h}
\end{equation}
in distributions on $I$.  In particular, if $\mathbf g_{\rm ax}=0$ on a high half-line,
then $\mathcal F=0$ there.
\end{proposition}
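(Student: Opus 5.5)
We test the first-variation identity with ambient fields of product form $Y(x,z)=\psi(z)\,\xi(x)\,e$, where $e\in\mathbb R^2$ is a fixed horizontal vector, $\psi\in C_c^\infty(I)$, and $\xi\in C_c^\infty(B_{r_0})$ equals one on $B_{r_0/2}$.  For heights in $I$ the graph lies in $B_{r_0/2}$, since all levels above $T_0$ do.  Hence $\xi\equiv1$ on the relevant part of $\Sigma$, and $Y$ acts there as the pure horizontal translation field $\psi(z)e$.  On the axis, $Y(0,z)=\psi(z)e$, so the axis term is $\int_I\psi(z)\,\mathbf g_{\rm ax,h}(z)\cdot e\,dz$.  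The plan is to compute the remaining two terms, $\delta V(Y)+\int\mathbf H_\Sigma\cdot Y\,d\|V\|$, directly on the smooth punctured graph, and to show they equal $-\int\psi'(s)\,\mathcal F(s)\cdot e\,ds$.

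Both terms are integrals over $\Sigma\cap\{z\in I\}$.  We can therefore first insert the puncture cutoff $\chi_\eta(x)$ and write $\delta V(Y)+\int\mathbf H_\Sigma\cdot Y=\lim_{\eta\downarrow0}\bigl[\delta V(\chi_\eta Y)+\int \mathbf H_\Sigma\cdot\chi_\eta Y\bigr]+(\text{cutoff remainder})$.  Here the remainder $-\lim\int\nabla_\Sigma\chi_\eta\cdot Y$ is exactly the axis term by \eqref{eq:halfline-axis-cutoff}; this is consistent with the hypothesis.  Instead we compute the smooth part without a cutoff in the horizontal direction, using the Noether stress in the base.  For a graph, the horizontal first variation pulled back to the base is
\[
 \operatorname{div}_\Sigma(\psi(z)e)+\mathbf H_\Sigma\cdot\psi e
 \;\longleftrightarrow\;
 -\operatorname{div}_x\bigl(\psi(u)\,\mathsf S e\bigr)+\psi'(u)\,(\mathsf S e)\cdot Du
\]
up to sign conventions.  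This holds because $\operatorname{div}\mathsf S=0$ is precisely the horizontal stationarity with respect to area plus the potential $u^2/2$.  Integrating over $B_{r_0}\setminus B_\eta$ then gives
\[
 \int\psi'(u)\,\mathsf S e\cdot Du\,dx \;-\;(\text{inner-circle flux of }\psi(u)\mathsf Se).
\]
Its two ingredients are handled as follows.
\begin{enumerate}[label=\textnormal{(\alph*)}]
\item By \eqref{eq:stress-identities-complete}, $\mathsf S Du=(u^2/2-\alpha)Du$.  Coarea on the level sets, together with $\int_{\partial^*E_s}\nu_s=0$, turns $\int\psi'(u)\,\mathsf Se\cdot Du$ into $-\int\psi'(s)\,\bigl(\tfrac{s^2}{2}\int_{\partial^*E_s}\nu_s-\mathcal F(s)\bigr)\cdot e\,ds=\int\psi'(s)\,\mathcal F(s)\cdot e\,ds$ (signs to be fixed), exactly as in the proof of Lemma~\ref{lem:tail-translation}.
\item The inner-circle flux over $\partial B_\eta\cap\{u\in\operatorname{spt}\psi\}$ is the $O(1)$ planar defect.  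By the smoothness of the identity on the annulus, it must coincide with the cutoff expression for $\mu_{\rm ax}$.
\end{enumerate}
Equating the two computations of $\delta V(Y)$ gives $\int\psi\,\mathbf g_{\rm ax,h}\cdot e=-\int\psi'\,\mathcal F\cdot e$, which is \eqref{eq:axis-flux-identity}.

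The cleanest route is to avoid identifying the circle flux separately.  We compute $\delta V(\chi_\eta Y)+\int\mathbf H_\Sigma\cdot\chi_\eta Y$ with the smooth stress identity, which is exact on the support of $\chi_\eta$.  We then note that the volume term $\int\chi_\eta\psi'(u)\,\mathsf Se\cdot Du$ converges by dominated convergence.  This uses the finite-band bound $|\mathsf S|\le CW$ and Lemma~\ref{lem:finite-band-complete} on the band $u\in\operatorname{spt}\psi$.  The cutoff term $\int\psi(u)\,\mathsf S e\cdot D\chi_\eta$ is the same quantity that appears as the axis force; both sides share it, and the identity results after $\eta\downarrow0$.

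The main obstacle is the bookkeeping of this correspondence.  We must verify pointwise that the tangential divergence on $\Sigma$ of $\chi_\eta(x)\psi(z)e$, plus the mean-curvature term, pulled back by the area element $W\,dx$, equals $-\operatorname{div}_x(\chi_\eta\psi(u)\,\mathsf S e)$ plus the $\psi'$ term, up to a fixed sign.  This is a direct but sign-sensitive computation using $\nu_\Sigma=(-Du,1)/W$ and $H=u$.  The $\psi'$ contribution must come out exactly as the level flux $\alpha\nu_s$ with no residual term in $\nu_s$.  The final claim is then immediate: if $\mathbf g_{\rm ax}=0$ on a high half-line, $\mathcal F$ is a.e.\ constant there.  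The bound $|\mathcal F(s)|\le -V'(s)$ and the choice of good levels $s_j$ with $-V'(s_j)\to0$, as in Lemma~\ref{lem:tail-translation}, force that constant to be zero.
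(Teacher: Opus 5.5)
\textbf{Gap: the pointwise identity at the step you flag as the crux is false.} The correspondence you propose,
\[
 \bigl[\operatorname{div}_\Sigma(\psi e)+\mathbf H_\Sigma\cdot\psi e\bigr]W
 \;\longleftrightarrow\;
 -\operatorname{div}_x\bigl(\psi(u)\mathsf Se\bigr)+\psi'(u)\,\mathsf Se\cdot Du ,
\]
cannot be repaired by sign conventions. Since $\mathsf S$ is symmetric and divergence free, $\operatorname{div}_x(\psi(u)\mathsf Se)=\psi'(u)\,\mathsf Se\cdot Du$, so the right side vanishes identically. The left side, however, equals $\psi'(u)\,Du\cdot e/W+u\psi(u)\,Du\cdot e$.

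Two consequences follow.
\begin{enumerate}
\item Taken literally, your annulus computation says that ``volume term minus inner-circle flux of $\psi(u)\mathsf Se$'' is zero for every $\eta$. That forces $\delta V(Y)+\int\mathbf H_\Sigma\cdot Y\,d\|V\|=0$, i.e.\ $\mathbf g_{\rm ax,h}=0$, which is not available at this stage.
\item In (b) you declare that circle flux to be $\mu_{\rm ax}(Y)$, while the hypothesis already says the whole left side is $\mu_{\rm ax}(Y)$. This counts the axis defect twice.
\end{enumerate}
The identity that actually holds is
\[
 \bigl[\operatorname{div}_\Sigma Y+\mathbf H_\Sigma\cdot Y\bigr]W
 =-\psi'(u)\,\mathsf Se\cdot Du+\operatorname{div}_x\Bigl(\frac{u^2}{2}\psi(u)\,e\Bigr).
\]
Its divergence part has a bounded density, so its inner-circle flux is $O(\eta)$ and no defect lives there. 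Your step (a) then gives directly
\[
 \delta V(Y)+\int\mathbf H_\Sigma\cdot Y\,d\|V\|=-\int\psi'(s)\mathcal F(s)\cdot e\,ds .
\]

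\textbf{The ``cleanest route'' has the same conflation.} The first-variation cutoff term is not the stress cutoff term. One computes
\[
 W\,\nabla_\Sigma\chi_\eta\cdot Y=\psi(u)\bigl(W\Id-W^{-1}Du\otimes Du\bigr)e\cdot D\chi_\eta=-\psi(u)\bigl(\mathsf S-\tfrac{u^2}{2}\Id\bigr)e\cdot D\chi_\eta ,
\]
so the two differ by $\int\psi(u)\tfrac{u^2}{2}\,e\cdot D\chi_\eta\,dx=O(\eta)$. With this correction written down, that route does close. Use \eqref{eq:halfline-axis-cutoff}, conservation of $\mathsf S$ on the support of $\chi_\eta\psi(u)$ (which avoids $\partial B_{r_0}$ by the choice of $T_0$), and dominated convergence on the band. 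This gives
\[
 \mu_{\rm ax}(Y)=\lim_{\eta\downarrow0}\int\psi(u)\mathsf Se\cdot D\chi_\eta\,dx=-\int\psi'(s)\mathcal F(s)\cdot e\,ds .
\]
It has the merit of exhibiting the axis force as the planar $O(1)$ stress-cutoff defect of \eqref{eq:finite-band-stress-cutoff}.

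\textbf{The paper's route needs neither stress nor cutoff.} It observes that $\delta V(Y)=\int_\Sigma\operatorname{div}_\Sigma Y\,d\mathcal H^2$ converges absolutely, because the axis is $\mathcal H^2$-null and the band area is finite. Then $\operatorname{div}_\Sigma Y=-\psi'(z)\alpha N_h\cdot e$, and graph coarea gives $-\int\psi'\mathcal F\cdot e$. The mean-curvature term equals $\int s\psi(s)\,e\cdot\int_{\partial^*E_s}\nu_s\,ds=0$. Your final step (constancy of $\mathcal F$ together with $|\mathcal F|\le-V'$) is fine.
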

\begin{proof}
Fix a constant horizontal vector $e\in\mathbb R^2$ and
$\chi\in C_c^1(I)$.  Choose \(\eta\in C_c^1(B_{r_0})\) equal to one
on \(B_{r_0/2}\), and use the compactly supported ambient vector field
\[
 Y(x,z)=\eta(x)\chi(z)(e,0).
\]
On every graph point with height in \(\operatorname{spt}\chi\), one has
\(\eta=1\) and \(D\eta=0\), by the choice of \(T_0\).  Thus the
spatial cutoff creates no graph term, and \(\eta(0)=1\) preserves the
axis term.
On the smooth graph, with upward normal $N=(N_h,\alpha)$,
\[
 \nabla_\Sigma z=e_3-\alpha N.
\]
Since the ambient horizontal field $(e,0)$ is constant,
\[
 \operatorname{div}_\Sigma Y
 =\chi'(z)(e,0)\cdot\nabla_\Sigma z
 =-\chi'(z)\alpha N_h\cdot e.
\]
Writing $\sigma=|N_h|$ and $N_h=\sigma\nu_s$ on the horizontal slice
$z=s$, graph coarea therefore gives
\begin{equation}\label{eq:axis-flux-coarea}
 \delta V(Y)
 =-\int_I\chi'(s)\mathcal F(s)\cdot e\,ds.
\end{equation}
Here the orientation agrees with \eqref{eq:physical-translation-flux}:
$\nu_s$ is the outer normal of the superlevel $\{u>s\}$.

The classical mean-curvature contribution contains no horizontal source.
Indeed \(\mathbf H_\Sigma=-sN\) on the slice \(z=s\), and graph coarea
rewrites \(-\int\mathbf H_\Sigma\cdot Y\,d\|V\|\) as
\[
 \int_I s\chi(s)\,e\cdot
 \left(\int_{\partial^*E_s}\nu_s\,d\mathcal H^1\right)ds,
\]
which vanishes by finite-perimeter Gauss--Green at almost every such high level,
applied to constant vector fields.  Comparing \eqref{eq:axis-flux-coarea}
with the axis term in the assumed first-variation formula gives
\[
 -\int_I\chi'(s)\mathcal F(s)\cdot e\,ds
 =\int_I\chi(s)\mathbf g_{\rm ax,h}(s)\cdot e\,ds.
\]
Since $e$ and $\chi$ are arbitrary, this is exactly
$D_s\mathcal F=\mathbf g_{\rm ax,h}$.

If $\mathbf g_{\rm ax}=0$ on $(T,\infty)$ with \(T\ge T_0\), then $\mathcal F$ is distributionally constant
there.  On regular levels,
\[
 |\mathcal F(s)|
 \le\int_{\partial^*E_s}\alpha\,d\mathcal H^1
 \le\int_{\partial^*E_s}\frac\alpha\sigma\,d\mathcal H^1
 =-V'(s).
\]
Because $-V'\in L^1(T,\infty)$, the flux itself belongs to
$L^1(T,\infty)$.  The only constant vector in this space is zero, so
$\mathcal F\equiv0$ a.e.\ on the high tail.
\end{proof}

\begin{lemma}\label{lem:shifted-zero-flux-centering}
Assume $n=2$.  Let $r_j\downarrow0$, put $t_j=r_j^{-1}$, and let
\[
 U_j(y)=r_j\bigl(u(r_jy)-\ell\bigr),\qquad
 F_j(y)=\frac{DU_j}{\sqrt{r_j^4+|DU_j|^2}}.
\]
Suppose, after a subsequence,
\[
 U_j\to U\quad\text{strongly in }L^1_{\rm loc}(\mathbb R^2),\qquad
 F_j\weakstar F,
\]
and for almost every $s>0$
\[
 \{U>s\}=B_{1/s}(c(s)),\qquad |c(s)|\le s^{-1},
\]
with the bounded positive truncations calibrated by $F$.  If the physical
positive translation flux $\mathcal F(q)$ from
\eqref{eq:physical-translation-flux} vanishes for almost every sufficiently
large $q$, then $c\equiv0$.
\end{lemma}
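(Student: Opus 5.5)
The plan is to repeat the argument of Proposition~\ref{prop:zero-charge-centers}, with the constant shift by $\ell$ tracked through the natural scaling. The shift changes neither $DU_j$ nor $F_j$. It only moves the physical thresholds: $\{U_j>s\}=t_jE^+_{\ell+st_j}$.

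\textbf{Step 1: exact flux identity.} First write the shifted analogue of \eqref{eq:translation-flux-natural-scaling}. For $\eta\in C_c^1((0,\infty))$, coarea on the rescaled levels $\{U_j=s\}$ gives
\[
 \int_{\mathbb R^2}\eta(U_j)F_j\,dy
 =-t_j^{3}\int_0^\infty\eta(s)\,\mathcal F(\ell+st_j)\,ds .
\]
This uses $F_j=-\sigma\nu$ on the level, $|DU_j|=t_j^{-2}|Du|$, and $d\mathcal H^1_y=t_j\,d\mathcal H^1_x$. These are the same bookkeeping facts as before, because subtracting $\ell$ does not affect gradients. For large $j$, every argument $\ell+st_j$ with $s\in\operatorname{spt}\eta$ exceeds the threshold beyond which $\mathcal F$ vanishes almost everywhere. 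The right side is therefore zero for all large $j$.

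\textbf{Step 2: pass to the limit.} Fix $\operatorname{spt}\eta\subset[a,b]\Subset(0,\infty)$. We need global tightness of $\{U_j>a\}$ so that $\eta(U_j)\to\eta(U)$ strongly in $L^1(\mathbb R^2)$, not merely locally. This set is $t_jE^+_{\ell+at_j}$. The uniform one-ball theorem (Proposition~\ref{prop:frag-uniform-clusters} with $m=1$) applied at height $q=\ell+at_j$ puts it, up to $o(1)$ symmetric difference, inside a ball of radius $t_j/q\to1/a$ with center bounded by $(1+o(1))/a$. This is the same computation done in Proposition~\ref{prop:halfline-half-dipole-blowdown}. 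Hence the set lies in a fixed bounded region up to vanishing measure.

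On a fixed bounded region, $\eta(U_j)\to\eta(U)$ in $L^1$ by strong $L^1_{\rm loc}$ convergence and the Lipschitz bound on $\eta$. Combined with $F_j\weakstar F$ and $|F_j|\le1$, this gives $\int\eta(U)F\,dy=0$.

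\textbf{Step 3: conclude with the moving-ball lemma.} Apply Lemma~\ref{lem:bridge-moving-ball-area} with $v=U$, $Z=F$, $k=1$ and $n=2$, on a compact band $I$ containing $\operatorname{spt}\eta$. The hypotheses to check are the following.
\begin{itemize}
\item The clipped representatives lie in $BV_{\rm loc}$; this follows from the calibrated bounded truncations.
\item $\operatorname{div}F\in L^1_{\rm loc}$. Passing to the limit in $\operatorname{div}F_j=-r_jU_j-\ell r_j^2$, with the puncture cutoff handled as in Proposition~\ref{prop:frag-signed-compactness}, gives $\operatorname{div}F=-U$. Here $U\in L^1_{\rm loc}$, including the case $\ell=-\infty$ excluded by hypothesis.
\item The calibration on $\{U\in I\}$ holds by assumption.
\end{itemize}
The lemma then yields $0=\pi\int_I\eta(s)s^{-1}c'(s)\,ds$ for all such $\eta$. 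So $c'=0$ almost everywhere, and the locally Lipschitz center is constant on $(0,\infty)$. The bound $|c(s)|\le1/s$ as $s\to\infty$ forces this constant to be $0$.

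\textbf{Main obstacle.} The delicate point is the global tightness in Step 2 together with the exact flux identity in Step 1 under the shift. The plan is to invoke the uniform cluster bounds at the shifted heights $\ell+st_j$, and to note that the threshold correction is harmless because $(\ell+st_j)/t_j\to s$.
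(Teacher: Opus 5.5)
Your proposal is correct and follows essentially the same route as the paper: the exact shifted scaling identity $\int\eta(U_j)F_j\,dy=-t_j^3\int\eta(s)\mathcal F(\ell+t_js)\,ds$, global tightness from the uniform one-ball theorem at the shifted heights, passage to $\int\eta(U)F\,dy=0$, and then the moving-ball formula \eqref{eq:bridge-moving-ball-vector-formula} together with $|c(s)|\le s^{-1}$. There is one small slip, which does not affect the argument: the rescaled equation is $\operatorname{div}F_j=-r_ju(r_j\,\cdot)=-(U_j+r_j\ell)$, not $-r_jU_j-\ell r_j^2$ (your version would give $\operatorname{div}F=0$ in the limit rather than $-U$), and Lemma~\ref{lem:bridge-moving-ball-area} only needs $\operatorname{div}F\in L^1_{\rm loc}$ in any case.
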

\begin{proof}
The limiting field satisfies the divergence hypothesis required by
Lemma~\ref{lem:bridge-moving-ball-area}.  Indeed, the rescaled equation gives
\[
 \operatorname{div}F_j=-r_j u(r_j\,\cdot)=-(U_j+r_j\ell).
\]
Since $U_j\to U$ strongly in $L^1_{\rm loc}$ and $F_j\weakstar F$, passage
to distributions yields
\[
 \operatorname{div}F=-U\in L^1_{\rm loc}(\mathbb R^2).
\]
For every $\eta\in C_c^1((0,\infty))$, coarea gives the exact shifted
scaling identity
\begin{equation}\label{eq:shifted-translation-flux-scaling}
 \int_{\mathbb R^2}\eta(U_j)F_j\,dy
 =-t_j^3\int_0^\infty
   \eta(s)\,\mathcal F(\ell+t_js)\,ds.
\end{equation}
Indeed, on $\{U_j=s\}$ the corresponding physical level is
$u=\ell+t_js$, while
$|DU_j|=t_j^{-2}|Du|$, $F_j=-\sigma\nu$, and
$d\mathcal H_y^1=t_j\,d\mathcal H_x^1$.  Thus the factor in
\eqref{eq:shifted-translation-flux-scaling} is exactly $t_j^3$.

If $\operatorname{spt}\eta\subset[a,b]\Subset(0,\infty)$, the uniform
one-ball theorem at the neighboring physical levels
$\ell+t_js\sim t_js$ gives global tightness of $\{U_j>a\}$.  Hence
$\eta(U_j)\to\eta(U)$ strongly in global $L^1(\mathbb R^2)$.  Since
$\ell+t_js$ is above the zero-flux threshold for every
$s\in\operatorname{spt}\eta$ and all large $j$, the right-hand side of
\eqref{eq:shifted-translation-flux-scaling} is zero.  Passing to the limit
therefore gives
\[
 \int\eta(U)F\,dy=0.
\]
Applying the moving-ball formula
\eqref{eq:bridge-moving-ball-vector-formula} on an arbitrary compact
positive value band yields
\[
 0=\pi\int \eta(s)\frac{c'(s)}{s}\,ds.
\]
Thus $c'=0$ almost everywhere.  The nested-ball estimate makes $c$ locally
Lipschitz, so $c$ is constant; the bound $|c(s)|\le s^{-1}$ forces this
constant to be zero as $s\to\infty$ through good levels.
\end{proof}

\begin{theorem}\label{thm:planar-exceptional-exclusion}
Neither a finite half-line cluster nor the full extended-line cluster can
occur in dimension two.
\end{theorem}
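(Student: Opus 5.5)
The proof combines, in each exceptional branch, three results already established: the vanishing of the axis-supported first variation, its identification with the height derivative of the translation flux, and zero-flux centering of the blow-down. Each branch then ends with a contradiction between the centered limit and the tangent-ball geometry forced by the cluster hypothesis.

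\emph{Half-line branch.} Suppose $\mathcal C_0(u)=[\ell,+\infty]$ with $\ell$ finite. Proposition~\ref{prop:halfline-axis-force-vanishes} gives $\mu_{\mathrm{ax}}=0$ on $\mathcal O$. Hence $\mathbf g_{\rm ax}=0$ on every high interval $I\subset(T_0,\infty)$. Since $\mu_{\mathrm{ax}}$ is supported on the axis, the first-variation hypothesis of Proposition~\ref{prop:axis-flux-derivative} holds for test fields supported in $B_{r_0}\times I$, with zero axis term. That proposition yields $\mathcal F(q)=0$ for almost every large $q$.

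Next take any $r_j\downarrow0$. Proposition~\ref{prop:halfline-half-dipole-blowdown}, together with Proposition~\ref{prop:frag-signed-compactness} applied along the shifted sequence, gives a subsequence with the following properties:
\begin{itemize}
\item $U_j\to H_e$ strongly in $L^1_{\rm loc}$;
\item $F_j\weakstar F$;
\item the bounded positive truncations are calibrated by $F$;
\item $\{H_e>s\}=B_{1/s}(e/s)$.
\end{itemize}
These are exactly the hypotheses of Lemma~\ref{lem:shifted-zero-flux-centering}. With zero physical flux, that lemma forces $c\equiv0$, contradicting $c(s)=e/s$ with $|e|=1$. So the half-line branch cannot occur. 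Applying the same argument to $-u$ rules out $[-\infty,\ell]$.

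\emph{Full-cluster branch.} Suppose $\mathcal C_0(u)=[-\infty,+\infty]$. Proposition~\ref{prop:fullcluster-axis-force-vanishes} gives $\mu=0$ on the completed varifold. Proposition~\ref{prop:axis-flux-derivative} then gives $\mathcal F=0$ on a high half-line for the positive sign. Running the same argument for $-u$, whose completed varifold is the reflection of the first, gives zero flux for the negative sign as well.

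Apply Lemma~\ref{lem:shifted-zero-flux-centering} with $\ell=0$ to the positive part of a blow-down. The positive superlevels of $D_e$ are $B_{1/s}(e/s)$, which are off-center, contradicting $c\equiv0$. Equivalently, Proposition~\ref{prop:zero-charge-centers} would force $\varepsilon w_\infty=1/|y|$ for both signs, and both limits cannot hold for disjoint phases.

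\emph{Main obstacle.} The step needing most care is applying Lemma~\ref{lem:shifted-zero-flux-centering} along the full blow-down subsequence. One must check that the positive level families of the dipole limit really are one-ball families with $|c(s)|\le 1/s$, calibrated by the weak-star limit of $F_j$. Calibration comes from \eqref{eq:signed-positive-calibration}. The one-ball form comes from Corollary~\ref{cor:one-ball-signed-compactness} together with the identification $c(s)=e/s$ derived in the text before \eqref{eq:full-dipole-limit}.

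One must also confirm that the flux-zero threshold in Proposition~\ref{prop:axis-flux-derivative} is uniform. It is: $T_0$ depends only on the collar maximum, which is finite in both branches because $u$ is continuous away from the puncture. Finally, combined with Lemma~\ref{lem:cluster-interval}, the remaining possibilities are a bounded cluster interval or $\{\pm\infty\}$, which is what the theorem asserts.
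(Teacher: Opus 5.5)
Your proposal is correct and follows the paper's proof. In both branches you chain the vanishing of the axis-supported first variation, the identity $D_s\mathcal F=\mathbf g_{\rm ax,h}$, and zero-flux centering against the off-center superlevel balls $B_{1/s}(e/s)$. In the full-cluster branch, Lemma~\ref{lem:shifted-zero-flux-centering} with $\ell=0$ is the same argument as Proposition~\ref{prop:zero-charge-centers}, which the paper invokes directly; the zero flux for the negative sign is harmless but not needed there.
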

\begin{proof}
For the half-line branch, Proposition~\ref{prop:halfline-axis-force-vanishes} and
Proposition~\ref{prop:axis-flux-derivative} give zero high translation flux.
Apply Lemma~\ref{lem:shifted-zero-flux-centering} to the shifted natural
rescalings from \eqref{eq:halfline-shifted-natural-scaling}.  It forces
the center path to be constant and equal to zero, whereas
Proposition~\ref{prop:halfline-half-dipole-blowdown} gives $c(s)=e/s$.  This is a
contradiction.

For the full-cluster branch,
Propositions~\ref{prop:fullcluster-axis-force-vanishes} and~\ref{prop:axis-flux-derivative} give
zero positive translation flux on
the high tail.  Apply the unshifted centering theorem
Proposition~\ref{prop:zero-charge-centers} to any natural blow-down sequence.  It
forces the positive one-ball center path to be identically zero, whereas
the positive phase of \eqref{eq:full-dipole-limit} has
$c_+(s)=e/s$.  This is again a contradiction.  The negative finite
half-line is obtained by the symmetry $u\mapsto-u$.
\end{proof}

\begin{corollary}\label{cor:planar-genuine-one-tail}
If $n=2$ and the positive tail is unbounded while the negative tail is not,
then $u$ is bounded below near the puncture, the positive blow-down is
centered, and \eqref{eq:wholeblowdown-pressure} holds with $k=1$.  The
analogous statement holds for the negative tail.
\end{corollary}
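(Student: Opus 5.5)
The corollary is a case analysis on the cluster set; after that I will quote the zero-flux centering argument in the same form as in the half-line exclusion.

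\emph{Step 1: the cluster set.} By Lemma~\ref{lem:cluster-interval}, $\mathcal C_0(u)$ is a closed interval in $[-\infty,+\infty]$. The positive tail is unbounded, so $+\infty\in\mathcal C_0(u)$. The negative tail is bounded, so $-\infty\notin\mathcal C_0(u)$. Hence $\mathcal C_0(u)=[\ell,+\infty]$ with $\ell\in\mathbb R$, or $\mathcal C_0(u)=\{+\infty\}$. Theorem~\ref{thm:planar-exceptional-exclusion} rules out the finite half-line, so $\mathcal C_0(u)=\{+\infty\}$. By Lemma~\ref{lem:cluster-interval} again, $u\to+\infty$ uniformly. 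In particular $u$ is bounded below, indeed positive, on some punctured disk. This gives \eqref{eq:lower-bound-after-twotail} for $n=2$.

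\emph{Step 2: zero high flux.} On a small punctured disk $u\ge T_0$, and $\Sigma$ accumulates on the axis only at height $+\infty$. So every finite-height point of the axis lies outside $\overline\Sigma$. The area bound \eqref{common:ambient-area} and the cutoff argument of Proposition~\ref{prop:halfline-axis-first-variation} show that $\delta V$ minus its smooth prescribed-mean-curvature part is a Radon measure supported on the finite-height cluster set. That set is now empty, so $\mathbf g_{\rm ax}=0$ on every finite height interval. Proposition~\ref{prop:axis-flux-derivative} then gives $D_s\mathcal F=0$ on a high half-line. With $|\mathcal F|\le -V'\in L^1$, this forces $\mathcal F=0$ for almost every high level.

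A more direct route also works. Since $u>T$ near $0$, the band $\{t<u<T\}$ is empty near the puncture, so the stress cutoff in Lemma~\ref{lem:tail-translation} costs nothing even when $n=2$. This directly gives \eqref{eq:tail-translation-flux-independence}, and then $\mathcal F=0$ by the decay of $-V'(s_j)$.

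\emph{Step 3: centering.} Apply Proposition~\ref{prop:zero-charge-centers} with $\varepsilon=+1$ and $k=1$. By Corollary~\ref{cor:one-ball-signed-compactness}, each blow-down is a single ball. The zero flux from Step~2 makes the center path constant and hence zero, and uniqueness of the limit yields \eqref{eq:wholeblowdown-pressure} without extraction. The negative-tail statement follows by applying everything to $-u$.

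\emph{Main obstacle.} The only delicate point is Step~2. I have to check that, once the cluster set is $\{+\infty\}$, the axis defect genuinely has empty support at finite heights. I expect this to be easy: all finite-height axis points are at positive distance from the closure of the graph. The direct stress-cutoff route avoids the varifold machinery altogether, and I would present that one.
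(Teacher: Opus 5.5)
Your proposal is correct and follows the paper's proof. Lemma~\ref{lem:cluster-interval} and Theorem~\ref{thm:planar-exceptional-exclusion} force $\mathcal C_0(u)=\{+\infty\}$, so the finite-height slab $\{t<u<T\}$ misses the puncture; integrating $\operatorname{div}\mathsf S=0$ there makes $\mathcal F$ constant on the high tail, the bound $|\mathcal F|\le -V'\in L^1$ forces $\mathcal F=0$, and Proposition~\ref{prop:zero-charge-centers} finishes. The direct stress-cutoff route you chose to present is exactly the paper's, so the varifold alternative is not needed.
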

\begin{proof}
By Lemma~\ref{lem:cluster-interval} and Theorem~\ref{thm:planar-exceptional-exclusion}, an
unbounded positive tail that is not accompanied by an unbounded negative
tail has cluster set $\{+\infty\}$; thus the convergence is uniform.
Hence, for any sufficiently high regular levels $t<T$, the finite-height
slab $\{t<u<T\}$ avoids the puncture.  Integrating $\operatorname{div}\mathsf S=0$
on this slab gives
\[
 \int_{\partial^*E_t}\left(\frac{t^2}{2}-\alpha\right)\nu_t\,d\mathcal H^1
 =\int_{\partial^*E_T}\left(\frac{T^2}{2}-\alpha\right)\nu_T\,d\mathcal H^1.
\]
Since each $E_s$ is a bounded finite-perimeter set,
$\int_{\partial^*E_s}\nu_s\,d\mathcal H^1=0$.  Therefore the physical
translation flux $\mathcal F(s)=\int_{\partial^*E_s}\alpha\nu_s$ is
constant on the high tail.  On the common full-measure set of regular
levels,
\[
 |\mathcal F(s)|\le\int_{\partial^*E_s}\frac{\alpha}{\sigma_s}\,d\mathcal H^1
 =-V'(s),
 \qquad
 \int_T^\infty[-V'(s)]\,ds<\infty.
\]
The only constant vector satisfying this bound is zero.  Thus
$\mathcal F=0$ almost everywhere on the high tail, and
Proposition~\ref{prop:zero-charge-centers} applies.
\end{proof}
 \section{Pressure graphs and the smooth one-sheeted end}
\label{sec:pressure-complete}

We work in the positive one-tail branch and retain the localized sets
$E_s=E^+_{s,r_0}$, their volumes and radii, and the natural rescaling on
$sB_{r_0}$ from the preceding sections.  Every pressure graph below is
built only from points $0<|x|<r_0$; consequently every level-set conclusion
in this section is a conclusion inside $B_{r_0}$.  The preceding one-tail analysis (and, in dimension two, Theorem~\ref{thm:planar-exceptional-exclusion}) gives numbers $M>0$ and $r_1>0$ such that
\begin{equation}
u\geq-M\qquad\hbox{on }B_{r_1}\setminus\{0\}.
\end{equation}
The whole-family cluster theorem also gives
\begin{equation}
w_t(y)=t^{-1}u(y/t)\longrightarrow \frac{k}{|y|}
\quad\hbox{in }L^1_{\rm loc}(\R^n),
\qquad C_*=\omega_nk^n.
\end{equation}
The purpose of this section is to upgrade this measure-theoretic statement to a smooth,
one-sheeted foliation by all sufficiently high localized levels.  The only new issue relative to a filled
punctured end is the possible vertical-axis boundary of the pressure graph.

Put
\begin{equation}\label{eq:pressure-definitions}
z_t(y)=t^2(w_t(y)-1),\qquad
a_t=(t^{-4}+|Dw_t|^2)^{1/2},\qquad
N_t=\left(-\frac{Dw_t}{a_t},\frac{t^{-2}}{a_t}\right).
\end{equation}
The vector field $N_t$ is extended constantly in the last variable.  It has unit length and,
by the rescaled equation,
\begin{equation}\label{eq:extended-normal-divergence}
\operatorname{div}_{y,z}N_t=-\operatorname{div}_y Z_t=w_t,
\qquad Z_t=\frac{Dw_t}{a_t}.
\end{equation}

\begin{lemma}\label{lem:axis-mass}
For every $L>0$ there are $t_L$ and $C_L$, independent of $t\geq t_L$ and
$0<\eta\leq1$, such that
\begin{equation}\label{eq:uniform-axis-mass}
\mathcal H^n\bigl(\widehat\Sigma_t^L\cap
(B_\eta\times(-L,L))\bigr)
\leq C_L(\eta^{n-1}+\eta^n),
\end{equation}
where
\begin{equation}\label{eq:pressure-slab-definition}
\widehat\Sigma_t^L=
\{(tx,t(u(x)-t)):0<|x|<r_0, |u(x)-t|<L/t\}.
\end{equation}
\end{lemma}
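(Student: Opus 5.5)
The plan is to undo the pressure rescaling and reduce \eqref{eq:uniform-axis-mass} to the shrinking-height estimate \eqref{common:shrinking-band} of Corollary~\ref{cor:L1-morrey}. First I check that this estimate has a constant independent of the location of the band. The map $(x,z)\mapsto(tx,t(z-t))$ is a homothety of ratio $t$ in $\mathbb R^{n+1}$, composed with a vertical translation, so it multiplies $\mathcal H^n$ by $t^n$. A point $(tx,t(u(x)-t))$ lies in $B_\eta\times(-L,L)$ exactly when $|x|<\eta/t$ and $t-L/t<u(x)<t+L/t$. Therefore
\[
 \mathcal H^n\bigl(\widehat\Sigma_t^L\cap(B_\eta\times(-L,L))\bigr)
 =t^n\int_{B_{\eta/t}\cap\{t-L/t<u<t+L/t\}}W\,dx .
\]
The restriction $0<|x|<r_0$ is automatic once $\eta/t<r_0$. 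This holds for all $\eta\le1$ as soon as $t\ge t_L\ge 1/r_0$.

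The key point is that the constant in \eqref{common:shrinking-band} does not depend on $a,b$ themselves. Here the band $[a,b]=[t-L/t,t+L/t]$ drifts to infinity, so this independence is essential. I will therefore revisit the proof of \eqref{common:shrinking-band}. One tests $\operatorname{div}(Du/W)=-u$ with $\eta_r\chi_\delta h(u)$, where $h$ is nondecreasing, $h'=\mathbf 1_{[a,b]}$ and $0\le h\le b-a$. This gives
\[
 \int\eta_r\chi_\delta h'(u)\bigl(W-W^{-1}\bigr)
 =-\int h(u)\tfrac{Du}{W}\cdot D(\eta_r\chi_\delta)+\int\eta_r\chi_\delta h(u)u .
\]
Each term is then bounded with only universal or $u$-global constants:
\begin{itemize}
\item the $W^{-1}$ term is at most $|B_{2r}|\le Cr^n$;
\item the spatial cutoff term is at most $(b-a)\,C r^{-1}r^n$;
\item the puncture cutoff term is at most $(b-a)C\delta^{n-1}$, which vanishes as $\delta\downarrow0$;
\item the source term is at most $(b-a)\int_{B_{2r}}|u|\le C(b-a)r^{n-1}$, by the Morrey bound of Corollary~\ref{cor:L1-morrey}.
\end{itemize}
The Morrey constant depends only on $u$ and the tail law, not on the band. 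Hence
\[
 \int_{B_r\cap\{a<u<b\}}W\le C\bigl(r^{n-1}(b-a)+r^n\bigr)
\]
for all $r\le r_*$, with $C$ and $r_*$ independent of $a<b$.

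Now I apply this with $r=\eta/t$ and $b-a=2L/t$. For $t\ge t_L:=\max\{1/r_*,1/r_0\}$ and all $0<\eta\le1$, this gives
\[
 t^n\int_{B_{\eta/t}\cap\{|u-t|<L/t\}}W\,dx
 \le C t^n\Bigl(\bigl(\tfrac{\eta}{t}\bigr)^{n-1}\tfrac{2L}{t}+\bigl(\tfrac{\eta}{t}\bigr)^n\Bigr)
 =C\bigl(2L\eta^{n-1}+\eta^n\bigr),
\]
which is \eqref{eq:uniform-axis-mass} with $C_L=C\max\{2L,1\}$.

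The main obstacle is exactly this band-independence of the constant; the scaling itself is routine. If the recorded form of Corollary~\ref{cor:L1-morrey} were read as having $C=C(a,b)$, the rescaling would fail. So the step I will carry out carefully is the re-derivation above, tracking that the only inputs are $|\mathcal A(Du)|\le1$, $0\le h\le b-a$, and the global Morrey bound $\int_{B_r}|u|\le Cr^{n-1}$. All three are uniform in the height window. The negative part of $u$ causes no difficulty, since the source term is controlled through $|u|$ rather than through its sign.
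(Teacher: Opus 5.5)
Your proof is correct, but it takes a different route from the paper. You undo the pressure scaling and reduce the lemma to the shrinking-height estimate \eqref{common:shrinking-band}. You also check, correctly, that its constant does not depend on where the height window sits: the only inputs are $|Du|/W\le1$, $0\le h\le b-a$, and the Morrey bound $\int_{B_r}|u|\le Cr^{n-1}$. With $r=\eta/t$ and $b-a=2L/t$ this gives exactly $C(2L\eta^{n-1}+\eta^n)$.

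The paper instead works directly in the pressure variables.
\begin{itemize}
\item It applies the divergence theorem to the unit field $N_t$, whose divergence is $w_t$. The region is the part of $(B_\eta\setminus\overline B_\delta)\times\mathbb R$ between the clipped graph $\bar z_t$ and the top face $z=L$.
\item The calibration $N_t\cdot\nu=-1$ on the graph turns the graph area into boundary fluxes. These are bounded by $|B_\eta|$ and $2L\bigl(\mathcal H^{n-1}(\partial B_\eta)+\mathcal H^{n-1}(\partial B_\delta)\bigr)$, and the clipped floor enters with a favorable sign.
\item The source term is controlled only through its negative part, $(w_t)_-\le M/t$, which comes from the one-sided bound \eqref{eq:lower-bound-after-twotail}.
\end{itemize}

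What each route buys:
\begin{itemize}
\item Your route is shorter and never uses the lower bound $u\ge-M$. In the planar case it therefore does not depend on the exclusion of the exceptional clusters. It does need the global $L^1$ Morrey bound, and hence the critical tail law for both signs.
\item The paper's route avoids the Morrey bound and gives a source contribution with an extra factor $t^{-1}$. It also sets up the calibrated-subgraph structure reused immediately afterward, both to close the pressure current across the axis and for the strict $BV$ convergence of the pressure subgraphs.
\end{itemize}
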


\begin{proof}
Fix $0<\delta<\eta\leq1$ and let
\[
D_{\delta,\eta}=B_\eta\setminus\overline{B_\delta},
\qquad \bar z_t(y)=\max\{-L,\min\{z_t(y),L\}\}.
\]
For large $t$, the points $y/t$ with $y\in B_1$ lie in $B_{r_1}$, so
\eqref{eq:lower-bound-after-twotail} gives $(w_t)_-\leq M/t$ there.  Apply the divergence
theorem to \eqref{eq:extended-normal-divergence} on the Lipschitz region
\[
\mathcal R_{\delta,\eta}
=\{(y,z):y\in D_{\delta,\eta},\ \bar z_t(y)<z<L\}.
\]
On the part of the lower boundary on which $|z_t|<L$, the outward normal is
$(Dz_t,-1)/(1+|Dz_t|^2)^{1/2}$.  Since $Dz_t=t^2Dw_t$,
\[
 \sqrt{1+|Dz_t|^2}=t^2a_t,
 \qquad
 N_t\cdot\frac{(Dz_t,-1)}{\sqrt{1+|Dz_t|^2}}
 =\left(-\frac{Dw_t}{a_t},\frac{t^{-2}}{a_t}\right)
 \cdot\left(\frac{Dw_t}{a_t},-\frac{t^{-2}}{a_t}\right),
\]
and hence
\begin{equation}\label{eq:bottom-calibration}
N_t\cdot\frac{(Dz_t,-1)}{(1+|Dz_t|^2)^{1/2}}=-1.
\end{equation}
Let
\[
 \Gamma_{\delta,\eta}
 :=\widehat\Sigma_t^L\cap
 ((B_\eta\setminus\overline B_\delta)\times(-L,L)).
\]
Writing \(F_{\rm top},F_{\rm out},F_{\rm in}\), and \(F_{\rm clip}\)
for the remaining boundary fluxes, the divergence theorem reads
\[
 \int_{\mathcal R_{\delta,\eta}}w_t
 =F_{\rm top}+F_{\rm out}+F_{\rm in}
   -\mathcal H^n(\Gamma_{\delta,\eta})+F_{\rm clip}.
\]
On the clipped floor \(\bar z_t=-L\),
\[
 F_{\rm clip}
 =-\int_{D_{\delta,\eta}\cap\{z_t\leq-L\}}
   \frac{t^{-2}}{a_t}\,dy\leq0,
\]
while
\[
 F_{\rm top}\leq|D_{\delta,\eta}|,
 \qquad
 |F_{\rm out}|+|F_{\rm in}|
 \leq2L\{\mathcal H^{n-1}(\partial B_\eta)
          +\mathcal H^{n-1}(\partial B_\delta)\}.
\]
Consequently
\[
\begin{aligned}
\mathcal H^n(\Gamma_{\delta,\eta})
&\leq |D_{\delta,\eta}|
 +2L\{\mathcal H^{n-1}(\partial B_\eta)
          +\mathcal H^{n-1}(\partial B_\delta)\}
 -\int_{\mathcal R_{\delta,\eta}}w_t.
\end{aligned}
\]
The source term is controlled by
\[
-\int_{\mathcal R_{\delta,\eta}}w_t\,dy\,dz
=-\int_{D_{\delta,\eta}}w_t(L-\bar z_t)\,dy
\leq2L\int_{D_{\delta,\eta}}(w_t)_-\,dy
\leq C_Lt^{-1}\eta^n.
\]
Substitution gives
\begin{equation}\label{eq:annular-axis-mass}
\mathcal H^n\bigl(\widehat\Sigma_t^L\cap
((B_\eta\setminus B_\delta)\times(-L,L))\bigr)
\leq C_L(\eta^{n-1}+\delta^{n-1}+\eta^n).
\end{equation}
Letting $\delta\downarrow0$ and using monotone convergence proves
\eqref{eq:uniform-axis-mass}.  Notice that no regular-value assertion has been used.
\end{proof}

The next lemma closes the pressure current across the missing vertical axis;
this must be done before any slicing argument.

\begin{lemma}
\label{lem:axis-current-closed}
Fix $L>0$ and take $t$ sufficiently large.  In the open slab
\[
 \mathcal O_L:=\mathbb R^n\times(-L,L),
\]
the pressure graph has a canonical closure as a locally integral
$n$-current $\mathcal T_t^L$ with
\begin{equation}\label{eq:axis-current-zero-boundary}
 \partial\mathcal T_t^L=0\qquad\text{in }\mathcal O_L.
\end{equation}
This conclusion holds for every $n\ge2$; no $r^{n-2}$ cutoff is used.
\end{lemma}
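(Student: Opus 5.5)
We will realize $\mathcal T_t^L$ as the boundary of the pressure subgraph rather than as a graph current, since the boundary of a boundary vanishes automatically. Put
\[
 \mathcal E_t^L:=\{(y,z)\in\mathcal O_L:\ 0<|y|<tr_0,\ z<z_t(y)\},
\]
and note that the vertical axis $\{0\}\times(-L,L)$ has zero $(n+1)$-dimensional measure. The set $\mathcal E_t^L$ is therefore well defined as a measurable subset of $\mathcal O_L$, and $\mathbf 1_{\mathcal E_t^L}$ does not depend on how it is defined on the axis. The plan has three steps: show that $\mathcal E_t^L$ has locally finite perimeter in $\mathcal O_L$, set $\mathcal T_t^L:=\partial[\![\mathcal E_t^L]\!]$ restricted to $\mathcal O_L$, and then identify this current with the oriented pressure graph $\widehat\Sigma_t^L$. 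Away from the axis and the outer boundary, where $z_t$ is smooth, this identification is classical. Once it holds, \eqref{eq:axis-current-zero-boundary} follows from $\partial\partial=0$, and integrality follows from De Giorgi's structure theorem.

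To prove the perimeter bound, we excise the axis. Let $\mathcal E_\delta:=\mathcal E_t^L\cap\{|y|>\delta\}$. The boundary of $\mathcal E_\delta$ inside a compact set $K\Subset\mathcal O_L$ has two parts. The first is the graph part, whose mass near the axis is bounded uniformly in $\delta$ by Lemma~\ref{lem:axis-mass}, namely $\le C_L(\eta^{n-1}+\eta^n)$ on $B_\eta\times(-L,L)$. Away from the axis the graph part is a compact piece of a smooth graph with finite area. The second part is the artificial cylinder $\partial B_\delta\times(-L,L)$, whose mass is $O(L\delta^{n-1})$. In addition, $|\mathcal E_\delta\triangle\mathcal E_t^L|\le 2L|B_\delta|\to0$. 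Lower semicontinuity of perimeter then gives $P(\mathcal E_t^L;K)<\infty$. Moreover the axis carries no perimeter: letting $\delta\downarrow0$, the cylinder contribution tends to zero for every $n\ge2$, because $\delta^{n-1}\to0$ and not merely $\delta^{n-2}$. This is why no $r^{n-2}$ cutoff enters.

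The next point is to identify the reduced boundary. Off the axis, $\partial^*\mathcal E_t^L$ coincides with the smooth graph of $z_t$, and the outer normal is the subgraph normal $N_t$ up to sign. The axis has $\mathcal H^n$-measure zero, so it cannot support any portion of $|D\mathbf 1_{\mathcal E_t^L}|$. Hence $|D\mathbf 1_{\mathcal E_t^L}|=\mathcal H^n\llcorner\widehat\Sigma_t^L$ inside $\mathcal O_L$, and $\mathcal T_t^L$ is integration over the graph with multiplicity one and the subgraph orientation. This identification is what makes the closure canonical. Local integrality comes from De Giorgi's theorem together with the mass bound.

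We expect the main obstacle to be two finiteness issues, one at the outer edge and one at the axis. At the outer edge, the set must be localized to $|y|<tr_0$. We will restrict to compact sets inside $\mathcal O_L$ near the relevant region. We will also use that for large $t$ the slab points satisfy $u\approx t$, so by the uniform divergence in Corollary~\ref{cor:planar-genuine-one-tail} and its analogue for $n\ge3$ they lie in $|x|<r_0/2$. This keeps the outer boundary $\{|y|=tr_0\}$ away from the slab, so it creates no boundary. At the axis, the concern is that the excision boundaries converge as currents to zero; this follows from the $\delta^{n-1}$ mass bound together with the uniform axis-mass estimate \eqref{eq:uniform-axis-mass}.
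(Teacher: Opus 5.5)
Your argument is correct, but it closes the current at the set level rather than the function level, which is a different route from the paper's proof of this lemma.

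\textbf{The paper's route.} The paper clips the pressure height to $\bar z_t=\max\{-L,\min\{z_t,L\}\}$. The fixed-$t$ finite-band estimate gives $\int_{\{|z_t|<L\}}|Dz_t|\,dy<\infty$. An inner-sphere integration by parts, with cost $O(L\eta^{n-1})$, then shows that $\bar z_t$ extends across $y=0$ as a $W^{1,1}_{\rm loc}$ function, which is further extended by $-L$ outside $\Omega_t$. Locally finite perimeter and $\partial^*\mathcal E_t^L\cap\mathcal O_L=\widehat\Sigma_t^L$ are read off from the subgraph formula for Sobolev functions. The key point is that a $W^{1,1}$ function has no jump part, so nothing can sit on the axis.

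\textbf{Your route.} You excise the axis from the set instead. The perimeter bound comes from lower semicontinuity, using the graph-area bound near the axis plus the $O(L\delta^{n-1})$ cylinder cost. The axis is then killed because $|D\mathbf 1_{\mathcal E_t^L}|\ll\mathcal H^n$, or directly from $P(\mathcal E_t^L;B_\eta\times(-L,L))\le C_L(\eta^{n-1}+\eta^n)$. This is essentially the argument the paper itself uses for the completed subgraph current in the planar half-line branch of Section~\ref{sec:planar-defects}.

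\textbf{Comparison.} Your version is more geometric and makes the dimension count $\delta^{n-1}$ versus $\delta^{n-2}$ transparent. The paper's Sobolev version produces a globally defined $\bar z_t\in W^{1,1}_{\rm loc}(\mathbb R^n)$ with $\bar z_t+L$ compactly supported. That makes the exact identity $\operatorname{Per}(\mathcal E_t^L;\mathcal O_L)=\mathcal H^n(\widehat\Sigma_t^L)$, used in the next proposition, immediate. You also do not need the $t$-uniformity of Lemma~\ref{lem:axis-mass} here. Finiteness for each fixed $t$, which Lemma~\ref{lem:finite-band-complete} already gives, is enough.

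\textbf{One justification must be corrected: the outer edge.}
\begin{itemize}
\item Uniform divergence $u\to+\infty$ as $x\to0$ is the wrong implication for keeping the band away from $\partial\Omega_t$. It says points near the origin have large $u$, not that points with large $u$ lie near the origin.
\item For $n\ge3$, uniform divergence is derived only after the pressure analysis, and that analysis depends on this very lemma. Citing it here would be circular.
\item What you actually need is elementary. The function $u$ is continuous, hence bounded, on the compact collar $\overline B_{r_0}\setminus B_{r_0/2}$ (equivalently, $\rho(t)\to0$ from Section~\ref{sec:finite-band-complete}). Once $t-L/t$ exceeds that bound, $z_t<-L$ near $\partial\Omega_t$, so the set is empty there and creates no boundary.
\end{itemize}
This is the only place where ``$t$ sufficiently large'' enters.
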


\begin{proof}
Write $\Omega_t=tB_{r_0}$ and recall
\[
 z_t(y)=t^2(w_t(y)-1)=t\bigl(u(y/t)-t\bigr)
 \qquad (y\in\Omega_t\setminus\{0\}).
\]
Let
\[
 \bar z_t(y):=\max\{-L,\min\{z_t(y),L\}\}.
\]
For fixed $t$ the finite-height estimate applied to the physical band
$t-L/t<u<t+L/t$ gives
\[
 \int_{\Omega_t\cap\{|z_t|<L\}}|D z_t|\,dy<\infty,
\]
because $D_yz_t(y)=Du(y/t)$.  Since $\bar z_t$ is bounded, an inner
sphere integration by parts has boundary cost $O(L\eta^{n-1})$; hence the
bounded truncation extends across $y=0$ as a function in
$W^{1,1}_{\rm loc}(\Omega_t)$.  Moreover, for large $t$ the function $u$
is bounded on a collar of $\partial B_{r_0}$ while $t-L/t\to\infty$.
Consequently $z_t<-L$ on a collar of $\partial\Omega_t$, so that
$\bar z_t=-L$ there.  Extending $\bar z_t$ by the constant $-L$ outside
$\Omega_t$ therefore produces a globally defined function
\[
 \bar z_t\in W^{1,1}_{\rm loc}(\mathbb R^n),
 \qquad \bar z_t+L\ \hbox{compactly supported}.
\]

Define the relative subgraph in the open slab by
\begin{equation}\label{eq:pressure-relative-subgraph}
 \mathcal E_t^L
 :=\{(y,z)\in\mathcal O_L:-L<z<\bar z_t(y)\}.
\end{equation}
The standard subgraph formula for a $W^{1,1}$ function shows that
$\mathcal E_t^L$ has locally finite perimeter in $\mathcal O_L$.  Since a
$W^{1,1}$ function has no jump part, its relative reduced boundary in the
open slab is precisely the graph of $\bar z_t$ at those points where
$-L<\bar z_t<L$.  The regions where $\bar z_t=L$ meet the top face
$z=L$, which lies outside the relative boundary of $\mathcal O_L$, while
$\bar z_t=-L$ contributes no interior boundary.  Thus, modulo an
$\mathcal H^n$-null set,
\[
 \partial^*\mathcal E_t^L\cap\mathcal O_L
 =\widehat\Sigma_t^L.
\]
On this graph the measure-theoretic outward unit normal of the subgraph is
\[
 \frac{(-D z_t,1)}{\sqrt{1+|D z_t|^2}}
 =\left(-\frac{Dw_t}{a_t},\frac{t^{-2}}{a_t}\right)=N_t,
\]
so the induced boundary orientation is exactly the upward pressure-graph
orientation used below.

Now define the current intrinsically by
\begin{equation}\label{eq:pressure-current-as-boundary}
 \mathcal T_t^L:=\partial[[\mathcal E_t^L]]\qquad\hbox{in }\mathcal O_L.
\end{equation}
Then $\mathcal T_t^L\in\mathbf I_{n,\rm loc}(\mathcal O_L)$ and, by the boundary of
boundary identity for relative currents,
\[
 \partial\mathcal T_t^L=\partial^2[[\mathcal E_t^L]]=0
 \qquad\hbox{in }\mathcal O_L.
\]
Thus the missing vertical axis produces no current boundary in any
ambient dimension $n\ge2$.
\end{proof}

For fixed \(L>0\), let
\begin{equation}\label{eq:pressure-varifold-notation}
 \mathcal V_t^L:=\mathbf v(\widehat\Sigma_t^L,1),\qquad
 \mu_t^L:=\|\mathcal V_t^L\|
 =\mathcal H^n\llcorner\widehat\Sigma_t^L,
 \qquad \widehat N_t:=N_t|_{\widehat\Sigma_t^L}.
\end{equation}
Thus \(\mathcal V_t^L\) is the multiplicity-one integral varifold
associated with the pressure graph and \(\mu_t^L\) is its weight measure;
the closure across the axis changes neither measure.  When \(L\) is fixed
we suppress it from the notation.  For a sequence \(t_j\to\infty\) we write
\(\mathcal V_j=\mathcal V_{t_j}\), \(\mu_j=\mu_{t_j}\), and
\(\widehat N_j=\widehat N_{t_j}\).

For $-L<z<L$ define
\begin{equation}\label{eq:pressure-slices}
s_t(z)=t+\frac zt,
\qquad \mathcal S_t(z)=tE_{s_t(z)}.
\end{equation}
To justify this moving threshold, observe first that
\[
 \mathcal S_t(z)=\frac{t}{s_t(z)}\bigl(s_t(z)E_{s_t(z)}\bigr),
 \qquad \frac{t}{s_t(z)}\longrightarrow1.
\]
By \eqref{eq:wholeblowdown-pressure}, $w_s\to k/|y|$ locally in $L^1$.
The value $1$ is a continuity level of the limit because
$\{k/|y|=1\}=\partial B_k$ has zero Lebesgue measure.  Therefore
\[
 \mathbf1_{sE_s}
 =\mathbf1_{\{y\in sB_{r_0}\setminus\{0\}:w_s(y)>1\}}
 \longrightarrow\mathbf1_{B_k}
 \quad\hbox{locally in }L^1(\mathbb R^n).
\]
The critical volume law also gives
\[
 |sE_s|=s^nV(s)\longrightarrow\omega_nk^n=|B_k|.
\]
Local convergence plus convergence of total volumes is global symmetric-difference
convergence, because for every \(R>k\),
\[
 |sE_s\mathbin\triangle B_k|
 \leq |(sE_s\cap B_R)\mathbin\triangle B_k|
      +\bigl||sE_s|-|sE_s\cap B_R|\bigr|\longrightarrow0.
\]
Writing \(\lambda_t=t/s_t(z)\to1\), dilation gives
\[
 |\mathcal S_t(z)\mathbin\triangle B_k|
 \leq \lambda_t^n|s_t(z)E_{s_t(z)}\mathbin\triangle B_k|
      +|\lambda_t B_k\mathbin\triangle B_k|\longrightarrow0,
\]
which is
\begin{equation}\label{eq:all-slice-L1}
\mathbf1_{\mathcal S_t(z)}\longrightarrow\mathbf1_{B_k}
\quad\hbox{in }L^1(\R^n)
\end{equation}
for every fixed \(z\), completing the moving-threshold argument.

\begin{proposition}\label{prop:pressure-spacetime-strict-bv}
With $\mathcal E_t^L$ defined by \eqref{eq:pressure-relative-subgraph},
\begin{equation}\label{eq:pressure-spacetime-L1}
 \mathbf1_{\mathcal E_t^L}
 \longrightarrow\mathbf1_{B_k\times(-L,L)}
 \qquad\hbox{in }L^1(\mathcal O_L),
\end{equation}
and
\begin{equation}\label{eq:pressure-spacetime-perimeter}
 \operatorname{Per}(\mathcal E_t^L;\mathcal O_L)
 \longrightarrow2L\,\mathcal H^{n-1}(\partial B_k).
\end{equation}
Hence the convergence is strict in $BV(\mathcal O_L)$ and
\begin{equation}\label{eq:pressure-subgraph-current-limit}
 \partial[[\mathcal E_t^L]]
 \rightharpoonup
 \partial[[B_k\times(-L,L)]]
 \qquad\hbox{as currents in }\mathcal O_L,
\end{equation}
with the outward horizontal orientation on the limiting cylinder.
\end{proposition}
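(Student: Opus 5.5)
The plan is to verify the $L^1$ convergence by slicing in the height variable, and then to obtain the perimeter limit from a matching upper bound (the graph area identity) and lower bound (lower semicontinuity).

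\textbf{The $L^1$ limit.} By construction, for almost every $z\in(-L,L)$ the horizontal slice of $\mathcal E_t^L$ at height $z$ is $\{y:\bar z_t(y)>z\}=\{z_t>z\}$. Off the puncture and inside $\Omega_t$ this set is exactly $\mathcal S_t(z)=tE_{s_t(z)}$, and outside $\Omega_t$ it is empty because $\bar z_t=-L$ there. Fubini then gives
\[
\|\mathbf1_{\mathcal E_t^L}-\mathbf1_{B_k\times(-L,L)}\|_{L^1(\mathcal O_L)}=\int_{-L}^L|\mathcal S_t(z)\mathbin\triangle B_k|\,dz .
\]
The integrand tends to zero for every $z$ by \eqref{eq:all-slice-L1}. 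It is bounded by $|\mathcal S_t(z)|+|B_k|\le t^nV(t-L/t)+|B_k|$, which is uniformly bounded by Proposition~\ref{prop:critical-volume-complete}. Dominated convergence yields \eqref{eq:pressure-spacetime-L1}.

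\textbf{Perimeter upper bound.} Since $\partial^*\mathcal E_t^L\cap\mathcal O_L=\widehat\Sigma_t^L$ up to null sets, the perimeter equals the pressure-graph area. Rescaling the physical graph area over the band $t-L/t<u<t+L/t$ multiplies it by $t^n$. Applying \eqref{eq:finite-graph-band-identity},
\[
\operatorname{Per}(\mathcal E_t^L;\mathcal O_L)=t^n\int_{t-L/t}^{t+L/t}\Bigl(J(s)+\tfrac1n\mathcal R_{\rm def}(s)\Bigr)ds .
\]
By \eqref{eq:narrow-R-vanishing}, the $\mathcal R_{\rm def}$ contribution tends to zero. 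Proposition~\ref{prop:critical-volume-complete} together with Proposition~\ref{prop:two-ball-impossible} gives $C_*=\omega_nk^n$ and $J(s)=\frac{n\omega_nk^n}{k}s^{-k}(1+o(1))=\mathcal H^{n-1}(\partial B_k)\,s^{1-n}(1+o(1))$. Since $s\in[t-L/t,t+L/t]$ and $t^n\cdot(2L/t)\cdot t^{1-n}=2L$, the integral tends to $2L\,\mathcal H^{n-1}(\partial B_k)$. This is \eqref{eq:pressure-spacetime-perimeter} as an upper limit.

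\textbf{Perimeter lower bound.} Lower semicontinuity of perimeter under the $L^1_{\rm loc}$ convergence just proved gives
\[
\liminf_t\operatorname{Per}(\mathcal E_t^L;\mathcal O_L)\ge\operatorname{Per}(B_k\times(-L,L);\mathcal O_L)=2L\,\mathcal H^{n-1}(\partial B_k),
\]
where the right-hand side counts only the lateral face, since the top and bottom faces lie outside the open slab. The two bounds together give convergence of the perimeters, hence strict $BV$ convergence.

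\textbf{Current limit.} Strict convergence implies weak-$*$ convergence of the vector measures $D\mathbf1_{\mathcal E_t^L}$, and $\partial[[\mathcal E_t^L]]$ is their Hodge dual. This gives \eqref{eq:pressure-subgraph-current-limit}, with the limiting cylinder carrying the outward normal of the solid cylinder, which is horizontal.

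\textbf{Main obstacle.} The delicate point is making sure that nothing charged at the axis or hidden in the clipped region spoils the identification of the perimeter with the graph area. Lemma~\ref{lem:axis-current-closed} already shows that the axis is $\mathcal H^n$-null and that the clipping creates no relative boundary, so the identification should hold. The remaining analytic inputs, namely the exact band identity and the vanishing of the $\mathcal R_{\rm def}$ defect in narrow bands, are available from Section~\ref{sec:finite-band-complete}.
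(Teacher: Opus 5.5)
Your proof is correct and follows the paper's argument. You use slice-wise Fubini with dominated convergence for the $L^1$ limit, the exact band identity \eqref{eq:finite-graph-band-identity} together with \eqref{eq:narrow-R-vanishing} and $C_*=\omega_nk^n$ for the perimeter, and $L^1$ (hence distributional) convergence of $D\mathbf1_{\mathcal E_t^L}$ for the currents. The band identity is an equality, so your ``upper bound'' computation already gives the full limit \eqref{eq:pressure-spacetime-perimeter}; the lower-semicontinuity step is harmless but unnecessary.
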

\begin{proof}
For a.e.\ fixed $z\in(-L,L)$ the horizontal section of
$\mathcal E_t^L$ is $\mathcal S_t(z)=tE_{t+z/t}$.  The moving-threshold convergence
\eqref{eq:all-slice-L1} gives
$\mathbf1_{\mathcal S_t(z)}\to\mathbf1_{B_k}$ in global $L^1(\mathbb R^n)$.
For $|z|\le L$, the critical volume law at the neighboring level
$t+z/t\sim t$ gives a uniform bound $|\mathcal S_t(z)|\le C_L$.  Fubini and dominated
convergence therefore yield \eqref{eq:pressure-spacetime-L1}.

The relative reduced boundary of $\mathcal E_t^L$ in $\mathcal O_L$ is
exactly the pressure graph by Lemma~\ref{lem:axis-current-closed}; hence
\[
 \operatorname{Per}(\mathcal E_t^L;\mathcal O_L)
 =\mathcal H^n(\widehat\Sigma_t^L).
\]
The finite-band coarea identity \eqref{eq:finite-graph-band-identity} gives the exact
formula
\[
 \mathcal H^n(\widehat\Sigma_t^L)
 =t^n\int_{t-L/t}^{t+L/t}
    \left(J(s)+\frac1n\mathcal R_{\rm def}(s)\right)\,ds.
\]
On this interval $s/t\to1$ uniformly and
$s^{n-1}J(s)\to P_k:=\mathcal H^{n-1}(\partial B_k)$, so the $J$-term
tends to $2LP_k$.  The defect term tends to zero by
\eqref{eq:narrow-R-vanishing}.  Hence
\[
 \mathcal H^n(\widehat\Sigma_t^L)\longrightarrow2LP_k,
\]
which is exactly the perimeter of the relative cylinder
$B_k\times(-L,L)$ in the open slab.  Together with
\eqref{eq:pressure-spacetime-L1}, this proves strict $BV$ convergence.

For the current convergence no separate Reshetnyak argument is needed:
$L^1$ convergence of the characteristic functions gives
$[[\mathcal E_t^L]]\rightharpoonup[[B_k\times(-L,L)]]$, and continuity of
the boundary operator gives \eqref{eq:pressure-subgraph-current-limit}.
The subgraph outward normal along the top graph is $N_t$, so the limiting
orientation is the outward horizontal orientation of the cylinder.
\end{proof}

\begin{proposition}\label{prop:slab-current-varifold}
For every $L>0$,
\begin{equation}\label{eq:slab-varifold-limit}
\widehat\Sigma_t^L\longrightarrow
\partial B_k\times(-L,L)
\end{equation}
as \(t\to\infty\), as integral varifolds with multiplicity one and as
oriented currents with outward horizontal orientation.  For every
\(0<L'<L\),
\[
d_H\!\left(
 \operatorname{spt}\widehat\Sigma_t^L\cap(\mathbb R^n\times[-L',L']),
 \partial B_k\times[-L',L']
\right)\longrightarrow0.
\]
\end{proposition}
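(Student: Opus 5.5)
The plan is to deduce varifold convergence from the strict $BV$ convergence of Proposition~\ref{prop:pressure-spacetime-strict-bv} via Reshetnyak continuity, and then to obtain Hausdorff convergence of supports from a monotonicity lower bound together with the uniform axis mass bound of Lemma~\ref{lem:axis-mass}.

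\emph{Currents and varifolds.} The current statement is already \eqref{eq:pressure-subgraph-current-limit}, since Lemma~\ref{lem:axis-current-closed} identifies $\partial[[\mathcal E_t^L]]$ with the pressure graph $\widehat\Sigma_t^L$ carrying the orientation $N_t$. For the varifolds, strict $BV$ convergence of $\mathbf 1_{\mathcal E_t^L}$ in $\mathcal O_L$ and Reshetnyak continuity give
\[
 \int\Phi(p,\widehat N_t)\,d\mu_t^L\longrightarrow
 \int_{\partial B_k\times(-L,L)}\Phi(p,\nu_{\rm cyl})\,d\mathcal H^n
\]
for every $\Phi\in C_c(\mathcal O_L\times\mathbb S^n)$. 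Restricting to test functions that are even in the normal shows that $\mathcal V_t^L\to\mathbf v(\partial B_k\times(-L,L),1)$ as varifolds, and this identifies the limit weight with multiplicity exactly one, with no hidden sheets. The equal total masses \eqref{eq:pressure-spacetime-perimeter} exclude any mass escaping to the slab's top or bottom faces or concentrating on the axis. The axis is in any case controlled by \eqref{eq:uniform-axis-mass}, which gives $\limsup_t\mu_t^L(B_\eta\times(-L,L))\le C_L\eta^{n-1}$.

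\emph{Hausdorff convergence.} One direction is immediate. Every point of $\partial B_k\times[-L',L']$ lies in the support of the limit measure, so weak convergence of $\mu_t^L$ forces points of $\operatorname{spt}\widehat\Sigma_t^L$ to lie nearby for $t$ large.

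The converse direction is the main obstacle. I would argue by contradiction and suppose there are points $p_t\in\widehat\Sigma_t^L$ with $|p_t|_z\le L'$ and $\operatorname{dist}(p_t,\partial B_k\times\mathbb R)\ge\delta$. The tool is the first-variation bound on the pressure graph. Its mean curvature is $w_t=1+z/t^2$, which is uniformly bounded in the slab. Away from the axis, smooth monotonicity then gives
\[
 \mu_t^L\bigl(B_r(p_t)\bigr)\ge c\,r^n
\]
for all $r\le\min\{\delta,L-L'\}/2$ that keep the ball off the axis. This contradicts the vanishing limit mass near $p_t$. A point escaping to spatial infinity cannot occur either, since it would carry the same mass lower bound while the total mass converges to the cylinder's mass.

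\emph{Points near the axis.} Here the full monotonicity route needs the first variation across the axis to be controlled. For $n\ge3$, I would use the cutoff estimate from Lemma~\ref{lem:axis-mass}: the varifold flux through $|y|=\eta$ is $O(\eta^{n-2})$, so the axis is removable for the first variation and bounded-mean-curvature monotonicity applies globally.

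For $n=2$ this cutoff cost is only $O(1)$. I would instead avoid the axis by a different route. Because the level sets converge in $L^1$ to $B_k$ uniformly for $|z|\le L$, the positive tail near the puncture is centred. In pressure variables near the axis, $z_t\to+\infty$ at $y=0$ for fixed large $t$, since $u\to+\infty$ uniformly by Corollary~\ref{cor:planar-genuine-one-tail}. Hence the graph restricted to $|z|<L$ avoids a neighbourhood $\{|y|<\eta_t\}$. To make this uniform in $t$, I would use the $L^1$ slice convergence together with a one-dimensional slicing argument on circles, in the spirit of Lemma~\ref{lem:zero-phase-every-annulus}, to show that $\{z_t<L\}$ does not reach into $B_{k/2}$ for large $t$. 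An interior point there would produce a ``cold'' component whose area is bounded below by Lemma~\ref{lem:no-trapped-cold-component}, and this is incompatible with $L^1$ convergence to $B_k$.

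With points near the axis excluded, the monotonicity argument above finishes the Hausdorff convergence.
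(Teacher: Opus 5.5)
Your proposal is correct. Its main lines are those of the paper's proof. Reshetnyak continuity applied to the strict $BV$ convergence of Proposition~\ref{prop:pressure-spacetime-strict-bv} gives the multiplicity-one varifold limit, and the current limit is \eqref{eq:pressure-subgraph-current-limit}. Lemma~\ref{lem:axis-mass} removes the axis from the first variation when $n\ge3$. Bounded-mean-curvature monotonicity, together with total-mass convergence, gives the Hausdorff statement.

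You diverge only at $n=2$, and there the uniform-in-$t$ exclusion of the axis is not needed. You already note the fact the paper uses: for each fixed $t$ the band $|u-t|<L/t$ misses some punctured disk, because $u\to+\infty$ uniformly. Hence $\widehat\Sigma_t^L$ is a smooth, properly embedded hypersurface in $\mathcal O_L$ without boundary there, with $|\widehat H_t|\le 1+L/t^2$. Monotonicity then gives $\mu_t^L(B_r(p))\ge c\,r^n$ for every support point $p$ and every ball $B_r(p)\subset\mathcal O_L$, including balls that meet the axis. The constant $c$ depends only on the curvature bound and $r$, not on the $t$-dependent radius of the missed disk. This is exactly how the paper finishes.

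Your detour can be made rigorous, but the mechanism is not ``area versus $L^1$ convergence''. A cold component reaching the outer collar is perfectly compatible with $L^1$ convergence of the slices to $B_k$. The working argument runs as follows.
\begin{enumerate}
\item Lemma~\ref{lem:no-trapped-cold-component} uses only $u\ge-M$, so it applies in this branch. It supplies a cold path from $y_0$ to the collar.
\item That path crosses an annulus $\{a_1<|y|<b_1\}$ with $k/2<a_1<b_1<k$.
\item At a suitably chosen level, coarea and the varifold convergence make the annular slice perimeter small, while $L^1$ convergence makes the hot phase meet most of the circles.
\item The circle-slicing bound, as in Lemma~\ref{lem:zero-phase-every-annulus}, then forces a perimeter of order $2(b_1-a_1)$, a contradiction.
\end{enumerate}
The level must be taken in $(L',L'')$ with $L''<L$. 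So you should exclude $z_t\le L'$ on $B_{k/2}$, rather than $z_t<L$. A point with $z_t$ just below $L$ only yields cold paths on an arbitrarily short range of levels, which is too little for the averaging step.
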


\begin{proof}
Fix an arbitrary sequence $t_j\to\infty$.

\noindent\textbf{Step 1: current and varifold convergence from strict $BV$.}
Proposition~\ref{prop:pressure-spacetime-strict-bv} gives
\[
 \mathbf1_{\mathcal E_{t_j}^L}\to
 \mathbf1_{B_k\times(-L,L)}
 \quad\text{strictly in }BV(\mathcal O_L).
\]
The oriented boundary currents already converge by
\eqref{eq:pressure-subgraph-current-limit}.  Let $\nu_j$ denote the
measure-theoretic outward normal of $\mathcal E_{t_j}^L$ and let
$\nu_\infty$ be that of the solid cylinder.  Strict $BV$ convergence and
Reshetnyak continuity imply, for every compactly supported continuous
function $\Phi(x,S)$ on the Grassmann bundle,
\[
 \int \Phi(x,\nu_j(x)^\perp)\,d|D\mathbf1_{\mathcal E_{t_j}^L}|(x)
 \longrightarrow
 \int \Phi(x,\nu_\infty(x)^\perp)\,d|D\mathbf1_{B_k\times(-L,L)}|(x).
\]
Since the relative reduced boundaries in the slab are exactly the pressure
graphs, this is precisely convergence to the multiplicity-one cylinder as
integral varifolds.  No separate slice reconstruction is needed.

\noindent\textbf{Step 2: first variation across the vertical axis.}
Let $\chi_\eta(y)$ vanish on $B_\eta$, equal one outside $B_{2\eta}$,
and satisfy $|D\chi_\eta|\le C/\eta$.  For a compactly supported vector
field $X$ in a smaller slab, the smooth first-variation identity for
$\chi_\eta X$ contains three errors near the axis: the cutoff-gradient
term, the omitted $\operatorname{div}_{\Sigma}X$ term, and the omitted
mean-curvature term.  Lemma~\ref{lem:axis-mass} gives
\[
 \frac{C}{\eta}\,
 \mathcal H^n\bigl(\widehat\Sigma_t^L\cap
 (B_{2\eta}\times(-L,L))\bigr)
 \le C_L(\eta^{n-2}+\eta^{n-1}),
\]
while the other two errors are bounded by a constant times the same local
mass.  Hence, when $n\ge3$, all three errors vanish as
$\eta\downarrow0$, and the closed varifold satisfies
\[
 \delta\mathcal V_t(X)=-\int H_t\cdot X\,d\mu_t
\]
with no singular contribution on the axis.  Since
$w_t=1+z_t/t^2$, the graph equation gives
\begin{equation}\label{eq:rescaled-mean-curvature}
 \widehat H_t=1+\frac{z}{t^2},\qquad H_t=-\widehat H_t\widehat N_t.
\end{equation}
Thus $|H_t|$ is uniformly bounded on every fixed slab.  If $n=2$, the
exceptional-cluster analysis has already shown $u(x)\to+\infty$ uniformly;
for fixed $t,L$ the band $|u-t|<L/t$ therefore misses a sufficiently small
punctured disk, so the same first-variation formula holds without an axis
cutoff.

\noindent\textbf{Step 3: Hausdorff convergence of the supports.}
Fix $0<L'<L''<L$.  Suppose that points
$x_j\in\operatorname{spt}\widehat\Sigma_{t_j}^L$ with $|z(x_j)|\le L'$
stayed a fixed distance from $\partial B_k\times[-L'',L'']$.  Choose a
small fixed $r>0$.  Since $x_j$ lies in the support, choose a rectifiable
density point $x_j^\ast\in B_{r/4}(x_j)$ with density at least one.  The
bounded-mean-curvature monotonicity formula gives
\[
 \mu_j(B_r(x_j))\ge c r^n
\]
with $c>0$ independent of $j$.  If $(x_j)$ is horizontally bounded, this
contradicts Step~1.  If it escapes horizontally, apply
Proposition~\ref{prop:pressure-spacetime-strict-bv} with the intermediate
slab $L''$: its total boundary mass tends to $2L''P_k$, while local
varifold convergence captures arbitrarily close to that full cylinder
mass inside a sufficiently large horizontal ball.  Thus the mass in
$(\mathbb R^n\setminus B_R)\times[-L',L']$ is uniformly small for large
$R$, contradicting the preceding lower bound.  This proves one Hausdorff
inclusion.

Conversely, if a point of $\partial B_k\times[-L',L']$ were missed by the
supports in a fixed ball, that ball would have zero $\mu_j$-mass for all
large $j$, whereas the limit cylinder has positive mass there.  The reverse
inclusion follows.  Since the initial sequence was arbitrary, all
convergences hold along the full family.
\end{proof}

We shall use the following boundary-null transfer from fixed to moving
Allard balls.

\begin{lemma}\label{lem:allard-generic-radius}
Let $V_j\to V$ as varifolds in an open subset of $\mathbb R^{n+1}$, let
$x_j\to p$, and let $P$ be an
$n$-plane.  If a neighborhood of $\overline B_{r+\delta}(p)$ is contained
in the open set and $\|V\|(\partial B_r(p))=0$, then
\begin{align}
\|V_j\|(B_r(x_j))&\longrightarrow\|V\|(B_r(p)),
\label{eq:generic-radius-mass-transfer}\\
\int_{B_r(x_j)\times G(n+1,n)}|S-P|^2\,dV_j(x,S)
&\longrightarrow
\int_{B_r(p)\times G(n+1,n)}|S-P|^2\,dV(x,S).
\label{eq:generic-radius-tilt-transfer}
\end{align}
\end{lemma}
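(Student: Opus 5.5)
The plan is a sandwich argument: compare the moving balls $B_r(x_j)$ with the fixed concentric balls $B_{r\pm\varepsilon}(p)$, pass to the limit on those fixed balls using varifold convergence, and then send $\varepsilon\downarrow0$ using the hypothesis that $\partial B_r(p)$ carries no $\|V\|$-mass.

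Both quantities have the same form
\[
 \int_{B_r(x_j)\times G(n+1,n)} f(S)\,dV_j(x,S),
\]
with either $f\equiv1$ or $f(S)=|S-P|^2$. In both cases $f$ is continuous, nonnegative and bounded on the compact Grassmannian. I will therefore prove the statement for a general $f$ of this type.

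Fix $0<\varepsilon<\delta$. Since $x_j\to p$, we have $|x_j-p|<\varepsilon$ for all large $j$, and hence
\[
 B_{r-\varepsilon}(p)\subset B_r(x_j)\subset B_{r+\varepsilon}(p).
\]
Since $f\ge0$, this sandwiches the moving-ball integral between the corresponding integrals over the two fixed balls.

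Next, choose continuous cutoffs $0\le\phi^-\le\mathbf1_{B_{r-\varepsilon}(p)}$ and $\mathbf1_{\overline B_{r+\varepsilon}(p)}\le\phi^+$, with $\phi^+$ supported in $B_{r+2\varepsilon}(p)$. The closure of that ball lies in the open set because $2\varepsilon$ can be taken below $\delta$, and $\overline B_{r+\delta}(p)$ has a neighborhood inside the open set. Moreover, choose $\phi^-$ equal to one on $B_{r-2\varepsilon}(p)$. The functions $\phi^\pm(x)f(S)$ are then compactly supported and continuous on the Grassmann bundle. Varifold convergence therefore gives
\[
 \int\phi^- f\,dV\le\liminf_j\int_{B_r(x_j)}f\,dV_j
 \le\limsup_j\int_{B_r(x_j)}f\,dV_j\le\int\phi^+f\,dV.
\]
The outer bounds lie between $\int_{B_{r-2\varepsilon}(p)}f\,dV$ and $\int_{\overline B_{r+2\varepsilon}(p)}f\,dV$.

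Finally, let $\varepsilon\downarrow0$. The sets $B_{r-2\varepsilon}(p)$ increase to $B_r(p)$, and the sets $\overline B_{r+2\varepsilon}(p)$ decrease to $\overline B_r(p)$. The difference between the two limits is bounded by $\sup|f|\cdot\|V\|(\partial B_r(p))=0$. Both sides therefore converge to $\int_{B_r(p)}f\,dV$, which proves \eqref{eq:generic-radius-mass-transfer} and \eqref{eq:generic-radius-tilt-transfer}.

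The only point that needs care is that the test functions must be compactly supported inside the open set; the margin $\delta$ provides exactly this. Beyond that, the argument is routine measure theory.
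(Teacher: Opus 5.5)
Your proof is correct and follows the paper's own argument: the same sandwich $B_{r-\varepsilon}(p)\subset B_r(x_j)\subset B_{r+\varepsilon}(p)$, passage to the limit on the fixed balls, and $\varepsilon\downarrow0$ using $\|V\|(\partial B_r(p))=0$. The only differences are cosmetic. You build the cutoffs explicitly and treat both integrands at once through a single continuous $f(S)\ge0$. The paper instead cites the Portmanteau theorem, once for the weights $\|V_j\|$ and once for the projected tilt measures $\lambda_j$.
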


\begin{proof}
Put \(\Delta_j=|x_j-p|\).  For every fixed \(0<\varepsilon<\delta\) and all
large \(j\),
\[
B_{r-\varepsilon}(p)\subset B_r(x_j)\subset B_{r+\varepsilon}(p),
\qquad \Delta_j<\varepsilon.
\]
The Portmanteau theorem for weakly convergent Radon measures \cite[Chapter~\refnum{1}]{AFP2000} therefore gives
\[
\begin{aligned}
\|V\|(B_{r-\varepsilon}(p))
&\leq\liminf_{j\to\infty}\|V_j\|(B_r(x_j))\\
&\leq\limsup_{j\to\infty}\|V_j\|(B_r(x_j))
\leq\|V\|(\overline B_{r+\varepsilon}(p)).
\end{aligned}
\]
Letting \(\varepsilon\downarrow0\) and using
\(\|V\|(\partial B_r(p))=0\) proves
\eqref{eq:generic-radius-mass-transfer}.

For the tilt term define Radon measures on the base by
\[
\lambda_j(A)=\int_{A\times G(n+1,n)}|S-P|^2\,dV_j(x,S),
\qquad
\lambda(A)=\int_{A\times G(n+1,n)}|S-P|^2\,dV(x,S).
\]
Since the Grassmannian integrand is bounded and continuous,
\(\lambda_j\rightharpoonup\lambda\), and
\[
\lambda(\partial B_r(p))
\leq\sup_{S\in G(n+1,n)}|S-P|^2\,\|V\|(\partial B_r(p))=0.
\]
The same moving-ball squeeze for \(\lambda_j\) proves
\eqref{eq:generic-radius-tilt-transfer}.
\end{proof}

\begin{proposition}\label{prop:allard-one-sheet}
For every \(0<L'<L\), there exists \(t_0=t_0(L',L)\) such that, for
every \(t\ge t_0\), there is a smooth function \(f_t\) satisfying
\[
\operatorname{spt}\widehat\Sigma_t^L
\cap(\mathbb R^n\times[-L',L'])
=\{((k+f_t(\omega,z))\omega,z):
(\omega,z)\in S^{n-1}\times[-L',L']\},
\]
and \(f_t\to0\) in \(C^m\) for every \(m\) as \(t\to\infty\).
Consequently, after increasing the threshold if necessary, for every such
$t$ the localized level
\[
 \Gamma_t:=\{x\in B_{r_0}\setminus\{0\}:u(x)=t\}
\]
is one smooth strictly convex hypersurface surrounding the origin; no other point of the level
$u=t$ lies in $B_{r_0}\setminus\{0\}$.  The sufficiently high localized levels are smoothly nested.
\end{proposition}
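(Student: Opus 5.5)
The plan is to apply Allard's regularity theorem, uniformly along the family, at every point of the limiting cylinder $\partial B_k\times[-L',L']$, then upgrade to smoothness with the graph equation, and finally read off the level sets.

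\emph{Step 1: uniform Allard hypotheses.} Fix $p\in\partial B_k\times[-L',L']$ and let $P_p=T_p(\partial B_k\times\mathbb R)$. By Proposition~\ref{prop:slab-current-varifold}, $\mathcal V_t\to|\partial B_k\times(-L,L)|$ as multiplicity-one varifolds. Choose $r>0$ small with $r<\tfrac12(L-L')$ and $\|V_\infty\|(\partial B_r(p))=0$. Then for $x_t\to p$, Lemma~\ref{lem:allard-generic-radius} gives
\[
 \mu_t(B_r(x_t))\to\omega_n r^n(1+O(r)),
\]
and the tilt excess relative to $P_p$ is $O(r^2)+o(1)$. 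The mean curvature \eqref{eq:rescaled-mean-curvature} is uniformly bounded on the slab. By Step~2 of Proposition~\ref{prop:slab-current-varifold}, the first variation has no axis part; for $n=2$ the slab misses the axis.

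\emph{Step 2: the $C^{1,\alpha}$ sheet.} Allard's theorem then shows that $\operatorname{spt}\mathcal V_t\cap B_{\gamma r}(x_t)$ is a single $C^{1,\alpha}$ graph over $P_p$ with small norm. The Hausdorff statement of Proposition~\ref{prop:slab-current-varifold} guarantees that every support point in the $L'$-slab lies in such a ball. Finitely many balls cover a compact neighborhood. Patching gives a normal graph $(k+f_t(\omega,z))\omega$. It is single-valued because the density is one and the current orientation is outward, which rules out folds and second sheets; there are no extra components, since the whole support is near the cylinder. This step is the main obstacle: the covering and moving-center argument must be uniform in $t$, and it must also exclude small spurious pieces. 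Both come from Hausdorff convergence together with the monotonicity lower mass bound.

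\emph{Step 3: smoothness.} The pressure graph satisfies the prescribed mean curvature equation $\widehat H_t=1+z/t^2$. Written as a normal graph over the cylinder, $f_t$ solves a uniformly elliptic quasilinear equation with smooth coefficients and small $C^{1,\alpha}$ norm. Schauder bootstrapping gives uniform $C^m$ bounds, and by compactness $f_t\to0$ in $C^m$ on the slightly smaller slab; adjust $L'$ at the start to absorb this.

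\emph{Step 4: the levels.} The slice $z=0$ is $t\Gamma_t$, and it equals the radial graph $\{(k+f_t(\omega,0))\omega\}$. With $f_t\to0$ in $C^2$, its second fundamental form is close to $k^{-1}\Id$, so $\Gamma_t$ is a smooth strictly convex hypersurface enclosing the origin. The support identity already excludes every other point of $\{u=t\}$ within $B_{r_0}\setminus\{0\}$.

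Nesting follows in two parts. Transversality ($\partial_z$ of the radial graph is bounded, and the horizontal normal component is near $1$) shows that $Du\neq0$ near $\Gamma_t$. Within one slab, the slices $z\in[-L',L']$ are ordered radial graphs, since $\partial_z f_t$ is small but the graph is not vertical: $N_t$ has positive last component. Combining consecutive $t$ yields the smooth nested foliation.
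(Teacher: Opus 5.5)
Your proposal is correct and follows the paper's proof: uniform Allard charts at every point of the limiting cylinder via moving centers and boundary-null radii (Lemma~\ref{lem:allard-generic-radius}), a finite cover made exhaustive by the Hausdorff support convergence, Schauder bootstrapping to $C^m$, and then reading $t\Gamma_t$ off the slice $z=0$. The one variation is global one-sheetedness: the paper shows the radial projection onto the cylinder is a finite covering and forces its degree to be one via the area formula and exact mass convergence. Your local route also works, but the operative reason is not ``density one'' of $\mathcal V_t$, which holds automatically for a graph. It is that two support points on one radial segment lie within the vanishing Hausdorff tube width of each other, hence in a single Allard ball, where the support is one small-slope graph transverse to that segment.
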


\begin{proof}
Fix an arbitrary sequence $t_j\to\infty$.

\noindent\textbf{Step 1: local smooth graphs.}
Let $p$ lie on the limiting cylinder
$\partial B_k\times(-L,L)$ and put
$P=T_p(\partial B_k\times\mathbb R)$.  For sufficiently small $\rho$,
smoothness of the cylinder gives
\[
 \frac{\|\mathcal V_\infty\|(B_{2\rho}(p))}{\omega_n(2\rho)^n}
 =1+O(\rho^2),\qquad
 (2\rho)^{-n}\int_{B_{2\rho}(p)}|S-P|^2\,d\mathcal V_\infty=O(\rho^2).
\]
Choose $\rho$ from the boundary-null radii and small enough for Allard's
interior theorem.  Proposition~\ref{prop:slab-current-varifold} gives
$x_j\in\operatorname{spt}\mathcal V_j$ with $x_j\to p$, and
Lemma~\ref{lem:allard-generic-radius} transfers the mass and tilt estimates
to $B_{2\rho}(x_j)$.  Step~2 of Proposition~\ref{prop:slab-current-varifold} supplies an
absolutely continuous first variation.  Fix any exponent $p_{\mathrm A}>n$.  On the
fixed pressure slab, \eqref{eq:rescaled-mean-curvature} and the transferred
mass-ratio bound give
\[
 (2\rho)^{1-n/p_{\mathrm A}}\,
 \|H_j\|_{L^{p_{\mathrm A}}(B_{2\rho}(x_j),\mu_j)}\le C_L\rho.
\]
Choose first $\rho$ so small that this quantity and the limiting-cylinder
mass and tilt excesses are below the Allard threshold, and then take $j$
large.  Since $\mathcal V_j$ is integral, its multiplicity is at least one
$\mu_j$-a.e.\  Thus the hypotheses of Allard's interior regularity theorem
\cite[Chapter~\refnum{5}, Theorem~\refnum{5.2}]{SimonGMT2018} are satisfied, and on a smaller ball the
support is a single $C^{1,\gamma}$ graph, $\gamma>0$, converging to the
cylinder graph.

The graphical prescribed-mean-curvature equation is uniformly elliptic on
these small-slope graphs.  Interior Schauder estimates give uniform
$C^{m,\alpha}$ bounds on still smaller balls for every $m$.  Every
subsequence therefore has a $C^m$-convergent subsubsequence, and its limit
must be the already identified $C^1$ cylinder graph.  Hence the whole
sequence converges to the cylinder in $C^m$.

\noindent\textbf{Step 2: the local graphs cover the full support.}
Choose intermediate slabs
$L'<L_0<L_1<L$.  Compactness of
$\partial B_k\times[-L_1,L_1]$ provides finitely many of the preceding
Allard charts.  By the Hausdorff support convergence in
Proposition~\ref{prop:slab-current-varifold}, after shrinking the charts
slightly they cover the entire support in $|z|\le L_0$, not merely a
selected component.  Consequently the $C^m$ estimates above are uniform
there.

\noindent\textbf{Step 3: global one-sheetedness.}
Fix $0<\tau<k/2$ and consider the radial projection
\[
 \pi(y,z)=\left(k\frac{y}{|y|},z\right)
\]
from the tubular neighborhood $\{||y|-k|<\tau\}$ onto the cylinder.  For
large $j$, full-support convergence places the support over the smaller
slab inside this tube, and the finite smooth graph cover makes the
restriction of $\pi$ a local diffeomorphism.  Over a compact
subcylinder it is proper; its image is open by local diffeomorphism and
closed by properness.  Since the target cylinder is connected and the
image is nonempty, $\pi$ is onto and hence a finite covering.  Write its
constant degree as $\deg(\pi_j)\ge1$.

Let $C'=\partial B_k\times(-L',L')$.  Since the local graphs converge in
$C^1$ to the cylinder, the tangential Jacobian of $\pi$ on
$\pi^{-1}(C')$ is at most $1+o(1)$.  The area formula and the exact mass
convergence from Proposition~\ref{prop:slab-current-varifold} give
\[
 \deg(\pi_j)\,\mathcal H^n(C')
 \le (1+o(1))\mathcal H^n(\widehat\Sigma_{t_j}^{L'})
 =(1+o(1))\mathcal H^n(C').
\]
Thus $\deg(\pi_j)=1$ for all large $j$.  The entire support over the smaller slab
is therefore one normal graph; oriented-current convergence fixes the
outward horizontal orientation.

Since the original sequence was arbitrary, the one-sheeted graph exists
for all sufficiently large $t$ and its height tends to zero in every
$C^m$.  Writing the normal graph in cylinder coordinates gives
\[
 \operatorname{spt}\widehat\Sigma_t^L\cap
 (\mathbb R^n\times[-L',L'])
 =\{((k+f_t(\omega,z))\omega,z):
   (\omega,z)\in S^{n-1}\times[-L',L']\},
 \qquad \|f_t\|_{C^m}\to0.
\]
The slice $z=z_0$ is exactly the rescaled level
$u=t+z_0/t$.  Hence, uniformly for $|z_0|\le L'$, these slices converge in
$C^2$ to the sphere of radius $k$; their outward shape operators converge
to $k^{-1}\Id$, and the vertical normal component tends to zero.  Thus the
slices are strictly convex and $|Du|>0$ for all large $t$.  At $z=0$,
\[
 t\Gamma_t=\{(k+f_t(\omega,0))\omega:\omega\in S^{n-1}\},
\]
which is a single smooth strictly convex hypersurface surrounding the
origin.  Since the pressure graph contains every point in the localized
height band, there is no additional component of the level $u=t$ in
$B_{r_0}\setminus\{0\}$.  The high levels are smoothly nested; coverage of
a full punctured neighborhood is established below.
\end{proof}

We now prove uniform blow-up at the puncture and identify every sufficiently
high localized superlevel.  Let
$\Gamma_t\subset B_{r_0}\setminus\{0\}$ be the single localized high level supplied by
Proposition~\ref{prop:allard-one-sheet}, and let $K_t\Subset B_{r_0}$ be the convex body bounded by it.
At \(z=0\), Proposition~\ref{prop:allard-one-sheet} gives
\[
t\Gamma_t
=\{(k+f_t(\omega,0))\omega:\omega\in S^{n-1}\},
\qquad \|f_t(\cdot,0)\|_{L^\infty}\longrightarrow0.
\]
Since \(K_t\) is the convex body bounded by this radial graph, for all
large \(t\),
\[
B_{k/2}\subset tK_t\subset B_{3k/2},
\qquad
B_{k/(2t)}\subset K_t\subset B_{3k/(2t)}.
\]
In particular,
\begin{equation}\label{eq:inner-ball-level}
B_{k/(2t)}\subset K_t
\end{equation}
for all large $t$.  The set
$\Omega_t^{\rm in}=\operatorname{int}K_t\setminus\{0\}$ is connected for $n\geq2$ and contains
no other point of the level $u=t$.  On \(\Gamma_t=\partial K_t\), the
outward orientation gives
\[
\frac{Du}{|Du|}=-\nu_{K_t},
\qquad
\partial_{\nu_{K_t}}u=-|Du|<0.
\]
Thus, for \(y\in\Gamma_t\) and small \(s>0\),
\[
u(y-s\nu_{K_t}(y))
=u(y)-s\,\partial_{\nu_{K_t}}u(y)+o(s)>t,
\]
so an inner collar lies in \(E_t\).  Moreover,
\[
\Omega_t^{\rm in}
=\bigl(\Omega_t^{\rm in}\cap\{u>t\}\bigr)
\mathbin{\dot\cup}
\bigl(\Omega_t^{\rm in}\cap\{u<t\}\bigr).
\]
The two sets are relatively open and the first is nonempty; connectedness
therefore gives
\begin{equation}\label{eq:inside-superlevel}
E_t\cap\Omega_t^{\rm in}=\Omega_t^{\rm in}.
\end{equation}
The upper radial inclusion already gives \(K_t\Subset B_{r_0}\) for large
\(t\).  Also
\[
E_t\subset B_{\rho(t)},\qquad \rho(t)\downarrow0,
\]
so
\[
E_t\cap(B_{r_0}\setminus\overline B_{r_0/2})=\varnothing
\]
for large \(t\).  The same relatively open/closed decomposition on the
connected exterior is
\[
B_{r_0}\setminus K_t
=\bigl((B_{r_0}\setminus K_t)\cap\{u>t\}\bigr)
\mathbin{\dot\cup}
\bigl((B_{r_0}\setminus K_t)\cap\{u<t\}\bigr).
\]
Its second part contains the outer annulus, so the first part is empty:
\[
E_t\cap(B_{r_0}\setminus K_t)=\varnothing.
\]
Together with \eqref{eq:inside-superlevel}, this gives
\begin{equation}\label{eq:exact-high-superlevel}
E_t=\operatorname{int}K_t\setminus\{0\}.
\end{equation}
Given \(T>0\), choose \(t>\max\{T,t_0\}\) and put
\(\delta_T=k/(2t)\).  Then
\[
0<|x|<\delta_T
\Longrightarrow x\in\operatorname{int}K_t\setminus\{0\}=E_t
\Longrightarrow u(x)>t>T.
\]
Equivalently,
\[
\forall T>0\ \exists\delta_T>0:\qquad
0<|x|<\delta_T\Longrightarrow u(x)>T,
\]
which is
\begin{equation}\label{eq:uniform-positive-blowup}
 u(x)\longrightarrow+\infty\qquad\hbox{uniformly as }x\to0.
\end{equation}
Fix a sufficiently high \(T_0\).  Nesting and
\eqref{eq:exact-high-superlevel} give the exact high-value region
\[
 \operatorname{int}K_{T_0}\setminus\{0\}
 =E_{T_0}=\bigcup_{t>T_0}\Gamma_t,
 \qquad
 \Gamma_{t_1}\cap\Gamma_{t_2}=\varnothing\quad(t_1\ne t_2).
\]
This region contains the punctured ball
\(B_{k/(2T_0)}\setminus\{0\}\) by \eqref{eq:inner-ball-level}.
Every point of it lies on the unique leaf \(\Gamma_{u(x)}\); together
with \(|Du|>0\), the implicit-function theorem makes the high levels a
smooth foliation covering a full punctured neighborhood.

The preceding construction used the fixed working radius $r_0$.  We record the quantifier needed
in the theorem.  Fix $0<r<R$ and choose
$0<\delta<\min\{r,r_0\}$.  Choose \(t_\delta\) so that
\(K_t\Subset B_\delta\) for \(t\geq t_\delta\), and put
\[
M_{r,\delta}:=\max_{\overline B_r\setminus B_\delta}u<\infty,
\qquad
T_r:=\max\{t_\delta,M_{r,\delta}+1\}.
\]
Then, for \(t\geq T_r\),
\[
\{x\in B_r\setminus\{0\}:u(x)>t\}
 \cap(\overline B_r\setminus B_\delta)=\varnothing,
\]
and \eqref{eq:exact-high-superlevel} proves
\begin{equation}\label{eq:localized-positive-level-derived}
 \{x\in B_r\setminus\{0\}:u(x)>t\}
 =\operatorname{int}K_t\setminus\{0\},\qquad t\geq T_r.
\end{equation}
Set \(K_{t,r}:=K_t\).  Then \eqref{eq:localized-positive-level-derived}
is \eqref{eq:localized-positive-level-main}, and
\(\partial K_{t,r}=\Gamma_t=\{u=t\}\cap B_r\).  The threshold may depend
on $r$; no uniform control as $r\uparrow R$ is claimed.
Because deleting the interior point $0$ does not change a
finite-perimeter set up to null sets, its reduced boundary remains
$\partial K_{t,r}$.  In the relative topology of
$B_r\setminus\{0\}$, the deleted point is not present, so the relative
topological boundary is also exactly $\partial K_{t,r}$.
The negative statement follows by applying the same proof to $-u$.
 \section{Gauss coordinates and the asymptotic expansion}\label{sec:round-complete}

Throughout this section \(k=n-1\).  By the preceding pressure section, there are
\(\delta>0\) and \(t_0\) such that
\[
B_\delta\setminus\{0\}\subset E_{t_0}
=\mathop{\dot\bigcup}_{t>t_0}\Gamma_t,\qquad
\Gamma_t=\partial^* E^+_{t,r_0}
=\{x\in B_{r_0}\setminus\{0\}:u(x)=t\},\qquad |Du|>0,
\]
and each \(\Gamma_t\) is a smooth strictly convex \(k\)-sphere surrounding
the origin.  For every fixed \(L>0\), apply
Proposition~\ref{prop:allard-one-sheet} with an outer slab, say \(2L\),
and inner slab \(L\).  Since the pressure graph itself is independent of
the clipping parameter on \( |z|<L\), this yields the normal graph
representation
\[
\operatorname{spt}\widehat\Sigma_T^L
=\{((k+f_T(\omega,z))\omega,z):
(\omega,z)\in\mathbb S^k\times(-L,L)\},
\qquad f_T\longrightarrow0\quad\hbox{in }C^\infty_{\rm loc}
\quad\text{as }T\to\infty.
\]
No level component outside \(B_{r_0}\) is used or controlled here.

The argument proceeds in four stages.  We first derive an exact support-function
system in Gauss coordinates and use translation balance to control its degree-one
modes.  We then prove a uniform interior estimate for the higher spherical
harmonics and use the limiting spectral gap to obtain exponential angular decay.
The remaining mean modes satisfy a two-dimensional perturbed radial system; a
coercive Hamiltonian barrier yields its algebraic asymptotics.  Finally we return
to Euclidean polar coordinates.

\subsection{Gauss coordinates and the exact support system}

Let $\omega\in\mathbb S^k$ be the outer unit normal of $\Gamma_t$, and let
$X(t,\omega)$ be the inverse Gauss map.  Its support function is
\[
h(t,\omega)=X(t,\omega)\cdot\omega.
\]
All covariant derivatives and contractions below are taken with respect to
the unit round metric $g$ on $\mathbb S^k$.
Let \(P_0,P_1,P_{\geq2}\) denote the orthogonal spherical-harmonic
projections of degrees \(0,1,\geq2\), with
\[
 P_1f=b_f\cdot\omega,\qquad
 b_f=\frac{k+1}{|\mathbb S^k|}
       \int_{\mathbb S^k}f(\omega)\omega\,d\omega.
\]
We write
\(C^m_{\geq2}(\mathbb S^k)
 :=\{f\in C^m(\mathbb S^k):P_0f=P_1f=0\}\).

The support parametrization and its normal velocity are recorded first.

\begin{lemma}
\label{lem:round-support-geometry}
One has
\begin{equation}\label{eq:round-support-map}
X=h\omega+\nabla h,\qquad
B_h:=\nabla^2h+hg>0,\qquad
dA_{\Gamma_t}=\det(B_h)\,d\omega.
\end{equation}
At fixed Gauss normal,
\begin{equation}\label{eq:round-normal-velocity}
h_t=-\frac1{|Du|}<0.
\end{equation}
\end{lemma}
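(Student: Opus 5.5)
The statement consists of two standard facts about strictly convex hypersurfaces parametrized by the inverse Gauss map, and the one dynamical identity \eqref{eq:round-normal-velocity}. We treat them separately, working for $t>t_0$ where each $\Gamma_t$ is a smooth strictly convex $k$-sphere and $|Du|>0$.

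\emph{Support parametrization.} Since $\Gamma_t$ is smooth and strictly convex, its Gauss map $\nu:\Gamma_t\to\mathbb S^k$ is a diffeomorphism, so $X(t,\cdot)$ is smooth. Strict convexity follows from Proposition~\ref{prop:allard-one-sheet}: the rescaled slices are $C^2$-close to the sphere of radius $k$. Differentiate $h=X\cdot\omega$ along a tangent vector $\tau$ of $\mathbb S^k$. Because $\partial_\tau X$ is tangent to $\Gamma_t$ at the point with normal $\omega$, we have $\partial_\tau X\cdot\omega=0$. Hence $\nabla h=X-(X\cdot\omega)\omega$ is the tangential part of $X$, which gives $X=h\omega+\nabla h$. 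Differentiating once more, and using $\partial_\tau\omega=\tau$ together with the Gauss formula on the unit sphere, yields $\partial_\tau X=(\nabla^2h+hg)(\tau)$ as a tangent vector. Thus $dX=B_h$, regarded as an endomorphism of $T_\omega\mathbb S^k=T_X\Gamma_t$. The matrix $B_h$ is the inverse of the Weingarten map of $\Gamma_t$ (the inverse of $d\nu$). Strict convexity, meaning a positive definite shape operator with respect to the outer normal, therefore gives $B_h>0$. The Jacobian of $X$ from the round sphere is then $\det B_h$, which gives $dA_{\Gamma_t}=\det(B_h)\,d\omega$.

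\emph{Normal velocity.} The outer normal of the superlevel $E_t$ on $\Gamma_t$ is $\nu_t=-Du/|Du|$, consistent with the orientation in Proposition~\ref{prop:allard-one-sheet}. Since $|Du|>0$ and the levels foliate a punctured neighborhood, the map $(t,\omega)\mapsto X(t,\omega)$ is smooth by the implicit function theorem applied to $\nu_t(X)=\omega$ and $u(X)=t$. Differentiating $u(X(t,\omega))=t$ in $t$ gives
\[
Du\cdot X_t=1,\qquad\text{that is,}\qquad -|Du|\,\omega\cdot X_t=1.
\]
Next, $h_t=X_t\cdot\omega$, because $\omega$ is held fixed. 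Therefore $h_t=-1/|Du|<0$, where $Du$ is evaluated at $X(t,\omega)$.

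The only delicate point is justifying the smooth dependence of the inverse Gauss map on $t$, which is where $|Du|>0$ and strict convexity (nondegeneracy of $d\nu$) are both used. The rest is routine computation.
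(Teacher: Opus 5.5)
Your proposal is correct and follows essentially the same route as the paper. You use the standard support-function identities $X=h\omega+\nabla h$ and $dX=B_h$ (the inverse of the Weingarten map, hence positive and with Jacobian $\det B_h$), and then differentiate $u(X(t,\omega))=t$ using $Du=-|Du|\,\omega$. The only differences are cosmetic: you read off $h_t=X_t\cdot\omega$ directly where the paper writes $X_t=h_t\omega+\nabla h_t$, and you spell out the implicit-function justification for smooth dependence of $X$ on $t$, which the paper takes for granted.
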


\begin{proof}
The standard support-function identities for a strictly convex hypersurface
give
\[
 X=h\omega+\nabla h,\qquad
 X_a=(h_{;ab}+h\delta_{ab})e_b,
\]
so $B_h=\nabla^2h+hg>0$ and
$dA_{\Gamma_t}=\det(B_h)\,d\omega$.  At fixed Gauss normal,
$u(X(t,\omega))=t$, $Du=-|Du|\omega$, and
$X_t=h_t\omega+\nabla h_t$.  Hence
\[
 1=Du\cdot X_t=-|Du|h_t,
\]
which proves \eqref{eq:round-normal-velocity}.
\end{proof}

Set
\begin{equation}\label{eq:round-variables}
Z=\frac{t^2}{2},\qquad q(Z,\omega)=t h(t,\omega),\qquad
-h_t=\tan\phi,
\qquad B_q=\nabla^2q+qg.
\end{equation}
Angular differentiation keeps \(t\) fixed, so
\[
B_q=\nabla^2(th)+thg=tB_h>0,\qquad
\phi=-\arctan h_t\in(0,\pi/2),
\]
where the indicated branch of \(\arctan\) is used.

We record the coordinate bridge because the estimates below use mixed
$Z$--angular derivatives.  Fix a large base height $T$, put
$Z_T=T^2/2$, and use the pressure coordinate $z=T(t-T)$.  Then
\begin{equation}\label{eq:round-pressure-Z-bridge}
\begin{aligned}
Z(t)-Z_T&=\frac{t^2-T^2}{2}
=z+\frac{z^2}{2T^2},&
\frac tT&=1+\frac z{T^2},\\
\frac{d(Z-Z_T)}{dz}&=\frac tT,&
\partial_Z&=\frac Tt\,\partial_z.
\end{aligned}
\end{equation}
The one-sheeted pressure surface supplied by
Proposition~\ref{prop:allard-one-sheet} is first written in radial cylinder
parameters as
\[
Y_T(\theta,z)=r_T(\theta,z)\theta,\qquad
r_T(\theta,z)=k+f_T(\theta,z),
\qquad f_T\longrightarrow0\quad\text{in }C^\infty
\]
on each fixed $z$-slab.  Here $\theta$ is the normal of the limiting
cylinder, not yet the Gauss normal of the actual slice.

For fixed $z$ let
$G_T(\theta,z)$ be the outward Gauss map of $Y_T(\cdot,z)$.  For large
$T$,
\[
G_T(\theta,z)
=\frac{r_T(\theta,z)\theta-\nabla_\theta r_T(\theta,z)}
{\sqrt{r_T(\theta,z)^2+|\nabla_\theta r_T(\theta,z)|^2}}.
\]
Therefore the parameter-dependent inverse-function theorem gives
\[
G_T\longrightarrow\operatorname{id},\qquad
G_T^{-1}\longrightarrow\operatorname{id}\quad\hbox{in }C^\infty
\]
with all mixed $z$ and angular derivatives on bounded slabs.
Reparametrizing by $\theta=G_T^{-1}(\omega,z)$, the support function in
the rescaled $y$ variables is
\[
\widehat h_T(z,\omega)
=Y_T(G_T^{-1}(\omega,z),z)\cdot\omega
=T h(T+z/T,\omega).
\]
The complete scaling identities at fixed Gauss normal are
\[
\begin{aligned}
\partial_z\widehat h_T(z,\omega)
 &=h_t(T+z/T,\omega),\\
q(Z(t),\omega)&=\frac tT\widehat h_T(z,\omega),\\
B_q(Z(t),\omega)
 &=\frac tT\bigl(\nabla_\omega^2\widehat h_T
      +\widehat h_Tg\bigr),\\
\phi(Z(t),\omega)
 &=-\arctan\bigl(\partial_z\widehat h_T(z,\omega)\bigr).
\end{aligned}
\]
Since \(\widehat h_T\to k\) with all mixed derivatives and
\(\partial_Z=(T/t)\partial_z\), translating \(Z_T\) to zero gives
\begin{equation}\label{eq:round-cylinder-limit}
B_q\longrightarrow kg,\qquad \phi\longrightarrow0
\quad\hbox{in }C^\infty_{\rm loc}
\end{equation}
in the mixed \(Z\)--angular sense.  Moreover, for
\(\ell_a(\omega)=a\cdot\omega\),
\[
\nabla^2\ell_a+\ell_ag=0,\qquad B_{q+\ell_a}=B_q.
\]
Thus \(B_q\to kg\) alone does not control \(P_1q\).  Here the centered
pressure graph already gives \(q(Z_T,\cdot)\to k\), hence
\(P_1q(Z_T,\cdot)\to0\); the translation balance below supplies the
intrinsic quantitative centering estimate used later.

The capillary equation becomes the following exact first-order support
system.

\begin{proposition}\label{prop:round-exact-system}
The variables in \eqref{eq:round-variables} satisfy
\begin{align}
q_Z&=\frac{q}{2Z}-\tan\phi,\label{eq:round-q}\\
\phi_Z&=\sec\phi-\operatorname{tr}(B_q^{-1})
-\sec^2\phi\,
  \langle B_q^{-1}\nabla\phi,\nabla\phi\rangle.
\label{eq:round-phi}
\end{align}
Moreover, horizontal translation balance is exactly
\begin{equation}\label{eq:round-flux}
\int_{\mathbb S^k}\det(B_q)\sin\phi\,\omega\,d\omega=0.
\end{equation}
\end{proposition}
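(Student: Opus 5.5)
The plan is a direct computation in Gauss coordinates, using Lemma~\ref{lem:round-support-geometry} and the level-set form of the equation. Equation \eqref{eq:round-q} is immediate. Since $q=th$ and $dZ=t\,dt$, we have $q_Z=t^{-1}(h+th_t)=h/t+h_t$. Substituting $h/t=q/t^2=q/(2Z)$ and $h_t=-\tan\phi$ from \eqref{eq:round-variables} gives \eqref{eq:round-q}.

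For \eqref{eq:round-phi}, I would rewrite $\operatorname{div}(Du/W)=-u$ on the level $\Gamma_t$ in terms of $|Du|$ and the geometry of the level. Set $\nu=-Du/|Du|=\omega$. Then $Du/W=-\sigma\nu$ with $\sigma=|Du|/W=\sin\phi$, because $h_t=-1/|Du|$ gives $|Du|=\cot\phi$, hence $W=\csc\phi$ and $\sigma=\cos\phi$ after the relevant identification. I would fix this bookkeeping carefully and check it against the radial profile before proceeding.

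Splitting the divergence into tangential and normal parts gives
\[
-\operatorname{div}(\sigma\nu)=-\sigma H_{\Gamma}-\partial_\nu\sigma-\nabla_\Gamma\sigma\cdot\nu ,
\]
where the last term vanishes. In Gauss coordinates the mean curvature is $H_\Gamma=\operatorname{tr}(B_h^{-1})=t\operatorname{tr}(B_q^{-1})$. The normal derivative of $\sigma$ along the level flow is computed by differentiating $\sigma(t,\omega)$ at fixed Gauss normal. The point $X(t,\omega)$ moves with velocity $X_t=h_t\omega+\nabla h_t$, and its normal component $h_t$ is the normal speed. A fixed-normal derivative therefore equals the spatial normal derivative times $h_t$, plus a tangential correction $\nabla h_t\cdot\nabla_\Gamma\sigma$. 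Expressed through $B_h^{-1}$, this correction is the quadratic term $\sec^2\phi\langle B_q^{-1}\nabla\phi,\nabla\phi\rangle$ after passing to $\phi$ and $q$. Collecting the terms, multiplying by the Jacobian $dt/dZ=1/t$, and using $u=t$ should produce \eqref{eq:round-phi}, with $\sec\phi$ coming from the source term $t\cdot\sigma^{-1}$-type factor. The main obstacle is the sign and branch bookkeeping between $\sigma,\alpha$ and $\phi$, together with the tangential correction. I would verify the final formula on the radial solution, where $\nabla\phi=0$ and $B_q=qg$, so that it reduces to the radial ODE $\phi_Z=\sec\phi-k/q$.

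For \eqref{eq:round-flux}, I would start from the vanishing of the physical translation flux $\mathcal F(t)=\int_{\Gamma_t}\alpha\nu\,d\mathcal H^k$ on high levels. In dimensions $n\ge3$ this is Lemma~\ref{lem:tail-translation}; for $n=2$ it is the argument of Corollary~\ref{cor:planar-genuine-one-tail}, which gives constant flux in $L^1$ and hence zero flux, and the same argument applies for every $n$ now that $u\to+\infty$ uniformly and the slab avoids the puncture. I would then change variables to Gauss coordinates, using $d\mathcal H^k=\det B_h\,d\omega=t^{-k}\det B_q\,d\omega$, $\nu=\omega$, and $\alpha=W^{-1}$, which equals $\sin\phi$ in the chosen convention. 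Discarding the nonzero factor $t^{-k}$ gives \eqref{eq:round-flux}.
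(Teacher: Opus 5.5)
Your proposal is correct. For \eqref{eq:round-q} and \eqref{eq:round-flux} it matches the paper: you use zero translation flux on high levels, then $\nu=\omega$, $dA=t^{-k}\det(B_q)\,d\omega$, $W^{-1}=\sin\phi$.

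For \eqref{eq:round-phi} you take a genuinely different route.

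\emph{The paper's route.} It works on the graph $\Sigma\subset\mathbb R^{n+1}$, parametrized by $F(t,\omega)=(X(t,\omega),t)$. It computes the induced metric (angular block $B_h^2$, Schur complement $Q^2$) and the second fundamental form. This gives the identity \eqref{eq:round-mean-curvature} for $H_\Sigma$, and it then imposes $H_\Sigma=t$. The quadratic gradient term arises as the Schur-complement correction to the inverse metric.

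\emph{Your route.} You stay in the base and split the divergence along level sets, $\operatorname{div}(Du/W)=-\sigma H_{\Gamma_t}-\partial_\nu\sigma$, with $H_{\Gamma_t}=\operatorname{tr}(B_h^{-1})$. You then convert $\partial_\nu\sigma$ into a derivative at fixed Gauss normal using $X_t=h_t\omega+\nabla h_t$. Here the quadratic term appears instead as the transport correction from the non-normal motion of fixed-normal points. Write $\sigma$ also for $\sigma(X(t,\omega))$, and use $D_{\Gamma}\sigma=B_h^{-1}\nabla\sigma$, which follows from $X_a=B_{ab}e_b$. Then the equation reads
\[
 \sigma H_{\Gamma_t}+\frac{1}{h_t}\Bigl(\sigma_t-\langle B_h^{-1}\nabla\sigma,\nabla h_t\rangle\Bigr)=t .
\]
Now substitute $\sigma=\cos\phi$, $h_t=-\tan\phi$, $B_h^{-1}=tB_q^{-1}$, $\nabla h_t=-\sec^2\phi\,\nabla\phi$ and $\sigma_t=-t\sin\phi\,\phi_Z$. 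Dividing by $t\cos\phi$ gives exactly \eqref{eq:round-phi}.

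\emph{Comparison.} Your route is more elementary: it never needs the second fundamental form in a non-orthogonal spacetime parametrization. The paper's route reads the equation off a single geometric identity and never has to track the tangential drift of fixed-normal points.

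Two small repairs are needed.
\begin{itemize}
\item Your second paragraph first asserts $\sigma=\sin\phi$. The correct values are $\sigma=|Du|/W=\cos\phi$ and $\alpha=W^{-1}=\sin\phi$, and the computation above closes only with $\sigma=\cos\phi$.
\item To get \eqref{eq:round-flux} at every high level rather than almost every, say explicitly why the slab argument applies to any pair of high levels. Every high level is regular with $|Du|>0$ by Proposition~\ref{prop:allard-one-sheet}, so $\mathcal F$ is constant on the whole high tail. Its integrability, via $|\mathcal F|\le -V'$, then forces it to vanish everywhere there. This is exactly the paper's upgrade from the almost-everywhere statement.
\end{itemize}
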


\begin{proof}
Since $dZ/dt=t$ and $q=th$,
\[
 q_Z=\frac1t(th)_t=\frac{q}{2Z}-\tan\phi,
\]
which is \eqref{eq:round-q}.

For the second equation put
$v=h_t$, $Q=(1+v^2)^{1/2}$, $B=B_h$, and
$F(t,\omega)=(X(t,\omega),t)$.  In the splitting $(t,\omega)$,
\[
 F_t=v\omega+\nabla v+e_{n+1},\qquad
 F_a=B_{ab}e_b,\qquad
 N=\frac{\omega-ve_{n+1}}{Q}.
\]
The angular block of the induced metric is $B^2$, and its Schur
complement is $Q^2$.  Contracting the second fundamental form with the
resulting inverse metric gives
\begin{equation}\label{eq:round-mean-curvature}
 H_\Sigma
 =\frac1Q\operatorname{tr}(B^{-1})
 +\frac1{Q^3}
   \bigl(\langle B^{-1}\nabla v,\nabla v\rangle-v_t\bigr).
\end{equation}
For completeness, the only nonzero components needed in this contraction
are $A_{ab}=Q^{-1}B_{ab}$ and $A_{tt}=-Q^{-1}v_t$; the mixed components
vanish.  Thus the angular-gradient term in \eqref{eq:round-mean-curvature}
comes from the Schur-complement correction to the angular inverse metric.

The capillary equation gives $H_\Sigma=t$.  Using
\[
 B=t^{-1}B_q,\qquad v=-\tan\phi,\qquad Q=\sec\phi,
 \qquad \nabla v=-\sec^2\phi\nabla\phi,
 \qquad v_t=-t\sec^2\phi\,\phi_Z,
\]
substitution in \eqref{eq:round-mean-curvature} yields exactly
\eqref{eq:round-phi}.

Finally, the horizontal translation flux vanishes on every sufficiently
high level of the actual pole branch.  Lemma~\ref{lem:tail-translation} for
$n\ge3$, and Corollary~\ref{cor:planar-genuine-one-tail} for $n=2$, first
give vanishing on a full-measure set of sufficiently high regular levels.
By Proposition~\ref{prop:allard-one-sheet}, however, every sufficiently high
level is smooth, $|Du|>0$, and the high levels form a smooth foliation.  For
any two such levels $t<T$, uniform blow-up makes the finite-height slab
$\{t<u<T\}$ avoid the puncture.  Integrating $\operatorname{div}\mathsf S=0$ on
that slab and using $\int_{\Gamma_s}\nu_s\,dA=0$ shows that the
translation flux is pointwise constant throughout the high tail.  Since that
constant is zero on a full-measure subset,
\[
 \int_{\Gamma_t}\frac{\nu}{\sqrt{1+|Du|^2}}\,dA=0
\]
for every sufficiently high $t$.  In Gauss coordinates
$\nu=\omega$,
$dA=t^{-k}\det(B_q)\,d\omega$, and
$(1+|Du|^2)^{-1/2}=\sin\phi$.  Removing the positive factor $t^{-k}$
gives \eqref{eq:round-flux}.
\end{proof}

\subsection{Translation balance and centering}

Define the mean radius and the Steiner point of the level body by
\begin{equation}\label{eq:round-center-def}
R_h(t)=\frac1{|\mathbb S^k|}\int_{\mathbb S^k}h(t,\omega)\,d\omega,
\qquad
c(t)=\frac{k+1}{|\mathbb S^k|}
      \int_{\mathbb S^k}h(t,\omega)\omega\,d\omega.
\end{equation}

The following intrinsic calculation supplies the quantitative centering
estimate used below.

\begin{lemma}\label{lem:round-centering}
As $t\to\infty$,
\begin{equation}\label{eq:round-centering-conclusion}
tR_h(t)\longrightarrow k,\qquad c(t)=o(R_h(t)).
\end{equation}
Consequently
\begin{equation}\label{eq:round-full-limit}
q(Z,\cdot)\longrightarrow k,\qquad \phi(Z,\cdot)\longrightarrow0
\quad\hbox{in }C^\infty(\mathbb S^k).
\end{equation}
\end{lemma}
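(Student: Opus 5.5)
The plan is to read off both limits in \eqref{eq:round-centering-conclusion} from the centered pressure-graph description, and then to convert them into Gauss coordinates with the scaling identities above. Fix a large base height $T$ and write $t=T+z/T$. By Proposition~\ref{prop:allard-one-sheet}, the rescaled support function $\widehat h_T(z,\omega)=Th(T+z/T,\omega)$ converges to the constant $k$ in $C^\infty$ on every bounded $z$-slab, together with all mixed derivatives. This uses the description of $\widehat h_T$ via $G_T^{-1}$ given just before Proposition~\ref{prop:round-exact-system}.

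\textbf{Step 1: the two limits in \eqref{eq:round-centering-conclusion}.} Take $z=0$. Then
\[
TR_h(T)=\frac{1}{|\mathbb S^k|}\int_{\mathbb S^k}\widehat h_T(0,\omega)\,d\omega\longrightarrow k,
\qquad
Tc(T)=\frac{k+1}{|\mathbb S^k|}\int_{\mathbb S^k}\widehat h_T(0,\omega)\,\omega\,d\omega\longrightarrow0,
\]
because $\int_{\mathbb S^k}\omega\,d\omega=0$. Since $T$ is arbitrary, $tR_h(t)\to k$ and $tc(t)\to0$, so $c(t)=o(t^{-1})=o(R_h(t))$. The essential input here is that the pressure graph is centered at the puncture, that is, it is a normal graph over $\partial B_k\times\mathbb R$ around the origin. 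This centering comes from Proposition~\ref{prop:zero-charge-centers} through \eqref{eq:wholeblowdown-pressure}, which itself rests on translation balance (Lemma~\ref{lem:tail-translation} for $n\ge3$, Corollary~\ref{cor:planar-genuine-one-tail} for $n=2$). I would add a remark that \eqref{eq:round-flux} gives an intrinsic reformulation of the same fact. Linearizing \eqref{eq:round-flux} at the round state gives $\int_{\mathbb S^k}(\det B_q\sin\phi)\,\omega=0$. The degree-one part of $\phi$ is then tied to $\partial_Z P_1q$ through \eqref{eq:round-q}, which forces $P_1q$ to vanish in the limit. For the present statement, however, the pressure-graph convergence suffices.

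\textbf{Step 2: \eqref{eq:round-full-limit}.} The scaling identities give
\[
q(Z(t),\omega)=\frac tT\,\widehat h_T(z,\omega),\qquad
\phi(Z(t),\omega)=-\arctan\bigl(\partial_z\widehat h_T(z,\omega)\bigr).
\]
At $z=0$ we have $t=T$, so $q(Z_T,\cdot)=\widehat h_T(0,\cdot)\to k$ and $\phi(Z_T,\cdot)=-\arctan\partial_z\widehat h_T(0,\cdot)\to0$ in every $C^m(\mathbb S^k)$. Since $Z_T=T^2/2$ ranges over all large values, this proves \eqref{eq:round-full-limit}. Uniformity in $Z$ is automatic from the argument: Proposition~\ref{prop:allard-one-sheet} holds for the full family $t\to\infty$, not just along sequences, so the $C^m$ errors are functions of $T$ alone that tend to zero.

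\textbf{Main obstacle.} The delicate point is that $C^\infty$ convergence of $\widehat h_T$ has to be genuinely in the Gauss parametrization, including the $z$-derivative that defines $\phi$. This requires $G_T^{-1}\to\operatorname{id}$ with all mixed $z$- and angular derivatives, which comes from applying the parameter-dependent inverse function theorem to $G_T$, built from $r_T=k+f_T$ with $f_T\to0$ in $C^\infty$ on slabs. I would check this carefully, in particular that $\partial_z\widehat h_T=h_t$ at fixed Gauss normal, so that the chain-rule terms coming from the reparametrization cancel.
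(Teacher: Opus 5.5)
Your argument is correct, but it is not the route the paper's proof takes.

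\textbf{Your route.} You read everything off the origin-centered convergence $\widehat h_T\to k$ with mixed derivatives. This is the statement that the pressure graph is a small normal graph over the cylinder $\partial B_k\times\mathbb R$ centered on the axis. In one stroke it gives
\[
TR_h(T)\to k,\qquad Tc(T)\to0,\qquad q(Z_T,\cdot)=\widehat h_T(0,\cdot)\to k,\qquad \phi(Z_T,\cdot)=-\arctan\partial_z\widehat h_T(0,\cdot)\to0 .
\]
Your worry about chain-rule terms dissolves. $\widehat h_T(z,\omega)=Th(T+z/T,\omega)$ is already written at fixed Gauss normal, so $\partial_z\widehat h_T=h_t$ is immediate. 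The paper itself records this observation in the sentence just before the lemma.

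\textbf{The paper's route.} Its proof deliberately uses only the translation-invariant information $B_q\to kg$ and $h_t\to0$.
\begin{itemize}
\item Averaging $\operatorname{tr}_gB_q=\Delta q+kq$ over the sphere gives $tR_h\to k$.
\item Dividing the exact flux identity \eqref{eq:round-flux} by $R_h^k$ and subtracting it from $c'(t)$ gives $|c'(t)|\le(k+1)\varepsilon_{\rm cen}(t)\,(-R_h'(t))$.
\item Integrating from $t$ to $\infty$, using $c,R_h\to0$ because the bodies shrink to the puncture, yields $|c(t)|\le(k+1)\sup_{s\ge t}\varepsilon_{\rm cen}(s)\,R_h(t)$.
\item $P_{\ge2}q$ is then recovered by inverting $\nabla^2+g$ on the high harmonics.
\end{itemize}

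\textbf{Comparison.} The paper's route gives an intrinsic, quantitative centering estimate in Gauss coordinates. It ties the drift of the Steiner point to the roundness defect and does not re-invoke the blow-down centering. Your route is shorter, but it inherits the centering from Proposition~\ref{prop:zero-charge-centers} and gives only the qualitative $tc(t)\to0$. That qualitative statement is all that is used later: the neighborhood in Lemma~\ref{lem:round-flux-ift}, and $a(Z_1)\to0$ in Proposition~\ref{prop:round-angular-decay}.

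\textbf{Your side remark.} The sketch of an intrinsic reformulation is imprecise as stated. The degree-one projection of \eqref{eq:round-q}, namely $a'-a/(2Z)=-\mathbf f_1$, has the growing homogeneous solution $a=CZ^{1/2}$, which corresponds to a constant Steiner point. So the flux alone does not force $P_1q\to0$. One also needs $c(t)\to0$ and an integration from infinity, which is exactly what the paper does in the $t$-variable. Since your proof does not rely on this remark, the imprecision does not affect it.
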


\begin{proof}
Since \(B_q=tB_h\),
\[
\begin{aligned}
\operatorname{tr}_gB_q&=\Delta q+kq,\\
\frac1{|\mathbb S^k|}\int_{\mathbb S^k}
\operatorname{tr}_gB_q\,d\omega
&=\frac{k}{|\mathbb S^k|}\int_{\mathbb S^k}q\,d\omega
=ktR_h(t).
\end{aligned}
\]
The left side tends to \(k^2\) by \eqref{eq:round-cylinder-limit}; hence
\[
tR_h(t)\longrightarrow k,\qquad
\frac{B_h}{R_h}=\frac{B_q}{tR_h}\longrightarrow g
\quad\hbox{in }C^\infty.
\]
Pressure-slab convergence also gives \(h_t\to0\) in \(C^\infty\).

Using $\sin\phi=-h_t/(1+h_t^2)^{1/2}$, the flux identity becomes
\begin{equation}\label{eq:round-support-flux}
\int_{\mathbb S^k}
\frac{h_t\det(B_h)}{(1+h_t^2)^{1/2}}\,\omega\,d\omega=0.
\end{equation}
Put
\[
\varepsilon_{\rm cen}(t)=
\left\|
\frac{\det(B_h)}{R_h(t)^k(1+h_t^2)^{1/2}}-1
\right\|_{L^\infty(\mathbb S^k)}.
\]
Then $\varepsilon_{\rm cen}(t)\to0$.  Subtracting \eqref{eq:round-support-flux} from the
derivative of \eqref{eq:round-center-def} is made explicit by first
dividing the zero flux by \(R_h(t)^k\):
\[
0=\int_{\mathbb S^k}h_t
\frac{\det(B_h)}
 {R_h(t)^k(1+h_t^2)^{1/2}}\,\omega\,d\omega.
\]
Therefore, using
\(R_h'(t)=|\mathbb S^k|^{-1}\int h_t\,d\omega\) and \(h_t<0\),
\[
\begin{aligned}
c'(t)
&=\frac{k+1}{|\mathbb S^k|}\int_{\mathbb S^k}h_t\omega\,d\omega\\
&=\frac{k+1}{|\mathbb S^k|}\int_{\mathbb S^k}h_t
\left(1-\frac{\det(B_h)}
 {R_h(t)^k(1+h_t^2)^{1/2}}\right)\omega\,d\omega,\\
|c'(t)|
&\leq\frac{k+1}{|\mathbb S^k|}\varepsilon_{\rm cen}(t)
\int_{\mathbb S^k}|h_t|\,d\omega
=(k+1)\varepsilon_{\rm cen}(t)(-R_h'(t)).
\end{aligned}
\]
The nested convex bodies shrink to the puncture, hence $c(t)\to0$ and
$R_h(t)\to0$.  With $\varepsilon_{\rm cen,*}(t)=\sup_{s\geq t}\varepsilon_{\rm cen}(s)$, integration from
$t$ to infinity yields
\[
\begin{aligned}
|c(t)|&\leq\int_t^\infty|c'(s)|\,ds\\
&\leq(k+1)\varepsilon_{\rm cen,*}(t)
\int_t^\infty(-R_h'(s))\,ds\\
&=(k+1)\varepsilon_{\rm cen,*}(t)R_h(t)=o(R_h(t)).
\end{aligned}
\]

Thus
\[
P_0q=tR_h(t)\longrightarrow k,\qquad
P_1q=t\,c(t)\cdot\omega=o(tR_h(t))\longrightarrow0.
\]
Put \(\eta=P_{\geq2}q\) and
\(\mathcal L\eta=\nabla^2\eta+\eta g\).  Since
\(\mathcal L(P_1q)=0\),
\[
\mathcal L\eta=B_q-(P_0q)g\longrightarrow0.
\]
Taking the trace gives
\[
 (\Delta+k)\eta=\operatorname{tr}_g(\mathcal L\eta).
\]
Since $P_0\eta=P_1\eta=0$, the scalar operator $\Delta+k$ is invertible on
the degree-$\ge2$ subspace.  The standard Schauder estimate on the fixed
sphere therefore gives
\[
 \|\eta\|_{C^{m+2,\alpha}(\mathbb S^k)}
 \le C_{m,\alpha}\|\operatorname{tr}_g(\mathcal L\eta)\|_{C^{m,\alpha}}
 \le C_{m,\alpha}\|\mathcal L\eta\|_{C^{m,\alpha}}.
\]
Hence \(\eta\to0\) with all derivatives.  This proves the first limit in
\eqref{eq:round-full-limit}; the second is already part of
\eqref{eq:round-cylinder-limit}.
\end{proof}

\subsection{Elimination of the flux mode}

Decompose
\begin{equation}\label{eq:round-decomposition}
q=\bar q+a\cdot\omega+\eta,\qquad
\phi=\bar\phi+b\cdot\omega+\psi,\qquad
P_{\geq2}\eta=\eta,\quad P_{\geq2}\psi=\psi.
\end{equation}

On the open set \(\bar q g+\nabla^2\eta+\eta g>0\), define
\[
\begin{aligned}
G:\;&\mathbb R^{k+1}\times\mathbb R^2
\times C^2_{\geq2}(\mathbb S^k)\times C^0_{\geq2}(\mathbb S^k)
\longrightarrow\mathbb R^{k+1},\\
G(b;\bar q,\bar\phi,\eta,\psi)
&:=\int_{\mathbb S^k}
\det(\bar q g+\nabla^2\eta+\eta g)
\sin(\bar\phi+b\cdot\omega+\psi)\,\omega\,d\omega.
\end{aligned}
\]
The flux equation eliminates the degree-one component of the angle.

\begin{lemma}\label{lem:round-flux-ift}
There are fixed neighborhoods \(\mathcal U\) of
\((k,0,0,0)\) in
\[
\mathbb R^2\times C^2_{\geq2}(\mathbb S^k)
\times C^0_{\geq2}(\mathbb S^k)
\]
and \(\mathcal V\) of \(0\) in \(\mathbb R^{k+1}\), and a unique smooth
map \(\beta:\mathcal U\to\mathcal V\), such that
\[
G(b;\bar q,\bar\phi,\eta,\psi)=0
\quad\Longleftrightarrow\quad
b=\beta(\bar q,\bar\phi,\eta,\psi).
\]
For every radial pair \((\bar q,\bar\phi)\) near \((k,0)\),
\begin{equation}\label{eq:round-beta-vanish}
\beta(\bar q,\bar\phi,0,0)=0,\qquad
D_{(\eta,\psi)}\beta(\bar q,\bar\phi,0,0)=0.
\end{equation}
In particular, uniformly near the limiting radial state,
\begin{equation}\label{eq:round-beta-quadratic}
|b|\leq C\bigl(\|\eta\|_{C^2}+\|\psi\|_{C^0}\bigr)^2.
\end{equation}
The degree-one coefficient $a$ does not enter this equation.
\end{lemma}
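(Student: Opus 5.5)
The plan is to apply the implicit function theorem to $G$ in the variable $b$ at the base point $b=0$, $(\bar q,\bar\phi,\eta,\psi)=(k,0,0,0)$.

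\textbf{Smoothness and the base point.} First, $G$ is a smooth map between the stated Banach spaces. The determinant is a polynomial in $\nabla^2\eta+\eta g$ and $\bar q$, and it is continuous from $C^2$ into $C^0$. The map $\sin$ acting on $C^0$ functions is a smooth Nemytskii operator. Integration against $\omega$ is bounded and linear. At the base point, $G(0;k,0,0,0)=k^k\int\sin(0)\,\omega=0$.

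\textbf{The $b$-derivative.} Differentiating in $b$ at a radial state $(\bar q,\bar\phi,0,0)$ gives
\[
D_bG\,[\beta']=\bar q^{\,k}\cos\bar\phi\int_{\mathbb S^k}(\beta'\cdot\omega)\,\omega\,d\omega
=\bar q^{\,k}\cos\bar\phi\,\frac{|\mathbb S^k|}{k+1}\,\beta'.
\]
This is invertible near $(k,0)$. The implicit function theorem then yields fixed neighborhoods $\mathcal U,\mathcal V$ and a unique smooth $\beta$ with $G=0\iff b=\beta$. We shrink $\mathcal V$ so that uniqueness holds within it, using a uniform lower bound on $D_bG$ over a neighborhood.

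\textbf{Vanishing at radial states.} Fix a radial state $(\bar q,\bar\phi,0,0)$. The integrand of $G(0;\bar q,\bar\phi,0,0)$ is $\bar q^{\,k}\sin\bar\phi\,\omega$, which integrates to zero. By uniqueness, $\beta(\bar q,\bar\phi,0,0)=0$.

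\textbf{Vanishing of $D_{(\eta,\psi)}\beta$.} Differentiating $G$ in $(\eta,\psi)$ at $b=0$ and the radial state gives
\[
\int\Bigl(\bar q^{\,k-1}\operatorname{tr}(\nabla^2\eta'+\eta'g)\sin\bar\phi+\bar q^{\,k}\cos\bar\phi\,\psi'\Bigr)\omega\,d\omega .
\]
Both terms vanish. The $\psi'$ term vanishes because $P_1\psi'=0$. The $\eta'$ term equals $(\Delta+k)\eta'$, whose degree-one projection is zero because $\Delta+k$ preserves harmonic degree and $P_1\eta'=0$. Then $D_{(\eta,\psi)}\beta=-(D_bG)^{-1}D_{(\eta,\psi)}G=0$.

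\textbf{Quadratic bound.} Estimate \eqref{eq:round-beta-quadratic} follows from Taylor's theorem in $(\eta,\psi)$ at fixed $(\bar q,\bar\phi)$. The second derivatives of $\beta$ are uniformly bounded on a convex shrunken neighborhood of the base point, and the zeroth- and first-order terms vanish by \eqref{eq:round-beta-vanish}.

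\textbf{Independence of $a$.} The last assertion is structural: $B_{q}$ does not see $a\cdot\omega$, since $\nabla^2\ell_a+\ell_ag=0$. This is why $G$ is defined without $a$.

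\textbf{Expected obstacle.} The main obstacle is only bookkeeping. One must choose the neighborhoods so that $\bar qg+\nabla^2\eta+\eta g>0$, $\cos\bar\phi$ stays bounded away from zero, and the implicit function neighborhoods are uniform in the radial parameters. Taking the implicit function theorem at the single base point and using continuity of $D_bG$ handles this.
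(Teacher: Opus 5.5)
Your proposal is correct and follows the paper's proof essentially step for step. The shared steps are: the implicit function theorem in $b$ using $D_bG=\bar q^{\,k}\cos\bar\phi\,\frac{|\mathbb S^k|}{k+1}\Id$; vanishing of $G$ and $D_{(\eta,\psi)}G$ at radial states by orthogonality of $\psi$ and $(\Delta+k)\eta$ to the degree-one harmonics; and Taylor's theorem with bounded $D^2\beta$ for the quadratic bound. The only cosmetic difference is that you kill the $\eta$-term by noting that $\Delta+k$ preserves harmonic degree, whereas the paper integrates by parts onto $(\Delta+k)\omega=0$.
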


\begin{proof}
The displayed map is smooth: the determinant is polynomial in the
\(C^0\) Hessian entries and the sine
Nemytskii map is smooth on \(C^0\).  Moreover
\[
\nabla^2(a\cdot\omega)+(a\cdot\omega)g=0,\qquad
B_q=\bar q g+\nabla^2\eta+\eta g,
\]
so \(a\) is absent from \(G\).  At a radial pair,
\[
G(0;\bar q,\bar\phi,0,0)
=\bar q^k\sin\bar\phi\int_{\mathbb S^k}\omega\,d\omega=0,
\]
and
\[
\begin{aligned}
D_bG
&=\bar q^k\cos\bar\phi\int_{\mathbb S^k}
\omega\otimes\omega\,d\omega
=\frac{\bar q^k\cos\bar\phi|\mathbb S^k|}{k+1}\Id,\\
D_\eta G[\dot\eta]
&=\bar q^{k-1}\sin\bar\phi
\int_{\mathbb S^k}(\Delta+k)\dot\eta\,\omega\,d\omega\\
&=\bar q^{k-1}\sin\bar\phi
\int_{\mathbb S^k}\dot\eta(\Delta+k)\omega\,d\omega=0,\\
D_\psi G[\dot\psi]
&=\bar q^k\cos\bar\phi\int_{\mathbb S^k}\dot\psi\,\omega\,d\omega=0.
\end{aligned}
\]
Thus \(D_bG\) is uniformly invertible near \((k,0)\), and the
Banach-space implicit-function theorem gives \(\beta\) with
\[
\beta(\bar q,\bar\phi,0,0)=0,\qquad
D_{(\eta,\psi)}\beta(\bar q,\bar\phi,0,0)
=-(D_bG)^{-1}D_{(\eta,\psi)}G=0.
\]
On a fixed smaller neighborhood, \(D^2\beta\) is uniformly bounded, so
Taylor's theorem gives
\[
|b|=|\beta(\bar q,\bar\phi,\eta,\psi)|
\leq C\bigl(\|\eta\|_{C^2}+\|\psi\|_{C^0}\bigr)^2.
\]
By \eqref{eq:round-full-limit}, the actual solution lies in
\(\mathcal U\times\mathcal V\) for all sufficiently large \(Z\).
\end{proof}

\subsection{The weighted interior estimate and the spectral gap}

For $L>0$ write
\[
\mathcal C_L(Z_0):=[Z_0-L,Z_0+L]\times\mathbb S^k.
\]

The next lemma is the uniform interior estimate for the high spherical
harmonics.

\begin{lemma}
\label{lem:round-DN-estimate}
For every \(L\geq2\) and integer \(j\geq1\), there are
$Z_{j,L}$ and $C_{j,L}$ such that, for $Z_0\geq Z_{j,L}$,
\begin{equation}\label{eq:round-DN-Cj}
\|\eta\|_{C^j(\mathcal C_{L-1}(Z_0))}
+\|\psi\|_{C^{j-1}(\mathcal C_{L-1}(Z_0))}
\leq C_{j,L}\| (\eta,\psi)\|_{L^2(\mathcal C_L(Z_0))}.
\end{equation}
\end{lemma}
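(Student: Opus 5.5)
The plan is to turn the exact support system of Proposition~\ref{prop:round-exact-system} into a linear, uniformly elliptic second-order equation for the high mode $\eta$ on the cylinder $\mathcal C_L(Z_0)$. Its coefficients converge in $C^\infty$ to constants, and its perturbations are either small relative to $(\eta,\psi)$ or finite-dimensional. Standard interior $L^2$ and Schauder theory then gives \eqref{eq:round-DN-Cj}. Throughout, the a priori convergence \eqref{eq:round-full-limit} together with the mixed $Z$--angular convergence \eqref{eq:round-cylinder-limit} supplies uniform smallness of $q-k$ and $\phi$ in every $C^m(\mathcal C_L(Z_0))$ once $Z_0$ is large. This smallness is used only to control coefficients, never as a substitute for the right-hand side $\|(\eta,\psi)\|_{L^2}$.

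\emph{Step 1: exact linearization in mean-value form.} Write $\tan\phi-\tan(\bar\phi)=A(Z,\omega)(b\cdot\omega+\psi)$ with $A=\int_0^1\sec^2(\bar\phi+\tau(b\cdot\omega+\psi))\,d\tau$. Similarly, write $\operatorname{tr}(B_q^{-1})-k/\bar q=-\langle\mathcal M(Z,\omega),\nabla^2\eta+\eta g\rangle$, where $\mathcal M$ is the integrated derivative of $B\mapsto\operatorname{tr}B^{-1}$ between $\bar q g$ and $B_q$. Here I use that $a\cdot\omega$ does not enter $B_q$. The gradient term $\sec^2\phi\langle B_q^{-1}\nabla\phi,\nabla\phi\rangle$ is written as $\langle\mathcal N\nabla(b\cdot\omega+\psi),\nabla\psi\rangle$ plus a term involving $b$ only. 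Then $A\to1$, $\mathcal M\to k^{-2}g$, and $\mathcal N\to0$ in every $C^m$. Apply $P_{\geq2}$ to \eqref{eq:round-q}--\eqref{eq:round-phi}; this kills $\bar q/(2Z)$, $a\cdot\omega$, and every purely radial term. The result is the exact linear first-order system
\[
\eta_Z=\tfrac{1}{2Z}\eta-P_{\geq2}\bigl(A\psi\bigr)+\mathcal E_1,\qquad
\psi_Z=P_{\geq2}\bigl(\langle\mathcal M,\nabla^2\eta+\eta g\rangle-\langle\mathcal N\nabla\psi,\nabla\psi\rangle\bigr)+\mathcal E_2,
\]
where $\mathcal E_1,\mathcal E_2$ collect the $b$-terms. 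By Lemma~\ref{lem:round-flux-ift}, \eqref{eq:round-beta-quadratic}, $|b|\le o(1)(\|\eta\|_{C^2}+\|\psi\|_{C^0})$ pointwise in $Z$. The nonlocality of $P_{\geq2}$ is handled by writing $P_{\geq2}(cf)=cf-P_{\leq1}(cf)$. The second piece lies in a fixed finite-dimensional space of spherical harmonics, with coefficients bounded by $\|c-c_\infty\|\,\|f\|_{L^2(\mathbb S^k)}$ when $f\in C_{\geq2}$, so it is a small, smoothing, $Z$-local perturbation.

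\emph{Step 2: reduction to an elliptic equation.} Differentiate the first equation in $Z$ and substitute the second, using the first equation again to replace $\psi$ by $-A^{-1}\eta_Z$ up to small terms. This gives
\[
\eta_{ZZ}+\langle\widetilde{\mathcal M},\nabla^2\eta\rangle+\text{lower order}=\mathcal R,
\]
with $\widetilde{\mathcal M}\to k^{-2}g$. This is uniformly elliptic on $\mathbb R\times\mathbb S^k$, since the limiting operator is $\partial_Z^2+k^{-2}(\Delta+k)$. The remainder $\mathcal R$ is bounded by $o(1)$ times $C^2$ norms of $\eta$ plus the finite-dimensional nonlocal pieces. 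The quadratic gradient term in $\psi$ carries a coefficient $\mathcal N$ that is $o(1)$ in $C^m$, so it is absorbed.

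\emph{Step 3: interior estimates.} Proceed by induction on $j$, using Caccioppoli/G\aa rding estimates on the nested cylinders $\mathcal C_{L-1}\subset\mathcal C_{L-1/2}\subset\mathcal C_L$. Differentiate tangentially with the rotation Killing fields of $\mathbb S^k$, which commute with $P_{\geq2}$, and in $Z$. The $o(1)$ terms are absorbed on the left once $Z_0\ge Z_{j,L}$. This gives $H^{j+s}$ bounds for $\eta$, and Sobolev embedding converts them to the $C^j$ bound. The bound for $\psi$ with one derivative fewer then follows from the first equation, $\psi=-A^{-1}(\eta_Z-\eta/(2Z)-\mathcal E_1)$ up to the finite-rank correction. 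This explains the loss of one derivative in \eqref{eq:round-DN-Cj}.

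The main obstacle is Step 2. One must verify that eliminating $\psi$ really produces a uniformly elliptic principal part, with signs such that $\partial_Z^2$ and $\Delta$ enter with the same sign. One must also check that the nonlocal projections and the $b$-terms enter only at a level absorbable into the left-hand side. I would first check the constant-coefficient model $(\eta_Z,\psi_Z)=(-\psi,k^{-2}(\Delta+k)\eta)$ mode by mode, and then treat every deviation as a perturbation controlled by the $C^\infty$ convergence \eqref{eq:round-cylinder-limit}.
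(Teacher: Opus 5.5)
Your route differs from the paper's in one structural choice. The paper never eliminates $\psi$. It keeps the first-order system $(\mathcal L_Z+\mathcal K_Z)\mathbf w=0$, checks Douglis--Nirenberg ellipticity with weights $(s_1,s_2;t_\eta,t_\psi)=(-1,0;2,1)$, localizes with a cutoff in $Z$ only (the finite-rank part $\mathcal K_Z$ commutes with it), and closes by hole filling. Your elimination of $\psi$ is compatible with this. The reduced principal part $\eta_{ZZ}-d^a\nabla_a\eta_Z+eA^{ab}\nabla_{ab}\eta$ has symbol $-\tau^2+\tau\,d\cdot\theta-eA^{ab}\theta_a\theta_b$, which is exactly the paper's determinant \eqref{eq:DN-symbol-determinant}. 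So your sign and ellipticity check is the same computation, and it lets you use classical scalar interior theory instead of the DN machinery. The price is one extra $Z$-derivative on every coefficient and on the solved flux correction. That is where your proposal has a genuine gap.

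\textbf{The gap: control of $b$.} The problem is the degree-one vector $b=\beta(\bar q,\bar\phi,\eta,\psi)$. Your only control is the pointwise bound $|b|\le o(1)(\|\eta\|_{C^2}+\|\psi\|_{C^0})$. This already costs angular derivatives: $\|\eta(Z,\cdot)\|_{C^2(\mathbb S^k)}$ is not controlled by the $H^2$ norm at the base of your induction. Worse, $\partial_Z\mathcal E_1$ contains $b_Z$, and by the chain rule $b_Z$ contains $D_\eta\beta[\eta_Z]$. Through $\det(\bar q g+\nabla^2\eta+\eta g)$ in the flux map, this involves $\nabla^2\eta_Z$. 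So your remainder $\mathcal R$ is not ``$o(1)$ times $C^2$ norms of $\eta$''. It contains a finite-rank term of order three, above your principal order two. In the $H^m$ norm of the equation it requires at least $m+3$ derivatives of $\eta$, which the $H^{m+2}$ left side cannot absorb at any level, however small the coefficient.

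\textbf{The missing idea.} The flux correction is an order-zero functional. Write $b=\mathfrak B[\mathbf w]$ with $\mathfrak B=\int_0^1D_{(\eta,\psi)}\beta(\bar q,\bar\phi,s\eta,s\psi)\,ds$. In the implicit-derivative formula, integrate the term $\int_{\mathbb S^k}\operatorname{cof}(B_s)^{ab}(\nabla_{ab}\xi+\xi g_{ab})\sin\widehat\phi_s\,\omega\,d\omega$ by parts twice onto the smooth kernel, as in \eqref{eq:DN-beta-order-zero}--\eqref{eq:DN-beta-kernels}. This gives $|\partial_Z^r b|\le\varepsilon\sum_{\ell\le r}\|\partial_Z^\ell\mathbf w\|_{L^2(\mathbb S^k)}$ on the tail, which loses no derivatives. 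With it, your reduction, the absorption in Step 3, and the recovery of $\psi$ from the first equation all close. A tame alternative also works: use that $\beta$ vanishes to second order on the radial family, put the a priori $C^\infty$ smallness on the most-differentiated factor, work at a high Sobolev level, and interpolate down. Some such device has to be supplied.

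Two smaller slips are harmless. You dropped the contribution $c\psi$ coming from $\sec\phi$. Also, the matrix in your bilinear gradient term tends to $k^{-1}g$, not to zero. What is small is the drift coefficient $\sec^2\phi\,B_q^{-1}\nabla\phi$, so that term should be treated as a linear drift $d^a\nabla_a\psi$.
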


\begin{proof}
 \noindent\textbf{Step 1: projected path linearization.}
Substitute \(b=\beta(\bar q,\bar\phi,\eta,\psi)\), put
\[
\phi=\bar\phi+\beta(\bar q,\bar\phi,\eta,\psi)\cdot\omega+\psi,
\qquad B_q=\bar q g+\nabla^2\eta+\eta g,
\]
and define
\[
\begin{aligned}
\mathfrak G_1(\bar q,\bar\phi,\eta,\psi)
&:=\eta_Z-\frac{\eta}{2Z}+P_{\geq2}(\tan\phi),\\
\mathfrak G_2(\bar q,\bar\phi,\eta,\psi)
&:=\psi_Z-P_{\geq2}\!\left[
\sec\phi-\operatorname{tr}(B_q^{-1})
-\sec^2\phi\,
\langle B_q^{-1}\nabla\phi,\nabla\phi\rangle
\right].
\end{aligned}
\]
Here the degree-one part of \(q\) disappears because
\(\nabla^2(a\cdot\omega)+(a\cdot\omega)g=0\).  The projected exact system
is \(\mathfrak G(\bar q,\bar\phi,\eta,\psi)=0\), while
\(\mathfrak G(\bar q,\bar\phi,0,0)=0\) for every radial pair.  Thus, with
\(\mathbf w=(\eta,\psi)\),
\[
\begin{aligned}
0&=\mathfrak G(\bar q,\bar\phi,\eta,\psi)
  -\mathfrak G(\bar q,\bar\phi,0,0)\\
 &=\int_0^1D_{(\eta,\psi)}
\mathfrak G(\bar q,\bar\phi,s\eta,s\psi)[\eta,\psi]\,ds
 =(\mathcal L_Z+\mathcal K_Z)\mathbf w.
\end{aligned}
\]
In particular,
\begin{equation}\label{eq:round-LK}
\mathcal L_Z\mathbf w+\mathcal K_Z\mathbf w=0.
\end{equation}
Here $\mathcal L_Z$ consists of the local differential terms and
$\mathcal K_Z$ of the finite-rank terms produced by $P_0,P_1$ and by the
derivative of the solved parameter $b$.

\noindent\textbf{Step 2: exact path linearization and the finite-rank correction.}
We use two distinct fundamental-theorem-of-calculus representations.  The
local differential part is obtained from the straight path in the
physical variables $(B,\phi)$ joining the radial endpoint to the actual
endpoint, whereas the implicit-function path in $(\eta,\psi)$ is used only
to represent the endpoint vector $b$.  These two identities are combined
only after both have been evaluated at the same endpoint.

Put \(\Pi:=P_0+P_1\) and \(\Pi_{\rm hi}:=I-\Pi\).  Shrinking the
implicit-function neighbourhood if necessary and increasing the tail
threshold, we may assume that it is star-shaped in the
\((\eta,\psi)\)-variables.  Put
\[
 b:=\beta(\bar q,\bar\phi,\eta,\psi),\qquad \zeta:=b\cdot\omega+\psi.
\]
For \(0\leq s\leq1\), use the straight path in the two physical
variables
\[
 B_s:=\bar q g+sC_\eta,\qquad C_\eta:=\nabla^2\eta+\eta g,
 \qquad \phi_s:=\bar\phi+s\zeta.
\]
We also write
\[
 M_s:=B_s^{-1},\qquad \mathbf h_s:=\nabla\phi_s,\qquad
 \mathcal Q_s:=\langle M_s\mathbf h_s,\mathbf h_s\rangle,
\qquad \dot\phi_s=\zeta=\psi+b\cdot\omega.
\]
All these objects are evaluated at the same \(Z\).  By the already
proved mixed \(Z\)--angular convergence to the round family, this
claim does not use the estimate currently being proved: indeed,
\[
 \bar q=\frac1{k|\mathbb S^k|}\int_{\mathbb S^k}
       \operatorname{tr}_gB_q\,d\omega,\qquad
 C_\eta=B_q-\bar q g,
\]
and the fixed-sphere elliptic inverse of
\(\eta\mapsto\nabla^2\eta+\eta g\) on the high-harmonic subspace gives
mixed convergence of \(\eta\) from that of \(B_q\).  The fixed
projections \(P_0,P_1,P_{\geq2}\) give the corresponding convergence
of \(\bar\phi,b,\psi\) from that of \(\phi\).  Hence, uniformly for
\(s\in[0,1]\),
\begin{equation}\label{eq:DN-path-convergence}
 B_s\longrightarrow kg,\qquad
 \phi_s\longrightarrow0,\qquad
 \nabla\phi_s\longrightarrow0
 \quad\text{in every mixed }C^m\text{ norm}.
\end{equation}

Let
\[
 \mathcal G(B,\phi):=
 \sec\phi-\operatorname{tr}(B^{-1})
 -\sec^2\phi\,
       \langle B^{-1}\nabla\phi,\nabla\phi\rangle.
\]
Since \(\dot M_s=-M_sC_\eta M_s\), direct differentiation gives
\begin{equation}\label{eq:DN-exact-linearization}
\begin{split}
 \frac d{ds}\mathcal G(B_s,\phi_s)
 ={}&
 A_s^{ab}\bigl(\nabla_{ab}\eta+\eta g_{ab}\bigr)
  +d_s^a\nabla_a\psi+c_s\psi\\
 &+
  c_s(b\cdot\omega)
  +d_s^a\nabla_a(b\cdot\omega),
\end{split}
\end{equation}
where
\begin{equation}\label{eq:DN-local-coefficients}
\begin{split}
 A_s^{ab}
 &:=(M_s^2)^{ab}
   +\sec^2\phi_s\,(M_s\mathbf h_s)^a(M_s\mathbf h_s)^b,\\
 d_s^a&:=-2\sec^2\phi_s\,(M_s\mathbf h_s)^a,\\
 c_s&:=\sec\phi_s\tan\phi_s
       -2\sec^2\phi_s\tan\phi_s\,\mathcal Q_s.
\end{split}
\end{equation}
Here \(M_s^2\) is the positive \(g\)-self-adjoint square of \(M_s\).
Similarly,
\begin{equation}\label{eq:DN-tan-linearization}
 \frac d{ds}\tan\phi_s
   =\sec^2\phi_s\,\psi
     +\sec^2\phi_s(b\cdot\omega).
\end{equation}
Define the path-averaged local coefficients
\begin{equation}\label{eq:DN-averaged-coefficients}
 e:=\int_0^1\sec^2\phi_s\,ds,\qquad
 A^{ab}:=\int_0^1A_s^{ab}\,ds,\qquad
 d^a:=\int_0^1d_s^a\,ds,\qquad
 c:=\int_0^1c_s\,ds.
\end{equation}
Thus the local differential part of the exact difference system is
\begin{equation}\label{eq:DN-local-operator}
\begin{split}
 (\mathcal L_Z\mathbf w)_1
   &:=\eta_Z-\frac{\eta}{2Z}+e\psi,\\
 (\mathcal L_Z\mathbf w)_2
   &:=\psi_Z
      -A^{ab}\nabla_{ab}\eta
      -(\operatorname{tr}_g A)\eta
      -d^a\nabla_a\psi-c\psi ,
\end{split}
\qquad \mathbf w=(\eta,\psi).
\end{equation}
Notice in particular that no derivative of \(\mathbf w\) has been suppressed
in \eqref{eq:DN-local-operator}.
At the round limit, \(\mathcal L_Z\mathbf w=0\) becomes
\begin{equation}\label{eq:round-linear-limit}
 \eta_Z+\psi=0,\qquad
 \psi_Z-k^{-2}(\Delta+k)\eta=0.
\end{equation}

We next express the solved vector \(b\) as a finite-dimensional linear
functional of \(\mathbf w\).  The straight physical path keeps this contribution
in a fixed finite-dimensional range, so the resulting nonlocal terms are
genuinely finite rank.  Let
\[
 \mathfrak F(b;\bar q,\bar\phi,\eta,\psi)
 :=\int_{\mathbb S^k}
       \det(\bar q g+C_\eta)\,
       \sin(\bar\phi+b\cdot\omega+\psi)\,\omega\,d\omega
\]
be the flux map used to define \(b=\beta(\bar q,\bar\phi,\eta,\psi)\).
Set
\[
 \widehat b_s:=\beta(\bar q,\bar\phi,s\eta,s\psi),\qquad
 \widehat\phi_s:=\bar\phi+\widehat b_s\cdot\omega+s\psi,
\]
so \(\widehat b_s\to0\) and \(\widehat\phi_s\to0\) in every required
mixed norm, uniformly in \(s\).  In the formulas below the subscript
\(s\) on a derivative of \(\mathfrak F\) means evaluation at
\((\widehat b_s;\bar q,\bar\phi,s\eta,s\psi)\).
For an arbitrary high-mode pair \(\mathbf v=(\xi,\upsilon)\), define
\[
 \mathfrak b_s[\mathbf v]
 :=D_{(\eta,\psi)}\beta(\bar q,\bar\phi,s\eta,s\psi)[\xi,\upsilon].
\]
The implicit differentiation formula is
\begin{equation}\label{eq:DN-beta-derivative}
 \mathfrak b_s[\mathbf v]
 =-\bigl(D_b\mathfrak F_s\bigr)^{-1}
   \left\{
   \int_{\mathbb S^k}
       \operatorname{cof}(B_s)^{ab}
       (\nabla_{ab}\xi+\xi g_{ab})
       \sin\widehat\phi_s\,\omega\,d\omega
   +\int_{\mathbb S^k}
       \det(B_s)\cos\widehat\phi_s\,\upsilon\,\omega\,d\omega
   \right\}.
\end{equation}
The matrix \(D_b\mathfrak F_s\) is uniformly invertible by the flux
implicit-function theorem and \eqref{eq:DN-path-convergence}.  Two
integrations by parts on \(\mathbb S^k\) rewrite
\eqref{eq:DN-beta-derivative} as
\begin{equation}\label{eq:DN-beta-order-zero}
 \mathfrak b_s[\mathbf v]
 =-\bigl(D_b\mathfrak F_s\bigr)^{-1}
   \left\{
      \int_{\mathbb S^k}\mathcal U_s\,\xi\,d\omega
      +\int_{\mathbb S^k}\mathcal V_s\,\upsilon\,d\omega
   \right\},
\end{equation}
where the vector-valued smooth kernels are
\begin{equation}\label{eq:DN-beta-kernels}
\begin{split}
 \mathcal U_s
 &:=
 \nabla_a\nabla_b
   \bigl(\operatorname{cof}(B_s)^{ab}
          \sin\widehat\phi_s\,\omega\bigr)
 +\operatorname{tr}_g(\operatorname{cof}(B_s))
          \sin\widehat\phi_s\,\omega,\\
 \mathcal V_s
 &:=\det(B_s)\cos\widehat\phi_s\,\omega.
\end{split}
\end{equation}
Consequently
\begin{equation}\label{eq:DN-B-functional}
 \mathfrak B[\mathbf v]:=\int_0^1\mathfrak b_s[\mathbf v]\,ds
\end{equation}
is an angular operator of order zero with range in
\(\mathbb R^{k+1}\), and the fundamental theorem of calculus gives
\begin{equation}\label{eq:DN-B-on-U}
 \mathfrak B[\mathbf w]
 =\beta(\bar q,\bar\phi,\eta,\psi)-\beta(\bar q,\bar\phi,0,0)=b.
\end{equation}

For \(\mathbf v=(\xi,\upsilon)\), set
\[
 \mathcal R[\mathbf v]
 :=A^{ab}(\nabla_{ab}\xi+\xi g_{ab})
       +d^a\nabla_a\upsilon+c\upsilon.
\]
Extend \(\mathcal L_Z\) from \eqref{eq:DN-local-operator} to an arbitrary
high-mode pair \(\mathbf v=(\xi,\upsilon)\) in the evident way, and define
\begin{equation}\label{eq:DN-exact-LK-splitting}
\begin{split}
 (\mathcal K_Z\mathbf v)_1
 &:=-\Pi(e\upsilon)
    +\Pi_{\rm hi}\!\left(e(\mathfrak B[\mathbf v]\cdot\omega)\right),\\
 (\mathcal K_Z\mathbf v)_2
 &:=\Pi\mathcal R[\mathbf v]
   -\Pi_{\rm hi}\!\left(
       c(\mathfrak B[\mathbf v]\cdot\omega)
       +d^a\nabla_a(\mathfrak B[\mathbf v]\cdot\omega)\right).
\end{split}
\end{equation}
Equations \eqref{eq:DN-exact-linearization},
\eqref{eq:DN-tan-linearization}, and
\eqref{eq:DN-B-on-U} show \emph{exactly} that
\begin{equation}\label{eq:DN-exact-P-equation}
 \mathcal P_Z\mathbf w=(\mathcal L_Z+\mathcal K_Z)\mathbf w=0.
\end{equation}
In particular, \(\mathcal K_Z\) is pointwise in \(Z\), contains no
\(Z\)-derivative of its argument \(\mathbf v\), and commutes with every scalar cutoff
\(\chi(Z)\).

We claim more precisely that \(\mathcal K_Z\) is small in the norms used
below.  At a radial state \(B=\bar q g\), \(\phi=\bar\phi\), the coefficients
of \(\mathcal R[\mathbf v]\) are independent of \(\omega\), and hence
\(\Pi\mathcal R[\mathbf v]=0\) for \(P_{\ge2}\xi=\xi,\ P_{\ge2}\upsilon=\upsilon\).  Moreover,
\[
 \int_{\mathbb S^k}(\Delta+k)\eta\,\omega\,d\omega=0,
 \qquad
 \int_{\mathbb S^k}\psi\,\omega\,d\omega=0,
\]
so \(D_{(\eta,\psi)}\beta(\bar q,\bar\phi,0,0)=0\) on
\(\operatorname{Ran}P_{\ge2}\times\operatorname{Ran}P_{\ge2}\).  For the variable-coefficient terms in
\(\Pi\mathcal R[\mathbf v]\), integrate the two derivatives of \(\xi\), and the one
derivative of \(\upsilon\), onto the fixed degree \(0\) or \(1\) harmonic
and the coefficient.  Together with
\eqref{eq:DN-beta-order-zero}, this shows that every component of
\(\mathcal K_Z\) is a finite sum of terms of the form
\begin{equation}\label{eq:DN-finite-rank-form}
 a_\nu(Z,\omega)
 \int_{\mathbb S^k}
    \bigl\{\alpha_\nu(Z,\omega')\xi(Z,\omega')
           +\beta_\nu(Z,\omega')\upsilon(Z,\omega')\bigr\}
 \,d\omega'.
\end{equation}
The mixed \(C^m\) seminorms of the relevant high-mode parts of
\(\alpha_\nu,\beta_\nu\), and of the nonradial parts producing
\(\Pi\mathcal R[\mathbf v]\), tend
to zero as \(Z\to\infty\).  Equivalently, since \(\xi,\upsilon\) are high
modes, one may replace the kernels in
\eqref{eq:DN-beta-order-zero} by their $P_{\ge2}$-projections, and
for \(0\leq r\leq m+1\) one has
\[
 \bigl|\partial_Z^r\mathfrak B[\mathbf v](Z)\bigr|
 \leq\varepsilon_{m,L}(Z_0)
 \sum_{\ell=0}^r\left(
  \|\partial_Z^\ell \xi(Z,\cdot)\|_{L^2(\mathbb S^k)}
 +\|\partial_Z^\ell \upsilon(Z,\cdot)\|_{L^2(\mathbb S^k)}
 \right)
\]
on \(\mathcal C_L(Z_0)\), with \(\varepsilon_{m,L}(Z_0)\to0\).
Therefore, for each fixed \(m\) and fixed cylinder length \(L\),
\begin{equation}\label{eq:DN-norms}
\begin{split}
 \mathcal X_m(\mathbf v;Q)&:=
 \|\xi\|_{H^{m+2}(Q)}
 +\|\upsilon\|_{H^{m+1}(Q)},\\
 \mathcal Y_m(\mathbf f;Q)&:=
 \|\mathbf f_1\|_{H^{m+1}(Q)}
 +\|\mathbf f_2\|_{H^m(Q)}
\end{split}
\end{equation}
satisfy
\begin{equation}\label{eq:round-K-small}
 \mathcal Y_m(\mathcal K_Z\mathbf v;\mathcal C_L(Z_0))
 \leq \varepsilon_{m,L}(Z_0)\,\mathcal X_m(\mathbf v;\mathcal C_L(Z_0)),
 \qquad
 \varepsilon_{m,L}(Z_0)\longrightarrow0.
\end{equation}
This follows directly from \eqref{eq:DN-finite-rank-form},
differentiating under the angular integrals; all coefficient
derivatives are uniformly controlled by
\eqref{eq:DN-path-convergence}.  Thus the finite-rank correction loses no derivatives.

\noindent\textbf{Step 3: Douglis--Nirenberg ellipticity and the interior estimate.}
We now check the local system to which the Douglis--Nirenberg estimate is applied.  First, after increasing
\(Z_*\), the eigenvalues of every \(B_s\) lie, for example, in
\([k/2,3k/2]\).  Thus, for every covector \(\theta\),
\begin{equation}\label{eq:DN-positive-A}
 A_s^{ab}\theta_a\theta_b
 =|M_s\theta|_g^2
  +\sec^2\phi_s
       \bigl\langle M_s\mathbf h_s,\theta\bigr\rangle_g^2
 \geq \frac{4}{9k^2}|\theta|_g^2.
\end{equation}
Also \(e\geq1\), and \(e\to1\),
\(A\to k^{-2}g^{-1}\), \(d\to0\), \(c\to0\) in every
mixed coefficient norm.  Assign column weights
\[
 (t_\eta,t_\psi)=(2,1)
 \quad\text{and row weights}\quad
 (s_1,s_2)=(-1,0).
\]
Then the orders of the four entries of \(\mathcal L_Z\) are at most
\[
 \begin{pmatrix}
 s_1+t_\eta&s_1+t_\psi\\
 s_2+t_\eta&s_2+t_\psi
 \end{pmatrix}
 =
 \begin{pmatrix}1&0\\2&1\end{pmatrix},
\]
and its weighted principal symbol at
\((Z,\omega;\tau,\theta)\) is
\begin{equation}\label{eq:DN-full-symbol}
 \sigma_{\rm DN}(\mathcal L_Z)(\tau,\theta)
 =
 \begin{pmatrix}
 i\tau&e\\
 A^{ab}\theta_a\theta_b&i(\tau-d^a\theta_a)
 \end{pmatrix}.
\end{equation}
The drift term \(-d^a\nabla_a\psi\) has weighted principal order one
and is therefore included in the lower-right entry.  Consequently
\begin{equation}\label{eq:DN-symbol-determinant}
\begin{split}
 \det\sigma_{\rm DN}(\mathcal L_Z)(\tau,\theta)
 &=-\tau^2+\tau d^a\theta_a-eA^{ab}\theta_a\theta_b\\
 &=-\left(\tau-\tfrac12d^a\theta_a\right)^2
   -\left(eA^{ab}-\tfrac14d^ad^b\right)\theta_a\theta_b.
\end{split}
\end{equation}
To verify uniform negativity, put \(v_s=M_s\mathbf h_s\).  The coefficient
formulas and weighted Cauchy--Schwarz give
\[
 \frac14(d\cdot\theta)^2
 =\left(\int_0^1\sec^2\phi_s\,v_s\cdot\theta\,ds\right)^2
 \le e\int_0^1\sec^2\phi_s\,(v_s\cdot\theta)^2\,ds.
\]
Hence
\[
 \left(eA-\tfrac14d\otimes d\right)[\theta,\theta]
 \ge e\int_0^1|M_s\theta|_g^2\,ds
 \ge \frac{4}{9k^2}|\theta|_g^2.
\]
Since \(d\) is uniformly bounded on the tail, completing the square
in \eqref{eq:DN-symbol-determinant} yields
\[
 \left|\det\sigma_{\rm DN}(\mathcal L_Z)(\tau,\theta)\right|
 \ge c_k\bigl(\tau^2+|\theta|_g^2\bigr)
 \qquad\text{for all }(\tau,\theta),
\]
after fixing the tail threshold.  This is precisely the
uniform Douglis--Nirenberg ellipticity condition
\cite{DouglisNirenberg1955}; in the round limit
\eqref{eq:DN-full-symbol} becomes
\[
 \begin{pmatrix}
 i\tau&1\\ k^{-2}|\theta|^2&i\tau
 \end{pmatrix}.
\]

Apply the compactly supported interior estimate for a uniformly
Douglis--Nirenberg elliptic system
\cite{DouglisNirenberg1955} with \(p=2\)
in a fixed finite atlas of \(\mathbb S^k\), subdivided into coordinate
balls of a uniformly small radius.  Patch with a partition of unity
whose derivatives are independent of \(Z_0\), and absorb the lower-order
commutator terms by interpolation.  This gives, for
every \(\mathbf v\) supported in the interior of the fixed translated
cylinder \(Q=\mathcal C_L(Z_0)\),
\begin{equation}\label{eq:round-DN-Sobolev}
 \mathcal X_m(\mathbf v;Q)
 \leq C_{m,L}\left\{
      \mathcal Y_m(\mathcal L_Z\mathbf v;Q)+\|\mathbf v\|_{L^2(Q)}
      \right\}.
\end{equation}
Here the first row is measured in \(H^{m+1}\), the second in \(H^m\),
and the two unknowns in \(H^{m+2}\) and \(H^{m+1}\),
respectively, exactly as dictated by
\((s_1,s_2;t_\eta,t_\psi)=(-1,0;2,1)\).
The constants are independent of the translated cylinder because
the ellipticity constant in \eqref{eq:DN-symbol-determinant} and the
required coefficient norms are uniform.  Since
\(\mathcal P_Z=\mathcal L_Z+\mathcal K_Z\), for high-mode pairs
\(\mathbf v\in\operatorname{Ran}P_{\ge2}\times\operatorname{Ran}P_{\ge2}\) estimates
\eqref{eq:round-K-small} and \eqref{eq:round-DN-Sobolev}, with
\(Z_*\) increased until
\(C_{m,L}\varepsilon_{m,L}(Z_*)\leq\tfrac12\), yield
\begin{equation}\label{eq:round-full-compact-estimate}
 \mathcal X_m(\mathbf v;Q)
 \leq C_{m,L}\left\{
      \mathcal Y_m(\mathcal P_Z\mathbf v;Q)+\|\mathbf v\|_{L^2(Q)}
      \right\}.
\end{equation}

\noindent\textbf{Step 4: localization and hole filling.}
We spell out the localization, because a direct use of
\eqref{eq:round-DN-Sobolev} on
$\mathcal L_Z\mathbf w=-\mathcal K_Z\mathbf w$ would put a highest-order norm on a larger
cylinder.  Let \(\mathcal C_\rho\Subset \mathcal C_\sigma\Subset \mathcal C_L\) and choose a scalar cutoff
\(\chi(Z)\) equal to one on \(\mathcal C_\rho\), supported in \(\mathcal C_\sigma\), with
\(|\partial_Z^p\chi|\leq C_p(\sigma-\rho)^{-p}\).  The finite-rank operator is angular
and order zero in $Z$, so it commutes with $\chi$.  Since
$\mathcal P_Z\mathbf w=0$,
\[
\mathcal P_Z(\chi\mathbf w)=[\mathcal L_Z,\chi]\mathbf w.
\]
Applying \eqref{eq:round-full-compact-estimate} to \(\chi\mathbf w\) gives
\[
\begin{aligned}
\mathcal X_m(\mathbf w;\mathcal C_\rho)
&\leq \mathcal X_m(\chi\mathbf w;\mathcal C_\sigma)\\
&\leq C\left(
\mathcal Y_m([\mathcal L_Z,\chi]\mathbf w;\mathcal C_\sigma)
 +\|\mathbf w\|_{L^2(\mathcal C_\sigma)}\right).
\end{aligned}
\]
The commutator has strictly lower Douglis--Nirenberg order.  Consequently,
for some integers $M_m,M'_m$ and every $\delta>0$, Sobolev interpolation
gives
\begin{align*}
\mathcal Y_m([\mathcal L_Z,\chi]\mathbf w;\mathcal C_\sigma)
&\leq C(\sigma-\rho)^{-M_m}\mathcal X_m^-(\mathbf w;\mathcal C_\sigma)\\
&\leq C(\sigma-\rho)^{-M_m}
\bigl(\delta \mathcal X_m(\mathbf w;\mathcal C_\sigma)
 +C\delta^{-M'_m}\|\mathbf w\|_{L^2(\mathcal C_\sigma)}\bigr),
\end{align*}
where
\[
\mathcal X_m^-(\mathbf w;Q):=\|\eta\|_{H^{m+1}(Q)}+\|\psi\|_{H^m(Q)}.
\]
Choose
$\delta$ to be a fixed small multiple of
\(\vartheta(\sigma-\rho)^{M_m}\).  We obtain, for any prescribed
$0<\vartheta<1$, after increasing the tail threshold,
\begin{equation}\label{eq:round-hole-filling}
\mathcal X_m(\mathbf w;\mathcal C_\rho)\leq\vartheta \mathcal X_m(\mathbf w;\mathcal C_\sigma)
+C_{m,L,\vartheta}(\sigma-\rho)^{-N_m}\|\mathbf w\|_{L^2(\mathcal C_L)}.
\end{equation}
Choose \(\vartheta\) so that \(2^{N_m}\vartheta<1\), set
\[
r_j=L-1+\tfrac12(1-2^{-j}),\qquad
A_j=\mathcal X_m(\mathbf w;\mathcal C_{r_j}),\qquad \mathcal N=\|\mathbf w\|_{L^2(\mathcal C_L)}.
\]
Since \(r_{j+1}-r_j=2^{-j-2}\), \eqref{eq:round-hole-filling} gives,
after absorbing a fixed power of \(2\) into \(C\),
\[
A_j\leq\vartheta A_{j+1}+C2^{N_mj}\mathcal N.
\]
Consequently
\[
A_0\leq\vartheta^JA_J
+C\mathcal N\sum_{j=0}^{J-1}(2^{N_m}\vartheta)^j.
\]
Smoothness of \(\mathbf w\) on \(\mathcal C_{L-1/2}\) gives
\(\vartheta^JA_J\to0\); letting \(J\to\infty\) therefore yields
\[
\mathcal X_m(\mathbf w;\mathcal C_{L-1})\leq C_{m,L}\|\mathbf w\|_{L^2(\mathcal C_L)}.
\]
The cylinder has dimension \(k+1\).  For the prescribed \(j\), choose
\(m\) so that \(m+2-(k+1)/2>j\).  Sobolev embedding, with the estimate
also applied one index higher for \(\psi\), proves
\eqref{eq:round-DN-Cj}; the threshold \(Z_{j,L}\) dominates the
coefficient, ellipticity, and perturbation-smallness thresholds, while
\(C_{j,L}\) is independent of \(Z_0\).
\end{proof}

The spectral gap upgrades translated-cylinder convergence to exponential
angular decay.

\begin{proposition}
\label{prop:round-angular-decay}
There is $\gamma>0$ such that, for every $j\geq0$,
\begin{equation}\label{eq:round-angular-decay}
\|q(Z,\cdot)-\bar q(Z)\|_{C^j(\mathbb S^k)}
+\|\phi(Z,\cdot)-\bar\phi(Z)\|_{C^j(\mathbb S^k)}
\leq C_j e^{-\gamma Z}.
\end{equation}
\end{proposition}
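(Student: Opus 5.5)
The plan is to derive exponential decay of the high modes $\mathbf w=(\eta,\psi)$ from the limiting linear system \eqref{eq:round-linear-limit}. The degree-one modes are then recovered separately: $b$ through the flux relation \eqref{eq:round-beta-quadratic}, and $a$ through the exact $q$-equation together with translation balance. We begin with the high modes.

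\emph{Step 1: spectral analysis of the limiting operator.} On the degree-$\ell$ harmonics, $\ell\ge2$, we have $(\Delta+k)\eta=-(\ell(\ell+k-1)-k)\eta=:-\lambda_\ell\eta$ with $\lambda_\ell\ge\lambda_2=k+2>0$. The system $\eta_Z=-\psi$, $\psi_Z=-k^{-2}\lambda_\ell\eta$ therefore has rates $\pm\mu_\ell$, where $\mu_\ell=\sqrt{\lambda_\ell}/k\ge\mu_2>0$. Hence every mode is hyperbolic and is a combination of $e^{\mu_\ell Z}$ and $e^{-\mu_\ell Z}$.

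\emph{Step 2: a three-annulus (log-convexity) argument.} Let $N(Z_0)=\|\mathbf w\|_{L^2(\mathcal C_1(Z_0))}$. For the exact round system, any $\gamma\in(0,\mu_2)$ has the following property: for a fixed large $L$, each solution obeys either $N(Z_0+L)\le e^{-\gamma L}N(Z_0)$ or $N(Z_0+L)\ge e^{\gamma L}N(Z_0)$. Moreover, the growing alternative propagates to all later cylinders. We transfer this property to the perturbed operator $\mathcal P_Z=\mathcal L_Z+\mathcal K_Z$ by contradiction and compactness. Suppose it fails along $Z_0^i\to\infty$. Normalize $\mathbf w_i/N(Z_0^i)$ on $\mathcal C_{3L}(Z_0^i)$. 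Lemma~\ref{lem:round-DN-estimate} provides uniform $C^j$ bounds; the coefficients converge to the round ones by \eqref{eq:DN-path-convergence}; and $\mathcal K_Z\to0$ by \eqref{eq:round-K-small}. A subsequence therefore converges to a nonzero high-mode solution of \eqref{eq:round-linear-limit} that violates the dichotomy.

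The growing alternative is excluded because $\mathbf w\to0$ by \eqref{eq:round-full-limit}, so $N$ is bounded. Consequently, for all large $Z_0$, $N(Z_0+mL)\le e^{-\gamma mL}N(Z_0)$, and so $N(Z)\le Ce^{-\gamma Z}$. Applying Lemma~\ref{lem:round-DN-estimate} once more upgrades this to $\|\eta\|_{C^j}+\|\psi\|_{C^{j-1}}\le C_je^{-\gamma Z}$.

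\emph{Step 3: degree-one modes.} By \eqref{eq:round-beta-quadratic}, $|b|\le Ce^{-2\gamma Z}$. Projecting \eqref{eq:round-q} onto degree one gives
\[
a_Z=\frac{a}{2Z}-P_1(\tan\phi),
\]
and by Step 2 and the bound on $b$, $|P_1(\tan\phi)|\le Ce^{-\gamma Z}$. Integrating $(aZ^{-1/2})_Z$ from $Z$ to $\infty$ yields $|a|\le Ce^{-\gamma Z}$, since $aZ^{-1/2}=\sqrt2\,c(t)\to0$. Here the improved centering $c=o(R_h)$ from Lemma~\ref{lem:round-centering} is exactly what selects the decaying solution. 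Combining the three estimates gives \eqref{eq:round-angular-decay}, after shrinking $\gamma$ if needed.

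The main obstacle is Step 2. The difficulty is to make the three-annulus dichotomy robust under the nonlocal finite-rank perturbation $\mathcal K_Z$ and the slowly varying coefficients, with the $L$ and $\gamma$ bookkeeping made uniform in $Z_0$. I would first attempt the compactness-contradiction argument above. If that proves awkward, I would instead use a mode-wise energy $E_\pm=\|\mu(\eta)\mp\psi\|^2$ and derive differential inequalities, treating $\mathcal L_Z-\mathcal L_\infty$ and $\mathcal K_Z$ as $o(1)$ perturbations controlled by Lemma~\ref{lem:round-DN-estimate}.
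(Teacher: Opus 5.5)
Your Steps 1 and 3 match the paper: the hyperbolic rates $\mu_\ell=\sqrt{(\ell-1)(\ell+k)}/k$, the quadratic bound on $b$, and the integrating factor for $a$. In Step 3 only $c(t)\to0$ is needed, since the homogeneous solution $a\propto Z^{1/2}$ is a constant translation.

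\textbf{The gap is Step 2.} The two-point dichotomy you assert is false already for the round system \eqref{eq:round-linear-limit}. For a nonzero degree-two harmonic $Y_2$ and any $L$, the pair
\[
 \eta=\cosh\bigl(\mu_2(Z-Z_0-\tfrac L2)\bigr)Y_2,\qquad \psi=-\eta_Z
\]
is a high-mode solution of \eqref{eq:round-linear-limit}. Both $|\eta|^2$ and $|\psi|^2$ are even about $Z_0+L/2$, so $N(Z_0+L)=N(Z_0)>0$, and neither alternative holds. Your compactness argument therefore yields no contradiction: the normalized limit may be exactly such a superposition of a growing and a decaying mode. Separately, normalizing by $N(Z_0^i)$ alone does not bound the sequence on $\mathcal C_{3L}(Z_0^i)$.

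What is missing is a genuinely three-cylinder statement. Let $N_i$ be the $L^2$ norm on the $i$-th of consecutive cylinders of length $L$. Splitting each mode into its $e^{\pm\mu_\ell Z}$ parts gives $N_i\le e^{-\gamma L}\max\{N_{i-1},N_{i+1}\}$ for round solutions when $\gamma<\mu_2$ and $L$ is large; the cross terms are negligible relative to $\sinh(\mu_\ell L)$. This inequality transfers to $\mathcal P_Z$ by your compactness scheme, with a little slack in $\gamma$, provided you normalize by $\max\{N_{i-1},N_i,N_{i+1}\}$. It then implies three things in turn:
\begin{itemize}
\item growth $N_{i+1}\ge e^{\gamma L}N_i$ propagates to all later cylinders;
\item boundedness of $N$ therefore excludes growth;
\item the three-cylinder inequality then forces $N_{i+1}\le e^{-\gamma L}N_i$.
\end{itemize}

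The paper avoids this bookkeeping altogether. Assuming no exponential bound holds, it takes maximizers $Z_i$ of $A(Z)e^{-\epsilon_i|Z-Y_i|}$ with $\epsilon_i\downarrow0$, so the normalized translates obey $A(Z_i+\tau)\le A(Z_i)e^{\epsilon_i|\tau|}$. The limit is a nonzero solution of \eqref{eq:round-linear-limit} that is bounded on all of $\mathbb R\times\mathbb S^k$. It must vanish, because every mode has the form $\alpha_\ell e^{\mu_\ell\tau}+\beta_\ell e^{-\mu_\ell\tau}$. That route needs only this Liouville property. A corrected three-annulus argument costs more work but ties the decay rate directly to the spectral gap $\mu_2$.
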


\begin{proof}
\noindent\textbf{Step 1: slow-maximum compactness.}
We first treat $\mathbf w=(\eta,\psi)$.  Set
\[
A(Z)=\|\mathbf w\|_{L^2([Z-1,Z+1]\times\mathbb S^k)}.
\]
By \eqref{eq:round-full-limit}, $A(Z)\to0$.  Suppose that no exponential
upper bound holds.  If $A$ vanishes identically on a terminal half-line,
then
\[
\eta=\psi=0,\qquad b=\beta(\bar q,\bar\phi,0,0)=0,\qquad
P_1(\tan\phi)=P_1(\tan\bar\phi)=0.
\]
Equation \eqref{eq:round-a-ode} reduces to
\((Z^{-1/2}a)'=0\); since \(a(Z)\to0\), also \(a=0\) there, and the
conclusion is immediate.  We exclude this case.  If no exponential upper bound holds, choose
$\epsilon_i\downarrow0$ and $Y_i\to\infty$ so that
$e^{\epsilon_iY_i}A(Y_i)$ escapes every fixed bound.  Since $A(Z)\to0$,
the function $A(Z)e^{-\epsilon_i|Z-Y_i|}$ attains a maximum on the tail;
choose a maximizer $Z_i$.  The choice of $Y_i$ forces $Z_i\to\infty$, and
the maximizing property gives
\begin{equation}\label{eq:round-slow-selection}
 A(Z)\le A(Z_i)e^{\epsilon_i|Z-Z_i|}
 \qquad (Z\ge Z_0).
\end{equation}
Indeed, one may maximize
$A(Z)e^{-\epsilon_i|Z-Y_i|}$ with $Y_i\to\infty$ chosen so that the
maximum cannot occur on any fixed initial interval.  This gives
$Z_i\to\infty$ and \eqref{eq:round-slow-selection}.

Normalize \(\mathbf w_i(\tau,\omega)=\mathbf w(Z_i+\tau,\omega)/A(Z_i)\).  Then
\[
\begin{aligned}
\|\mathbf w_i\|_{L^2([-1,1]\times\mathbb S^k)}&=1,\\
\|\mathbf w_i\|_{L^2([\tau-1,\tau+1]\times\mathbb S^k)}
&=\frac{A(Z_i+\tau)}{A(Z_i)}
\leq e^{\epsilon_i|\tau|}.
\end{aligned}
\]
On each fixed compact
cylinder, \eqref{eq:round-slow-selection} and
Lemma~\ref{lem:round-DN-estimate} give uniform bounds in every $C^j$.  A diagonal
subsequence converges smoothly on compact sets.  The translated coefficients
converge to the radial ones and \(\mathcal K_Z\to0\) by
\eqref{eq:round-K-small}, so the limit \(\mathbf w_\infty\) solves
\eqref{eq:round-linear-limit} on
\(\mathbb R\times\mathbb S^k\).  Smooth convergence and the normalized
identities give
\[
\|\mathbf w_\infty\|_{L^2([-1,1]\times\mathbb S^k)}=1,\qquad
\|\mathbf w_\infty\|_{L^2([\tau-1,\tau+1]\times\mathbb S^k)}\leq1.
\]
Covering the cylinders used in Lemma~\ref{lem:round-DN-estimate} by
finitely many unit slabs gives, for every integer \(r\geq0\),
\[
\sup_{\tau\in\mathbb R}\left(
\|\eta_\infty\|_{C^r(\mathcal C_1(\tau))}
+\|\psi_\infty\|_{C^r(\mathcal C_1(\tau))}\right)\leq C_r.
\]
Thus \(\mathbf w_\infty\) is bounded on the whole cylinder.
It contains only harmonics of degree at least two.

\noindent\textbf{Step 2: spectral exclusion of the high modes.}
Let \(f_\ell(\tau)\) and \(g_\ell(\tau)\) be the corresponding degree-\(\ell\)
coefficients of the limiting fields \(\eta_\infty\) and \(\psi_\infty\).  Since
\(-\Delta Y_\ell=\lambda_\ell Y_\ell\), where
\(\lambda_\ell=\ell(\ell+k-1)\), equation
\eqref{eq:round-linear-limit} gives
\(g_\ell=-f_\ell'\) and
\[
f_\ell''-\frac{\lambda_\ell-k}{k^2}f_\ell
=f_\ell''-\mu_\ell^2f_\ell=0,\qquad
\mu_\ell:=\frac{\sqrt{(\ell-1)(\ell+k)}}{k}>0.
\]
Thus
\[
f_\ell(\tau)=A_\ell e^{\mu_\ell\tau}+B_\ell e^{-\mu_\ell\tau}.
\]
Boundedness as \(\tau\to+\infty\) gives \(A_\ell=0\), while boundedness as
\(\tau\to-\infty\) gives \(B_\ell=0\).  Hence \(\mathbf w_\infty=0\), contradicting
its unit norm.  Therefore
$A(Z)\leq Ce^{-\gamma Z}$ for some $\gamma>0$, and
\[
\|\mathbf w\|_{L^2(\mathcal C_L(Z))}\leq C_Le^{-\gamma Z}.
\]
Applying Lemma~\ref{lem:round-DN-estimate} with index \(j+1\) yields
\[
\|\eta(Z,\cdot)\|_{C^j}
+\|\psi(Z,\cdot)\|_{C^j}\leq C_je^{-\gamma Z}.
\]
Equation \eqref{eq:round-beta-quadratic} gives exponential
decay of $b$.

\noindent\textbf{Step 3: recovery of the degree-one mode.}
It remains to control the degree-one part of $q$.  Projecting
\eqref{eq:round-q} onto degree one gives
\begin{equation}\label{eq:round-a-ode}
a'(Z)-\frac{a(Z)}{2Z}=-\mathbf f_1(Z),\qquad
\mathbf f_1(Z)\cdot\omega=P_1(\tan\phi).
\end{equation}
Put \(\zeta=b\cdot\omega+\psi\).  Taylor's formula gives
\[
\tan(\bar\phi+\zeta)
=\tan\bar\phi+\sec^2\bar\phi\,\zeta+O(\zeta^2).
\]
Since \(P_1(\tan\bar\phi)=0\) and \(P_1\psi=0\),
\[
|\mathbf f_1(Z)|
\leq C\bigl(|b(Z)|+\|\psi(Z,\cdot)\|_{C^0}^2
 +|b(Z)|\|\psi(Z,\cdot)\|_{C^0}\bigr)
\leq Ce^{-\gamma Z}.
\]
The integrating-factor calculation is
\[
\begin{aligned}
(Z^{-1/2}a)'&=-Z^{-1/2}\mathbf f_1,\\
Z_1^{-1/2}a(Z_1)-Z^{-1/2}a(Z)
&=-\int_Z^{Z_1}\sigma^{-1/2}\mathbf f_1(\sigma)\,d\sigma.
\end{aligned}
\]
By Lemma~\ref{lem:round-centering}, \(a(Z_1)\to0\).  Letting
\(Z_1\to\infty\) gives
\[
a(Z)=Z^{1/2}\int_Z^\infty \sigma^{-1/2}\mathbf f_1(\sigma)\,d\sigma
\]
and, since \(Z^{1/2}\sigma^{-1/2}\leq1\) for \(\sigma\geq Z\),
\[
|a(Z)|\leq C\int_Z^\infty e^{-\gamma\sigma}\,d\sigma
\leq\frac C\gamma e^{-\gamma Z}.
\]
This completes
\eqref{eq:round-angular-decay}.
\end{proof}

\subsection{Radial dynamics and a coercive barrier}

Let
\[
\bar q(Z)=\frac1{|\mathbb S^k|}\int_{\mathbb S^k}q\,d\omega,\qquad
\bar\phi(Z)=\frac1{|\mathbb S^k|}\int_{\mathbb S^k}\phi\,d\omega.
\]
Write
\[
\langle F\rangle_\omega
:=\frac1{|\mathbb S^k|}\int_{\mathbb S^k}F\,d\omega,\qquad
\zeta_{\rm ang}=\phi-\bar\phi,\qquad C_\eta=\nabla^2\eta+\eta g.
\]
Averaging \eqref{eq:round-q}--\eqref{eq:round-phi} gives the exact system
\begin{align}
\bar q'&=\frac{\bar q}{2Z}-\tan\bar\phi+\varepsilon_q(Z),\label{eq:round-mean-q}\\
\bar\phi'&=\sec\bar\phi-\frac{k}{\bar q}+\varepsilon_\phi(Z),\label{eq:round-mean-phi}\\
|\varepsilon_q(Z)|+|\varepsilon_\phi(Z)|&\leq Ce^{-\gamma Z},\qquad
\bar q\to k,\quad\bar\phi\to0.\label{eq:round-mean-errors}
\end{align}
Here
\[
\begin{aligned}
\varepsilon_q(Z)&=-\bigl\langle\tan(\bar\phi+\zeta_{\rm ang})-\tan\bar\phi\bigr\rangle_\omega,\\
\varepsilon_\phi(Z)&=\bigl\langle\sec(\bar\phi+\zeta_{\rm ang})-\sec\bar\phi\bigr\rangle_\omega\\
&\quad-\left\langle\operatorname{tr}(\bar q g+C_\eta)^{-1}-\frac{k}{\bar q}
\right\rangle_\omega\\
&\quad-\left\langle\sec^2(\bar\phi+\zeta_{\rm ang})
\langle(\bar q g+C_\eta)^{-1}\nabla\zeta_{\rm ang},\nabla\zeta_{\rm ang}\rangle
\right\rangle_\omega.
\end{aligned}
\]
The three cancellations are
\[
\langle\zeta_{\rm ang}\rangle_\omega=0,\qquad B_q=\bar q g+C_\eta,\qquad
\langle\operatorname{tr}_gE\rangle_\omega
=\langle(\Delta+k)\eta\rangle_\omega=0.
\]
The last equality uses \(\langle\eta\rangle_\omega=0\), and the degree-one
term is absent because \(\nabla^2(a\cdot\omega)+(a\cdot\omega)g=0\).
Taylor expansion gives
\[
\begin{aligned}
\tan(\bar\phi+\zeta_{\rm ang})
 &=\tan\bar\phi+\sec^2\bar\phi\,\zeta_{\rm ang}+O(\zeta_{\rm ang}^2),\\
\sec(\bar\phi+\zeta_{\rm ang})
 &=\sec\bar\phi+\sec\bar\phi\tan\bar\phi\,\zeta_{\rm ang}+O(\zeta_{\rm ang}^2),\\
\operatorname{tr}(RI+C_\eta)^{-1}
 &=\frac{k}{\bar q}-\frac1{\bar q^2}\operatorname{tr}C_\eta+O(|C_\eta|^2).
\end{aligned}
\]
All displayed linear terms vanish after averaging.  Uniform positivity of
\(B_q\) also gives
\[
\frac1{|\mathbb S^k|}\int_{\mathbb S^k}\sec^2\phi
\langle B_q^{-1}\nabla\phi,\nabla\phi\rangle\,d\omega
\leq C\|\nabla\zeta_{\rm ang}\|_{C^0}^2.
\]
Together with Proposition~\ref{prop:round-angular-decay}, this yields
\[
|\varepsilon_q(Z)|+|\varepsilon_\phi(Z)|
\leq C\left(\|\zeta_{\rm ang}\|_{C^0}^2+\|C_\eta\|_{C^0}^2
 +\|\nabla\zeta_{\rm ang}\|_{C^0}^2\right)
\leq Ce^{-2\gamma Z}.
\]
Relabeling the exponent
gives \eqref{eq:round-mean-errors}; moreover,
$\operatorname{tr}((\bar q g)^{-1})=k/\bar q$ gives the term $k/\bar q$ in
\eqref{eq:round-mean-phi}.

Define
\begin{equation}\label{eq:round-approx-pair}
q_*(Z)=k-\frac{k^2(k+4)}{8Z^2},\qquad
\phi_*(Z)=\frac{k}{2Z}
-\frac{k^2(17k+60)}{48Z^3}.
\end{equation}
A direct Taylor substitution in \eqref{eq:round-mean-q}--\eqref{eq:round-mean-phi}
gives
\begin{equation}\label{eq:round-approx-residual}
q_*'-\frac{q_*}{2Z}+\tan\phi_*=O(Z^{-5}),\qquad
\phi_*'-\sec\phi_*+\frac{k}{q_*}=O(Z^{-4}).
\end{equation}

The approximate radial pair controls the true mean modes as follows.

\begin{lemma}\label{lem:round-radial-error}
One has
\begin{equation}\label{eq:round-radial-error}
|\bar q(Z)-q_*(Z)|+|\bar\phi(Z)-\phi_*(Z)|=O(Z^{-3}).
\end{equation}
\end{lemma}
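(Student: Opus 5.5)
The plan is to study the difference $(x,y):=(\bar q-q_*,\bar\phi-\phi_*)$ and close the estimate with a Lyapunov (Hamiltonian-type) functional adapted to the linearization at the round state. Subtracting \eqref{eq:round-approx-residual} from \eqref{eq:round-mean-q}--\eqref{eq:round-mean-phi} gives
\begin{align*}
 x'&=\frac{x}{2Z}-\bigl(\tan\bar\phi-\tan\phi_*\bigr)+O(Z^{-5})+\varepsilon_q,\\
 y'&=\bigl(\sec\bar\phi-\sec\phi_*\bigr)+\Bigl(\frac k{q_*}-\frac k{\bar q}\Bigr)+O(Z^{-4})+\varepsilon_\phi .
\end{align*}
The exponentially small errors from \eqref{eq:round-mean-errors} are dominated by the algebraic ones. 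Since $\bar q\to k$, $\bar\phi\to0$ and $\phi_*\to0$, the mean value theorem turns this into
\begin{equation*}
 x'=\frac{x}{2Z}-(1+\alpha)y+r_1,\qquad y'=\frac{1+\beta}{k}x+\gamma y+r_2,
\end{equation*}
with $\alpha,\beta\to0$, $\gamma=O(|\phi_*|+|\bar\phi|)=o(1)$, $r_1=O(Z^{-5})$ and $r_2=O(Z^{-4})$. The structure is that of a center: at leading order $x''\approx-x/k$, an oscillator of frequency $k^{-1/2}$.

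The natural first integral is $H=\tfrac{1}{2k}x^2+\tfrac12 y^2$ (suitably weighted). Along the flow we get
\begin{equation*}
 H'=\frac{x^2}{2kZ}+o(1)\,H+O\bigl(\sqrt H\,Z^{-4}\bigr).
\end{equation*}
The $o(1)H$ terms (from $\alpha,\beta,\gamma$) are the problem: they are not integrable in $Z$ unless they carry explicit decay. I would sharpen them using the known rates. From the a priori convergence the true $\bar\phi$ is $O(Z^{-1})$, since integrating \eqref{eq:round-mean-q} shows $\tan\bar\phi\approx\bar q/(2Z)$ on average. Hence $\gamma,\alpha,\beta=O(Z^{-1})+O(\sqrt H)$. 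The key point is to modify $H$ by a cross term $\theta(Z)xy$ with $\theta=O(Z^{-1})$, together with rescaled weights, so that the $O(Z^{-1})$ parts of the non-sign-definite terms are absorbed and the remaining coefficient of $H$ in $\widetilde H'$ is at most $c/Z$ for a computable $c$. This is the coercive barrier. Its first-order averaging exponent comes from $x^2/(2kZ)$ averaged over the oscillation, giving roughly $\widetilde H'\le \frac{1+o(1)}{2Z}\widetilde H+CZ^{-4}\sqrt{\widetilde H}$.

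Next I set $E=\sqrt{\widetilde H}$. Then $E'\le \frac{1+o(1)}{4Z}E+CZ^{-4}$. Integrating backward from $+\infty$ gives $E(Z)=O(Z^{-3})$, which requires knowing $E\to0$ (true, since both $\bar q-q_*$ and $\bar\phi-\phi_*$ tend to $0$). The catch is that a positive growth coefficient makes the backward integration from infinity the wrong direction: a growth exponent $Z^{1/4}$ would allow $E$ to be as large as $Z^{1/4}$ times a constant. I would resolve this by solving exactly: $(Z^{-1/4-\epsilon}E)'\le CZ^{-17/4}$ only bounds $E$ from above given a bound at a finite point, which does not help. The alternative is to treat the linear part by variation of constants. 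The homogeneous system has solutions behaving like $Z^{1/4}$ times an oscillation (the analogue of the radial-perturbation growth, matching the classical radial theory). The particular solution forced by $O(Z^{-4})$ is $O(Z^{-3})$ (formally, the $Z^{-4}$ forcing in the $y$-equation divided by the unit frequency gives $x=O(Z^{-4})$ and $y=O(Z^{-4})$, and $Z^{-5}$ is smaller still, so $O(Z^{-3})$ has room). The homogeneous component must vanish, because it grows while the true difference tends to zero.

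The main obstacle is exactly this growing homogeneous mode combined with the $O(\sqrt H)$ nonlinear coefficients. I would first try a contraction/bootstrap: write the solution as the unique decaying solution given by the integral formula from $+\infty$ (a fixed point in the weighted space $\sup Z^3(|x|+|y|)$), and use the uniqueness of decaying solutions of the linearized oscillator with $Z^{1/4}$ growth in both Floquet directions to identify it with $(x,y)$. This is where care with the $1/(2Z)$ term, and the sign of its averaged effect, is essential.
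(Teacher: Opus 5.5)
The difficulty you flag at the end is a genuine gap. Neither the cross term nor the fixed-point detour removes it.

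\textbf{The gap: uncancelled cubic terms.} With the quadratic energy $H=\frac1{2k}x^2+\frac12y^2$, the coefficients you call $o(1)$ have $O(d)$ parts, where $d=(x^2+y^2)^{1/2}$. Explicitly, $\beta=-\frac xk+O(d^2+Z^{-2})$ and $\gamma=\phi_*+\frac y2+O(d^3+Z^{-3})$. Hence
\[
 H'=\frac{x^2}{2kZ}+\frac{k\,y^2}{2Z}+\frac{y^3}{2}-\frac{x^2y}{k^2}
 +O\Bigl(\frac{d^2}{Z^2}+\frac{d^3}{Z}+d^4+\frac{d}{Z^4}\Bigr).
\]
The cubic terms carry no factor $1/Z$; they measure exactly how far $H$ is from being conserved by the nonlinear autonomous center. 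They can be absorbed into the $d^2/Z$ terms only if $d\le\varepsilon Z^{-1}$ with $\varepsilon$ small. A priori you know only $d\to0$. Your averaged heuristic $\bar\phi=O(Z^{-1})$ is not pointwise, and it says nothing about $\bar q-q_*$. Adding finitely many higher-order corrections to $H$ only pushes the problem to $O(d^N)$.

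The contraction route fails at the identification step for the same reason. The difference between the actual $(x,y)$ and your constructed $O(Z^{-3})$ solution solves the linearized system perturbed by coefficients of size $O(d)=o(1)$. Those coefficients can swamp the $O(1/Z)$ growth. So uniqueness of decaying solutions of the linearization does not transfer to the true solution unless you already know $d=o(Z^{-1})$.

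\textbf{The missing idea: the exact first integral.} The autonomous system $\rho'=-\tan\vartheta$, $\vartheta'=\sec\vartheta-\frac{k}{k+\rho}$ conserves
\[
 H_k(\rho,\vartheta)=(k+\rho)^k\cos\vartheta-\frac{(k+\rho)^{k+1}}{k+1},
 \qquad H_k(0,0)-H_k=k^kH+O(d^3).
\]
This is the paper's route. Write the error equations as this system plus
\[
 \epsilon_1=\frac{\rho}{2Z}+O\Bigl(\frac{d}{Z^2}+\frac{d^2}{Z}+Z^{-5}\Bigr),
 \qquad
 \epsilon_2=\frac{k\vartheta}{2Z}+O\Bigl(\frac{d}{Z^2}+\frac{d^2}{Z}+Z^{-4}\Bigr),
\]
up to exponentially small terms. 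Then exactly $H_k'=\partial_\rho H_k\,\epsilon_1+\partial_\vartheta H_k\,\epsilon_2$. Every cubic error now carries a factor $1/Z$, so it is absorbed once $d$ is small.

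\textbf{Two further points to correct.}
\begin{enumerate}
\item The $\sec$-difference coefficient is $\gamma\approx\frac{k}{2Z}$, which has a definite sign. So the $1/Z$ part $\frac{x^2}{2kZ}+\frac{ky^2}{2Z}$ is pointwise coercive, and no averaging or cross term is needed. Your growth exponent $Z^{1/4}$ comes from dropping this term.
\item Positive growth is exactly what makes the argument from $+\infty$ work. You need a lower bound on the growth, not the upper bound $\widetilde H'\le\cdots$ you wrote. For $D=H_k(0,0)-H_k\simeq d^2$ one gets $D'\ge\frac dZ\bigl(cd-CZ^{-3}\bigr)$. If $D\ge KZ^{-6}$ at some tail point with $K$ large, then $D-KZ^{-6}$ stays positive and $D$ keeps increasing. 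This contradicts $D\to0$, so $d=O(Z^{-3})$.
\end{enumerate}
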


\begin{proof}
\noindent\textbf{Step 1: conservation and quadratic coercivity.}
Put $\rho=\bar q-q_*$, $\vartheta=\bar\phi-\phi_*$, and
$d=(\rho^2+\vartheta^2)^{1/2}$.  The autonomous system
\begin{equation}\label{eq:round-autonomous}
\rho'=-\tan\vartheta,\qquad
\vartheta'=\sec\vartheta-\frac{k}{k+\rho}
\end{equation}
preserves
\begin{equation}\label{eq:round-Hamiltonian}
H_k(\rho,\vartheta)
=(k+\rho)^k\cos\vartheta
-\frac{(k+\rho)^{k+1}}{k+1}.
\end{equation}
Indeed,
\[
\begin{aligned}
\partial_\rho H_k
&=(k+\rho)^{k-1}(k\cos\vartheta-k-\rho),\\
\partial_\vartheta H_k
&=-(k+\rho)^k\sin\vartheta,\\
\frac d{dZ}H_k
&=\partial_\rho H_k(-\tan\vartheta)
 +\partial_\vartheta H_k
\left(\sec\vartheta-\frac{k}{k+\rho}\right)=0.
\end{aligned}
\]
Taylor expansion at the origin gives
\begin{equation}\label{eq:round-H-quadratic}
H_k(0,0)-H_k(\rho,\vartheta)
=\frac{k^{k-1}}2\rho^2+\frac{k^k}2\vartheta^2+O(d^3).
\end{equation}

\noindent\textbf{Step 2: exact perturbation and dissipation.}
Subtracting \eqref{eq:round-approx-residual} from
\eqref{eq:round-mean-q}--\eqref{eq:round-mean-phi}, and using the addition formulas
for $\tan$ and $\sec$, writes the actual error equations as
\begin{align*}
\rho'&=-\tan\vartheta+\epsilon_1,\\
\vartheta'&=\sec\vartheta-\frac{k}{k+\rho}+\epsilon_2,
\end{align*}
where, on putting
\[
\begin{aligned}
r_1&=q_*'-\frac{q_*}{2Z}+\tan\phi_*=O(Z^{-5}),\\
r_2&=\phi_*'-\sec\phi_*+\frac{k}{q_*}=O(Z^{-4}),
\end{aligned}
\]
the errors are exactly
\[
\begin{aligned}
\epsilon_1
&=\frac{\rho}{2Z}
-[\tan(\phi_*+\vartheta)-\tan\phi_*-\tan\vartheta]
+\varepsilon_q-r_1,\\
\epsilon_2
&=[\sec(\phi_*+\vartheta)-\sec\phi_*-\sec\vartheta+1]\\
&\quad+\left[-\frac{k}{q_*+\rho}+\frac{k}{q_*}
+\frac{k}{k+\rho}-1\right]+\varepsilon_\phi-r_2.
\end{aligned}
\]
For large $Z$ and small $d$, Taylor expansion gives
\begin{align}
\epsilon_1&=\frac{\rho}{2Z}
+O\left(\frac d{Z^2}+\frac{d^2}{Z}+Z^{-5}+e^{-\gamma Z}\right),
\label{eq:round-epsilon1}\\
\epsilon_2&=\frac{k\vartheta}{2Z}
+O\left(\frac d{Z^2}+\frac{d^2}{Z}+Z^{-4}+e^{-\gamma Z}\right).
\label{eq:round-epsilon2}
\end{align}
For example,
$\tan(\phi_*+\vartheta)-\tan\phi_*-\tan\vartheta
=O(d/Z^2+d^2/Z)$, while
$\sec(\phi_*+\vartheta)-\sec\phi_*-\sec\vartheta+1
=k\vartheta/(2Z)+O(d/Z^2+d^2/Z)$; the reciprocal-radius
difference has the same stated remainder.

The autonomous contributions cancel, so
\[
H_k'=\partial_\rho H_k\,\epsilon_1
+\partial_\vartheta H_k\,\epsilon_2.
\]
Moreover,
\[
\partial_\rho H_k=-k^{k-1}\rho+O(d^2),\qquad
\partial_\vartheta H_k=-k^k\vartheta+O(d^2).
\]
and the leading terms are
\[
-\frac{k^{k-1}}{2Z}\rho^2
-\frac{k^{k+1}}{2Z}\vartheta^2.
\]
Thus, using \(e^{-\gamma Z}\leq CZ^{-4}\),
\[
H_k'\leq-\frac{c_1}{Z}d^2
+C\left(\frac{d^2}{Z^2}+\frac{d^3}{Z}+\frac d{Z^4}\right).
\]
Choose the tail so that \(C/Z+Cd\leq c_1/2\).  Then there are \(c,C>0\)
such that
\begin{equation}\label{eq:round-H-dissipation}
\frac{d}{dZ}H_k(\rho,\vartheta)
\leq-\frac cZ d^2+\frac C{Z^4}d
\end{equation}
for all sufficiently large $Z$.

\noindent\textbf{Step 3: the coercive barrier.}
Let $D(Z)=H_k(0,0)-H_k(\rho(Z),\vartheta(Z))$.  By
\eqref{eq:round-H-quadratic}, after increasing the initial value of $Z$,
there are $c_0,C_0>0$ such that
\[
c_0d^2\leq D\leq C_0d^2,\qquad D(Z)\longrightarrow0.
\]
Since $D'=-H_k'$, \eqref{eq:round-H-dissipation} gives
\[
D'\geq\frac cZ d^2-\frac C{Z^4}d
=\frac dZ(cd-CZ^{-3}).
\]
Choose \(K\) so large that \(c\sqrt{K/C_0}\geq2C\).  Then
\[
\begin{aligned}
D\geq KZ^{-6}
&\Longrightarrow d\geq\sqrt{K/C_0}\,Z^{-3}\\
&\Longrightarrow D'\geq\frac c{2Z}d^2>0.
\end{aligned}
\]
Set \(\mathcal B(Z)=D(Z)-KZ^{-6}\).  On \(\{\mathcal B\geq0\}\),
\[
\mathcal B'=D'+6KZ^{-7}>0.
\]
If \(\mathcal B(Z_1)\geq0\) at a tail point, then \(\mathcal B\) remains positive and
increasing thereafter; in particular \(D\) is positive and increasing,
contrary to \(D(Z)\to0\).  Hence
\[
D(Z)\leq KZ^{-6},\qquad
d(Z)^2\leq c_0^{-1}D(Z)\leq\frac{K}{c_0}Z^{-6}.
\]
Together with \eqref{eq:round-H-quadratic}, this proves
\eqref{eq:round-radial-error}.
\end{proof}

Combining Proposition~\ref{prop:round-angular-decay} and Lemma~\ref{lem:round-radial-error} gives
\begin{equation}\label{eq:round-q-expansion}
q(Z,\omega)=k-\frac{k^2(k+4)}{8Z^2}+O_{C^\infty(\mathbb S^k)}(Z^{-3}).
\end{equation}
The $Z^{-3}$ term of $\phi_*$ in \eqref{eq:round-approx-pair} was chosen to improve the first
residual; \eqref{eq:round-radial-error} does not identify that coefficient
as an asymptotic coefficient of the actual $\bar\phi$.

\subsection{Return from Gauss coordinates to Euclidean polar coordinates}

Put \(A=k^2(k+4)/8\).  Since \(Z=t^2/2\) and \(h=q/t\),
\eqref{eq:round-q-expansion} gives
\[
\begin{aligned}
h(t,\omega)=\frac{q(Z,\omega)}t
&=\frac kt-\frac{A}{tZ^2}
 +O_{C^\infty}(t^{-1}Z^{-3})\\
&=\frac kt-\frac{k^2(k+4)}{2t^5}
 +O_{C^\infty}(t^{-7}),
\end{aligned}
\]
which is
\begin{equation}\label{eq:round-h-expansion}
h(t,\omega)=\frac{k}{t}-\frac{k^2(k+4)}{2t^5}
+O_{C^\infty(\mathbb S^k)}(t^{-7}).
\end{equation}
The radial error has no angular derivative, while
Proposition~\ref{prop:round-angular-decay} gives
\[
\nabla_\omega h=t^{-1}\nabla_\omega(q-\bar q)
=O_{C^\infty}(t^{-1}e^{-\gamma t^2/2}),
\]
that is,
\begin{equation}\label{eq:round-h-angular}
\nabla h=O_{C^\infty}(t^{-1}e^{-\gamma t^2/2}).
\end{equation}
For \(x=X(t,\omega)\), \eqref{eq:round-support-map} gives
\[
|x|^2=h^2+|\nabla h|^2.
\]
Since \(h\sim k/t\),
\[
\begin{aligned}
r-h
&=\sqrt{h^2+|\nabla h|^2}-h\\
&=\frac{|\nabla h|^2}
 {\sqrt{h^2+|\nabla h|^2}+h}
=O_{C^\infty}(t^{-1}e^{-\gamma t^2}).
\end{aligned}
\]
This is smaller than every algebraic remainder, so
\begin{equation}\label{eq:round-radius-t}
r:=|x|=\frac{k}{t}-\frac{k^2(k+4)}{2t^5}+O(t^{-7}),
\end{equation}
uniformly with every angular derivative.

Define the Euclidean polar-direction map
\[
\mathcal P_t(\omega):=\frac{X(t,\omega)}{|X(t,\omega)|}.
\]
Then
\[
\mathcal P_t(\omega)
=\frac{h\omega+\nabla h}{(h^2+|\nabla h|^2)^{1/2}}
=\omega+O_{C^\infty}(e^{-\gamma t^2/2}).
\]
Equivalently, for every fixed \(m\),
\[
\|\mathcal P_t-\operatorname{id}\|_{C^m(\mathbb S^k)}
\leq C_me^{-\gamma t^2/2}.
\]
For large \(t\), \(D\mathcal P_t\) is invertible, so $\mathcal P_t$ is a
local diffeomorphism.  Because $\mathbb S^k$ is compact, the map is proper;
its image is therefore both open and closed in the connected sphere and is
nonempty, hence it is onto.  Thus $\mathcal P_t$ is a covering map.  Since
it is homotopic to the identity, its degree is one, so it is a global
diffeomorphism.  The quantitative inverse
function theorem gives, for every fixed \(m\),
\[
\|\mathcal P_t^{-1}-\operatorname{id}\|_{C^m(\mathbb S^k)}
\leq C_me^{-\gamma t^2/2}.
\]
Composing \eqref{eq:round-radius-t} with this inverse therefore preserves
all stated orders.

For the remaining implicit inversion we do not differentiate the
\(O(t^{-7})\) remainder in \eqref{eq:round-radius-t}.  Instead,
\eqref{eq:round-normal-velocity}, the radial estimate, and the angular
decay give directly
\[
\begin{aligned}
h_t=-\tan\phi
&=-\frac{k}{2Z}+O(Z^{-3})+O(e^{-\gamma Z})
 =-\frac{k}{t^2}+O(t^{-6}),\\
\nabla h_t&=O_{C^\infty}(e^{-\gamma t^2/2}).
\end{aligned}
\]
Since \(X_t=h_t\omega+\nabla h_t\) and
\(\mathcal P_t=X/|X|=\omega+O_{C^\infty}(e^{-\gamma t^2/2})\),
\[
\partial_t|X(t,\omega)|
=\mathcal P_t(\omega)\cdot X_t
=-\frac{k}{t^2}+O(t^{-6}).
\]
The mixed \(Z\)--angular estimates also give
\(\partial_t\mathcal P_t=O_{C^\infty}(t e^{-\gamma t^2/2})\);
therefore composing with \(\mathcal P_t^{-1}\) changes the last formula
only by an exponentially small term.  Thus, at fixed Euclidean polar
direction,
\[
\partial_t r=-\frac{k}{t^2}+O(t^{-6})<0,\qquad
D_\theta t=-\frac{D_\theta r}{\partial_t r}.
\]
Since \(D_\theta r\) is exponentially small while
\(\partial_t r=-k t^{-2}+O(t^{-6})\) stays uniformly away from zero after
rescaling by $t^2$, the first implicit derivative is exponentially small.
Differentiating the identity $r(t(r,\theta),\theta)=r$ repeatedly and
arguing inductively, using the mixed $t$--angular bounds above, shows that
every higher angular derivative of $t=t(r,\theta)$ is likewise negligible
relative to the algebraic remainder.

Putting $s=t^{-1}$, \eqref{eq:round-radius-t} reads
\[
 r=ks-\frac{k^2(k+4)}2s^5+O(s^7).
\]
Elementary series inversion gives
\[
 s=\frac rk+\frac{k+4}{2k^4}r^5+O(r^7),\qquad
 t=\frac kr-\frac{k+4}{2k^2}r^3+O(r^5),
\]
so
\begin{equation}\label{eq:round-final-expansion}
t=\frac{k}{r}-\frac{k+4}{2k^2}r^3+O(r^5).
\end{equation}
The high-level foliation assigns to every sufficiently small \(x\ne0\)
the unique leaf \(t=u(x)\), and both coordinate inverses above are uniform
on that foliation.  Hence \eqref{eq:round-final-expansion} holds pointwise
as \(x\to0\), not merely subsequentially or levelwise, with the claimed
uniform angular-derivative bounds.  With $k=n-1$, the second
coefficient is $-(n+3)/(2(n-1)^2)$.

\section{Bounded singularities and assembly of the classification}
\label{sec:bounded}
\begin{proposition}\label{prop:bounded-complete}
If $u$ is bounded in a punctured neighborhood of the origin, then it has a
unique $W^{1,1}_{\rm loc}(B_R)$ weak extension $\widetilde u$ across the
origin, unique up to equality almost everywhere, which solves
\eqref{eq:pde} distributionally.
\end{proposition}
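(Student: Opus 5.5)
The plan is to reuse the finite-band machinery of Lemma~\ref{lem:finite-band-complete} directly. Suppose $|u|\le M$ on $B_{r_1}\setminus\{0\}$. First, Lemma~\ref{lem:finite-band-complete} with the band $[a,b]=[-M-1,M+1]$ gives
\[
 \int_{B_{r}\setminus\{0\}}W\,dx\le Cr^{n-1}
\]
for small $r$. Since $|Du|\le W$, this shows $Du\in L^1(B_{r_1})$, and $u\in L^\infty$.

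To see that the pointwise gradient is the distributional gradient across the puncture, I will repeat the truncation argument used before \eqref{eq:finite-level-coarea}, now with $T_{a,b}$ replaced by $u$ itself, which is legitimate because $u$ is bounded. Take $\varphi\in C_c^1(B_{r_1};\R^n)$ and integrate by parts on $B_{r_1}\setminus \overline B_\varepsilon$. The inner-sphere term is bounded by $M\|\varphi\|_\infty\mathcal H^{n-1}(\partial B_\varepsilon)=O(\varepsilon^{n-1})$, which tends to $0$. Dominated convergence then gives $\int u\operatorname{div}\varphi=-\int Du\cdot\varphi$. Combined with smoothness away from $0$, this puts $\widetilde u:=u$ (extended arbitrarily at $0$) in $W^{1,1}_{\rm loc}(B_R)$.

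Next I check the equation distributionally. For $\psi\in C_c^\infty(B_R)$, test $\operatorname{div}\mathcal A(Du)=-u$ on $\{|x|>\varepsilon\}$. The boundary flux is bounded by $\|\psi\|_\infty\mathcal H^{n-1}(\partial B_\varepsilon)$ because $|\mathcal A(Du)|\le1$. Both $\mathcal A(Du)\cdot D\psi$ and $u\psi$ are integrable, so letting $\varepsilon\downarrow0$ yields
\[
 \int\mathcal A(D\widetilde u)\cdot D\psi\,dx=\int \widetilde u\,\psi\,dx,
\]
which is the distributional form of \eqref{eq:pde} on $B_R$. Here $\mathcal A(D\widetilde u)$ is defined a.e. since the puncture is null.

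Uniqueness up to a.e. equality is immediate. Any $L^1_{\rm loc}(B_R)$ function agreeing with $u$ on $B_R\setminus\{0\}$ agrees with it a.e., because $\{0\}$ is Lebesgue-null. Its weak derivative is then determined as a distribution, so the $W^{1,1}_{\rm loc}$ class is unique.

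The only step needing care is the first one: obtaining $Du\in L^1$ near $0$ with no gradient assumption. This is exactly what the stress-based monotonicity in Lemma~\ref{lem:finite-band-complete} supplies, since that lemma needs no tail information and boundedness places all values in one finite band. Alternatively, Corollary~\ref{cor:L1-morrey} gives the same bound through \eqref{common:shrinking-band}. The rest is the same cutoff bookkeeping already used repeatedly in the paper, with errors of order $\varepsilon^{n-1}$, which vanish for all $n\ge2$.
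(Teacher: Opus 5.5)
Your proposal is correct and follows essentially the same route as the paper. Lemma~\ref{lem:finite-band-complete}, applied on a band containing the range, gives $Du\in L^1$ near the puncture. The inner-sphere terms are $O(\varepsilon^{n-1})$, using $|u|\le M$ for the derivative and $|\mathcal A(Du)|\le1$ for the equation flux, so they vanish. This places the a.e.\ extension in $W^{1,1}_{\rm loc}(B_R)$ as a distributional solution, and uniqueness holds because the puncture is Lebesgue-null. The only difference is cosmetic: the paper patches local extensions over nested balls, whereas you combine the local statement near $0$ with smoothness away from it.
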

\begin{proof}
Choose $B_{2r}\Subset B_R$ and $M$ with $|u|\le M$ on the punctured ball.
The finite-height estimate, applied to an interval containing the range,
gives
\[
 \int_{B_\rho}\sqrt{1+|Du|^2}\,dx\le C\rho^{n-1},
\]
so $Du\in L^1(B_{2r})$.  Integrating by parts on
$B_{2r}\setminus B_\varepsilon$, the inner boundary term for the
distributional derivatives is $O(M\varepsilon^{n-1})$, and the equation
flux is $O(\varepsilon^{n-1})$ because
$|Du|/\sqrt{1+|Du|^2}\le1$.  Hence the a.e.\ extension lies in
$W^{1,1}$ and satisfies the weak equation on the full ball.

Thus the almost-everywhere extension already belongs to
$W^{1,1}(B_{2r})$ and satisfies the capillary equation distributionally on
the full ball.  Repeating this argument on nested balls patches the local
extensions to an element of $W^{1,1}_{\rm loc}(B_R)$.  Any two such weak
extensions agree almost everywhere on $B_R\setminus\{0\}$ and hence almost
everywhere on $B_R$, which proves uniqueness in the weak class.

\end{proof}

\begin{proposition}[Continuous removability in dimensions $2\le n\le7$]
\label{prop:bounded-continuous-lowdim}
Assume $2\le n\le7$ and that $u$ is bounded in a punctured neighborhood of
the origin.  Then $u$ has a unique continuous extension $\widetilde u$ across
the origin.  This extension belongs to, and represents, the weak
$W^{1,1}_{\mathrm{loc}}(B_R)$ solution class constructed in
Proposition~\ref{prop:bounded-complete}.  Equivalently,
\[
 \lim_{x\to0}u(x)
\]
exists and is finite.
\end{proposition}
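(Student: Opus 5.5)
The plan follows the outline in the introduction: regularity of the completed subgraph boundary, then analyticity, then collapse of the cluster interval. By Lemma~\ref{lem:cluster-interval} (for $n=2$) and the same connectedness argument in every dimension, $\mathcal C_0(u)=[\ell_-,\ell_+]$ is a compact interval, since $u$ is bounded. It suffices to show $\ell_-=\ell_+$. Uniqueness of the continuous extension is then trivial. That it represents the weak class of Proposition~\ref{prop:bounded-complete} is also immediate, because the two agree off the null point.

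\textbf{Step 1: completed interface.} Assume for contradiction that $\ell_-<\ell_+$. Consider the subgraph $E=\{(x,z):z<u(x)\}$ in $\mathcal O=B_r\times\mathbb R$. By the finite-height estimate (Lemma~\ref{lem:finite-band-complete}) and the puncture-cutoff argument already used in the planar current lemma, $E$ has locally finite perimeter. Its reduced boundary is the graph $\Sigma$, and the vertical segment $\mathcal L=\{0\}\times[\ell_-,\ell_+]$ lies in $\overline\Sigma$.

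Next I would show that $E$ is a local perimeter $\Lambda$-minimizer, i.e.\ quasiminimizer, in $\mathcal O$. The calibration is the unit field $N=(-\mathcal A(Du),W^{-1})$ extended vertically. It satisfies $\operatorname{div}N=u$, which is bounded by $M$. The distributional equation across the puncture holds by Proposition~\ref{prop:bounded-complete} (the cutoff flux is $O(\varepsilon^{n-1})$), so this divergence identity holds on all of $\mathcal O$. The standard calibration comparison then gives
\[
P(E;B_\rho(p))\le P(F;B_\rho(p))+M|E\triangle F|
\]
for $E\triangle F\Subset B_\rho(p)$, because $|N|\le1$.

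Since $\Lambda$-minimizing boundaries in $\mathbb R^{n+1}$ are $C^{1,\alpha}$ away from a singular set of dimension at most $n-7$, the completed interface $\partial E$ is $C^{1,\alpha}$ when $n\le6$. For $n=7$ the singular set is discrete. There I would run the directed tangent-cone argument: a tangent cone at an axis point arises from subgraphs, so it is a vertically monotone minimizing cone. The cone is therefore a cylinder $C\times\mathbb R$ over a minimizing cone in $\mathbb R^7$, or else a half-space. The cylinder case forces a singular line, contradicting the dimension bound, so every axis point is regular.

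\textbf{Step 2: analyticity and the contradiction.} Near an interior axis point $p=(0,z_0)$ with $\ell_-<z_0<\ell_+$, $\partial E$ is a $C^{1,\alpha}$ hypersurface that contains a vertical segment through $p$. Its tangent plane at $p$ is therefore vertical; a horizontal normal is also forced by the density of graph points with large gradient. Write $\partial E$ locally as a graph over this vertical plane. The mean-curvature equation $H=z$ is then a uniformly elliptic analytic equation for the height function, so by Schauder theory and Morrey's analytic regularity the interface is an analytic hypersurface $S$ near $p$.

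Now $S$ contains the segment $\{0\}\times(\ell_-,\ell_+)$ but does not contain $\{0\}\times(\ell_+,\ell_++\delta)$. The latter fails because $u\le\ell_++o(1)$ near $0$, so the open set above the graph near those points is disjoint from $\overline E$. Work at the endpoint $(0,\ell_+)$, which is also a regular point of $\partial E$. Restrict the analytic defining function of $S$ to the vertical line. Its zero set contains an interval ending at $\ell_+$, so by the identity theorem the restriction vanishes identically near $\ell_+$. Hence the line continues inside $S$ beyond $\ell_+$. Each such point would be a limit of graph points with heights near $z>\ell_+$, which contradicts $\limsup u=\ell_+$. So $\ell_-=\ell_+$.

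\textbf{Main obstacle.} I expect the main obstacle to be Step 1 at the axis, in two places. First, the calibration inequality must be justified across a set where $u$ has no gradient bound; this should follow from the $W^{1,1}$ extension and the vanishing of cutoff fluxes. Second, for $n=7$, the classification of the tangent cones at axis points must exclude the isolated singularity. What I would try first is to show that blow-ups of subgraphs remain subgraph-like, i.e.\ invariant under adding the vertical direction in the monotone sense, and then invoke the splitting of monotone minimizing cones.
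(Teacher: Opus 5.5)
Your outline tracks the paper's proof almost step for step:
\begin{itemize}
\item the vertically extended unit normal $(-\mathcal A(Du),W^{-1})$ as a calibration with divergence $u$, giving an $(M,r_0)$ perimeter quasiminimizer;
\item regularity of the completed interface for $n\le 6$, and a directed tangent-cone argument for $n=7$;
\item analytic side graphs at axis points;
\item the identity theorem for an analytic defining function restricted to the axis at $(0,\ell_+)$.
\end{itemize}

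The step you assert but do not prove is the dichotomy at the heart of the $n=7$ case. You need that a perimeter-minimizing cone $\mathcal C\subset\mathbb R^8$ with $D_8\chi_{\mathcal C}\le 0$ is either a vertical cylinder or a half-space. The part you propose to check first, that blow-ups remain subgraph-like, only yields $J_8:=\nu_{\mathcal C}\cdot e_8\ge 0$. The splitting itself needs an argument, and this is where the paper does the work:
\begin{itemize}
\item The singular set of $\partial\mathcal C$ is discrete and dilation invariant, hence contained in $\{0\}$. So the link $\Lambda=\partial\mathcal C\cap\mathbb S^7$ is a smooth compact minimal hypersurface.
\item The degree-zero Jacobi field $J_8$ solves $\Delta_\Lambda J_8+|\mathrm{II}_{\mathcal C}|^2J_8=0$ on each component of $\Lambda$. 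By the strong maximum principle, on each component either $J_8\equiv 0$ or $J_8>0$.
\item If $J_8>0$ on a component, integrating gives $\int|\mathrm{II}_{\mathcal C}|^2J_8=0$, so that component is a great sphere. No other component can then exist. It would lie in an open hemisphere where the normal coordinate $f$ has a strict sign, whereas $\Delta f+6f=0$ on a minimal hypersurface of $\mathbb S^7$ forces $\int f=0$. This rules out the mixed case.
\item If $J_8\equiv 0$ on every component, then $D_8\chi_{\mathcal C}=0$ and $\mathcal C=\mathcal K\times\mathbb R$. Your singular-line observation then finishes: the singular set is vertically invariant and discrete, hence empty, so $\mathcal C$ is flat.
\end{itemize}

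There is a smaller omission in Step 2. To get the weak prescribed-mean-curvature equation for the side graph $\gamma$ across the image of the axis segment, you need the axis to carry no first-variation defect.
\begin{itemize}
\item For $n\ge 3$ a cutoff suffices, since its cost is $O(\delta^{n-2})$.
\item For $n=2$ the segment has positive length in the two-dimensional chart, so the cutoff cost is only $O(1)$. You must use either continuity of $D\gamma$, so that the normal fluxes from the two sides cancel, or, as the paper does, the bound $|\delta V(Y)|\le M\int|Y|\,d\mathcal H^n$. The latter comes from testing the quasiminimality inequality against $\Phi_t(E)$ for flows $\Phi_t$.
\end{itemize}
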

\begin{proof}
It is enough to work in a ball $B_{2r}\Subset B_R$ on which $|u|\le M$.
By Proposition~\ref{prop:bounded-complete}, after changing the value at the origin if
necessary, $u\in W^{1,1}(B_{2r})$ and
$\operatorname{div}\mathcal A(Du)=-u$ distributionally on $B_{2r}$.

\smallskip
\noindent\textit{Step 1: a calibrated perimeter quasiminimizer.}
Let $\Omega\Subset B_{2r}$ be a ball centered at the origin and consider
the restriction to $\Omega\times\mathbb R$ of the ambient subgraph $E$
defined in the introduction (equivalently, the subgraph of the weak
representative just constructed).  Since $u\in W^{1,1}_{\rm loc}$, $E$
has locally finite perimeter.  Put
\[
 W:=\sqrt{1+|Du|^2},
 \qquad
 \mathbf N_u(x,\tau):=\left(-\frac{Du(x)}{W(x)},\frac1{W(x)}\right).
\]
The subgraph formula gives
\[
 \mathbf N_u=\nu_E
 \qquad |D\chi_E|\text{-a.e.},
\]
and, in distributions,
\[
 |\mathbf N_u|=1,
 \qquad
 \operatorname{div}_{x,\tau}\mathbf N_u=-\operatorname{div}_x\mathcal A(Du)=u(x).
\]
To use this identity without assigning traces of $\mathbf N_u$ on arbitrary
competitors, mollify only in the base variable:
$\mathbf N_{u,\varepsilon}=\mathbf N_u*_{x}\varrho_\varepsilon$.  On every compactly contained cylinder,
\[
 |\mathbf N_{u,\varepsilon}|\le1,
 \qquad
 \operatorname{div}\mathbf N_{u,\varepsilon}=u_\varepsilon,
 \qquad
 |u_\varepsilon|\le M.
\]
Let $\mathcal O\Subset\Omega\times\mathbb R$ and let
$\widetilde E$ be a set of locally finite perimeter with
$E\triangle\widetilde E\Subset\mathcal O$.  Applying Gauss--Green with a
cutoff equal to one on a neighborhood of $E\triangle\widetilde E$ gives
\[
 \int_{\partial^*E\cap\mathcal O} \mathbf N_{u,\varepsilon}\cdot\nu_E\,d\mathcal H^n
 -\int_{\partial^*\widetilde E\cap\mathcal O} \mathbf N_{u,\varepsilon}\cdot\nu_{\widetilde E}\,d\mathcal H^n
 =\int_{\mathcal O}(\chi_E-\chi_{\widetilde E})u_\varepsilon\,dX.
\]
Away from the vertical axis $\{0\}\times\mathbb R$, the original graph is
smooth and $\mathbf N_{u,\varepsilon}\to\mathbf N_u$.  Moreover
$|D\chi_E|(\{0\}\times\mathbb R)=0$, since $n\ge2$ and the axis has
$\mathcal H^n$-measure zero.  Dominated convergence therefore yields
\[
 \int_{\partial^*E\cap\mathcal O}\mathbf N_{u,\varepsilon}\cdot\nu_E\,d\mathcal H^n
 \longrightarrow P(E,\mathcal O).
\]
Since $|\mathbf N_{u,\varepsilon}|\le1$,
\[
 \limsup_{\varepsilon\downarrow0}
 \int_{\partial^*\widetilde E\cap\mathcal O}\mathbf N_{u,\varepsilon}\cdot\nu_{\widetilde E}\,d\mathcal H^n
 \le P(\widetilde E,\mathcal O),
\]
whereas the right-hand side is bounded in absolute value by
$M|E\triangle\widetilde E|$.  Hence
\begin{equation}\label{eq:bounded-quasiminimizer}
 P(E,\mathcal O)\le P(\widetilde E,\mathcal O)+M|E\triangle\widetilde E|.
\end{equation}
Thus $E$ is a local perimeter quasiminimizer with bounded mean-curvature
scale.  More precisely, after shrinking the working radius if necessary, fix
$r_0>0$ with
\[
 Mr_0\le1.
\]
Then \eqref{eq:bounded-quasiminimizer}, restricted to competitors supported in
balls of radius less than $r_0$, is the standard $(M,r_0)$ perimeter
quasiminimality condition.  Replace $E$ by its canonical measure-theoretic representative.  The density estimates of
\cite[Theorem~\refnum{21.11}]{Maggi2012} identify its topological boundary locally with
$\operatorname{spt}|D\chi_E|$; we continue to denote this boundary by
$\Sigma_E$.  The reduced boundary is $C^{1,\beta}$ by
\cite[Theorem~\refnum{26.5}]{Maggi2012}.  The singular-set theorem
\cite[Theorem~\refnum{28.1}]{Maggi2012} gives no singular points when the ambient
dimension $N=n+1$ is at most seven and a locally discrete singular set
when $N=8$; tangent blow-ups at singular points are perimeter-minimizing
cones by \cite[Theorem~\refnum{28.6}]{Maggi2012}.  Consequently $\Sigma_E$ is
everywhere regular when $2\le n\le6$.

\smallskip
\noindent\textit{Step 2: the directed tangent-cone argument when $n=7$.}
Assume now $n=7$ and suppose, for contradiction, that
$p_0\in\Sigma_E$ is singular.  By \cite[Theorem~\refnum{28.6}]{Maggi2012}, after
passing to a blow-up sequence $r_j\downarrow0$ the sets
$E_j=(E-p_0)/r_j$ converge locally in $L^1$ to a perimeter-minimizing cone
$\mathcal C\subset\mathbb R^8$ with vertex at the origin.  Equivalently, the
rescaled quasiminimality errors vanish along the sequence.  Each $E_j$ is a vertical subgraph,
so $D_8\chi_{E_j}\le0$ distributionally.  Thus, for every nonnegative
$\varphi\in C_c^1(\mathbb R^8)$,
\[
 -\int\chi_{\mathcal C}\,\partial_8\varphi
 =\lim_{j\to\infty}
   \left(-\int\chi_{E_j}\,\partial_8\varphi\right)\le0.
\]
Therefore $D_8\chi_{\mathcal C}\le0$.  Since
$D\chi_{\mathcal C}=-\nu_{\mathcal C}|D\chi_{\mathcal C}|$, we obtain
\[
 J_8:=\nu_{\mathcal C}\cdot e_8\ge0
 \qquad |D\chi_{\mathcal C}|\text{-a.e.},
\]
and hence on the regular part of $\partial\mathcal C$.
For a seven-dimensional perimeter-minimizing boundary in $\mathbb R^8$,
the singular set is locally discrete.  Since a nonzero singular point of a
cone would generate a whole singular ray, $\partial\mathcal C\setminus\{0\}$ is
smooth.  Hence
\[
 \Lambda:=\partial\mathcal C\cap\mathbb S^7
\]
is a smooth compact embedded minimal hypersurface, possibly disconnected.
The vertical translation Jacobi field is homogeneous of degree zero, so on
each connected component of $\Lambda$ it satisfies
\[
 \Delta_{\Lambda}J_8+|\mathrm{II}_{\mathcal C}|_{r=1}^2J_8=0.
\]
Because $J_8\ge0$, the strong maximum principle shows that on each
component either $J_8\equiv0$ or $J_8>0$.  In the latter case,
integration over that component gives
\[
 \int_{\Lambda} |\mathrm{II}_{\mathcal C}|_{r=1}^2J_8\,d\mathcal H^6=0,
\]
so $\mathrm{II}_{\mathcal C}\equiv0$ there and that component is a great sphere
$\mathbb S^6\subset\mathbb S^7$.  No other component can be disjoint from
this great sphere: if $f$ denotes the corresponding normal coordinate on
$\mathbb S^7$, then $f$ has a strict sign on any disjoint connected
component $\Lambda'$, whereas minimality in the unit sphere gives
\[
 \Delta_{\Lambda'}f+6f=0,
\]
and integration forces $\int_{\Lambda'}f=0$, a contradiction.  Thus in this
case $\mathcal C$ is a half-space.

It remains to consider the case in which $J_8\equiv0$ on every
component.  The singular point at the cone vertex has zero perimeter
measure, hence the vertical component of the distributional derivative of
$\chi_{\mathcal C}$ vanishes:
\[
 D_8\chi_{\mathcal C}=0.
\]
By Fubini's theorem for distributions,
\[
 \mathcal C=\mathcal K\times\mathbb R
\]
up to a null set, for a cone $\mathcal K\subset\mathbb R^7$.  The minimizing
property of $\mathcal C$ implies that $\mathcal K$ is perimeter minimizing.  Indeed, insert a
compactly supported competitor for $\mathcal K$ into $\mathcal K\times(-T,T)$, close it with
end caps of $O(1)$ perimeter, divide the resulting inequality by $2T$, and
let $T\to\infty$.  The boundary $\partial\mathcal K$ has dimension six, so the
classical regularity theorem for perimeter minimizers has no singular set.
In particular the cone vertex is regular, so the unique tangent cone of
$\mathcal K$ at the origin is a half-space $H$.  But $\mathcal K$ itself is a cone, hence its
tangent cone at the origin is $\mathcal K$ itself.  Therefore $\mathcal K=H$, and so $\mathcal C$ is
a half-space.

The tangent cone furnished by Theorem~\refnum{28.6} at the hypothetical singular
point is therefore a half-space.  By the regularity/singular-set
characterization in \cite[Theorem~\refnum{26.5}]{Maggi2012}, a point admitting such
a flat tangent is regular, a contradiction.  Hence
\[
 \Sigma_E\in C^{1,\beta}_{\rm loc}
\]
also when $n=7$.

\smallskip
\noindent\textit{Step 3: analytic continuation of the complete hypersurface.}
Orient $\Sigma_E$ by the outer normal of the subgraph.  Away from the vertical
axis, $\Sigma_E$ is the classical graph of $u$, and
\[
 H_{\Sigma_E}=-\operatorname{div}\mathcal A(Du)=u=\tau.
\]
The quasiminimizer inequality \eqref{eq:bounded-quasiminimizer} gives
locally bounded generalized mean curvature.  For completeness, let
$\Phi_t$ be the flow of a compactly supported $C^1$ vector field $Y$ and
note that, since $\Sigma_E$ is $C^{1,\beta}$ and $Y$ is compactly supported, the
normal displacement under its flow satisfies
\[
 \frac{|E\triangle\Phi_t(E)|}{|t|}
 \longrightarrow
 \int_{\Sigma_E}|Y\cdot\nu_E|\,d\mathcal H^n.
\]
Applying \eqref{eq:bounded-quasiminimizer} with $F=\Phi_t(E)$ for both signs
of $t$, dividing
by $|t|$, and letting $t\to0$ gives
\[
 |\delta V_{\Sigma_E}(Y)|
 \le M\int_{\Sigma_E} |Y|\,d\mathcal H^n.
\]
Thus the first variation is absolutely continuous with respect to
$\mathcal H^n\lfloor\Sigma_E$ and has an $L^\infty$ generalized mean-curvature
vector.  Since the vertical axis has zero $\mathcal H^n$-measure and the
classical scalar identity $H_{\Sigma_E}=\tau$ holds off that axis, the
first-variation convention used here gives the vector identity
\begin{equation}\label{eq:bounded-pmc-complete}
 \mathbf H_{\Sigma_E}(X)=-X_{n+1}\nu_{\Sigma_E}(X)
 \qquad\mathcal H^n\text{-a.e.\ on }\Sigma_E.
\end{equation}
Now make a rigid motion and write a local side graph as
$\Xi(y)=p_c+\mathcal R(y,\gamma(y))$.  The ambient height is affine in these coordinates,
$\Xi_{n+1}=c_0+b_0\cdot y+a_0\gamma(y)$, so, according to the local orientation, $\gamma$
satisfies weakly
\begin{equation}\label{eq:bounded-sidegraph-pmc}
 \operatorname{div}\frac{D\gamma}{\sqrt{1+|D\gamma|^2}}
 =\varepsilon\bigl(c_0+b_0\cdot y+a_0\gamma\bigr),
 \qquad \varepsilon\in\{-1,1\}.
\end{equation}
Because $\gamma\in C^{1,\beta}$, its gradient is locally bounded, and the
principal matrix
\[
 \mathbb A_{\rm cap}^{ij}(p)=\frac{(1+|p|^2)\delta_{ij}-p_ip_j}
 {(1+|p|^2)^{3/2}}
\]
is uniformly elliptic on each smaller chart.  Difference quotients and
Schauder estimates first give $\gamma\in C^{2,\beta}$ and then, by bootstrap,
$\gamma\in C^\infty$.  Equation \eqref{eq:bounded-sidegraph-pmc} is real
analytic in $(y,\gamma,D\gamma,D^2\gamma)$ and elliptic in the $D^2\gamma$ variables, so analytic
elliptic regularity yields that $\Sigma_E$ is real analytic
\cite{Morrey1958,Morrey1966}.

\smallskip
\noindent\textit{Step 4: an analytic hypersurface cannot terminate in a
vertical segment.}
Set
\[
 u_-:=\liminf_{x\to0}u(x),
 \qquad
 u_+:=\limsup_{x\to0}u(x).
\]
Both numbers are finite.  Because $n\ge2$, every punctured ball is
connected, so the image of $u$ on it is an interval.  The nested
intersection of the closures of these images is therefore exactly the
cluster interval $[u_-,u_+]$.  Every graph point off the axis belongs to
$\Sigma_E$, and $\Sigma_E$ is closed, while a point of the vertical axis with
height strictly above $u_+$ or strictly below $u_-$ has a neighborhood in
which the subgraph is respectively empty or full up to a Lebesgue-null set.
Consequently
\begin{equation}\label{eq:bounded-axis-cluster}
 \Sigma_E\cap(\{0\}\times\mathbb R)
 =\{0\}\times[u_-,u_+].
\end{equation}
Suppose that $u_-<u_+$.  At $p_+=(0,u_+)$, the embedded analytic hypersurface
admits a local real-analytic defining function $\mathcal F_{\rm an}$ such that
\[
 \Sigma_E=\{\mathcal F_{\rm an}=0\},
 \qquad
 D\mathcal F_{\rm an}(p_+)\ne0.
\]
By \eqref{eq:bounded-axis-cluster}, the one-variable analytic function
$\tau\mapsto\mathcal F_{\rm an}(0,\tau)$ vanishes on an interval immediately to the left of
$u_+$.  The analytic identity theorem forces it to vanish also immediately
to the right of $u_+$.  This puts vertical-axis points of height larger than
$u_+$ in $\Sigma_E$, contradicting \eqref{eq:bounded-axis-cluster}.  Hence
$u_-=u_+$, so $u(x)$ has a finite limit as $x\to0$.

Assigning this value at the origin gives a continuous extension
$\widetilde u$ of the punctured solution.  Since changing one point does not
alter the weak $W^{1,1}$ class, $\widetilde u$ represents the extension from
Proposition~\ref{prop:bounded-complete}.  Its uniqueness as a continuous
extension follows because two continuous extensions which agree on the
punctured ball agree at the origin as well.
\end{proof}

\begin{proof}[Proof of Theorem~\ref{thm:main}]
If $u$ is bounded near the puncture, apply
Proposition~\ref{prop:bounded-complete}; when $2\le n\le7$, apply in addition
Proposition~\ref{prop:bounded-continuous-lowdim}.  Otherwise at least one tail is unbounded.
For $n\ge3$, Corollary~\ref{cor:highdim-centering} excludes simultaneous tails and
centers the remaining sign.  For $n=2$, the cluster set is an interval by
Lemma~\ref{lem:cluster-interval}; Theorem~\ref{thm:planar-exceptional-exclusion}
removes the finite half-lines and full cluster, leaving only
$\{+\infty\}$ or $\{-\infty\}$.  Thus in every dimension there is one
centered sign, and \eqref{eq:wholeblowdown-pressure} holds.

The pressure section upgrades this measure-theoretic convergence to a
smooth one-sheeted strictly convex foliation of a full punctured
neighborhood.  The Gauss-coordinate analysis then gives
\eqref{eq:positive-expansion-main}; the negative case follows by the exact
symmetry $u\mapsto-u$.  The compact-annulus boundedness of the fixed
solution gives the stated $r$-dependent localization of high and low levels.
The same argument localizes the pressure bands: for fixed $L>0$, choose
$0<\delta<\min\{r,r_0\}$ and set
\[
 M^{\rm abs}_{r,\delta}:=\max_{\overline B_r\setminus B_\delta}|u|.
\]
Once $t-L/t>M^{\rm abs}_{r,\delta}$, the condition
$|u-t|<L/t$ cannot occur in $\overline B_r\setminus B_\delta$.  Thus the
pressure piece formed inside $B_r$ agrees, on every fixed smaller vertical
slab and for all sufficiently large $t$, with the already controlled local
pressure piece near the puncture, and therefore has the same smooth
multiplicity-one cylindrical limit.  The bounded, positive-pole, and
negative-pole alternatives are mutually exclusive by their limiting
behavior, and the preceding argument exhausts the bounded and unbounded
cases; hence exactly one alternative in Theorem~\ref{thm:main} occurs.
\end{proof}
\section*{Acknowledgments}
Xi-Nan Ma thanks Professor Qi Ding of Fudan University for
many helpful discussions over the years.

This work was supported by the National Key R\&D Program of China (2025YFA1017603).

\section*{Declaration of AI use}
During the preparation of this manuscript, the authors used ChatGPT (OpenAI) as an auxiliary tool for language editing, organization of arguments, and preliminary checking of mathematical derivations. All mathematical statements, proofs, references, and conclusions appearing in the manuscript were independently verified by the authors, who take full responsibility for the content.

\end{document}